\documentclass[a4paper, 12pt, oneside, notitlepage]{amsart}
\usepackage[usenames,dvipsnames]{xcolor}
\usepackage[colorlinks=true,linkcolor=Red,citecolor=Green]{hyperref}
\usepackage{tikz-cd}
\usepackage[margin=3cm]{geometry}
\usepackage[normalem]{ulem}
\usepackage{amsmath,amssymb,amsthm,graphicx,mathrsfs,bbm,url}
\usepackage{amsthm}
\usepackage{wrapfig}
\usepackage{enumitem}
\usepackage{mathtools}
\usepackage[utf8]{inputenc}
 \usepackage[T1]{fontenc}
\usepackage[super]{nth}
\usepackage[open, openlevel=2, depth=3, atend]{bookmark}
\hypersetup{pdfstartview=XYZ}
\usepackage[font=footnotesize]{caption}
\usepackage{a4wide}
\usepackage[all]{xy}

\usepackage{epstopdf}
 
\usepackage{hyperref}

\theoremstyle{plain}
\newtheorem{theorem}{Theorem}[section]
\newtheorem*{theorem*}{Theorem}

\newtheorem{lemma}[theorem]{Lemma}
\newtheorem{proposition}[theorem]{Proposition}
\newtheorem{corollary}[theorem]{Corollary}

\theoremstyle{definition}
\newtheorem{definition}[theorem]{Definition}

\theoremstyle{remark}
\newtheorem{remark}[theorem]{Remark}

\numberwithin{equation}{section}

\newcommand{\fg}{\mathfrak{g}}
\newcommand{\fz}{\mathfrak{z}}

\newcommand{\C}{\mathbb{C}}
\newcommand{\R}{\mathbb{R}}

\newcommand{\Z}{\mathbb{Z}}

\newcommand{\V}{\mathbb{V}}
\newcommand{\X}{\mathbf{X}}

\newcommand{\HH}{\mathbb{H}}

\newcommand{\eps}{\varepsilon}

\newcommand{\mc}{\mathcal}

\newcommand{\dd}{\mathrm{d}}
\newcommand{\n}{\mathbf{n}}
\newcommand{\kk}{\mathbf{k}}
\newcommand{\btheta}{\boldsymbol{\theta}}
\newcommand{\comp}{\mathrm{comp}}
\newcommand{\bk}{\mathbf{k}}
\newcommand{\Lk}{\mathbf{L}^{\otimes \mathbf{k}}}

\DeclareMathOperator{\vol}{vol}
\DeclareMathOperator{\Ell}{ell}

\DeclareMathOperator{\Op}{Op}
\DeclareMathOperator{\WF}{WF}
\DeclareMathOperator{\Var}{Var}

\DeclareMathOperator{\id}{id}

\DeclareMathOperator{\hol}{hol}

\newcommand{\be}{\begin{equation}}
\newcommand{\ee}{\end{equation}}

\author{Mihajlo Cekić}
\address{CNRS and Universit\'e Paris-Est Cr\'eteil (LAMA), 94010 Cr\'eteil, France}
\email{mihajlo.cekic@cnrs.fr}

\author{Thibault Lefeuvre}
\address{Universit\'e Paris-Saclay, Laboratoire de math\'ematiques d’Orsay, 91405, Orsay, France.}
\email{thibault.lefeuvre1@universite-paris-saclay.fr}

\author{Sebastián Muñoz-Thon}
\address{Universit\'e Paris-Saclay, Laboratoire de math\'ematiques d’Orsay, 91405, Orsay, France.}
\email{sebastian.munoz-thon@universite-paris-saclay.fr}

\title[Decay of correlations on Abelian covers]{Decay of correlations on Abelian covers of isometric extensions of volume-preserving Anosov flows}

\begin{document}

\begin{abstract}
    We establish an asymptotic expansion in inverse powers of time of the correlation function of isometric extensions of volume-preserving Anosov flows on Abelian covers of closed manifolds.
\end{abstract}

\maketitle

\section{Introduction}

\subsection{Decay of correlations for Anosov flows on Abelian covers}

Let $M_0$ be a smooth closed connected manifold equipped with a smooth Anosov flow $\varphi_t^0 \colon M_0 \to M_0$ that preserves a smooth probability measure $\vol_{M_0}$. The Anosov property yields a continuous flow-invariant decomposition of $TM_0$ into
\[
    TM_0 = \R X_{M_0} \oplus E_s \oplus E_u,
\]
where $X_{M_0} \in C^\infty(M_0,TM_0)$ is the generator of the flow, and $E_s$ and $E_u$ are respectively the stable and the unstable vector bundles.

Let $\rho \colon \pi_1(M_0) \to \Z^d$ be a (surjective) representation. This defines a $\Z^d$-cover
\begin{equation}
\label{equation:zd-cover} 
\pi \colon M \to M_0
\end{equation}
by considering the associated bundle $M:=\widetilde{M}_0 \times_{\rho} \Z^d$, where $\widetilde{M}_0$ denotes the universal cover of $M_0$. We will refer to this construction as an Abelian cover (see \S\ref{ssection:abelian-covers} for further details). The flow $(\varphi_t^0)_{t \in \R}$ can be lifted to a $\Z^d$-equivariant flow $(\varphi_t)_{t \in \R}$ on $M$. Similarly, the measure $\vol_{M_0}$ can be lifted to a smooth flow-invariant measure $\vol_{M}$ on $M$. However, note that $\vol_{M}$ has infinite volume if $d > 0$. Unless the context is clear, and there is no risk of confusion, objects on $M_0$ will be distinguished by an index $0$.

Let $\alpha \in C^0(M_0,T^*M_0)$ be the Anosov $1$-form on $M_0$ defined by $\alpha(E_s \oplus E_u) = 0$ and $\alpha(X_{M_{0}})=1$. It is merely continuous, but $d\alpha$ is well-defined as a distributional $2$-form. In particular, we have that $d\alpha = 0$ if and only if $E_s \oplus E_u$ is jointly integrable (see e.g. \cite[Section 4.2.2]{Cekic-Lefeuvre-24}).

We are interested in the decay of correlations for the flow $(\varphi_t)_{t \in \R}$ with respect to the measure $\vol_{M}$ on the Abelian cover $M$. The following statement relies on a scale of spaces $(B^{s,r}(M))_{s,r \geq 0}$ on the Abelian cover $M$. To avoid burdening the introduction with technical details, we postpone their precise definition to \S\ref{ssection:functional-spaces}. For the time being, let us simply note that functions in $B^{s,r}$ are locally $H^s$ and exhibit increasing decay at infinity in $M$ as the parameter $r$ grows. Our first result is the following:

\begin{theorem}[Decay of correlations on Abelian covers]
\label{theorem:main1}
Let $\pi \colon M \to M_0$ be a $\Z^d$-cover corresponding to a surjective representation $\rho \colon \pi_1(M_0) \to \Z^d$ as in \eqref{equation:zd-cover}, and let $(\varphi_t)_{t \in \R}$ be a volume-preserving $\Z^d$-equivariant Anosov flow as above.

If $d\alpha \neq 0$, then there exist a constant $\kappa > 0$, and continuous bilinear forms $(C_j)_{j \geq 1}$ on $C^\infty_{\comp}(M)$ such that for all $f,g \in C^\infty_{\comp}(M)$, for all integers $N \geq 1$:
\begin{equation}
    \label{equation:decay-correlation-0}
\begin{split}
t^{d/2} \int_M f\circ \varphi_{-t} \cdot g~ \dd \vol_{M} =  \kappa \int_M f ~ \dd \vol_{M} \int_M g  ~\dd \vol_{M} + \sum_{j =1}^{N-1} t^{-j} C_j(f,g) + R_N(t,f,g)
\end{split}
\end{equation}
where the following estimates hold: for all $s > 0$, there exists a constant $C > 0$ such that
\begin{equation}
    \label{equation:pfiou}
\begin{split}
|C_j(f,g)| & \leq C\|f\|_{B^{s,2j}}\|g\|_{B^{s,2j}},\quad 1 \leq j \leq N - 1,\\
|R_N(t,f,g)| & \leq C \langle t \rangle^{-N}\left(\|f\|_{B^{\vartheta N,0}}\|g\|_{B^{\vartheta N,0}} + \|f\|_{B^{s,2N+d+1}}\|g\|_{B^{s,2N+d+1}}\right), \quad \forall t \geq 0,
\end{split}
\end{equation}
where the constant $\vartheta > 0$ is explicit and only depends on $\dim M_0$.

In addition, \eqref{equation:decay-correlation-0} holds more generally for all $f,g \in B^{\vartheta N, 0}(M) \cap B^{s,2N+d+1}(M)$.
\end{theorem}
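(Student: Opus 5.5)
Our plan follows the classical Bloch--Floquet (Fourier) route: reduce the correlation on $M$ to a family of twisted correlations on the compact base $M_0$, parametrised by characters of $\Z^d$, and then run a stationary-phase analysis in the character variable near the trivial character.

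\emph{Step 1: Fourier decomposition.} Every $\theta \in \mathbb{T}^d = \R^d/(2\pi\Z)^d$ gives a unitary character $\chi_\theta = e^{i\langle \theta, \rho(\cdot)\rangle}$. For $f \in C^\infty_{\comp}(M)$ we set
\[
f_\theta(x) = \sum_{n\in\Z^d} e^{-i\langle\theta,n\rangle} f(n\cdot x),
\]
a section of the flat Hermitian line bundle $L_\theta \to M_0$. The Plancherel formula then gives
\[
\int_M f\circ\varphi_{-t}\, g\,\dd\vol_M = (2\pi)^{-d}\int_{\mathbb{T}^d} \langle e^{-tX_\theta} f_\theta, g_\theta\rangle_{L^2(M_0,L_\theta)}\,\dd\theta,
\]
where $X_\theta$ is the twisted generator acting on sections of $L_\theta$. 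Fixing a smooth closed $1$-form $\omega$ on $M_0$ whose cohomology class is dual to $\rho$, we trivialise $L_\theta$ and obtain $X_\theta = X_{M_0} + i\langle\theta,\omega(X_{M_0})\rangle$ acting on functions. Decay at infinity of $f$, measured by the weight $r$ in $B^{s,r}$, becomes $\theta$-smoothness of $f_\theta$: in effect $\|\partial_\theta^\beta f_\theta\|_{H^s(M_0)} \lesssim \|f\|_{B^{s,|\beta|}}$. This is the mechanism that lets us trade weights for derivatives.

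\emph{Step 2: spectral theory of $X_\theta$.} We work on anisotropic Sobolev spaces $\mathcal{H}^s$ on $M_0$, where $X_\theta$ has discrete Ruelle resonances. We will assume, or invoke from the preliminary sections, the following facts.
\begin{itemize}
\item For $\theta\neq 0$ there is no resonance on $i\R$. Indeed, a resonance there would produce a unitary character $\chi_\theta$ cohomologous to a coboundary along the flow, and Livšic theory together with surjectivity of $\rho$ rules this out.
\item For $\theta = 0$ the only resonance on $i\R$ is a simple one at $0$, by mixing.
\item The hypothesis $d\alpha \neq 0$ enters here. It is equivalent to non-joint-integrability of $E_s\oplus E_u$, and through Dolgopyat/Liverani-type estimates (or Tsujii/Nonnenmacher--Zworski for volume-preserving contact-like situations; in the general case, the results of \cite{Cekic-Lefeuvre-24}) it yields a uniform spectral gap with polynomial resolvent bounds $\|(X_\theta + \lambda)^{-1}\|_{\mathcal{H}^s\to\mathcal{H}^{s}} \lesssim \langle \Im\lambda\rangle^{\vartheta'}$ in a strip $\Re\lambda > -\epsilon$, uniformly for $\theta$ in compact sets away from $0$.
\end{itemize}
The loss $\vartheta'$ is what eventually produces the regularity index $\vartheta N$.

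\emph{Step 3: perturbation near $\theta=0$.} Analytic perturbation theory gives a simple eigenvalue $\lambda(\theta)$ with spectral projector $\Pi_\theta$ for $|\theta|$ small, and we expand $\lambda(\theta) = i\langle\theta,\mu\rangle - \tfrac12\langle\Sigma\theta,\theta\rangle + O(|\theta|^3)$. The variance $\Sigma$ is positive definite because $\omega(X)$ is not cohomologous to a constant in any direction, which again uses surjectivity of $\rho$. The drift $\mu$ must vanish for the $t^{-d/2}$ scaling; this should follow from volume preservation and $\Z^d$-symmetry via flip invariance, or else the statement is implicitly centred. We then split
\[
e^{-tX_\theta} = e^{t\lambda(\theta)}\Pi_\theta + e^{-tX_\theta}(1-\Pi_\theta).
\]
The second term decays like $O(e^{-\epsilon t})$ in $\mathcal{H}^{-s}$. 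We convert it to polynomial decay for smooth data by the standard device: write it as a contour integral of the resolvent, shift the contour, and integrate by parts in $\Im\lambda$, at the cost of $\vartheta N$ derivatives. This is the source of the $B^{\vartheta N,0}$ part of $R_N$, and the region of $\theta$ away from $0$ is treated identically. The main term is
\[
\int e^{t\lambda(\theta)}\chi(\theta)\langle \Pi_\theta f_\theta, g_\theta\rangle\,\dd\theta.
\]
We apply Laplace's method after rescaling $\theta = \eta/\sqrt t$. Taylor expanding the smooth amplitude $\langle\Pi_\theta f_\theta,g_\theta\rangle$ and $e^{t(\lambda(\theta)+\frac12\langle\Sigma\theta,\theta\rangle)}$, the odd terms vanish and we obtain $t^{-d/2}\bigl(\kappa\int f\int g + \sum_j t^{-j}C_j\bigr)$ with $\kappa = (2\pi)^{-d/2}(\det\Sigma)^{-1/2}$. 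Here $C_j$ involves $2j$ derivatives in $\theta$, hence $B^{s,2j}$ norms. The remainder after $N$ terms needs roughly $2N+d+1$ derivatives to control the Taylor remainder integrated against the Gaussian, which matches the $B^{s,2N+d+1}$ norm.

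\emph{Main obstacle.} The hard step is the uniform high-frequency resolvent bound for the whole family $X_\theta$, which is needed to turn exponential decay of each fiber into polynomial decay with controlled loss $\vartheta N$ and an explicit $\vartheta$. My first approach would be to prove a Dolgopyat-type estimate for the twisted transfer operators uniformly in $\theta$. The twist is a unitary potential, so the usual cancellation argument, driven by the non-integrability $d\alpha\neq0$, should go through with constants depending only on $\dim M_0$.
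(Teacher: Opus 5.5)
Your architecture is the paper's: Floquet reduction to $X_{\btheta}=X_{M_0}+i\iota_{X_{M_0}}\eta_{\btheta}$, the simple leading resonance $\lambda_{\btheta}$ near $\mathbf{0}$, a contour argument with integration by parts in $z$, then stationary phase in $\btheta$. However, the step that carries all the weight is assumed rather than proved, and in a form that is not available.

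Non-integrability $d\alpha\neq 0$ is not known to give a resonance-free strip $\{\Re z>-\eps\}$ with polynomial resolvent bounds for general volume-preserving Anosov flows, since that would amount to exponential mixing. So your $\mc{O}(e^{-\eps t})$ decay of $e^{-tX_{\btheta}}(1-\Pi_{\btheta})$ is unjustified. A Dolgopyat-type argument under this hypothesis alone should only be expected to give rapid mixing.

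What is actually needed, and what the paper proves in Theorem~\ref{theorem:high-frequency-ab-ext}, is the weaker bound $\|(\mp X_{\btheta}-z)^{-1}\|_{\mc{H}^s_\pm\to\mc{H}^s_\pm}\leq C\langle \Im z\rangle^{\vartheta}$ for $\Re z\in[0,1]$, $|z|>1$. It must hold uniformly over \emph{all} $\btheta\in\mathrm{U}(1)^d$, including a neighbourhood of $\mathbf{0}$, which your bullet leaves out. The contour then stays on $i\R$ except for a small detour around $0$, and integration by parts turns these bounds into $\mc{O}(t^{-N})$ at a cost of $\vartheta N$ derivatives.

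The paper proves this bound by a semiclassical contradiction argument, not a transfer-operator estimate:
\begin{itemize}
\item Quasimodes at $h=|\Im z|^{-1}$ concentrate on $\mathcal{T}=\{-\alpha(x)\}$ by source, sink and propagation estimates, which forces $\Re z\to0$.
\item Commuting $d_{\btheta}=d+i\eta_{\btheta}$ with $X_{\btheta}$ and inverting the twisted flow on $E_s^*$, $E_u^*$ gives the horocyclic equation $(-hd_{\btheta}-z_h\alpha\wedge)v_h=o(h^{\ell/2})$.
\item Hence $|v_h|$ is nearly constant and bounded below, so $d+i\eta_{\btheta}+i\alpha$ has trivial holonomy around contractible loops. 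Its curvature $i\,d\alpha$ must therefore vanish; the closed form $\eta_{\btheta}$ drops out, which is also why uniformity in $\btheta$ is free.
\end{itemize}
Relatedly, your Liv\v{s}ic argument for the absence of resonances $i\lambda$ when $\btheta\neq\mathbf{0}$ only covers $\lambda=0$. For $\lambda\neq0$, a unit-norm resonant state is parallel for $d+i\eta_{\btheta}+i\lambda\alpha$, giving $\lambda\,d\alpha=0$; this is where $d\alpha\neq0$ is genuinely used (Lemma~\ref{lemma:no-resonances-imaginary-axis}).

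On the drift, the justification you suggest does not work, and in this generality none exists. Differentiating $(-X_{\btheta}-\lambda_{\btheta})u_{\btheta}=0$ at $\mathbf{0}$ and integrating gives $\lambda'_{\mathbf{0}}(v)=-i\int_{M_0}\iota_{X_{M_0}}D_v\eta_{\mathbf{0}}\,\dd\vol_{M_0}$, the asymptotic cycle of $\vol_{M_0}$ paired with $\rho$. Volume preservation does not make it vanish. Take the suspension of a hyperbolic automorphism of $\mathbb{T}^2$ under a roof function not cohomologous to a constant: it is volume preserving with $d\alpha\neq0$. Yet its infinite cyclic cover unwrapping the suspension direction is $\mathbb{T}^2\times\R$ with lifted flow $(x,s)\mapsto(x,s+t)$. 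Correlations of compactly supported functions therefore vanish for large $t$, contradicting the expansion with $\kappa>0$.

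The paper's own argument for $\lambda'_{\mathbf{0}}=0$ in Lemma~\ref{lemma:resonance-non-degenerate} has the same gap. Maximality of $\Re\lambda_{\btheta}$ at $\mathbf{0}$, or the symmetry $\overline{\lambda_{\btheta}}=\lambda_{-\btheta}$, only shows that $\lambda'_{\mathbf{0}}$ is purely imaginary, which the formula above already gives. Vanishing does follow when $\lambda_{\btheta}$ is real, e.g.\ for geodesic flows via the flip (Lemma~\ref{lemma:leading-resonance-symmetry}). So your hedge is the right instinct, but the argument needs an explicit zero-drift hypothesis.

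Granting that hypothesis, your Step 3 matches the paper. Parity of the Gaussian removes half-integer powers. $C_j$ costs $2j$ derivatives in $\btheta$ and the remainder costs $2N+d+1$, converted into $B^{s,r}$ weights via $\|\partial^\beta_{\btheta}f_{\btheta}\|_{H^s}\lesssim\|f\|_{B^{s,|\beta|}}$. Your $\kappa=(2\pi)^{-d/2}(\det\Sigma)^{-1/2}$ is the correctly normalised constant; the paper drops the Parseval factor $(2\pi)^{-d}$ in \eqref{eq:parseval-in-action}.
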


That \eqref{equation:decay-correlation-0} holds for functions in $B^{\vartheta N,0}(M) \cap B^{s,2N+d+1}(M)$ is a straightforward consequence of the density of $C^\infty_{\comp}(M)$ in this space, together with the bounds \eqref{equation:pfiou}. The constant $\kappa > 0$ is explicit and given by a factor $(2\pi)^{d/2}$ times the inverse square root of the determinant of a covariance matrix, see \eqref{equation:expression-cj} applied with $j=0$. We establish that the bilinear forms $C_j$ are all non-zero for $j \geq 1$ (see Lemma \ref{lemma:cj-non-zero}), together with an explicit expression for $C_1$, see \eqref{equation:c1}.

One can also allow for a \emph{linear drift} in \eqref{equation:decay-correlation-0}, that is one can replace $f$ by $\tau_{\bk}^*f$ provided $\bk \in \Z^d$ is bounded by $\mc{O}(t^{1/2-\eps})$ for some $\eps > \eps_d > 0$, where $\eps_d$ depends on the order $d \geq 1$. In the regime $|\bk| \simeq t^{1/2-\eps}$, the second term in the expansion gets considerably simplified, see the discussion in \S\ref{ssection:linear-drift}.

The rate of mixing $d/2$ in the leading term $t^{-d/2}$ of the asymptotic expansion appears as half of the dimension of the connected component of the trivial representation of $\Z^d$ in its unitary dual $\widehat{\Z^d} \simeq \mathrm{U}(1)^d$ (the equivalence class of all unitary irreducible representations). This is  a general principle that should hold for any $G$-extension of a hyperbolic dynamical system, where $G$ is a non-compact Lie group, provided the trivial representation is not isolated (that is Kazhdan's property (T) does not hold). See \S\ref{ssection:related-results} for a related discussion.

Theorem \ref{theorem:main1} was established for the geodesic flow in negative curvature in \cite{Dolgopyat-Pene-Nandori-22}, see also \S\ref{ssection:related-results} where this is further discussed. We have here the sole assumption that $d\alpha \neq 0$. We believe that the argument could be adapted to treat any equilibrium state of $(\varphi_t^0)_{t \in \R}$; this is left for future investigation.

\subsection{Decay of correlations for isometric extensions of Anosov flows on Abelian covers} Theorem \ref{theorem:main1} is a consequence of a more general result on isometric extensions that we now explain. Let $p \colon P_0 \to M_0$ be a principal $G$-bundle where $G$ is a compact Lie group, and denote by $R_g \colon P_0 \to P_0$ the fiberwise right-action of $g \in G$ on $P_0$. Let $(\psi_t^0)_{t \in \R}$ be a $G$-extension of the Anosov flow $(\varphi_t^0)_{t \in \R}$ to $P_0$ in the sense that it satisfies the following two properties:
\[
\varphi_t^0 \circ p = p \circ \psi_t^0, \qquad R_g \circ \psi_t^0 = \psi_t^0 \circ R_g,\quad t \in \mathbb{R}, g \in G.
\]

The flow $(\psi_t^0)_{t \in \R}$ is called an \emph{isometric extension} of $(\varphi_t^0)_{t \in \R}$ and is a partially hyperbolic flow, see \cite[Chapter 12]{Lefeuvre-book} for instance. It preserves a smooth probability measure $\nu_0$ obtained (locally) as the product of $\vol_{M_{0}}$ with the Haar measure on $G$.

Associated with $(\psi_t^0)_{t \in \R}$, there is an important object on $P_0$, called the \emph{dynamical connection}. This is a $G$-equivariant continuous connection $\nabla^{\mathrm{dyn}}$ that is functorially induced by $(\psi_t^0)_{t \in \R}$. For precise definitions and further details we refer to \S\ref{ssection:dynamical-connection} below; we now state a few of its key properties. The \emph{transitivity group} $H \leqslant G$ (or the \emph{Brin group}) of $(\psi_t^0)_{t \in \R}$ is the closure of the holonomy group of the dynamical connection; it is well-defined up to conjugation in $G$. This group plays a key role as it describes the ergodic components of $(\psi_t^0)_{t \in \R}$. More precisely, $(\psi_t^0)_{t \in \R}$ is ergodic with respect to $\nu_0$ if and only if $H = G$ (see \cite{Brin-75-1,Brin-75-2,Lefeuvre-23}).

Let $\mathfrak{g}$ be the Lie algebra of $G$, and denote by $\mathrm{Ad} \colon G \to \mathrm{End}(\mathfrak{g})$ the adjoint representation. Let $\mathrm{Ad}(P_0) = P_0 \times_{\mathrm{Ad}} \mathfrak{g} \to M_0$ be the adjoint bundle associated with $P_0$. By the general structure theory of compact Lie groups, the Lie algebra $\fg$ of $G$ splits into
\[
\fg = \fz \oplus [\fg,\fg],
\]
where $\fz \simeq \R^a$ for some $a \geq 0$ is the Lie algebra of the center of $G$, and the derived algebra $[\fg,\fg]$ corresponds to the Lie algebra of the semisimple part of $G$. The previous decomposition being invariant by the $\mathrm{Ad}(G)$ action, it implies a decomposition for the vector bundle $\mathrm{Ad}(P_0)$ into
\[
\mathrm{Ad}(P_0) = M_0 \times \fz \oplus E,
\]
where $E$ is the vector bundle corresponding to the semisimple part. The curvature of the dynamical connection is an $\mathrm{Ad}(P_0)$-valued distributional $2$-form on $M_0$
\[
F_{\nabla^{\mathrm{dyn}}} \in \mc{D}'(M_0, \Lambda^2 T^*M_0 \otimes \mathrm{Ad}(P_0)).
\]
Projecting $F_{\nabla^{\mathrm{dyn}}}$ onto the trivial bundle $M_0 \times \fz$, and taking an arbitrary trivializing frame, we obtain $a$ distributional $2$-forms $F_1,\dotsc , F_a \in \mc{D}'(M_0, \Lambda^2 T^*M_0)$.

Let $P := \pi^*P_0 \to M$ be the pullback bundle over $M$ (we recall $\pi \colon M \to M_0$ is the $\Z^d$-cover). Note that $P$ is also a $(G \times \Z^d)$-bundle over $M_0$, and $\mathbb{Z}^d$-bundle over $P_0$ (see \S \ref{subsection:topology-abelian-covers} for further details), that is the following commutative diagram holds:
\[
\begin{tikzcd}
P = \pi^*P_0 \arrow{r}{} \arrow[swap]{dr}{} \arrow[swap]{d}{} & P_0 \arrow{d}{p} \\
M \arrow{r}{\pi} & M_0
\end{tikzcd}
\]
The flow $(\psi_t^0)_{t \in \R}$ can be lifted equivariantly to a flow on $P$, denoted by $(\psi_t)_{t \in \R}$. Similarly, the measure $\nu_0$ can be lifted to a measure $\nu$ (of infinite volume when $d > 0$). In the following result, given $f \in C^\infty_{\comp}(P)$, we let $f_{\bk=\mathbf{0}} \in C^\infty_{\comp}(M)$ be the function obtained by averaging in the $G$-fibers of $P$, namely
\[
f_{\bk=\mathbf{0}}(x) := \int_{P_x} f(w \cdot g) \dd g, \qquad x \in M
\]
where $w \in P_x$ is arbitrary and $\dd g$ denotes the normalized Haar measure on $G$. The meaning of the index $\bk=\mathbf{0}$ will become clear later, when introducing the Borel-Weil calculus on principal bundles, see \S\ref{ssection:bw-calculus}. We shall establish the following:

\begin{theorem}
\label{theorem:main2}
Let $P_0 \to M_0$ be a $G$-principal bundle where $G$ is a compact Lie group, and $P := \pi^*P_0 \to M$ be the pullback bundle on the $\Z^d$ Abelian cover $\pi \colon M \to M_0$, equipped with the lifted flow $(\psi_t)_{t \in \R}$ as defined above.

If the transitivity group $H$ is equal to $G$, and $(d\alpha, F_1,\dotsc,F_a)$ are linearly independent, then for all $f,g \in C^\infty_{\comp}(P)$, for all integers $N \geq 1$:
\begin{equation}
    \label{equation:decay-correlation-1}
\begin{split}
t^{d/2}\int_P f\circ \psi_{-t} \cdot g~ \dd \nu = \kappa   \int_P f \dd \nu \int_P g \dd \nu + \sum_{j =1}^{N-1} t^{-j} C_j(f_{\bk=\mathbf{0}},g_{\bk=\mathbf{0}}) + R_N(t,f,g),
\end{split}
\end{equation}
where for all $s > 0$, there exists $C > 0$ such that
\[
R_N(t,f,g) \leq C \langle t \rangle^{-N}\left(\|f\|_{B^{\vartheta N,0}(P)}\|g\|_{B^{\vartheta N,0}(P)} + \|f_{\bk = \mathbf{0}}\|_{B^{s,2N+d+1}(M)}\|g_{\bk = \mathbf{0}}\|_{B^{s,2N+d+1}(M)}\right),
\]
and the constant $\vartheta > 0$ is explicit and only depends on $\dim M_0$ and $\dim G$.

In addition, \eqref{equation:decay-correlation-1} holds more generally for all $f,g \in B^{\vartheta N,0}(P)$ such that $f_{\bk = \mathbf{0}}, g_{\bk = \mathbf{0}} \in B^{s,2N+d+1}(M)$.
\end{theorem}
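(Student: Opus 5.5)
We follow the standard Floquet--Bloch strategy, combined with twisted-operator spectral theory for isometric extensions. First we Fourier-decompose along the $\Z^d$ deck action. For $f,g \in C^\infty_{\comp}(P)$ and $\theta \in \mathbb{T}^d := \R^d/(2\pi\Z)^d$, we set $f_\theta := \sum_{\bk \in \Z^d} e^{i\langle \theta, \bk\rangle}\tau_\bk^* f$. This is a section of the flat Hermitian line bundle $L_\theta \to P_0$ associated with the character $\chi_\theta \circ \rho$. Plancherel then gives
\[
\int_P f\circ\psi_{-t}\cdot g\,\dd\nu = (2\pi)^{-d}\int_{\mathbb{T}^d} \langle e^{-tX_\theta} f_\theta, g_\theta\rangle_{L^2(P_0,L_\theta)}\,\dd\theta .
\]
Here $X_\theta = X + i\langle\theta,\omega\rangle$, with $\omega$ a closed $1$-form on $M_0$ representing the cohomology class of $\rho$ (pulled back to $P_0$). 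We further decompose $L^2(P_0)$ by Peter--Weyl/Borel--Weil into isotypic components indexed by $\bk$ in the dominant weights of $G$. On each component the operator becomes a transfer operator $-X_{\theta}$ acting on sections of an associated vector bundle $E_\bk \otimes L_\theta$ over $M_0$, twisted by the dynamical connection.

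Second, we invoke the anisotropic Sobolev space theory for these twisted generators. This includes a uniform spectral gap estimate away from the "resonant" set, i.e. the pairs $(\bk,\theta)$ for which $-X_{\bk,\theta}$ has a resonance on $i\R$. We claim that the hypotheses force this set to consist only of $\bk = \mathbf 0$, $\theta = 0$. Indeed, a purely imaginary resonance $i\lambda$ with a smooth resonant state $u$ (regularity via the standard Liv\v{s}ic/Pollicott--Ruelle argument) gives a parallel section for the connection $\nabla^{\mathrm{dyn}} + i\langle\theta,\omega\rangle + i\lambda\alpha$. For $\bk$ nontrivial, $H = G$ rules out the semisimple part; the central part contributes a character. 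Taking curvature then yields a relation $\lambda\, d\alpha + \sum c_j F_j = 0$, since $\omega$ is closed. Linear independence forces $\lambda = 0$ and all central charges to vanish, so $\bk = \mathbf 0$. Finally, $\theta \neq 0$ is excluded by surjectivity of $\rho$ and the Anosov closing argument.

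The main obstacle is making this quantitative and uniform in $\bk$. We must bound $\|e^{-tX_{\bk,\theta}}\|$ by $C\langle \bk\rangle^{\vartheta N}\langle t\rangle^{-N}$, or exponentially, uniformly over $\theta\in\mathbb{T}^d$ and $\bk\neq \mathbf 0$. We would first try the semiclassical approach from the isometric-extension literature. We treat $|\bk|^{-1}$ as a semiclassical parameter, so that high-frequency nonzero $\bk$ become a partially hyperbolic Schr\"odinger-type problem. Nonintegrability of the dynamical connection, through $H = G$ together with the independence of $(d\alpha, F_j)$, gives a uniform gap of polynomially small size via Dolgopyat-type/wavefront estimates. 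A gap of size $\langle\bk\rangle^{-\gamma}$ costs $\vartheta N$ derivatives to produce a $\langle t\rangle^{-N}$ decay, and this is where the $B^{\vartheta N,0}(P)$ norm enters. This accounts for the part of $R_N$ involving $f,g$.

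Third, the $\bk = \mathbf 0$ component is exactly the base problem on $M$ with $f_{\bk=\mathbf 0}, g_{\bk=\mathbf 0}$, so Theorem \ref{theorem:main1} applies. Concretely, near $\theta=0$, analytic perturbation of the simple eigenvalue $0$ gives $\lambda(\theta) = -\tfrac12\langle \Sigma\theta,\theta\rangle + \mathcal O(|\theta|^3)$ with $\Sigma$ positive definite; positivity holds because $\langle\theta,\omega\rangle$ is not a coboundary. The resulting term has the form $\int e^{t\lambda(\theta)}\langle \Pi_\theta f_\theta, g_\theta\rangle\,\dd\theta$. Laplace's method expands it in powers $t^{-d/2-j}$, where Taylor coefficients of $\theta\mapsto f_\theta$ of order $2j$ are controlled by $B^{s,2j}$ (weighted decay), and the error is controlled by $B^{s,2N+d+1}$. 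Away from $\theta=0$ we again use exponential decay. The $t^{-d/2}$ leading coefficient only involves $\int f_{\bk=\mathbf 0}=\int_P f\,\dd\nu$, which gives \eqref{equation:decay-correlation-1}. The extension to the stated function spaces then follows by density.
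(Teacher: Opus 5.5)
Your reduction is the same as the paper's. You use a Floquet decomposition in $\btheta$, then a Borel--Weil decomposition in $\bk$, and you delegate the block $\bk=\mathbf 0$ to Theorem~\ref{theorem:main1}. For $\bk\neq\mathbf 0$ you rule out resonances on $i\R$ via ergodicity on $F_0$ (which excludes semisimple weights, since $\Lk$ is nontrivial on $G/T$) and the curvature identity $k_1F_1+\dots+k_aF_a+\lambda\,d\alpha=0$; this is exactly Lemma~\ref{lemma:no-res-k}.

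\textbf{The gap.} It lies in the step you yourself flag as the main obstacle. You propose to obtain, for $\bk\neq\mathbf 0$, a resonance-free strip of width $\langle\bk\rangle^{-\gamma}$, uniform in $\Im z$ and $\btheta$. You then convert the resulting $e^{-c\langle\bk\rangle^{-\gamma}t}$ into $\langle t\rangle^{-N}\langle\bk\rangle^{\gamma N}$.
\begin{itemize}
\item Such a strip, with resolvent bounds, would amount to exponential mixing of every nontrivial mode. This is not known under these hypotheses. The paper does not have it even for the base flow: Theorem~\ref{theorem:high-frequency-ab-ext} gives resolvent bounds growing like $\langle\Im z\rangle^{\vartheta}$.
\item What the nonintegrability/quasimode argument actually yields are bounds polynomial in both $\langle\Im z\rangle$ and $\langle\bk\rangle$. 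These only give a resonance-free region of width about $\langle\Im z\rangle^{-\vartheta}\langle\bk\rangle^{-\vartheta}$, which closes up as $|\Im z|\to\infty$. So your conversion mechanism is not available.
\item Taking only $|\bk|^{-1}$ as semiclassical parameter ignores the regime $|\Im z|\geq|\bk|$. That is exactly where the decay in $t$ is decided.
\item The same slip appears in ``away from $\theta=0$ we again use exponential decay''. There, Theorem~\ref{theorem:mixing_X_theta}(ii) only gives $\mathcal O(\langle t\rangle^{-\infty})$, which is all that is used. That part survives only because you invoke Theorem~\ref{theorem:main1} as a black box.
\end{itemize}

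\textbf{What is needed instead.} The required input is \eqref{equation:twisted-hf}: polynomial bounds on $(\mp X_{\btheta,\bk}-z)^{-1}$ on the anisotropic spaces, for $z$ in the closed strip $\mathbb{B}$ (hence including $i\R$), uniformly in $\btheta$. These are proved separately in two regimes, $|\bk|\geq|\Im z|$ with semiclassical parameter $1/\langle\bk\rangle$, and $|\Im z|\geq|\bk|$ with parameter $1/\langle\Im z\rangle$. The proof is the quasimode/horocyclic-invariance contradiction argument of Theorem~\ref{theorem:high-frequency-ab-ext}, transported to the Borel--Weil calculus.

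One then proceeds as follows.
\begin{itemize}
\item Write $\langle e^{-tX_{\btheta,\bk}}f_{\btheta,\bk},g_{\btheta,\bk}\rangle$ through the spectral measure of the skew-adjoint operator $X_{\btheta,\bk}$ (Stone's formula, with density given by the resolvents on $i\R$).
\item Integrate by parts $N$ times in the spectral variable, inserting powers of $\langle X_{\btheta,\bk}\rangle$ so that the integral converges. Since there are no resonances on $i\R$, no contour deformation is needed. This gives $\langle t\rangle^{-N}\langle\bk\rangle^{\vartheta_0N}$ times Sobolev norms of $f_{\btheta,\bk}$ and $g_{\btheta,\bk}$.
\item Sum over $\bk$ using Cauchy--Schwarz and the embedding of the horizontal--vertical anisotropic spaces into Sobolev spaces on $P_0$.
\item Integrate in $\btheta$ using Lemma~\ref{lemma:spaces_theta_sobolev}. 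This produces the $B^{\vartheta N,0}(P)$ term.
\end{itemize}
With this replacement, the rest of your argument goes through.
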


The constant $\kappa$, as well as the bilinear forms $C_j$, are the same as the ones appearing in Theorem \ref{theorem:main1}. Also note that
\[
\int_P f ~\dd \nu \int_P g ~\dd \nu = \int_M f_{\bk=\mathbf{0}} ~\dd \vol_{M} \int_M g_{\bk=\mathbf{0}} ~\dd \vol_{M}.
\]
All relevant terms in the expansion \eqref{equation:decay-correlation-1} thus depend only on the behavior of the $0$-th Fourier mode of $f$ and $g$.

\subsection{Application to frame flows}

We now apply the previous result to the frame flow. Let $(N_0,g_0)$ be a smooth closed connected oriented Riemannian $n$-manifold of negative sectional curvature. Let $M_0 := SN_0$ and let $(\varphi_{t}^{0})_{t \in \mathbb{R}}$ denote the geodesic flow on the unit tangent bundle $SN_0$. Let $P_0 := FN_0$ be the frame bundle over $N_0$. It can be seen as a principal $\mathrm{SO}(n)$-bundle over $N_0$, or as a principal $\mathrm{SO}(n-1)$-bundle over $SN_0$. The frame flow is defined on $N_0$ by:
\[
\psi_{t}^{0}(v, w) := (\varphi_{t}^{0}(v), \tau_{v \to \varphi_{t}^{0} v} w),\quad t \in \mathbb{R},
\]
where $w$ is a frame in the orthogonal complement of $v$ and $\tau_{v \to \varphi_{t}^{0} v}$ denotes the parallel transport along the geodesic $(\varphi_{s}^{0} v)_{s \in [0,t]}$ with respect to the Levi-Civita connection.

Let $\rho \colon \pi_1(N_0) \to \Z^d$ be a surjective representation and $N := \widetilde{N_0}\times_\rho \Z^d$ be the corresponding Abelian cover. Let $g$ be the equivariant lift of $g_0$ to $N$, and $P := FN$ be the frame bundle of $(N,g)$. An immediate consequence of Theorem \ref{theorem:main2} is the following result:

\begin{corollary}
\label{theorem:frame-flows}
Assume that the frame flow is ergodic on $N_0$. Then there exist $\kappa > 0$, and continuous bilinear forms $(C_j)_{j \geq 1}$ on $C^\infty_{\comp}(SN)$, such that for all $f,g \in C^\infty_{\comp}(FN)$, for all integers $N \geq 1$:
\begin{equation}
    \label{equation:decay-correlation-2}
\begin{split}
t^{d/2}\int_{FN} f\circ \psi_{-t} \cdot g~ \dd \nu = \kappa  \int_{FN} f \dd \nu \int_{FN} g \dd \nu + \sum_{j =1}^{N-1} t^{-j} C_j(f_{\bk=\mathbf{0}},g_{\bk=\mathbf{0}}) + R_N(t,f,g),
\end{split}
\end{equation}
where $C_j$ satisfies the bounds \eqref{equation:pfiou}, and for all $s > 0$, there exists $C > 0$ such that for all $t \geq 0$:
\[
R_N(t,f,g) \leq C \langle t \rangle^{-N}\left(\|f\|_{B^{\vartheta N,0}(FN)}\|g\|_{B^{\vartheta N,0}(FN)} + \|f_{\bk = \mathbf{0}}\|_{B^{s,2N+d+1}(SN)}\|g_{\bk = \mathbf{0}}\|_{B^{s,2N+d+1}(SN)}\right).
\]
The constant $\vartheta > 0$ is explicit and only depends on $\dim N_0$.

In addition, \eqref{equation:decay-correlation-2} holds more generally for all $f,g \in B^{\vartheta N,0}(FN)$ such that $f_{\mathbf{k} = \mathbf{0}}, g_{\bk = \mathbf{0}} \in B^{s,2N+d+1}(SN)$.
\end{corollary}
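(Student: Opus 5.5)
The corollary will follow from Theorem~\ref{theorem:main2} applied with $M_0 := SN_0$, $P_0 := FN_0$, viewed as a principal $G = \mathrm{SO}(n-1)$-bundle over $SN_0$, and with $(\psi^0_t)$ the frame flow. The work is to check that the setting and the two hypotheses of Theorem~\ref{theorem:main2} hold.

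\emph{Setting.} The geodesic flow of a negatively curved closed manifold is Anosov and preserves the Liouville measure, which is smooth. The frame flow commutes with the right $\mathrm{SO}(n-1)$-action, since parallel transport is linear and isometric, and it covers the geodesic flow. So it is an isometric extension in the sense above, and $\nu_0$ is the Liouville measure times Haar measure. For the cover, compose $\rho$ with the surjection $\pi_*\colon \pi_1(SN_0)\to\pi_1(N_0)$; this is an isomorphism for $n\ge 3$, and for $n=2$ there is no frame-flow extension to consider. This gives a surjective representation of $\pi_1(SN_0)$. The associated $\Z^d$-cover of $SN_0$ is $SN$, the lifted flow is the geodesic flow of $(N,g)$, and $\pi^*FN_0 = FN$ with the lifted flow being the frame flow of $(N,g)$, because all these objects are local and $\Z^d$-equivariant. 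The averages $f_{\bk=\mathbf 0}$ are then functions on $SN$, as in the statement.

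\emph{Hypotheses.} First, $H=G$: by the Brin characterization recalled above, ergodicity of $(\psi^0_t)$ with respect to $\nu_0$ is equivalent to $H=G$, and this is exactly the assumption. Second, the linear independence of $(d\alpha,F_1,\dots,F_a)$, which is where some care is needed. For the geodesic flow, $\alpha$ is the Liouville contact form, which is smooth, so $d\alpha$ is symplectic on $E_s\oplus E_u$ and in particular nonzero. The family $(F_i)$ runs over the center $\fz$ of $\mathfrak{so}(n-1)$, and $\mathfrak{so}(n-1)$ has trivial center except when $n-1=2$. Hence for $n\ne 3$ we have $a=0$ and the condition reduces to $d\alpha\neq 0$.

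\emph{The case $n=3$.} Here $G=\mathrm{SO}(2)$ and $a=1$, and I expect this to be the main obstacle. One has to rule out $F_1 = c\, d\alpha$ for a constant $c$. I would argue by contradiction using ergodicity. If $F_1 = c\, d\alpha$, then the $\mathrm{U}(1)$-connection $\nabla^{\mathrm{dyn}} - ic\alpha$ has vanishing curvature, so it is flat. Combined with the defining property of $\nabla^{\mathrm{dyn}}$ (parallel along $X$, trivial holonomy along stable and unstable leaves), this would force the transitivity group $H$, the closure of the holonomy group, to be a proper closed subgroup: one reduces the structure group using an invariant section built from the flat structure. That contradicts ergodicity.

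If making this rigorous requires more of the dynamical connection machinery of \S\ref{ssection:dynamical-connection}, the fallback is that the paper presumably contains a lemma stating that, for $\mathrm{U}(1)$-extensions, ergodicity is equivalent to $F_1\notin\R\, d\alpha$; I would invoke it directly. Once both hypotheses are checked, Theorem~\ref{theorem:main2} yields \eqref{equation:decay-correlation-2} with the stated bounds, and $\vartheta$ depends only on $\dim SN_0$ and $\dim\mathrm{SO}(n-1)$, hence only on $\dim N_0$.
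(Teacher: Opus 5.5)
Your reduction is the one the paper has in mind when it calls the corollary an immediate consequence of Theorem~\ref{theorem:main2}. You take $M_0=SN_0$, $P_0=FN_0$ and $G=\mathrm{SO}(n-1)$, and you use that ergodicity is equivalent to $H=G$. For $n\neq 3$ the center of $\mathfrak{so}(n-1)$ is trivial, so the remaining hypothesis is $d\alpha\neq 0$, which holds because $\alpha$ is the contact form.

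The paper does not spell out $n=3$, and you are right that it needs an argument. The one you give fails, however. It is not true that $F_1=c\,d\alpha$ contradicts ergodicity, and the fallback lemma you hope to cite is false (it is not in the paper).

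Here is a counterexample. Take $P_0=M_0\times\mathrm{U}(1)$ with $\psi^0_t(x,e^{i\phi})=(\varphi^0_t x,e^{i(\phi+ct)})$ and $c\neq 0$.
\begin{itemize}
\item The stable and unstable holonomies are trivial in this trivialization, so $\nabla^{\mathrm{dyn}}=d+ic\alpha$ up to sign, and $F_1=c\,d\alpha$.
\item Yet the flow is ergodic. The $k$-th fiber Fourier mode of an invariant function is an eigenfunction of the weak-mixing flow $\varphi^0_t$ with eigenvalue $-ikc$, hence vanishes for $k\neq 0$. So $H=G$.
\end{itemize}
The flaw is that $H$ is the holonomy group of $\nabla^{\mathrm{dyn}}$, not of the flat connection $\nabla^{\mathrm{dyn}}-ic\alpha$. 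Along contractible loops, $\nabla^{\mathrm{dyn}}$ has holonomy $e^{\pm ic\int_\gamma\alpha}$, and these values are dense in $\mathrm{U}(1)$ since $d\alpha\neq 0$. A parallel section $s$ of the flat connection satisfies $Xs=ic\,s$. This obstructs weak mixing, not ergodicity. Even when $c=0$, a flat $\mathrm{U}(1)$-connection can have dense holonomy (an irrational character of $\pi_1$), and the extension is then ergodic. So linear independence of $(d\alpha,F_1,\dotsc,F_a)$ is a genuinely additional, mixing-type hypothesis, which is why Theorem~\ref{theorem:main2} lists it separately from $H=G$.

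For $n=3$ you therefore need an input specific to the frame bundle, and a topological one suffices.
\begin{itemize}
\item The complex line bundle associated with $FN_0\to SN_0$ and $\mathrm{SO}(2)\simeq\mathrm{U}(1)$ is $v^\perp$ with complex structure $w\mapsto v\times w$.
\item On each fiber $S_xN_0\simeq S^2$ it restricts to $TS^2$, which has Euler number $2$. Hence its first Chern class is nonzero in $H^2(SN_0;\R)$.
\item Suppose $\lambda_0\,d\alpha+\lambda_1F_1=0$ with $\lambda_1\neq 0$. Write $\nabla^{\mathrm{dyn}}=\nabla^0+iA$ with $\nabla^0$ a smooth unitary connection and $A$ a continuous real $1$-form.
\item Since $F_{\nabla^{\mathrm{dyn}}}=\pm iF_1$ is then a multiple of $d\alpha$, the smooth closed form $F_{\nabla^0}=F_{\nabla^{\mathrm{dyn}}}-i\,dA$ is $d$ of a continuous $1$-form.
\item By de Rham's theorem for currents, $\tfrac{i}{2\pi}F_{\nabla^0}$ is then exact, contradicting $c_1\neq 0$.
\end{itemize}
Hence $\lambda_1=0$, and then $\lambda_0=0$ because $d\alpha\neq 0$. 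This step uses no dynamics; ergodicity for $n=3$ is known anyway by Brin--Gromov. With it, the hypotheses of Theorem~\ref{theorem:main2} are verified for $n=3$.
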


Ergodicity of the frame flow is known when $n := \dim(N_0)$ is odd and different from $7$ \cite{Brin-Gromov-80}, or when the metric is $\delta(n)$-pinched\footnote{That is the sectional curvature is contained in an interval $[-C,-\delta(n)C)$ for some $C > 0$.} \cite{Cekic-Lefeuvre-Moroianu-Semmelmann-24} (see also \cite{Brin-Karcher-84, Burns-Pollicott-03}) and:
\begin{itemize}
\item $n=4$ or $n=4k + 2$ and $\delta(n)\sim 0.28$;
\item $n=4k$ and $\delta(n) \sim 0.55$;
\item $n=7$ and $\delta(7) \sim 0.5$.
\end{itemize}
See also the survey \cite{Cekic-Lefeuvre-Moroianu-Semmelmann-22} for further discussion.

\subsection{Related results} \label{ssection:related-results}

The result closest to Theorem \ref{theorem:main1} in the existing literature appears to be \cite{Dolgopyat-Pene-Nandori-22}, which addresses $\Z^d$-covers of hyperbolic flows. Specifically, in the context of the geodesic flow, Theorem \ref{theorem:main1} was first established in \cite[Theorem~4.1]{Dolgopyat-Pene-Nandori-22}. However, the general non-integrability condition $d\alpha \neq 0$ does not explicitly appear in their analysis, up to our understanding.

The study of decay of correlations or central/local limit theorems for Abelian extensions (i.e. $\R^d$- or $\Z^d$-extensions) of chaotic dynamical systems such as Markov chains, Anosov maps, or Sinai billiards is now a classical topic in the field, see \cite{Balint-Bruin-Terhsiu-23, Guivarch-Hardy-88,Guivarch-89,Szasz-Varju-04,Gouezel-11,Iwata-08,Melbourne-Terhesiu-13,Liverani-Terhesiu-16,Aaronson-Nakada-17,Dolgopyat-Nandori-17a,Oh-Pan-17,Melbourne-Terhesiu-17,Dolgopyat-Nandori-19,Pene-19,Terhesiu-19,Dolgopyat-Nandori-25} among other references. See also \cite{Bruin-Fougeron-Terhesiu-24, Castorrini-Ravotti-24} for results about asymptotic expansions of ergodic averages on Abelian extensions of parabolic dynamical systems.

However, Theorem \ref{theorem:main2} which deals with $(G \times \Z^d)$-extensions, where $G$ is a compact Lie group, and derives a full asymptotic expansion of the correlation function, seems completely new.

We also point out that $\Z^d$ could be replaced by $\R^d$ at the expense of adding a new semiclassical parameter $\lambda \in \R^d$, thus replacing the parameter $\btheta \in \mathrm{U}(1)^d$ in the analysis developed in \S\ref{section:abelian-extension-anosov} and \S\ref{section:decay-isometric}. This is due to the fact that the unitary dual of $\R^d$ is $\R^d$ itself, which is noncompact (unlike $\widehat{\Z^d}\simeq \mathrm{U}(1)^d)$. The problem of understanding decay of correlations would then be more involved, as one would have to prove uniform high-frequency estimates as $|\lambda| \to +\infty$.

The study of $\R^d$- or $(\R^d \times G)$-extensions (with $G$ compact) has emerged recently in the context of Anosov representations, see \cite{Thirion-09,Sambarino-15,Chow-Sarkar-23O,Bonthonneau-Lefeuvre-Weich-26}. In this setting, one considers a discrete subgroup $\Gamma < \mathbf{G}$ of a noncompact semisimple Lie group $\mathbf{G}$, which generalizes convex co-compact subgroups of $\mathrm{Isom}(\HH^n) \simeq \mathrm{SO}(n,1)$. The associated dynamical system is a multiflow (a hyperbolic $\R^k$-action), which can be viewed as a higher-rank analogue of the geodesic flow on the unit tangent bundle of a hyperbolic manifold. By fixing a direction for the $\R^k$-action (i.e. restricting to an $\R$-factor), one obtains the so-called \emph{refraction flow}.

The first three articles \cite{Thirion-09,Sambarino-15,Chow-Sarkar-23O} only derive the leading term in the asymptotic expansion of the correlation function for the refraction flow. However, \cite{Bonthonneau-Lefeuvre-Weich-26} achieves a full asymptotic expansion, analogous to the results presented in Theorems \ref{theorem:main1} and \ref{theorem:main2}. In this context, the mixing rate $d/2$ corresponds to half of $\mathrm{rank}(\mathbf{G}) - 1$.

\subsection{Organization of the article} In \S\ref{section:abelian-covers}, we recall standard facts on Abelian covers, Floquet theory, and isometric extensions of Anosov flows. In \S\ref{section:semiclassical}, we provide the necessary background in semiclassical analysis required for the proofs in subsequent sections. In particular, we recall the main results on the Borel-Weil calculus developed in \cite{Cekic-Lefeuvre-24}, which allows to treat equivariant (pseudo)differential operators on $G$-bundles. In \S\ref{section:abelian-extension-anosov}, we prove Theorem \ref{theorem:main1}. Finally, Theorem \ref{theorem:main2} is established in \S\ref{section:decay-isometric}. \\

\noindent \textbf{Acknowledgement:} This project is supported by the European Research Council (ERC) under the European Union’s Horizon 2020 research and innovation programme (Grant agreement no. 101162990 — ADG). 

\section{Abelian covers and isometric extensions}

\label{section:abelian-covers}

\subsection{Abelian covers}

\label{ssection:abelian-covers}

Let $M_0$ be a smooth closed manfold, $x_0 \in M_0$ be an arbitrary fixed point, and let
\begin{equation}
    \label{equation:representation-rho}
    \rho \colon \pi_1(M_0,x_0) \to \Z^d
\end{equation}
be a surjective representation onto $\Z^d$ for some $d \geq 1$. Let
\[
M := \widetilde{M}_0 \times_{\rho} \Z^d
\]
be the associated Abelian cover, where $\widetilde{M}_0$ denotes the universal cover of $M_0$. That is $M = (\widetilde{M}_0 \times \Z^d)/\sim$ where

\[
    (x,\mathbf{n}) \sim (\gamma.x, - \rho(\gamma)+\mathbf{n}),\quad \forall x \in \widetilde{M}_0, \n \in \Z^d, \gamma \in \pi_1(M_0, x_0).
\]
Note that $M$ is connected as $\rho$ is surjective. Additionally, the representation \eqref{equation:representation-rho} factors through 
\[
    \pi_1(M_0,x_0) \xrightarrow{\rho_{\mathrm{ab}}} H_1(M_0, \Z) \to H_1(M_0,\Z)/\mathrm{Tor} \to \Z^d,
\]
where $\rho_{\mathrm{ab}}$ denotes the Abelianization map and $\mathrm{Tor}$ denotes the torsion part, see \cite{Hatcher-02}. Denote the induced map $H_1(M_0, \Z) \to \mathbb{Z}^d$ by $\rho'$; then for \emph{any} closed loop $\gamma \subset M_0$ (not necessarily based at $x_0$) we can define $\rho(\gamma) := \rho'([\gamma]_{H_1})$ where $[\gamma]_{H_1}$ denotes the class of $\gamma$ in $H_1(M_0, \mathbb{Z})$.

The manifold $M$ is a principal $\Z^d$-bundle over $M_0$, namely there is a surjective projection $\pi \colon M \to M_0$ such that the preimage of each point can be identified with $\Z^d$, and it is equipped with a $\Z^d$-action $\tau \colon M \times \Z^d \to M$. On the universal cover, this action is given by 
\[
    \tau_\kk(x,\n) = (x,\n + \kk),\quad \kk, \n \in \mathbb{Z}^d, x \in \widetilde{M}_0,
\]
and it is immediate to verify that it descends to the quotient $M$.

We will need the following lemma:

\begin{lemma}
Let $\widehat{x}_0 \in M$ be an arbitrary preimage of $x_0 \in M_0$ by the projection map $\pi$, and let $\gamma \subset M$ be a loop based at $\widehat{x}_0$. Then $\pi_* \gamma \in \ker \rho$.
\end{lemma}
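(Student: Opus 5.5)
We argue by lifting to the universal cover and following the $\Z^d$-coordinate along the loop. Write $q \colon \widetilde{M}_0 \times \Z^d \to M$ for the quotient map, and note that $\pi \circ q(x,\n) = p(x)$, where $p \colon \widetilde{M}_0 \to M_0$ is the universal covering. Fix a lift $(\widetilde{x}_0,\n_0)$ of $\widehat{x}_0$ with $p(\widetilde{x}_0)=x_0$. The map $q$ is a covering map: the equivalence relation comes from a free and properly discontinuous action of $\pi_1(M_0,x_0)$ on $\widetilde{M}_0 \times \Z^d$, given by $\gamma \cdot (x,\n) = (\gamma.x, \n - \rho(\gamma))$, and $\Z^d$ carries the discrete topology. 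Hence the loop $\gamma$ in $M$ lifts uniquely to a path $\widetilde{\gamma}(s) = (c(s), \n(s))$ starting at $(\widetilde{x}_0,\n_0)$.

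Since $\Z^d$ is discrete and $[0,1]$ is connected, $\n(s) \equiv \n_0$ is constant along the lift. The endpoint $\widetilde{\gamma}(1)$ lies over $\widehat{x}_0$, so there is some $\eta \in \pi_1(M_0,x_0)$ with
\[
(c(1),\n_0) = (\eta.\widetilde{x}_0, \n_0 - \rho(\eta)).
\]
Comparing the $\Z^d$-components gives $\rho(\eta) = 0$.

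It remains to identify $\eta$ with $[\pi_*\gamma]$. The path $c$ is a lift of $\pi \circ \gamma = p \circ c$ to $\widetilde{M}_0$ starting at $\widetilde{x}_0$ and ending at $\eta.\widetilde{x}_0$. By the standard correspondence between deck transformations and $\pi_1$, this means $[\pi_*\gamma] = \eta$ (up to the usual inversion convention for the action, which does not affect membership in $\ker\rho$). Therefore $\pi_*\gamma \in \ker \rho$.

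The only delicate point is justifying that $q$ is a covering map so that lifting applies. Freeness holds because the action of $\pi_1$ on $\widetilde{M}_0$ is already free, and proper discontinuity follows from that of the deck action on $\widetilde{M}_0$. One must also keep the sign and convention of the identification $\pi_1 \cong \mathrm{Deck}$ consistent, but since $\ker\rho$ is a subgroup, inverses are harmless.
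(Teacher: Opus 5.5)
Your proof is correct and follows essentially the same route as the paper. You lift the loop to $\widetilde{M}_0\times\Z^d$, observe that the $\Z^d$-coordinate stays constant, and compare the endpoints using the equivalence relation together with the identification of $\pi_1(M_0,x_0)$ with deck transformations; the only difference is the order of steps. You first produce $\eta$ with $\rho(\eta)=0$ and then identify $\eta$ with $[\pi_*\gamma]$ (implicitly using freeness of the deck action), whereas the paper first writes the endpoint as $[\pi_*\gamma].\widetilde{x}_0$ and then invokes freeness to force the relating element to be trivial.
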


\begin{proof}
Write $\widehat{x}_0=[(\widetilde{x}_0,\mathbf{n}_{0})] \in M$ where $\widetilde{x}_0 \in \widetilde{M}_0$ is a lift of $x_{0}$ by $p \colon \widetilde{M}_0 \to M_{0}$. Consider $\pi_{*}(\gamma)$, where $\gamma$ is a loop in $M$ based at $\widehat{x}_0$, and write $\gamma(t)=[(\widetilde{\gamma}(t),\mathbf{n}(t))]$, where $\widetilde{\gamma}$ and $\mathbf{n}$ are paths in $\widetilde{M}_0$ and $\mathbb{Z}^{d}$ respectively, such that $\widetilde{\gamma}(0)=\widetilde{x}_0$ and $\mathbf{n}(0)=\mathbf{n}_{0}$ (here we use the path lifting property, see \cite{Hatcher-02}). Note that since $\Z^{d}$ is discrete, we have $\mathbf{n}(t) \equiv \mathbf{n}_{0}$. Now, 
\[
[(\widetilde{x}_0,\mathbf{n}_{0})]=\gamma(0)=\gamma(1)=[(\widetilde{\gamma}(1),\mathbf{n}_{0})]=[(\gamma. \widetilde{x}_0,\mathbf{n}_{0})]=[(\widetilde{x}_0,\rho(\gamma)+\mathbf{n}_{0})],
\]
where the first three equalities follow since $\gamma$ is a loop based at $\widehat{x}_0$, the fourth one follows from the fact that deck transformations on $\widetilde{M}_0$ are given by $\pi_{1}(M_{0},x_{0})$ (the endpoints of lifted paths on the universal cover are given by the action of the corresponding element in the fundamental group on the initial point of the lifted path), and we used the definition of $M$ in the last step. The equality implies that $(\widetilde{x}_0,\n_0) = (\omega. \widetilde{x}_0, -\rho(\omega)+\rho(\gamma)+\n_0)$ for some $\omega \in \pi_1(M_0, x_0)$. Since $\pi_1(M_0, x_0)$ acts freely, this forces that $\omega$ is the neutral element, and $\rho(\gamma)=0$.
\end{proof}

Let $\mathrm{U}(1) := \R/2\pi\Z$. Given $\btheta \in \mathrm{U}(1)^d$, we introduce the representation
\[
    \alpha_{\btheta} \colon \Z^d \to \mathrm{U}(1), \qquad \alpha_{\btheta}(\n) :=  \btheta \cdot \n ~ \text{ mod } 2\pi.
\]
We will write $H^1_{\mathrm{dR}}(M_0, \mathbb{R})$ for the first de Rham cohomology group of $M_0$, and denote by $H^1_{\mathrm{dR}}(M_0, 2\pi \mathbb{Z})$ the lattice inside of it, whose elements $[\eta]$ satisfy for any closed loops $\gamma \subset M_0$
\[
    \int_\gamma [\eta] \in 2\pi \mathbb{Z}.
\]

\begin{proposition}\label{prop:eta-theta}
    There exists a smooth map
    \[
    \mathrm{U}(1)^d \ni \btheta \mapsto [\eta_{\btheta}] \in H^1_{\mathrm{dR}}(M_0, \mathbb{R})/H^1_{\mathrm{dR}}(M_0, 2\pi \mathbb{Z})
    \]
    such that for all smooth closed curves $\gamma \subset M_0$:
    \begin{equation}
        \label{equation:eta-theta}
            \int_\gamma [\eta_{\btheta}] = \rho(\gamma) \cdot \btheta ~ \mod 2\pi.
    \end{equation}
    Moreover, the dependence on $\btheta$ is linear in the sense that the map $\btheta \mapsto [\eta_{\btheta}]$ descends from a linear map $\mathbb{R}^d \ni \btheta \mapsto [\widetilde{\eta}_{\btheta}] \in H^1_{\mathrm{dR}}(M_0, \mathbb{R})$, and so is in particular smooth. As a consequence, for any $v \in T_{\btheta} \mathrm{U}(1)^d \cong \mathbb{R}^d$, we have $D[\eta_{\btheta}](v) = [\widetilde{\eta}_{v}]$.
\end{proposition}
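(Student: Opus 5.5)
The approach is to build the linear map on $\R^d$ first, using de Rham's theorem together with the universal coefficient theorem, and then pass to the quotient. By the factorization described above, $\rho$ induces a homomorphism $\rho' \colon H_1(M_0,\Z) \to \Z^d$ that vanishes on torsion. For each $i = 1,\dots,d$, the $i$-th component $\rho'_i \colon H_1(M_0,\Z) \to \Z$ is a homomorphism. Since $\mathrm{Hom}(H_1(M_0,\Z),\R) \cong H^1(M_0,\R) \cong H^1_{\mathrm{dR}}(M_0,\R)$ (de Rham isomorphism plus universal coefficients, with no Ext term over $\R$), there is a unique class $[\omega_i] \in H^1_{\mathrm{dR}}(M_0,\R)$ such that $\int_\gamma [\omega_i] = \rho'_i([\gamma]_{H_1})$ for every closed curve $\gamma$. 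Because $\rho'_i$ takes integer values, $2\pi[\omega_i] \in H^1_{\mathrm{dR}}(M_0,2\pi\Z)$.

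Next we define the linear map $\R^d \ni \btheta \mapsto [\widetilde{\eta}_{\btheta}] := \sum_i \theta_i [\omega_i] \in H^1_{\mathrm{dR}}(M_0,\R)$. It is linear by construction, and for every closed $\gamma$,
\[
\int_\gamma [\widetilde{\eta}_{\btheta}] = \sum_i \theta_i \rho_i(\gamma) = \rho(\gamma)\cdot\btheta .
\]
Here the integral of a class is well defined because exact forms integrate to zero over closed curves.

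To descend to $\mathrm{U}(1)^d = \R^d/2\pi\Z^d$, we check that the map sends the lattice $2\pi\Z^d$ into $H^1_{\mathrm{dR}}(M_0,2\pi\Z)$. Indeed, if $\btheta \in 2\pi\Z^d$, then $\rho(\gamma)\cdot\btheta \in 2\pi\Z$ for every loop $\gamma$, so $[\widetilde\eta_{\btheta}]$ lies in the lattice. Hence $[\eta_{\btheta}] := [\widetilde{\eta}_{\btheta}] \bmod H^1_{\mathrm{dR}}(M_0,2\pi\Z)$ is well defined on $\mathrm{U}(1)^d$. Reducing the displayed identity mod $2\pi$ gives \eqref{equation:eta-theta}.

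For the smoothness and derivative statements, note that $H^1_{\mathrm{dR}}(M_0,2\pi\Z)$ is a discrete subgroup (a lattice) of the finite-dimensional space $H^1_{\mathrm{dR}}(M_0,\R)$, since $H_1(M_0,\Z)$ is finitely generated for $M_0$ closed. The quotient is therefore a torus-type Lie group with the obvious local charts. The induced map is locally given by the linear map $\btheta\mapsto[\widetilde\eta_{\btheta}]$, so it is smooth, and its differential at any $\btheta$ is $v \mapsto [\widetilde\eta_v]$, after identifying the tangent spaces of both quotients with $\R^d$ and $H^1_{\mathrm{dR}}(M_0,\R)$ respectively.

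The only point needing care is the first step: the identification of $\mathrm{Hom}(H_1(M_0,\Z),\R)$ with de Rham cohomology through integration over smooth closed curves. We invoke this as the standard de Rham theorem, noting that every homology class is represented by smooth loops. Everything after that is linear algebra.
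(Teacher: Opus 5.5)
Your proposal is correct and follows essentially the same route as the paper: you realize $\btheta\cdot\rho'$ as a de Rham class via $\mathrm{Hom}(H_1(M_0,\Z),\R)\cong H^1_{\mathrm{dR}}(M_0,\R)$, note linearity in $\btheta$, check that $(2\pi\Z)^d$ lands in $H^1_{\mathrm{dR}}(M_0,2\pi\Z)$, and descend to the tori. The only differences from the paper, which uses $\beta_{\btheta}\circ\rho'$ directly, are cosmetic: you build the map from the basis classes $[\omega_i]$, and you spell out explicitly that $H^1_{\mathrm{dR}}(M_0,2\pi\Z)$ is a lattice, so the quotient is a torus.
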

\begin{proof}
    Note that as $\mathbb{Z}^d$ is Abelian $\rho$ factors as $\rho = \rho' \circ \rho_{ab}$
    \[
        \pi_1(M_0, x_0) \to H_1(M_0, \mathbb{Z}) \to \mathbb{Z}^d.
    \]
    where we denote by $\rho_{\mathrm{ab}}$ the Abelianisation map, and by $\rho'$ the induced map $H_1(M_0, \mathbb{Z}) \to \mathbb{Z}^d$. For any $\btheta \in \mathbb{R}^d$, consider the homomorphism $\beta_{\btheta} \colon \mathbb{Z}^d \to \mathbb{R}$ given by $\beta_{\btheta}(\mathbf{n}) := \btheta \cdot \mathbf{n}$. Then
    \[
        \beta_{\btheta} \circ \rho' \in \mathrm{Hom}(H_1(M_0, \mathbb{Z}), \mathbb{R}) \cong H^1_{\mathrm{dR}}(M_0, \mathbb{R}),
    \]
    where the isomorphism is given by de Rham's theorem, and so there exists a unique first cohomology class $[\widetilde{\eta}_{\btheta}]$ that equals $\beta_{\btheta} \circ \rho'$ in this correspondence, i.e. by definition, for any loop $\gamma \subset M_0$ we have
    \begin{equation}\label{eq:de-rham}
        \int_\gamma [\widetilde{\eta}_{\btheta}] = \beta_{\btheta} \circ \rho'([\gamma]_{H^1}) = \btheta \cdot \rho([\gamma]_{\pi_1}),
    \end{equation}
    where in the last equality we write $\rho_{\mathrm{ab}}([\gamma]_{\pi_1}) = [\gamma]_{H^1}$. Moreover, the map $\mathbb{R}^d \ni \btheta \mapsto [\widetilde{\eta}_{\btheta}] \in H^1_{\mathrm{dR}}(M_0, \mathbb{R})$ is linear since $\btheta \mapsto \beta_{\btheta}$ is linear and de Rham's isomorphism is linear. We next observe that by \eqref{eq:de-rham}, if $\btheta \in (2\pi \mathbb{Z})^d$, then $[\widetilde{\eta}_{\btheta}] \in H^1(M_0, 2\pi \mathbb{Z})$; therefore the map $\btheta \mapsto [\widetilde{\eta}_{\btheta}]$ descends to a map of tori. Finally, the derivative claim follows from linearity, and the fact that the derivative can be identified as a section of endomorphisms between $\mathbb{R}^d$ and $H^1_{\mathrm{dR}}(M_0, \mathbb{R})$. 
\end{proof}

\begin{remark}\rm
    We remark that in general the map $\btheta \mapsto [\eta_{\btheta}]$ does not lift to a map to $H^1_{\mathrm{dR}}(M_0, \mathbb{R})$ (unless $H^1_{\mathrm{dR}}(M_0, \mathbb{R}) = \{0\}$ or $d = 0$ but in this case the claim is trivial). Indeed, the existence of such a lift is equivalent to $[\widetilde{\eta}_{\btheta}] = 0$ in $H^1_{\mathrm{dR}}(M_0, \mathbb{R})$ for all $\btheta \in (2\pi \mathbb{Z})^d$. Equivalently, by \eqref{eq:de-rham} we have $\rho(\gamma) = 0$ for all closed loops $\gamma$, which contradicts the fact that $\rho$ is surjective.
\end{remark}

We now introduce sections that will eventually enable us to parametrize Fourier modes in $\btheta$ and write them as functions on $M_0$.

\begin{lemma} \label{lemma:stheta}
Let $\eta_{\btheta}$ be a smooth closed $1$-form representing the class of $[\eta_{\btheta}]$ from Proposition \ref{prop:eta-theta}. We then introduce
\begin{equation}
\label{equation:stheta}
    s_{\btheta}(x) := \exp\left(i \int_\gamma \eta_{\btheta} \right), \qquad x \in M,
\end{equation}
where $\gamma := \pi(\gamma')$ and $\gamma'$ is an arbitrary path joining $\widehat{x}_0$ to $x$ in $M$. Then $s_{\btheta}$ is well-defined, and  
\[
    s_{\btheta} \circ \tau_\n = e^{i \n \cdot \btheta} s_{\btheta},\quad s_{\btheta}^{-1} d s_{\btheta} = i \pi^* \eta_{\btheta},\quad \forall \mathbf{n} \in \mathbb{Z}^d.
\]
\end{lemma}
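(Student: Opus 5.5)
Three things need checking: that $s_{\btheta}$ does not depend on the path $\gamma'$, the twisting relation under $\tau_\n$, and the differential identity. All three follow from the defining property \eqref{equation:eta-theta} of $[\eta_{\btheta}]$ together with the preceding lemma, which says that projections of loops in $M$ based at $\widehat{x}_0$ lie in $\ker\rho$. One preliminary remark: $\eta_{\btheta}$ is only determined up to exact forms and up to the lattice $H^1_{\mathrm{dR}}(M_0,2\pi\Z)$. Fixing a representative closed form therefore yields a genuine $1$-form, and every integral below is taken of that fixed form.

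\textbf{Step 1: well-definedness.} Let $\gamma'_1,\gamma'_2$ be two paths in $M$ from $\widehat{x}_0$ to $x$. The concatenation $\gamma'_1 \ast \overline{\gamma'_2}$ is a loop in $M$ based at $\widehat{x}_0$. By the preceding lemma, its projection $c := \pi(\gamma'_1)\ast\overline{\pi(\gamma'_2)}$ satisfies $\rho(c)=0$. Applying \eqref{equation:eta-theta} gives
\[
\int_{\pi(\gamma'_1)}\eta_{\btheta} - \int_{\pi(\gamma'_2)}\eta_{\btheta} = \int_c \eta_{\btheta} \in \rho(c)\cdot\btheta + 2\pi\Z = 2\pi\Z.
\]
Hence the two exponentials agree. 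If the paths are only piecewise smooth, I would smooth them first: since $\eta_{\btheta}$ is closed, integrals are homotopy invariant, so this changes nothing. Finally, a different choice of representative of $\btheta$ modulo $2\pi$ shifts the class by an element of $H^1_{\mathrm{dR}}(M_0,2\pi\Z)$, and this is harmless for the same reason.

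\textbf{Step 2: equivariance.} Take a path $\gamma'$ from $\widehat{x}_0$ to $x$, and a path $\sigma'$ from $\widehat{x}_0$ to $\tau_\n\widehat{x}_0$. Then $\sigma' \ast \tau_\n(\gamma')$ joins $\widehat{x}_0$ to $\tau_\n x$. Since $\pi\circ\tau_\n=\pi$, we get $s_{\btheta}(\tau_\n x) = \exp\bigl(i\int_{\pi\sigma'}\eta_{\btheta}\bigr)\, s_{\btheta}(x)$. The curve $\pi\sigma'$ is a closed loop at $x_0$, so it remains to show $\rho(\pi\sigma')=\n$. Write $\widehat{x}_0=[(\widetilde{x}_0,\n_0)]$, as in the proof of the preceding lemma. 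Then $\sigma'$ lifts to a path $(\widetilde{\sigma},\n_0)$ in $\widetilde{M}_0\times\Z^d$ ending at $(\omega.\widetilde{x}_0,\n_0)$, where $\omega=[\pi\sigma']$. By the definition of $\sim$, this point equals $[(\widetilde{x}_0,\n_0+\rho(\omega))]$. On the other hand the endpoint is $\tau_\n\widehat{x}_0 = [(\widetilde{x}_0,\n_0+\n)]$. Freeness of the deck action then forces $\rho(\omega)=\n$. Using \eqref{equation:eta-theta} once more, $\exp(i\int_{\pi\sigma'}\eta_{\btheta}) = e^{i\n\cdot\btheta}$.

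This is the step I expect to need the most care. The sign convention in $\sim$ has to produce $+\n$ rather than $-\n$, and one must track exactly which representative the action $\tau_\n$ uses. If the sign comes out reversed, the fix is to adjust the convention for $\rho(\omega)$ consistently with the preceding lemma.

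\textbf{Step 3: the differential identity.} Fix $x$ and a path $\gamma'$ to $x$, and vary $x$ in a small contractible neighbourhood $U$ of $x$ on which $\pi$ is a diffeomorphism. Extend $\gamma'$ by short paths inside $U$. On $\pi(U)$ write $\eta_{\btheta}=dF$, which is possible because $\eta_{\btheta}$ is closed and $\pi(U)$ is contractible. Then locally
\[
s_{\btheta} = \text{const}\cdot e^{iF\circ\pi},
\]
and differentiating gives $s_{\btheta}^{-1}ds_{\btheta} = i\,d(F\circ\pi) = i\pi^*\eta_{\btheta}$. The same local formula also shows that $s_{\btheta}$ is smooth.
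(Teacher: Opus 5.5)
Your proposal is correct and follows essentially the paper's argument: path-independence via the preceding lemma combined with \eqref{equation:eta-theta}, equivariance by computing $\rho$ of the projection of a path joining a point to its $\tau_\n$-translate (using the definition of $\sim$ and freeness of the deck action), and the derivative identity by local differentiation. The only cosmetic difference is in the equivariance step. The paper reaches $\tau_\n x$ by lifting, at $x$, a loop based at $\pi(x)$ with $\rho$-value $\n$, chosen by surjectivity. You instead pass through $\tau_\n\widehat{x}_0$ and translate $\gamma'$, and your sign bookkeeping matches the paper's convention, giving $\rho(\omega)=\n$.
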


\begin{proof}
Firstly, we show that $s_{\btheta}$ is well defined, i.e. independent of the choice of path $\gamma$. Let $x \in M$, and consider two paths $\gamma_{1},\gamma_{2}$ from $\widehat{x}_0$ to $x$. Since $\pi_*(\gamma_{2}^{-1} \cdot \gamma_{1}) \in \ker \rho$, we find that 
\[ \int_{\pi(\gamma_{2}^{-1} \cdot \gamma_{1})} \eta_{\btheta} \equiv \rho (\pi_{*}(\gamma_{2}^{-1} \cdot \gamma_{1})) \cdot \btheta \equiv 0,
\]
where $\equiv$ stands for equality modulo $2\pi$. This shows that $s_{\btheta}$ is well defined. 

We then study how $s_{\btheta}$ interacts with $\tau_{\mathbf{n}}$. Since we have the freedom to choose any path joining $\widehat{x}_0$ with $\tau_{\n}(x)$, we will take a path which is the result of the concatenation of $\gamma_{1}$ joining $\widehat{x}_0$ to $x$, with the path $\gamma_{2}$ from $x$ to $\tau_{\n}(x)$. Although we can choose $\gamma_{1}$ arbitrary, we will take $\gamma_{2}$ in a very specific way. If we write $y=\pi(x)$, the surjectivity of $\rho \colon \pi_{1}(M_{0},y) \to \Z^{d}$ gives the existence of a path $\gamma \in \pi_{1}(M_{0},y)$ with $\rho(\gamma)=\n$. Let $\gamma'$ be a lift of $\gamma$ to $M$, with base point $x$. We claim that $\gamma'(1)=\tau_{\n}(x)$. Indeed, let us write $x=[(\widetilde{y},\kk)]$, where $\widetilde{y} \in \widetilde{M}_0$ is such that $p(\widetilde{y})=y$. Hence, $\gamma'(t)=[(\widetilde{\gamma}(t),\kk)]$, for some $\kk \in \Z^{d}$, and $\widetilde{\gamma}$ a path in $\widetilde{M}_0$. Then, using that the endpoints of paths in $\widetilde{M}_0$ are given by the action of the corresponding element on $\pi_{1}(M_{0},y)$ in the initial point, we have
\[
\gamma'(1)=[(\gamma.\widetilde{y}, \mathbf{k})]=[(\widetilde{y},\rho(\gamma)+\kk)]=[(\widetilde{y},\kk+\n)]=\tau_{\n}(x).
\]
Hence, to compute $s_{\btheta}(\tau_{\n}(x))$, we will use any path $\gamma_{1}$ from $x_{0}$ to $x$, and we take $\gamma_{2}=\gamma'$ as above, which joins $x$ with $\tau_{\n}(x)$. Therefore, 
\[ 
s_{\btheta}(\tau_{\n}(x))=\exp \left(i \int_{\pi(\gamma')} \eta_{\btheta} \right) \exp \left(i \int_{\pi(\gamma_{1})} \eta_{\btheta} \right)=\exp(i \rho(\pi(\gamma'))\cdot \btheta)s_{\btheta}(x)=e^{i\n\btheta}s_{\btheta}(x).
\]
Finally, the last relation on the statement of the lemma follows directly by differentiation.
\end{proof}

Unfortunately, the previous construction depends on the choice of the representative $\eta_{\btheta}$ and so is not globally defined. We can do better over contractible open sets.

\begin{proposition}\label{prop:contractible}
Let $U \subset \mathrm{U}(1)^d$ be an open contractible set. Then there exists a smooth map
\[
    U \ni \btheta \mapsto \eta_{\btheta} \in C^\infty(M_0, T^*M_0) \cap \ker d, 
\]
whose de Rham cohomology class agrees with the one of $[\widetilde{\eta}_{\btheta}]$ constructed in Proposition \ref{prop:eta-theta}.  Moreover, if $\mathbf{0} \in U$, we may assume that $\eta_{\mathbf{0}} \equiv 0$, and we can assume that $(\eta_{\btheta})_{\btheta \in U}$ are harmonic with respect to a background Riemannian metric $g_0$ on $M_0$. Consequently, the conclusions of Lemma \ref{lemma:stheta} are valid with smooth dependence on $\btheta \in U$.
\end{proposition}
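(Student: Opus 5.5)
The plan is to use Hodge theory to obtain canonical representatives, and then lift through the universal cover of $U$; since $U$ is contractible, that lift is $U$ itself. Fix a background Riemannian metric $g_0$ on $M_0$ and let $\mathcal{H}^1 \subset C^\infty(M_0,T^*M_0)$ denote the finite-dimensional space of $g_0$-harmonic $1$-forms. The Hodge theorem gives a linear isomorphism $h \colon H^1_{\mathrm{dR}}(M_0,\R) \to \mathcal{H}^1$, $[\eta]\mapsto$ its unique harmonic representative. Harmonic forms are closed. By Proposition \ref{prop:eta-theta}, the map $\R^d \ni \btheta \mapsto [\widetilde{\eta}_{\btheta}]$ is linear. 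Hence $\R^d \ni \btheta \mapsto \widetilde{\eta}^{h}_{\btheta} := h([\widetilde{\eta}_{\btheta}]) \in \mathcal{H}^1 \cap \ker d$ is linear, hence smooth as a map into $C^\infty(M_0,T^*M_0)$, since it takes values in a finite-dimensional subspace. It also satisfies $\widetilde{\eta}^h_{\mathbf{0}} = 0$.

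Next, descend to $U$. The projection $q \colon \R^d \to \mathrm{U}(1)^d = \R^d/2\pi\Z^d$ is a covering map. Since $U$ is contractible, hence simply connected and locally path-connected, the inclusion $U \hookrightarrow \mathrm{U}(1)^d$ lifts to a continuous map $\sigma \colon U \to \R^d$ with $q\circ\sigma = \mathrm{id}_U$. The lift $\sigma$ is automatically smooth, since $q$ is a local diffeomorphism. If $\mathbf{0}\in U$, we choose the lift with $\sigma(\mathbf{0}) = 0$. Then define $\eta_{\btheta} := \widetilde{\eta}^{h}_{\sigma(\btheta)}$ for $\btheta \in U$. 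This is a smooth family of closed harmonic $1$-forms, with $\eta_{\mathbf{0}}=0$ when $\mathbf{0}\in U$. By construction its cohomology class is $[\widetilde{\eta}_{\sigma(\btheta)}]$. This is the class $[\widetilde{\eta}_{\btheta}]$ of Proposition \ref{prop:eta-theta}, once $\btheta\in U$ is identified with its lift $\sigma(\btheta)$. Its image modulo $H^1_{\mathrm{dR}}(M_0,2\pi\Z)$ is $[\eta_{\btheta}]$, because different lifts differ by elements of $(2\pi\Z)^d$, which map into the lattice. In particular \eqref{equation:eta-theta} holds.

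Finally, apply Lemma \ref{lemma:stheta} with these representatives. Since $\btheta \mapsto \eta_{\btheta}$ is smooth into a finite-dimensional space of smooth forms, the integral $\int_\gamma \eta_{\btheta}$ over a fixed path $\gamma$ depends smoothly, indeed linearly in $\sigma(\btheta)$, on $\btheta$. Locally in $x$ we may take the paths to depend smoothly on $x$. Hence $s_{\btheta}(x)$ is jointly smooth in $(\btheta,x)$, and the identities $s_{\btheta}\circ\tau_{\n} = e^{i\n\cdot\btheta}s_{\btheta}$ and $s_{\btheta}^{-1}ds_{\btheta} = i\pi^*\eta_{\btheta}$ hold for all $\btheta \in U$. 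The only delicate point is the existence of the global lift $\sigma$ on $U$, which is exactly where contractibility is used. This is also why no such global choice exists on all of $\mathrm{U}(1)^d$, consistent with the preceding remark.
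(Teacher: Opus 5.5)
Your proof is correct and uses the same idea as the paper's: contractibility of $U$ lets you lift through a covering map, and Hodge theory then gives $g_0$-harmonic representatives. The only difference is the order of operations. You lift $U$ through $\R^d \to \mathrm{U}(1)^d$ and then apply the linear map $\btheta \mapsto [\widetilde{\eta}_{\btheta}]$. The paper instead first shows that $\btheta \mapsto [\eta_{\btheta}]$ embeds $\mathrm{U}(1)^d$ as a subtorus of $H^1_{\mathrm{dR}}(M_0,\R)/H^1_{\mathrm{dR}}(M_0,2\pi\Z)$, and lifts through that covering. Your version therefore skips the embedding argument, and it gives $\eta_{\mathbf{0}} = 0$ directly, without the final subtraction of $\eta_{\mathbf{0}}$.
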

\begin{proof}
For simplicity denote the map $\btheta \mapsto [\eta_{\btheta}]$ constructed in Proposition \ref{prop:eta-theta} by $\Xi$ and the linear map on the cover as $\widetilde{\Xi} \colon \mathbb{R}^d \to H^1_{\mathrm{dR}}(M_0, \mathbb{R})$. Notice that $\widetilde{\Xi}$ is injective: indeed, if $\widetilde{\Xi}(\btheta) = 0$, then by \eqref{eq:de-rham}, and since $\rho$ is by assumption surjective, we get $\btheta = 0$. By invariance of rank, we thus must have $\widetilde{\Xi}((2\pi \Z)^d) = H^1(M_0, 2\pi \Z) \cap \widetilde{\Xi}(\R^d)$, and so it follows that $\Xi$ is an embedding and that $\Xi(\mathrm{U}(1)^d) = \widetilde{\Xi}(\mathbb{R}^d)/\widetilde{\Xi}(2\pi \mathbb{Z}^d)$ is a sub-torus of $J_T = H^1(M_0, \mathbb{R})/H^1(M_0, 2\pi \mathbb{Z})$ of dimension $d$. We conclude that $\Xi(U)$ is open and contractible inside $\Xi(\mathrm{U}(1)^d)$. We may restrict the $H^1_{\mathrm{dR}}(M_0, 2\pi \Z)$-bundle (or covering space) $H^1_{\mathrm{dR}}(M_0, \mathbb{R})$ to $\Xi(\mathrm{U}(1)^d)$; since $\Xi(U) \subset \Xi(\mathrm{U}(1)^d)$ is contractible there exists a local smooth section $S_{\Xi(U)} \colon \Xi(U) \to H^1_{\mathrm{dR}}(M_0, \mathbb{R})$ of that bundle. Let $g_0$ be an arbitrary Riemannian metric on $M_0$ and denote by $\Pi_{\mc{H}^1}$ the Hodge projection onto $g_0$-harmonic $1$-forms; we will use Hodge theory. Using the notation of Proposition \ref{prop:eta-theta}, we then set for $\btheta \in U$, $\eta_{\btheta} := \Pi_{\mc{H}^1} S_{\Xi(U)} ([\eta_{\btheta}])$ which clearly satisfies the required properties. Finally, if $\mathbf{0} \in U$, the claim about $\eta_{\mathbf{0}}$ can be achieved by replacing $\eta_{\btheta}$ by $\eta_{\btheta} - \eta_{\mathbf{0}}$. This completes the proof.
\end{proof}

\subsection{Floquet theory} \label{ssection:floquet-theory} For $\btheta \in \mathrm{U}(1)^d$ we introduce the associated line bundles
\[
    L_{\btheta} := \widetilde{M}_0 \times_{e^{-i \alpha_{\btheta} \circ \rho}} \mathbb{C} = M \times_{e^{-i\alpha_{\btheta}}} \mathbb{C},
\]
where as usual we have 
\[
    M \times_{e^{-i\alpha_{\btheta}}} \C = M \times \C/\sim,\quad (x, z) \sim (\tau_{\n} x, e^{i \btheta \cdot \n}z), \forall \n \in \Z^d.
\]
Observe that $L_{\btheta}$ is a flat Hermitian (i.e. equipped with an inner product in its fibres and a compatible flat connection) line bundle with holonomy along a curve $\gamma \in \pi_1(M_0, x_0)$ given by
\[
\exp\left(-i \alpha_{\btheta} \circ \rho(\gamma)\right) = \exp\left(-i \btheta \cdot \rho(\gamma)\right).
\]
From the definition of $L_{\btheta}$, we see that the sections in $C^\infty(M_0, L_{\btheta})$ correspond bijectively to equivariant functions in
\begin{equation}
    \label{equation:equivariant-space}
    C^\infty_{\btheta}(M) := \{F \in C^\infty(M) \mid \forall \mathbf{n} \in \mathbb{Z}^d,\, F \circ \tau_\n = e^{i \btheta \cdot \n} F\},
\end{equation}
the space Fourier modes of frequency $\btheta$. Given a section $s \in C^\infty(M_0, L_{\btheta})$ we will denote its equivariant lift by $\widetilde{s} \in C^\infty_{\btheta}(M)$.

We now observe that the line bundles $L_{\btheta}$ are topologically trivial.

\begin{proposition}
\label{proposition:ltheta-trivial}
    Let $U \subset \mathrm{U}(1)^d$ be an open contractible set. Then, the family $(L_{\btheta})_{\btheta \in U}$ of line bundles is smoothly trivial, i.e. there exists a smooth family $(s_{\btheta})_{\btheta \in U}$ such that $s_{\btheta} \in C^\infty(M_0, L_{\btheta})$ is nowhere vanishing. 
\end{proposition}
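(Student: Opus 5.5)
The plan is to build the trivializing sections directly from the functions $s_{\btheta}$ of Lemma \ref{lemma:stheta}, taking the closed forms $\eta_{\btheta}$ from Proposition \ref{prop:contractible} so that everything depends smoothly on $\btheta \in U$.

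First, apply Proposition \ref{prop:contractible} to the contractible set $U$. This gives a smooth family $(\eta_{\btheta})_{\btheta \in U}$ of closed $1$-forms on $M_0$ with $[\eta_{\btheta}]$ as in Proposition \ref{prop:eta-theta}. Then define $s_{\btheta} \in C^\infty(M)$ by \eqref{equation:stheta}. By Lemma \ref{lemma:stheta}, $s_{\btheta}$ is well defined and satisfies $s_{\btheta}\circ\tau_\n = e^{i\n\cdot\btheta}s_{\btheta}$ for all $\n \in \Z^d$. It has modulus one, so it is nowhere vanishing.

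Second, observe that the equivariance relation says exactly that $s_{\btheta} \in C^\infty_{\btheta}(M)$ in the sense of \eqref{equation:equivariant-space}. The bijection between $C^\infty_{\btheta}(M)$ and $C^\infty(M_0,L_{\btheta})$ recalled just before the statement then produces a section of $L_{\btheta}$ over $M_0$, still denoted $s_{\btheta}$. Concretely, at $\pi(x)$ it is the class $[(x,s_{\btheta}(x))]$. This is well defined because
\[
[(\tau_\n x, s_{\btheta}(\tau_\n x))] = [(\tau_\n x, e^{i\btheta\cdot\n}s_{\btheta}(x))] = [(x,s_{\btheta}(x))],
\]
using the relation $(x,z)\sim(\tau_\n x, e^{i\btheta\cdot\n}z)$. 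Since $s_{\btheta}(x)\neq 0$, the section is nowhere vanishing, and so $L_{\btheta}$ is trivial.

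Third, check smoothness of the family in $\btheta$. The phase $\int_\gamma \eta_{\btheta}$ depends smoothly on $\btheta$, because $\btheta\mapsto\eta_{\btheta}$ is smooth and the path $\gamma$ can be fixed locally in $x$; the final sentence of Proposition \ref{prop:contractible} records exactly this. To make sense of a smooth family of sections of different bundles $L_{\btheta}$, I would view $(L_{\btheta})_{\btheta\in U}$ as a single line bundle over $M_0\times U$, namely $M\times U\times\C$ quotiented by $(x,\btheta,z)\sim(\tau_\n x,\btheta,e^{i\btheta\cdot\n}z)$. The family $(s_{\btheta})$ is then a single smooth, nowhere vanishing section of this bundle.

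I do not expect a real obstacle. The only delicate point is the global choice of representatives $\eta_{\btheta}$ depending smoothly on $\btheta$, which is why contractibility of $U$ is needed; that issue is already handled by Proposition \ref{prop:contractible}.
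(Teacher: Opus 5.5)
Your proposal is correct and follows the paper's proof: take the smooth family $(\eta_{\btheta})_{\btheta\in U}$ from Proposition \ref{prop:contractible}, form the unit-modulus equivariant functions $s_{\btheta}\in C^\infty_{\btheta}(M)$ of Lemma \ref{lemma:stheta}, and descend them to nowhere-vanishing sections of $L_{\btheta}$. Your explicit check that the descended section is well defined, and your repackaging of the family as a single line bundle over $M_0\times U$, only spell out details the paper leaves implicit.
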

    By a smooth family of sections $(s_{\btheta})_{\btheta \in U}$ here we mean that the family of equivariant lifts $\widetilde{s}_{\btheta} \in C^\infty_{\btheta}(M) \subset C^\infty(M)$ depend smoothly on $\btheta$.
\begin{proof}
    By Proposition \ref{prop:contractible}, there is a smooth family of $1$-forms $(\eta_{\btheta})_{\btheta \in U}$ on $M_0$ satisfying \eqref{equation:eta-theta}. By Lemma \ref{lemma:stheta}, there are equivariant functions $\widetilde{s}_{\btheta} \in C^\infty_{\btheta}(M)$ of pointwise unit norm, which therefore descend to the desired sections $s_{\btheta}$ of $L_{\btheta}$ of unit pointwise norm. This completes the proof.
\end{proof}
\begin{remark}\rm
    We note that the family of line bundles $\{L_{\btheta} \mid \btheta \in \mathrm{U}(1)^d\}$ is not trivial as a \emph{family}, i.e. we cannot choose a smooth family $\{s_{\btheta} \mid \btheta \in \mathrm{U}(1)^d\}$ of trivialising sections. Indeed, if there was such a family $(s_{\btheta})_{\btheta}$, then we would also get a smooth family of $1$-forms $(\eta_{\btheta})_{\btheta} \in C^\infty(M_0, T^*M_0)$ derived from the formula
    \[
        s_{\btheta}^{-1} ds_{\btheta} = i \pi^*\eta_{\btheta},
    \]
    see Lemma \ref{lemma:stheta}, which satisfy that for any loop $\gamma \subset M_0$
    \[
        \int_\gamma \eta_{\btheta} = \rho(\gamma) \cdot \btheta \mod 2\pi.
    \]
    Differentiating, we get $\int_\gamma D_v\eta_{\btheta} = \rho(\gamma)\cdot v$, where $D_v\eta_{\btheta}$ is seen as the directional derivative in the $\btheta$-variable in the direction of $v \in \mathbb{R}^d$. Let $\delta$ be the closed geodesic on the torus $\mathrm{U}(1)^d$ starting at the origin tangent to $v \in (2\pi \mathbb{Z})^d$ and of length $|v|$. Integrating this previous equality along $\delta$, we get $0 = \rho(\gamma)\cdot v$. This would imply that $\rho \equiv 0$, contradicting our assumption that $\rho$ is surjective.
\end{remark}

After choosing a representative $\eta_{\btheta}$ of $[\eta_{\btheta}]$ as in Proposition \ref{prop:eta-theta}, by Lemma \ref{lemma:stheta}, there exists an equivariant function $s_{\btheta} \in C_{\btheta}^\infty(M)$ of pointwise unit norm, and an isomorphism
\[
C^\infty(M_0) \to C^\infty_{\btheta}(M), \qquad f \mapsto F_{\btheta} := (\pi^*f) s_{\btheta}.
\]
Conversely, the inverse map is denoted by
\[
C^\infty_{\btheta}(M) \to C^\infty(M_0), \qquad F_{\btheta} \mapsto f_{\btheta},
\]
where $f_{\btheta} \in C^\infty(M_0)$ is the unique function such that $F_{\btheta}/s_{\btheta} = \pi^*f_{\btheta}$. We note that since $s_{\btheta}$ has pointwise unit norm, it is uniquely determined up to multiplication by a function of the form $\pi^*h$, where $h \in C^\infty(M_0)$ has pointwise unit norm; therefore the identification above is also unique up to multiplication by $\pi^*h$. Note also that $C^\infty_{\mathbf{0}}(M)$ corresponds to $\Z^d$-periodic functions, so they are pullbacks of functions on $M_0$, that is $C^\infty_{\mathbf{0}}(M) = \pi^* C^\infty(M_0)$. By Proposition \ref{prop:contractible}, we can do this procedure \emph{smoothly} for $\btheta$ varying in a contractible open set $U \subset \mathrm{U}(1)^d$.

We introduce the following $L^2$-norm on $C^\infty_{\btheta}(M)$ which coincides with the $L^2$ norm on sections of $L_{\btheta}$ under the identification above:
\[
    \|F_{\btheta}\|^2_{L^2_{\btheta}(M)} := \|F_{\btheta}\|_{L^2(M, L_{\btheta})}^2 = \|f_{\btheta}\|^2_{L^2(M_0)},
\]
where $L^{2}(M_{0})$ is with respect to the volume measure $\vol_{M_0}$. Here we abuse the notation slightly by identifying sections of $L_{\btheta}$ with equivariant functions in $C_{\btheta}^\infty(M)$. The space $L^2_{\btheta}(M)$ is then defined as the completion of $C^\infty_{\btheta}(M)$ with respect to this norm. 

We now express an arbitrary smooth function in terms of its Fourier modes.

\begin{proposition}\label{eq:fourier-theory}
    Let $f \in C^\infty_{\comp}(M)$. Then 
    \begin{equation}
        \label{equation:floquet-decomposition}
            f = \dfrac{1}{(2\pi)^d} \int_{\mathrm{U}(1)^d} F_{\btheta} ~\dd \btheta,
    \end{equation}
    where $F_{\btheta} \in C_{\btheta}^\infty(M)$ is defined as
\begin{equation}\label{eq:theta-frequency}
    F_{\btheta}(x) := \sum_{\n \in \Z^d} f(\tau_{\n}(x)) e^{-i \n \cdot \btheta}, \quad x \in M.
\end{equation}
Finally, we have the following Parseval identity: for all $f,g \in C^\infty_{\comp}(M)$,
\begin{equation}
\label{equation:parseval}
\langle f,g\rangle_{L^2(M)} = \dfrac{1}{(2\pi)^d} \int_{\mathrm{U}(1)^d} \langle F_{\btheta}, G_{\btheta}\rangle_{L^2(M_0, L_{\btheta})}\, \dd\btheta.
\end{equation}
\end{proposition}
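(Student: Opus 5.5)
The plan is to reduce everything to ordinary Fourier series on $\Z^d$, applied fibrewise over a fundamental domain. First I will check that $F_{\btheta}$ is well defined, smooth and lies in $C^\infty_{\btheta}(M)$. Since $f$ has compact support and the $\Z^d$-action on $M$ is properly discontinuous, for $x$ in a compact set $K\subset M$ only finitely many $\n$ satisfy $\tau_{\n}(x)\in \operatorname{supp} f$. Hence the sum \eqref{eq:theta-frequency} is locally finite, so it is smooth in $(x,\btheta)$ and a trigonometric polynomial in $\btheta$ for fixed $x$. The equivariance follows by reindexing, using $\tau_{\n}\circ\tau_{\mathbf{m}}=\tau_{\n+\mathbf{m}}$:
\[
F_{\btheta}(\tau_{\mathbf{m}}x)=\sum_{\n} f(\tau_{\n+\mathbf{m}}x)e^{-i\n\cdot\btheta}=e^{i\mathbf{m}\cdot\btheta}F_{\btheta}(x).
\]

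For the inversion formula \eqref{equation:floquet-decomposition}, I fix $x$ and integrate the finite sum term by term. The orthogonality relation $\frac{1}{(2\pi)^d}\int_{\mathrm{U}(1)^d}e^{-i\n\cdot\btheta}\,\dd\btheta=\delta_{\n,\mathbf{0}}$ then leaves only the term $f(\tau_{\mathbf{0}}x)=f(x)$.

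For Parseval \eqref{equation:parseval}, I first express the $L^2(M_0,L_{\btheta})$ pairing on $M$. The product $F_{\btheta}\overline{G_{\btheta}}$ is $\Z^d$-invariant, so it is $\pi^*$ of a function on $M_0$, namely $f_{\btheta}\overline{g_{\btheta}}$, because $s_{\btheta}$ has unit norm. Therefore
\[
\langle F_{\btheta},G_{\btheta}\rangle_{L^2(M_0,L_{\btheta})}=\int_{\mathcal{F}}F_{\btheta}\overline{G_{\btheta}}\,\dd\vol_M .
\]
Here $\mathcal{F}\subset M$ is a measurable fundamental domain for the $\Z^d$-action, meaning $\pi|_{\mathcal F}$ is a bijection onto $M_0$ up to null sets. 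Such a domain is built by choosing a finite cover of $M_0$ by evenly covered open sets, taking a disjoint measurable refinement, and picking one lift of each piece. By construction $\pi_*(\vol_M|_{\mathcal F})=\vol_{M_0}$, since $\vol_M$ is the lift of $\vol_{M_0}$.

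Next I expand the product as a finite double sum:
\[
F_{\btheta}(x)\overline{G_{\btheta}(x)}=\sum_{\n,\mathbf{m}}f(\tau_{\n}x)\overline{g(\tau_{\mathbf{m}}x)}e^{-i(\n-\mathbf{m})\cdot\btheta}.
\]
It is finite uniformly for $x\in\overline{\mathcal F}$, which I may take relatively compact. Integrating over $\btheta$ kills all terms except $\n=\mathbf{m}$. Exchanging the finite sums and integrals by Fubini gives
\[
\frac{1}{(2\pi)^d}\int_{\mathrm{U}(1)^d}\langle F_{\btheta},G_{\btheta}\rangle\,\dd\btheta=\sum_{\n}\int_{\mathcal F}f(\tau_{\n}x)\overline{g(\tau_{\n}x)}\,\dd\vol_M(x).
\]
Finally, $\tau_{\n}$ preserves $\vol_M$ because the measure is a lift, and the translates $\tau_{\n}\mathcal{F}$ tile $M$ up to null sets. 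The right-hand side is therefore $\int_M f\bar g\,\dd\vol_M=\langle f,g\rangle_{L^2(M)}$.

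The only step needing care is the fundamental domain. I need it to be measurable, relatively compact, to tile $M$ under $\Z^d$, and to satisfy $\pi_*\vol_M|_{\mathcal F}=\vol_{M_0}$. Everything else is finite-sum bookkeeping.
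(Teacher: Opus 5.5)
Your proof is correct and is essentially the paper's argument. Both reduce to Parseval for Fourier series on $\Z^d$ applied along the fibres of $\pi$: the paper does this pointwise at each $y_0 \in M_{x_0}$ and then integrates over $x_0 \in M_0$. Your measurable fundamental domain $\mathcal{F}$ simply makes explicit the fibre-integration identity $\int_M h\,\dd\vol_M=\int_{M_0}\sum_{y\in\pi^{-1}(x)}h(y)\,\dd\vol_{M_0}(x)$, which the paper uses implicitly.
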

Here, $L^{2}(M)$ is defined with respect to the measure $\vol_{M}$.
\begin{proof}
For the first part, note that the sum defining $F_{\btheta}$ converges because the $\mathbb{Z}^d$-action is proper (i.e. for each $x \in M$ and $K \subset M$ compact, there are only finitely many $\n \in K$ such that $\tau_{\n}x \in K$) and $f$ has compact support; it clearly satisfies $F_{\btheta} \in C^\infty_{\btheta}(M)$. In addition, \eqref{equation:floquet-decomposition} is easily verified.

For the second part, let $y_0 \in M_{x_0}$ be arbitrary. Then
\begin{align*}
    &\int_{M_{x_0}} f(y) \overline{g}(y)\, \dd y = \sum_{\mathbf{n} \in \mathbb{Z}^d} f(\tau_{\mathbf{n}} y_0) \overline{g}(\tau_{\mathbf{n}} y_0)\\
    &= \frac{1}{(2\pi)^{d}} \int_{\mathrm{U}(1)^d} F_{\btheta}(y_0) \overline{G}_{\btheta}(y_0)\, \dd \btheta = \dfrac{1}{(2\pi)^d} \int_{\mathrm{U}(1)^d} \langle{F_{\btheta}(x_0), G_{\btheta}(x_0)}\rangle_{L_{\btheta}(x_0)}\, \dd\btheta,
\end{align*}
where $F_{\btheta}$ and $G_{\btheta}$ are $\btheta$-frequency parts of $f$ and $g$ respectively (defined in \eqref{eq:theta-frequency}), and in the last equality we identified those with sections of $L_{\btheta}$. Since $x_0 \in M_0$ was arbitrary, integrating in $M_0$ completes the proof.
\end{proof}

We end this section with an auxiliary claim about the result of deriving $f_{\btheta}$ in $\btheta$ and the corresponding operation on $f$. We fix an open contractible set $U \subset \mathrm{U}(1)^d$ as above and work over $U$. We define
\[
    H_j(x) := \int_{\widehat{x}_0}^x \pi^* \partial_{\btheta_j} \eta_{\btheta},\quad x \in M,\quad 1 \leq j \leq d,
\]
where the integration is along an arbitrary path between the basepoint $\widehat{x}_0 \in M$ and $x$. That this is well-defined is checked in the same way as \eqref{equation:stheta}. Alternatively, we may observe that
\begin{equation}\label{eq:auxiliary-Hj}
    s_{\btheta}^{-1} \partial_{\btheta_j} s_{\btheta} = i H_j
\end{equation} showing directly that $H_j$ is well-defined. For a multi index $\alpha$ we may set $\mathbf{H}^\alpha := \prod_{j = 1}^d H_j^{\alpha_j}$. We then have

\begin{proposition}\label{prop:differentiation-btheta}
    For an arbitrary multi index $\alpha$ we have
    \[
        \partial^\alpha_{\btheta} f_{\btheta} = (-i)^{|\alpha|}(\mathbf{H}^\alpha f)_{\btheta},\quad f \in C^\infty_{\comp}(M),\quad \btheta \in U.
    \]
\end{proposition}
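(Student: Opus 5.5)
The identity follows from induction on $|\alpha|$, and the whole argument rests on the defining relation $F_{\btheta} = (\pi^* f_{\btheta})\, s_{\btheta}$. Here $F_{\btheta}$ is given by \eqref{eq:theta-frequency} and $s_{\btheta}$ is the smooth family of unit-norm equivariant functions from Lemma \ref{lemma:stheta} and Proposition \ref{prop:contractible}, defined over the contractible set $U$. Since $f$ has compact support, the sum in \eqref{eq:theta-frequency} is locally finite in $x$, uniformly in $\btheta$. Hence $F_{\btheta}$ is smooth in $\btheta$ and may be differentiated term by term. In particular $\pi^* f_{\btheta} = F_{\btheta}\, s_{\btheta}^{-1}$ depends smoothly on $\btheta$, so all the derivatives below make sense.

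For the base case $|\alpha| = 1$, I differentiate this relation in $\btheta_j$ and use \eqref{eq:auxiliary-Hj}, namely $\partial_{\btheta_j} s_{\btheta}^{-1} = -s_{\btheta}^{-1}\,(s_{\btheta}^{-1}\partial_{\btheta_j} s_{\btheta}) = -iH_j\, s_{\btheta}^{-1}$. This gives
\[
\partial_{\btheta_j} \pi^* f_{\btheta} = s_{\btheta}^{-1}\Big(\sum_{\n} f(\tau_\n x)(-i n_j) e^{-i\n\cdot\btheta} - iH_j(x) F_{\btheta}(x)\Big).
\]
Next I need a cocycle identity for $H_j$. Differentiating $s_{\btheta}\circ\tau_\n = e^{i\n\cdot\btheta}s_{\btheta}$ in $\btheta_j$ and using \eqref{eq:auxiliary-Hj} on both sides gives $H_j\circ\tau_\n = H_j + n_j$. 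Therefore
\[
\sum_\n (H_j f)(\tau_\n x)e^{-i\n\cdot\btheta} = H_j(x)F_{\btheta}(x) + \sum_\n n_j f(\tau_\n x) e^{-i\n\cdot\btheta},
\]
so the bracket above equals $-i$ times the $\btheta$-Fourier mode of $H_j f$. Note that $H_j f$ is still compactly supported, since $H_j$ is smooth on $M$. Dividing by $s_{\btheta}$ yields $\partial_{\btheta_j} f_{\btheta} = -i(H_j f)_{\btheta}$.

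For the induction step, I apply the base case to the compactly supported smooth function $\mathbf{H}^\beta f$ in place of $f$. The functions $H_j$ commute, since they are scalars, so the order in which the derivatives are taken does not matter. Iterating therefore gives $\partial^\alpha_{\btheta} f_{\btheta} = (-i)^{|\alpha|}(\mathbf{H}^\alpha f)_{\btheta}$.

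One subtlety needs checking: $H_j$ is defined through $\partial_{\btheta_j}\eta_{\btheta}$, so it depends on $\btheta$ only if $\partial_{\btheta_j}\eta_{\btheta}$ does. By Proposition \ref{prop:contractible}, $\eta_{\btheta}$ is the harmonic projection of a lift of a map that descends from a linear map. It is therefore affine in $\btheta$ on $U$, and $\partial_{\btheta_j}\eta_{\btheta}$ is independent of $\btheta$. This is what guarantees that no extra terms from differentiating $H_j$ appear in the induction, and it is the main point I expect to need care with. Apart from that, the argument uses only the transformation rule $H_j\circ\tau_\n = H_j + n_j$.
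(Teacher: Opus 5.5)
Your proof is correct and follows the paper's argument: you differentiate $F_{\btheta} = s_{\btheta}\pi^*f_{\btheta}$, use \eqref{eq:auxiliary-Hj} together with the cocycle identity $H_j\circ\tau_{\mathbf{n}} = H_j + n_j$ (obtained by differentiating the equivariance of $s_{\btheta}$), and then iterate. Your check that $\partial_{\btheta_j}\eta_{\btheta}$, and hence $H_j$, is independent of $\btheta$ on $U$ is left implicit in the paper's ``the other cases follow by iteration''; it is what makes the iteration legitimate, and the paper uses the same fact ($\ddot{\eta}_{\btheta}\equiv 0$) later, in the proof of Lemma~\ref{lemma:resonance-non-degenerate}.
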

\begin{proof}
    It suffices to consider the case $|\alpha| = 1$ since the other cases follow by iteration. Fix an index $j$; then by differentiating the relation $s_{\btheta} \circ \tau_{\mathbf{n}} = e^{i \mathbf{n} \cdot \btheta} s_{\btheta}$ from Lemma \ref{lemma:stheta}, valid for any $\mathbf{n} \in \mathbb{Z}^d$, we get
    \[
        i H_j \circ \tau_{\mathbf{n}} \cdot s_{\btheta} \circ \tau_{\mathbf{n}} = \partial_{\btheta_j} s_{\btheta} \circ \tau_{\mathbf{n}} = i n_j e^{i \mathbf{n} \cdot \btheta} s_{\btheta} + e^{i\mathbf{n} \cdot \btheta} \partial_{\btheta_j} s_{\btheta} = i(n_j + H_j) e^{i \mathbf{n} \cdot \btheta} s_{\btheta},
    \]
    where we used \eqref{eq:auxiliary-Hj} in the first and last equalities. We conclude that
    \begin{equation}\label{eq:auxiliary-equivariance-Hj}
        H_j \circ \tau_{\mathbf{n}} = n_j + H_j,\quad \mathbf{n} \in \mathbb{Z}^d.
    \end{equation}
    By differentiating the equality $F_{\btheta} = s_{\btheta} \pi^*f_{\btheta}$ we get
    \begin{align*}
        s_{\btheta} \pi^* \partial_{\btheta_j} f_{\btheta} = \partial_{\btheta_j}F_{\btheta} - \partial_{\btheta_j} s_{\btheta} \pi^*f_{\btheta} = (-i n_j) F_{\btheta} - iH_j s_{\btheta}\pi^*f_{\btheta} =  (-i)(n_j + H_j) F_{\btheta} = -i (fH_j)_{\btheta},
    \end{align*}
    where in the last equality we used \eqref{eq:auxiliary-equivariance-Hj}. This completes the proof.
\end{proof}

\subsection{Topology of Abelian extensions} 
\label{subsection:topology-abelian-covers}

As above, $M_0$ denotes a smooth closed connected manifold and $\pi \colon M \to M_0$ a $\Z^d$-extension defined via a surjective representation $\rho \colon \pi_1(M_0,x_0) \to \Z^d$. Let $p \colon P_0 \to M_0$ be a $G$-principal bundle over $M_{0}$, where $G$ is a compact Lie group. Let $\widetilde{P}_{0}$, $\widetilde{M}_{0}$ be, respectively, the universal covers of $P_{0}$ and $M_{0}$. We introduce
\[
P:=\widetilde{P}_{0} \times _{\rho \circ p_{*}}\Z^{d},
\]
the Abelian cover associated with the representation $\rho \circ p_{*}$, where $p_{*} \colon \pi_{1}(P_{0},z_0) \to \pi_{1}(M_{0},x_0)$ is the map induced by $p$, where $z_0 \in P_{x_0}$ is arbitrary.

\begin{lemma} \label{lemma:g-z-bundles-relation}
    The following holds:
    \begin{enumerate}[label=\roman*)]
        \item $P$ is a $G$-bundle over $M$, and a $(G \times \Z^d)$-bundle over $M_0$;
        \item $P \simeq \pi^*P_0$ as $G$-bundles over $M$, and as $(G \times \Z^d)$-bundles over $M_0$.
    \end{enumerate}
\end{lemma}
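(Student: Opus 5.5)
The plan is to build an explicit map $\Phi \colon P \to \pi^*P_0$ and check that it is a diffeomorphism intertwining all the relevant actions. Claims i) then follow from the corresponding, standard facts about the pullback $\pi^*P_0$.

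\textbf{The pullback side.} First I record what is known about $\pi^*P_0 = \{(x,z) \in M \times P_0 \mid \pi(x) = p(z)\}$:
\begin{itemize}
\item it is a $G$-bundle over $M$, with $G$ acting on the second factor;
\item $\Z^d$ acts by $\tau_\kk(x,z) = (\tau_\kk x, z)$;
\item the two actions commute and together act freely and transitively on the fibers of $\pi^*P_0 \to M_0$, since $\pi$ is a principal $\Z^d$-bundle;
\item local triviality over $M_0$ follows by combining local trivializations of $\pi$ and $p$ over a common small contractible set.
\end{itemize}
So $\pi^*P_0$ is a $(G\times\Z^d)$-bundle over $M_0$.

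\textbf{Construction of $\Phi$.} Fix the universal covering map $q \colon \widetilde{P}_0 \to P_0$. The map $p \circ q$ lifts to a map $\widetilde{p} \colon \widetilde{P}_0 \to \widetilde{M}_0$, because $\widetilde{P}_0$ is simply connected. I choose basepoints so that this lift is equivariant with respect to $p_* \colon \pi_1(P_0,z_0) \to \pi_1(M_0,x_0)$, that is $\widetilde{p}(\gamma.w) = p_*(\gamma).\widetilde{p}(w)$. This equivariance is the standard lifting argument for deck groups, using that the deck actions are given by endpoints of lifted loops as in the proof of Lemma \ref{lemma:stheta}. Then I define
\[
\Phi([(w,\n)]) := \big([(\widetilde{p}(w),\n)],\, q(w)\big) \in M \times P_0 .
\]
\begin{itemize}
\item It is well defined: replacing $(w,\n)$ by $(\gamma.w,\, -\rho(p_*\gamma)+\n)$ gives $([(p_*\gamma.\widetilde{p}(w),\, -\rho(p_*\gamma)+\n)],\, q(w))$, which is the same point.
\item It lands in $\pi^*P_0$, since $\pi[(\widetilde p(w),\n)] = p(q(w))$.
\item It commutes with $\tau_\kk$ by construction.
\end{itemize}

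\textbf{The $G$-action on $P$.} This is the main obstacle. I lift the right action $R_g$ to $\widetilde{P}_0$; this needs $G$ connected, or else working one component at a time. A cleaner route is to avoid lifting altogether: define the $G$-action on $P$ by transport through $\Phi$ once $\Phi$ is known to be a diffeomorphism. Alternatively, I would take as definition of the action the lift of $R_g$ along the covering $P \to P_0$. Since $\Phi$ is a local diffeomorphism covering the identity of $P_0$, the two descriptions agree.

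\textbf{$\Phi$ is a diffeomorphism.} Both $P$ and $\pi^*P_0$ are coverings of $P_0$ with fiber $\Z^d$, with $\Z^d$ acting simply transitively on fibers: for $\pi^*P_0 \to P_0$ via $\pi$, and for $P \to P_0$ by definition of the cover. The map $\Phi$ is a $\Z^d$-equivariant map of principal $\Z^d$-bundles over $P_0$, covering the identity. Such a map is automatically an isomorphism. It is also a local diffeomorphism, because $q$ and the projection are local diffeomorphisms.

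\textbf{Conclusion.} Transporting the structures through $\Phi$ gives i). Equivariance of $\Phi$ for $G$ and for $\Z^d$ gives ii), as bundles over $M$ and as bundles over $M_0$.

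One point needs care: $\pi^*P_0 \to P_0$ might be disconnected, and then no such $\Phi$ from the connected $P$ could exist. To rule this out I check that $\rho\circ p_*$ is surjective. This holds because $p_*$ is surjective on $\pi_1$, from the long exact sequence of the fibration, and $\rho$ is surjective.
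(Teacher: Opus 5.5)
Your proposal is correct. Your map $\Phi$ is literally the paper's $\overline{\varphi}$ from Step 3, built from the same lift $\widetilde{p}$ that intertwines the deck groups via $p_*$; what differs is the organisation around it.

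The paper first equips $P$ intrinsically with the projection $\overline{p}\colon P\to M$ and with a $G$-action descended from $\widetilde{P}_0$ (Steps 1--2). It then proves $\overline{\varphi}$ bijective by hand: injectivity from freeness of the deck action, surjectivity by path lifting. You get bijectivity in one stroke instead, since $\Phi$ is a $\Z^d$-equivariant map between two principal $\Z^d$-bundles over $P_0$ covering the identity. You then obtain i) by transporting the standard structures of $\pi^*P_0$ through $\Phi$.

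This is shorter, and it avoids a point the paper passes over quickly. The formula $\widetilde{R}_g(\gamma)=R_g\circ\gamma$ produces a path starting at $z_0\cdot g$ rather than $z_0$. So lifting $R_g$ to $\widetilde{P}_0$ requires choosing a path from $z_0$ to $z_0\cdot g$ inside the fibre, and one must check that the resulting ambiguity (loops in the fibre, killed by $\rho\circ p_*$) disappears in $P$.

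The price of your route is that ii) ``as $G$-bundles'' holds by construction. To give it content, identify the transported action intrinsically, for example as the unique lift of $R_g$ along the covering $P\to P_0$ that commutes with $\overline{p}$. Your alternative description of the action needs exactly this normalisation. Lifts of $R_g$ along $P\to P_0$ are only unique up to composition with some $\tau_{\kk}$, so ``$\Phi$ is a local diffeomorphism covering the identity'' does not by itself make the two descriptions agree.

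Your closing connectedness check is superfluous. The torsor argument gives bijectivity whether or not $P$ is connected. Moreover, its justification (surjectivity of $p_*$ from the long exact sequence) uses that $G$ is connected, which is not among the hypotheses.
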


\begin{proof}
Recall that, by construction, the universal cover of a topological space is defined as the set of continuous paths based at a given point, modulo homotopies with fixed ends. Let $z_0 \in P$ and $x_0 := p(z_0) \in M_0$, where $p : P_0 \to M_0$ is the projection. Let $\gamma_{P_0} \colon [0,1]\to P_0$ be a path with $\gamma_{P_0}(0)=z_0$ and $\gamma_{P_0}(1)=z \in P$. This path projects to a path $\gamma_{M_0} := p \circ \gamma_{P_0}$ with $\gamma_{M_0}(0)=x_0$ and $\gamma_{M_0}(1) = x \in M_0$, which shows that there is a well-defined map $\widetilde{p} \colon \widetilde{P}_0 \to \widetilde{M}_0$. In addition, there is a well-defined action of $G$ on $\widetilde{P}_0$ given by $\widetilde{R}_g(\gamma_{P_0}) := R_g \circ \gamma_{P_0}$ (where $R_g \colon P_0 \to P_0$ is the right-action of $g \in G$ on $P_0$), and $\widetilde{p} \colon \widetilde{P}_0 \to \widetilde{M}_0$ is a $G$-bundle map with respect to this action. Finally, by construction, for $\gamma \in \pi_1(P_0,z_0)$ and $z \in \widetilde{P}_0$, it is immediate that $\widetilde{p}(\gamma.z) = p_*(\gamma).\widetilde{p}(z)$, where $p_* \colon \pi_1(P_0,z_0) \to \pi_1(M_0,x_0)$ is the induced map in homotopy. We now drop the dependence of $\pi_1(M_0, x_0)$ (resp. $\pi_1(P_0,z_0)$) on the basepoint $x_0 \in M_0$ (resp. $z_0 \in P_0$) to simplify notation.
\medskip

\emph{Step 1. $P$ is a $G$-bundle over $M$.} We claim that the map $\widetilde{p} \times \id \colon \widetilde{P}_{0} \times \Z^{d} \to \widetilde{M}_{0}\times \Z^{d}$ descends to a $G$-bundle map $\overline{p} \colon P \to M$. First, we check that $\widetilde{p} \times \mathrm{id}$ is equivariant with respect to the actions of $\pi_{1}(P_{0})$ and $\pi_{1}(M_{0})$ on the domain and codomain, respectively. Let $\widetilde{z} \in \widetilde{P}_{0}$, $\n \in \Z^{d}$, and let $\gamma \in \pi_{1}(P_{0})$. Then:
\[
\begin{split}
    (\widetilde{p} \times \id)(\gamma .(\widetilde{z},\n)) &=(\widetilde{p} \times \id)(\gamma . \widetilde{z},-(\rho \circ p_{*})(\gamma)+\n) \\
    &=(\widetilde{p}(\gamma .\widetilde{z}),-(\rho \circ p_{*})(\gamma)+\n) \\
    &=(p_{*}(\gamma). \widetilde{p}(\widetilde{z}),-(\rho(p_{*}(\gamma)))+\n) \\
    &=p_{*}(\gamma).(\widetilde{p}(\widetilde{z}),\n),
\end{split}
\]
where in the third equality we used that $\tilde{p}$ intertwines deck transformations. In addition, the $G$-action on $\widetilde{P}_0$ descends to $P$ as it commutes with the action of $\pi_1(P_0)$. This shows that the induced map $\overline{p} \colon P \to M$ is a $G$-bundle map.

\medskip

\emph{Step 2. $P$ is a $(G \times \Z^{d})$-bundle over $M_{0}$.} The right $(G \times \Z^{d})$-action on $P$ is given by $[\widetilde{z},\n] \cdot (g,\kk) := [\widetilde{z}\cdot g,\n+\kk]$, which is well-defined on $P$ following the same proof as in Step 1. Recall that $\pi : M \to M_0$ is the projection. It is also immediate to verify that $\pi \circ \overline{p}(\bullet \cdot (g,\kk)) = \pi \circ \overline{p}(\bullet)$. Therefore, $(\pi \circ \overline{p}) \colon P \to M_{0}$ is a $(G \times \Z^{d})$-bundle map.

\medskip

\emph{Step 3. $\pi^{*}P_{0}$ and $P$ are isomorphic as $G$-bundles over $M$.} Let $q_{M} \colon \widetilde{M}_{0} \to M_{0}$ and $q_P \colon \widetilde{P}_0 \to P_0$ be the projections. Define
\[
\varphi \colon \widetilde{P}_{0} \times \Z^{d}  \to \pi^{*}P_{0}, \qquad \varphi(\widetilde{z},\n) := ([\widetilde{p}(\widetilde{z}),\n],q_{P}(\widetilde{z})).
\]
A quick calculation shows that $\varphi$ descends to a map $\overline{\varphi} \colon P \to \pi^{*}P_{0}$. Indeed, let $\gamma \in \pi_{1}(P_{0})$, $\widetilde{z} \in \widetilde{P}_{0}$, $\n \in \Z^{d}$; then, as in Step 1, one finds that $\varphi(\gamma .(\widetilde{z},\n)) = ([\widetilde{p}(\widetilde{z}),\n],q_{P}(\widetilde{z})) = \varphi(\widetilde{z},\n)$. In addition, it is also straightforward to verify that $\overline{\varphi}$ intertwines the two $G$-actions on $\pi^*P_0$ and $P$, and that $\mathrm{pr} \circ \overline{\varphi}=\overline{p}$, where $\mathrm{pr} \colon \pi^{*}P_{0} \to M$ is the projection.

We now show that $\overline{\varphi}$ is bijective. First, we deal with the injectivity. Let $[\widetilde{z}_{j},\n_{j}] \in P$, with $j=1,2$ be such that 
\begin{equation} \label{equation:bar-phi-injective}
    \overline{\varphi}([\widetilde{z}_{1},\n_{1}])=\overline{\varphi}([\widetilde{z}_{2},\n_{2}]).
\end{equation}
This implies that $q_{P}(\widetilde{z}_{2})=q_{P}(\widetilde{z}_{1})$. Hence, there exists $\gamma \in \pi_{1}(P_{0})$ such that $\widetilde{z}_{2}=\gamma.\widetilde{z}_{1}$. Then, using the equality of the first entries of \eqref{equation:bar-phi-injective} yields
\[ [\widetilde{p}(\widetilde{z}_{1}),\n_{1}]=[\widetilde{p}(\widetilde{z}_{2}),\n_{2}]=[\widetilde{p}(\gamma.\widetilde{z}_{1}),\n_{2}] =[p_{*}(\gamma).\widetilde{p}(\widetilde{z}_{1}),\n_{2}] =[\widetilde{p}(\widetilde{z}_{1}),\rho(p_{*}(\gamma))+\n_{2}]. \]
Comparing the left and right-hand side, we conclude that $\n_{2}=-\rho(p_{*}(\gamma))+\n_{1}$, which shows that $[\widetilde{z}_{1},\n_{1}]=[\widetilde{z}_{2},\n_{2}]$. 

We now show the surjectivity. Consider $([\widetilde{x},\n], z) \in \pi^*P_0$, where $z \in P_x$ and $x := q_M(\widetilde{x})$. By construction, $\widetilde{x}$ corresponds to a path connecting the basepoint $x_0$ to $x$, which can be lifted to a path $\widetilde{z} \in \widetilde{P}$ from $z_0 \in P_{x_0}$ to $z$. Then $\varphi(\widetilde{z},\n) = ([\widetilde{p}(\widetilde{z}),\n],q_P(\widetilde{z})) = ([\widetilde{x},\n],z)$. This concludes the proof of the third step.

\medskip

\emph{Step 4. $(\pi \circ \mathrm{pr}) \colon \pi^{*}P_{0} \to M_{0}$ and $(\pi \circ \overline{p}) \colon P \to M_0$ are isomorphic as $(G \times \Z^{d})$-bundles over $M_0$.} The $(G \times \Z^{d})$-action on $\pi^{*}P_{0}$ is given by
\[
([x,\n],y)\cdot(g,\kk) := ([x,\n+\kk],y \cdot g).
\]
We may then define $\overline{\varphi}$ as in Step 3. A similar proof to that step shows that $\overline{\varphi}$ is an isomorphism,  that $(\pi \circ \mathrm{pr}) \overline{\varphi}=\pi \circ \overline{p}$, and that $\overline{\varphi}$ intertwines both actions. This concludes the proof.
\end{proof}

Recall that a connected $\Z^{d}$-cover $M \to M_{0}$ is the data of a surjective representation $\rho \colon \pi_{1}(M_{0}) \to \Z^{d}$. Let $p \colon E_{0} \to M_{0}$ be a fiber bundle with typical fiber $F$; then $\rho \circ p_{*} \colon \pi_1(E_0) \to \Z^d$ defines a representation of $\pi_{1}(E_{0})$, hence a $\Z^{d}$-cover of $E_{0}$. A natural question is if we can go the other way around; that is given a $\Z^{d}$-cover of $E_{0}$, can we obtain a $\Z^{d}$-cover of $M_{0}$? Under certain assumptions on $F$, we can prove that it is indeed the case:

\begin{lemma}
\label{lemma:ehoui}
    Assume that $p \colon E_0 \to M_0$ is a fiber bundle with typical fiber $F$, such that $F$ is connected and $\pi_1(F)$ is finite. Then there is a one-to-one correspondence between $\Z^d$-covers of $E_0$ and $\Z^d$-covers of $M_0$. 
\end{lemma}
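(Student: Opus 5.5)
The plan is to reduce the statement to a purely group-theoretic fact about the map $p_* \colon \pi_1(E_0) \to \pi_1(M_0)$, using the long exact sequence of homotopy groups of the fibration $F \to E_0 \to M_0$. As recalled above, a connected $\Z^d$-cover of a space is the same datum as a surjective representation of its fundamental group onto $\Z^d$ (up to the natural equivalence). So it suffices to show that
\[
\mathrm{Hom}_{\mathrm{surj}}(\pi_1(M_0),\Z^d) \to \mathrm{Hom}_{\mathrm{surj}}(\pi_1(E_0),\Z^d), \qquad \rho \mapsto \rho \circ p_*,
\]
is a bijection. This map realizes the pullback construction described before the lemma, compatibly with the identification of covers with representations.

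First I would write out the exact sequence
\[
\pi_1(F) \xrightarrow{\iota_*} \pi_1(E_0) \xrightarrow{p_*} \pi_1(M_0) \to \pi_0(F).
\]
Since $F$ is connected, $\pi_0(F)$ is trivial, so $p_*$ is surjective. Surjectivity of $p_*$ gives injectivity of $\rho \mapsto \rho\circ p_*$. It also shows that $\rho \circ p_*$ is surjective whenever $\rho$ is, so the map is well defined between the sets of surjective representations.

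For surjectivity of the correspondence, take a surjective $\sigma \colon \pi_1(E_0) \to \Z^d$. Then $\sigma \circ \iota_*$ is a homomorphism from the finite group $\pi_1(F)$ into the torsion-free group $\Z^d$, so it is trivial. Hence $\sigma$ vanishes on $\mathrm{im}\,\iota_* = \ker p_*$ by exactness. Therefore $\sigma$ factors through $\pi_1(E_0)/\ker p_* \cong \pi_1(M_0)$, giving $\rho$ with $\sigma = \rho\circ p_*$, and $\rho$ is surjective because $\sigma$ is.

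The main obstacle is bookkeeping rather than content. Basepoints must be chosen compatibly, with $z_0 \in p^{-1}(x_0)$ and $F = p^{-1}(x_0)$. One must also check that "$\Z^d$-cover" is taken up to the same equivalence on both sides (isomorphism of principal $\Z^d$-bundles, corresponding to equality of representations), so the correspondence of representations really is one between covers. Finally, I would note that the explicit cover $\widetilde{E}_0 \times_{\rho\circ p_*}\Z^d$ is identified with $\pi^*E_0$, exactly as in Lemma \ref{lemma:g-z-bundles-relation}. This makes the bijection geometric: its inverse sends a $\Z^d$-cover of $E_0$ to the cover of $M_0$ whose pullback it is.
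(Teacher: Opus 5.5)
Your proof is correct and uses the same ingredients as the paper: the long exact sequence of the fibration gives surjectivity of $p_*$, and the image of the finite group $\pi_1(F)$ is killed by any homomorphism to the torsion-free group $\Z^d$. The only difference is that the paper abelianizes via Hurewicz and proves $H_1(E_0,\Z)/\mathrm{Tor}\cong H_1(M_0,\Z)/\mathrm{Tor}$, while you factor $\sigma$ through $\pi_1(E_0)/\ker p_*\cong\pi_1(M_0)$ directly.
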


In particular, the lemma applies to all principal bundles $p \colon P_0 \to M_0$ with $G$ compact and semisimple as $\pi_1(G)$ is finite, see \cite[Remark 7.13]{Brocker-Dieck-85} for instance.

\begin{proof}
This boils down to showing that there exists a map $\pi_{1}(M_{0}) \to \Z^{d}$ such that the following diagram commutes:
\[
\xymatrix{
\pi_{1}(E_{0}) \ar[rr]^{\rho_{E_0}} \ar[dr]_{p_{*}} 
  & & \mathbb{Z}^{d} \\
& \pi_{1}(M_{0}) \ar@{-->}[ur] &
}
\]
However, we can factorize $\rho_{E_0}$ through the first homology group of $E_{0}$ (using the Hurewicz map). Hence, it is enough to obtain an isomorphism between the torsion free part of the homologies, that is, we will show that $H_{1}(E_{0}, \Z)/\mathrm{Tor} \cong H_{1}(M_{0}, \Z)/\mathrm{Tor}$. The fibration $F \hookrightarrow E_0 \xrightarrow{p}M_0$ gives the long exact sequence
\[ \cdots \rightarrow \pi_{1}(F) \rightarrow \pi_{1}(E_{0}) \xrightarrow{p_{*}} \pi_{1}(M_{0}) \to \pi_{0}(F) \rightarrow \cdots \]
Since $F$ is connected, $\pi_{0}(F)$ is trivial, so $p_{*}$ is surjective. Taking Abelianizations, the Hurewicz Theorem gives 
\[ \pi_{1}(F)^{\mathrm{Ab}} \to H_{1}(E_{0}, \Z) \xrightarrow{p_{*}^{\mathrm{Ab}}} H_{1}(M_{0}, \Z), \]
where $\bullet^{\mathrm{Ab}}$ denotes the Abelianization of the group, and $p_{*}^{\mathrm{ab}}$ is the map induced by $p_{*}$, which is still surjective as Abelianizing is a right-exact functor. As $\pi_1(F)^{\mathrm{Ab}}$ is finite, its image in $H_1(E_0, \Z)$ lies in the torsion part; hence $\ker p_{*}^{\mathrm{Ab}}$ is contained in the torsion-part so the induced map
\[
H_{1}(E_{0}, \Z)/\mathrm{Tor} \xrightarrow{p_{*}^{\mathrm{Ab}}} H_{1}(M_{0}, \Z)/\mathrm{Tor}
\]
is an isomorphism.
\end{proof}

Finally, we focus on the particular case of Abelian extensions of Riemannian manifolds. For a Riemannian manifold $N_0$, we let $SN_0$ denote its unit tangent bundle, and $FN_0$ its frame bundle.

\begin{lemma}
Let $N_{0}$ be a closed connected Riemannian manifold with $\dim N_{0} \geq 3$, $\rho \colon \pi_{1}(N_{0}) \to \Z^{d}$ a surjective representation, $q \colon \widetilde{N}_{0} \to N_{0}$ be the universal cover, and let $\pi \colon N=\widetilde{N}_{0} \times_{\rho} \Z^{d} \to N_{0}$ be the associated Abelian cover. Then $SN \cong \widetilde{SN}_{0} \times_{\rho \circ (p_{SN_0})_*}\Z^{d}$ and $FN \cong \widetilde{FN}_{0} \times_{\rho \circ (p_{FN_0})_{*}} \Z^{d}$, where $p_{FN_0} \colon FN_{0} \to N_{0}$ and $p_{SN_0} \colon SN_0 \to N_0$ are the projections. In addition, $FN$ is a $\mathrm{SO}(n-1)$-bundle over $SN$ and a $(\mathrm{SO}(n-1)\times\Z^d)$-bundle over $SN_0$.
\end{lemma}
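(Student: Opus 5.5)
The plan is to reduce the lemma to Lemma \ref{lemma:g-z-bundles-relation} by identifying $SN$ and $FN$ with pullbacks. The key geometric observation is that the covering map $\pi \colon N \to N_0$ is a local isometry for the lifted metric $g$. Its differential $d\pi$ therefore induces bundle isomorphisms $SN \cong \pi^* SN_0$ and $FN \cong \pi^*FN_0$ over $N$. Concretely, $v \in S_xN$ is sent to $(x, d\pi_x v)$, and a frame is sent to its image frame. These maps are smooth bijections, equivariant for the deck $\Z^d$-action and, in the frame case, for the right $\mathrm{SO}(n)$-action.

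Next I show that $\pi^*SN_0 \to SN_0$ is itself the $\Z^d$-cover of $SN_0$ associated with $\rho \circ (p_{SN_0})_*$; the same argument applies to $FN_0$. Apply Lemma \ref{lemma:g-z-bundles-relation}, or rather its proof, with $P_0$ replaced by the fibre bundle $E_0 := SN_0 \to N_0$. The proof of that lemma never used the group structure beyond equivariance. Steps 1 and 3 only use path lifting, the map $\widetilde{p} \colon \widetilde{E}_0 \to \widetilde{N}_0$, and the relation $\widetilde{p}(\gamma.z) = p_*(\gamma).\widetilde{p}(z)$. So the map $\overline{\varphi} \colon \widetilde{E}_0 \times_{\rho \circ p_*} \Z^d \to \pi^*E_0$ defined there is again a well-defined bijection, by the same injectivity and surjectivity arguments. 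It is a local diffeomorphism because both sides are coverings of $E_0$ and $\overline{\varphi}$ commutes with the projections to $E_0$. Hence it is a diffeomorphism, and it intertwines the $\Z^d$-actions. Composing with the isomorphism $SN \cong \pi^*SN_0$ from the first step gives $SN \cong \widetilde{SN}_0 \times_{\rho\circ(p_{SN_0})_*}\Z^d$. For $FN$ we invoke Lemma \ref{lemma:g-z-bundles-relation} verbatim with $G = \mathrm{SO}(n)$.

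The statement also implicitly requires that these are genuine connected $\Z^d$-covers, i.e.\ that $\rho \circ (p_{SN_0})_*$ and $\rho\circ (p_{FN_0})_*$ are surjective. This follows from the long exact sequence used in Lemma \ref{lemma:ehoui}. The fibres $S^{n-1}$ and $\mathrm{SO}(n)$ are connected since $n \geq 3$ (indeed $n\ge 2$ suffices for $\mathrm{SO}(n)$), so $p_*$ is surjective on $\pi_1$. For $n \geq 3$ the fibres moreover have finite $\pi_1$, so Lemma \ref{lemma:ehoui} shows that this correspondence of covers is canonical.

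For the last claim, regard $FN_0 \to SN_0$, sending a frame to its first vector, as a principal $\mathrm{SO}(n-1)$-bundle. Then apply Lemma \ref{lemma:g-z-bundles-relation} with $M_0 := SN_0$, the cover $M := SN$ given by the representation $\rho' := \rho\circ(p_{SN_0})_*$, and $P_0 := FN_0$. This shows that $\widetilde{FN}_0 \times_{\rho' \circ q_*} \Z^d$ is an $\mathrm{SO}(n-1)$-bundle over $SN$ and an $(\mathrm{SO}(n-1)\times\Z^d)$-bundle over $SN_0$, where $q \colon FN_0 \to SN_0$. Since $p_{SN_0}\circ q = p_{FN_0}$, we have $\rho'\circ q_* = \rho\circ(p_{FN_0})_*$, so this space is $FN$ by the previous step. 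The only real subtlety, and the step I expect to need the most care, is checking that the various identifications are compatible. Specifically, the isomorphism $FN\cong \pi^*FN_0$ from the Riemannian structure must agree with the abstract one from Lemma \ref{lemma:g-z-bundles-relation} as $\mathrm{SO}(n-1)$-bundles over $SN$. I would verify this directly: both maps are local isometries covering $\pi$, and both commute with the map sending a frame to its first vector.
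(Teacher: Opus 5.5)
Your argument is correct, but it is organized differently from the paper's. The paper's proof is a single appeal to Lemma~\ref{lemma:ehoui}: the fibres have finite fundamental group, so $\Z^d$-covers of $N_0$, $SN_0$ and $FN_0$ are in bijection, and the paper then says the isomorphisms follow easily.

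You instead construct the isomorphisms explicitly:
\begin{itemize}
\item $d\pi$ identifies $SN, FN$ with $\pi^*SN_0, \pi^*FN_0$;
\item the map $\overline{\varphi}$ from Step~3 of Lemma~\ref{lemma:g-z-bundles-relation} identifies these pullbacks with the associated covers. That step uses only path lifting, the equivariance of $\widetilde{p}$, and connectedness of the fibre; the last is needed for surjectivity and is worth stating there;
\item the last claim comes from Lemma~\ref{lemma:g-z-bundles-relation} over the base $SN_0$, together with $\rho\circ(p_{SN_0})_*\circ q_*=\rho\circ(p_{FN_0})_*$.
\end{itemize}

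This fills in the step the paper leaves implicit, namely that $SN\to SN_0$ really is the cover attached to $\rho\circ(p_{SN_0})_*$. It also shows that the identifications only need connected fibres. Finiteness of $\pi_1$, and hence $\dim N_0\geq 3$, is only used for an extra fact that the paper's route records: every $\Z^d$-cover of $SN_0$ or $FN_0$ comes from $N_0$.

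Your route also sidesteps a small slip in the paper. For $n=3$, $\pi_1(\mathrm{SO}(n-1))=\pi_1(\mathrm{SO}(2))\cong\Z$ is infinite. So Lemma~\ref{lemma:ehoui} must be applied to $FN_0\to N_0$ (fibre $\mathrm{SO}(3)$, $\pi_1\cong\Z/2\Z$) rather than to $FN_0\to SN_0$.

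The compatibility you flag at the end is automatic. View $\widetilde{z}\in\widetilde{FN}_0$ as a path class in $FN_0$; then both composites send $[\widetilde{z},\n]$ to $([p_{FN_0}\circ\widetilde{z},\n],\,q(\widetilde{z}(1)))\in\pi^*SN_0$. Alternatively, you can check the last claim directly on $FN$, where the right $\mathrm{SO}(n-1)$-action visibly commutes with the deck isometries $d\tau_{\n}$.
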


\begin{proof}
As $n := \dim N_{0} \ge 3$, $\pi_1(S^{n-1})=\{0\}$ and $\pi_1(\mathrm{SO}(n-1))$ is finite. By Lemma \ref{lemma:ehoui}, there is a one-to-one correspondence between Abelian covers of $N_0$, $SN_0$ and $FN_0$. The claim then easily follows from this observation.
\end{proof}

\subsection{Functional spaces on Abelian covers}  \label{ssection:functional-spaces}

Let $g_0$ be an arbitrary background metric on $M_0$ and let $g$ denote its lift to $M$. Let $\nabla$ be the Levi-Civita connection induced by $g$ on $M$. Recall that, given $u \in C^\infty(M)$ and an integer $s \geq 0$, $\nabla^s u$ is a section of the symmetric $s$-tensor bundle $\mathrm{Sym}^s(T^*M) \to M$ such that
\[
\nabla^s u (x ; v, \dotsc, v) := \partial^s_t u(\gamma(t))|_{t = 0}, \qquad \forall (x,v) \in TM,
\]
where $t \mapsto \gamma(t)$ is the geodesic generated by $(x,v)$. The vector bundle $\mathrm{Sym}^s(T^*M) \to M$ carries a natural inner product in its fibres induced by the Riemannian metric $g$. We note that by definition, for every $\mathbf{n} \in \mathbb{Z}^d$, we have that $\tau_{\mathbf{n}}: (M, g) \to (M, g)$ is an isometry and that the differential of $\tau_{\mathbf{n}}$ is an isometry of the fibres of $\mathrm{Sym}^s(T^*M)$.

Given an integer $s \geq 0$, and an open subset $U \subset M$, the $H^s$-norm on $U$ is defined as:
\[
\|u\|^2_{H^s(U)} := \|u\|^2_{L^2(U)} + \|\nabla^s u\|^2_{L^2(U, \mathrm{Sym}^s T^*U)},
\]
and the $C^s$-norm is defined as
\[
\|u\|_{C^s(U)} := \|u\|_{C^0(U)} + \|\nabla^s u\|_{C^0(U, \mathrm{Sym}^s T^*U)}.
\]
Since $\tau_{\mathbf{n}}$ acts by isometries of $\mathrm{Sym}^s(T^*M)$, we have 
\begin{equation}\label{eq:symmetry-CHs}
    \|\tau_{\mathbf{n}}^*u\|_{F^s(U)} = \|u\|_{F^s(\tau_{\mathbf{n}}U)},\quad F \in \{C, H\}. 
\end{equation}
Let $\widehat{x}_0 \in M$ be an arbitrary point in $M$. Let $D \subset M$ be a relatively compact open subset with smooth boundary such that $\widehat{x}_0 \in D$ and $\pi \colon D \to M_0$ is surjective. For nonnegative integers $r, s \geq 0$, we introduce the norm
\[
\|u\|_{B^{s,r}(M)} := \sum_{\n \in \Z^d} \langle \n \rangle^{r} \|u\|_{H^s(\tau_{\mathbf{n}}D)},
\]
where $\langle n \rangle := (1+|\n|^2)^{1/2}$ and $|\bullet|$ is the Euclidean norm in $\Z^d$. The space $B^{s,r}(M)$ is defined as the completion of $C^\infty_{\comp}(M)$ with respect to the previous norm. Similarly, we let
\[
\begin{split}
\|u\|_{C^{s,r}(M)} & := \sup_{\n \in \Z^d} \langle \n \rangle^{r} \|u\|_{C^s(\tau_{\mathbf{n}}D)},
\end{split}
\]
and define $C^{s,r}(M)$
as the completion of $C^\infty_{\comp}(M)$ with respect to the norm $\|\bullet\|_{C^{s,r}(M)}$.
It is straightforward to verify that the functional space $F^{s,r}(M)$ is independent of the choice of background metric $g_0$ on $M_0$ and domain $D \subset M$, for $F \in \{B,C\}$ (a different choice of $D$ would give equivalent norms). For non integer values of $r,s \geq 0$, and $F \in \{B,C\}$, the spaces $F^{s,r}(M)$ are defined by real interpolation. The following lemma is a consequence of standard results.

\begin{lemma}
\label{lemma:relation-spaces}
    Let $r, s \geq 0$. For all $\eps > 0$, there exists a constant $C > 0$ such that:
    \[
    \begin{array}{lll}
    \|u\|_{B^{s,r}(M)} & \leq C \|u\|_{C^{s,r+d+\eps}(M)}, & \qquad \forall u \in C^{s,r+d+\eps}(M), \\
    \|u\|_{C^{s,r}(M)} & \leq C \|u\|_{B^{s+n/2+\eps,r}(M)}, & \qquad \forall u \in H^{s+n/2+\eps,r}(M).
    \end{array}
    \]
\end{lemma}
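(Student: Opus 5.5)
Both inequalities reduce to estimates on the individual translates $\tau_{\mathbf{n}}D$, which we then sum or take suprema over in $\mathbf{n}$ with the weights $\langle \mathbf{n} \rangle^r$. The key input is that the translates $\tau_{\mathbf{n}}D$ are all isometric to $D$ via $\tau_{\mathbf{n}}$, by \eqref{eq:symmetry-CHs}. So any local estimate on $D$ (or on a slight enlargement of $D$) holds on every $\tau_{\mathbf{n}}D$ with the \emph{same} constant. We first treat integer $s$ and integer $r$, then obtain the general case by real interpolation. The weighted spaces $\ell^1_r(H^s)$ and $\ell^\infty_r(C^s)$ interpolate in the expected way, and the inequalities are linear maps between scales of spaces, so they persist under interpolation.

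For the first inequality, fix $\mathbf{n}$. Since $D$ is relatively compact, $\vol(\tau_{\mathbf{n}}D) = \vol(D) < \infty$, so $\|u\|_{H^s(\tau_{\mathbf{n}}D)} \leq C\|u\|_{C^s(\tau_{\mathbf{n}}D)}$ with $C$ depending only on $\vol(D)$ (for the $\nabla^s$ term as well as the $L^2$ term). Then
\[
\|u\|_{B^{s,r}} \leq C\sum_{\mathbf{n}} \langle \mathbf{n}\rangle^{r}\|u\|_{C^s(\tau_{\mathbf{n}}D)} \leq C \|u\|_{C^{s,r+d+\eps}} \sum_{\mathbf{n}\in\Z^d} \langle \mathbf{n}\rangle^{-d-\eps},
\]
and the last sum converges because $d+\eps > d$.

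For the second inequality, we use the local Sobolev embedding $H^{s+n/2+\eps}(\widetilde{D}) \hookrightarrow C^s(\overline{D})$, where $n = \dim M$. Here $\widetilde{D}$ is a fixed relatively compact neighbourhood of $\overline{D}$, which is needed if one wants uniform estimates up to the boundary; alternatively, the smooth boundary of $D$ lets one use the embedding on $D$ itself. Covering $\widetilde{D}$ by finitely many translates $\tau_{\mathbf{m}}D$ with $|\mathbf{m}| \leq R$ (possible by properness of the action and since $\pi(D) = M_0$), and transporting by $\tau_{\mathbf{n}}$, we get
\[
\|u\|_{C^s(\tau_{\mathbf{n}}D)} \leq C\sum_{|\mathbf{m}|\leq R}\|u\|_{H^{s+n/2+\eps}(\tau_{\mathbf{n}+\mathbf{m}}D)}.
\]
Multiplying by $\langle \mathbf{n}\rangle^r \leq C_R\langle \mathbf{n}+\mathbf{m}\rangle^r$ (Peetre's inequality) and bounding the supremum over $\mathbf{n}$ by the sum gives $\|u\|_{C^{s,r}} \leq C\|u\|_{B^{s+n/2+\eps,r}}$. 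We read the statement's "$H^{s+n/2+\eps,r}$" as $B^{s+n/2+\eps,r}$.

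The only genuinely delicate point is the passage to non-integer indices. For $r$ this is harmless, since the weighted $\ell^1/\ell^\infty$ estimates hold for all real $r$ directly. For fractional $s$, one can interpolate the linear inclusion maps between integer endpoints. This uses the identification of the interpolation spaces in $\ell^q$-vector-valued Sobolev scales, which is standard because the pieces are uniformly equivalent to a fixed domain. In the second inequality one also loses nothing beyond an arbitrary $\eps$, which is absorbed by choosing the interpolation endpoints appropriately.
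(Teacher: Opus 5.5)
Your proposal is correct and follows the paper's proof. The first bound comes from comparing $H^s$ with $C^s$ on each translate $\tau_{\mathbf{n}}D$ and summing $\sum_{\mathbf{n}}\langle \mathbf{n}\rangle^{-d-\eps}$, and the second from transporting the Sobolev embedding on $D$ to $\tau_{\mathbf{n}}D$ via \eqref{eq:symmetry-CHs}; the paper applies the embedding directly on $D$, which has smooth boundary, so your enlargement $\widetilde{D}$ and the Peetre step are unnecessary, and you are right that the ``$H^{s+n/2+\eps,r}$'' in the statement should read $B^{s+n/2+\eps,r}$.
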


\begin{proof}
    For the first estimate, for $u \in C^\infty_{\comp}(M)$ we have
    \begin{align*}
        \|u\|_{B^{s, r}(M)} = \sum_{\n \in \mathbb{Z}^d} \langle \n \rangle^r \|u\|_{H^s(\tau_{\n}D)} &= \sum_{\n \in \mathbb{Z}^d} \langle \n \rangle^{-d - \varepsilon} \langle \n \rangle^{r + d + \varepsilon} \|u\|_{H^s(\tau_{\n} D)}\\ 
        &\leq \|u\|_{C^{s, r + d + \varepsilon}} \sum_{\n \in \mathbb{Z}^d} \langle \n \rangle^{-d - \varepsilon},
    \end{align*}
    which proves the bound since the sum over $\n \in \Z^d$ converges.

    For the second one, for some $C> 0$, and for all $u \in C^\infty_{\comp}(M)$ and $\n \in \mathbb{Z}^d$ we have
    \begin{align*}
        \|u\|_{C^{s}(\tau_{\n} D)} = \|\tau_{\n}^* u\|_{C^s(D)} \leq C\|\tau_{\n}^*u\|_{H^{s + \frac{n}{2} + \varepsilon}(D)} = C\|u\|_{H^{s + \frac{n}{2} + \varepsilon}(\tau_{\n} D)},
    \end{align*}
    where in the equalities we used \eqref{eq:symmetry-CHs}, and in the inequality we used the classical Sobolev embedding theorem on $D$. The second estimate follows immediately, completing the proof. 
\end{proof}

An open subset $U \subset \mathrm{U}(1)^d$ is called \emph{good} if there exists an open neighbourhood $V \subset \mathrm{U}(1)^d$ of $\overline{U}$ such that $V$ is contractible. In the next lemma, given $f \in C^\infty_{\comp}(M)$, and $U \subset \mathrm{U}(1)^d$, a good open subset, we write $f_\bullet \in C^\infty(U,C^\infty(M_0))$ to denote the function $U \to C^\infty(M_0), \btheta \mapsto f_{\btheta}$, defined in \S \ref{ssection:floquet-theory}. For $k \geq 0$ an integer, we will write 
\[
    \|f_{\bullet}\|_{C^k(U, H^s(M_0))} := \max_{|\alpha| \leq k} \sup_{\btheta \in U} \|\partial^{\alpha}_{\btheta} f_{\btheta}\|_{H^s(M_0)}.
\]

\begin{lemma} \label{lemma:spaces_theta_sobolev}
Let $U \subset \mathrm{U}(1)^d$ be a good open subset, $k \geq 0$ an integer, and $s \geq 0$ a non negative real number. Then, there exists $C > 0$ such that for all $f \in B^{s,k}(M)$
\[
\|f_\bullet\|_{C^k(U,H^s(M_0))} \leq C \|f\|_{B^{s,k}(M)}.
\]
\end{lemma}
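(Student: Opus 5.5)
Since the norm $B^{s,k}$ is defined via interpolation for non-integer $s$, I would first prove the estimate for integer $s \geq 0$ and $f \in C^\infty_{\comp}(M)$. The non-integer case then follows by interpolation, since $f \mapsto \partial^\alpha_{\btheta} f_{\btheta}$ is linear. The general case $f \in B^{s,k}(M)$ follows by density.

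\textbf{Reduction to a single mode.} By Proposition \ref{prop:differentiation-btheta}, for $|\alpha| \leq k$ and $\btheta \in U$ we have
\[
\partial^\alpha_{\btheta} f_{\btheta} = (-i)^{|\alpha|}(\mathbf{H}^\alpha f)_{\btheta}.
\]
Here the functions $H_j$ and the sections $s_{\btheta}$ are defined using a smooth family $(\eta_{\btheta})$ over a contractible neighbourhood $V \supset \overline{U}$, given by Proposition \ref{prop:contractible}. It therefore suffices to bound $\|g_{\btheta}\|_{H^s(M_0)}$ uniformly in $\btheta \in U$, where $g := \mathbf{H}^\alpha f$, and then control $g$ in terms of $f$.

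\textbf{Bound for the mode.} By definition $\pi^* g_{\btheta} = s_{\btheta}^{-1} G_{\btheta}$, where
\[
G_{\btheta} = \sum_{\n} (\tau_\n^* g)\, e^{-i\n\cdot\btheta}.
\]
Since $\pi \colon D \to M_0$ is surjective and a local isometry, and $\overline D$ can be covered by finitely many open sets on which $\pi$ is injective, I estimate
\[
\|g_{\btheta}\|_{H^s(M_0)} \leq C\|s_{\btheta}^{-1} G_{\btheta}\|_{H^s(D)}.
\]
The functions $s_{\btheta}^{-1}$ are smooth on $\overline D$ with all derivatives bounded uniformly for $\btheta \in \overline U$: indeed $s_{\btheta}^{-1}ds_{\btheta} = i\pi^*\eta_{\btheta}$ with $\eta_{\btheta}$ depending smoothly on $\btheta$, and $\overline U \subset V$ is compact. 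So multiplication by $s_{\btheta}^{-1}$ is bounded on $H^s(D)$, uniformly in $\btheta$. By the triangle inequality and \eqref{eq:symmetry-CHs},
\[
\|G_{\btheta}\|_{H^s(D)} \leq \sum_\n \|\tau_\n^* g\|_{H^s(D)} = \sum_{\n}\|g\|_{H^s(\tau_\n D)} = \|g\|_{B^{s,0}(M)}.
\]

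\textbf{Controlling $\mathbf{H}^\alpha f$.} The remaining step, and the main (mild) obstacle, is to show
\[
\|\mathbf{H}^\alpha f\|_{B^{s,0}} \leq C\|f\|_{B^{s,|\alpha|}}.
\]
By \eqref{eq:auxiliary-equivariance-Hj}, on $\tau_\n D$ we have
\[
H_j = H_j \circ \tau_{\n} \circ \tau_{-\n} = n_j + (H_j|_D)\circ\tau_{-\n}.
\]
Since $H_j$ is smooth on $\overline D$ and depends smoothly on $\btheta \in \overline U$, its restriction to $\tau_\n D$ has $C^s$ norm at most $C\langle \n\rangle$. This holds uniformly in $\n$ and $\btheta$, using \eqref{eq:symmetry-CHs} for the derivatives, which are $\tau$-invariant since $dH_j = \pi^*\partial_{\btheta_j}\eta_{\btheta}$. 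Hence $\mathbf{H}^\alpha$ has $C^s(\tau_\n D)$ norm at most $C\langle \n\rangle^{|\alpha|}$. This gives
\[
\|\mathbf{H}^\alpha f\|_{H^s(\tau_\n D)} \leq C\langle\n\rangle^{|\alpha|}\|f\|_{H^s(\tau_\n D)}.
\]
Summing over $\n$ yields the bound. Taking the maximum over $|\alpha| \leq k$ and the supremum over $\btheta \in U$ concludes the proof.
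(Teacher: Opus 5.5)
Your proof is correct and follows essentially the paper's argument: the $k=0$ case is the same (triangle inequality on the Floquet sum over the translates $\tau_{\n}D$, with multiplication by $s_{\btheta}^{-1}$ bounded on $H^s(D)$ uniformly in $\btheta$). For $k\geq 1$ the paper simply differentiates \eqref{equation:dimanche} in $\btheta$, which produces exactly the factors $n_j + H_j$ that you obtain via Proposition \ref{prop:differentiation-btheta} and the equivariance \eqref{eq:auxiliary-equivariance-Hj}, so your version just spells out the resulting $\langle \n\rangle^{|\alpha|}$ weights that the paper leaves implicit.
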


\begin{proof}
    We begin by proving the claim for $k=0$. It suffices to prove the claim for integers $s \geq 0$, as the general statement follows by interpolation. For $x \in M$, using \eqref{eq:theta-frequency}, we write
    \begin{equation}
        \label{equation:dimanche}
    F_{\btheta}(x) = \pi^* f_{\btheta}(x) s_{\btheta}(x) = \sum_{\n \in \Z^d} f(\tau_{\n}(x)) e^{-i\n \cdot \btheta}.
    \end{equation}
    It follows that:
    \[
    \begin{split}
    \|f_{\btheta}\|_{H^s(M_0)} & \leq \|\pi^* f_{\btheta}\|_{H^s(D)} = \|s_{\btheta}^{-1}F_{\btheta}\|_{H^s(D)}  \leq \sum_{n \in \Z^d} \|s_{\btheta}^{-1} f(\tau_{\mathbf{n}}\bullet)\|_{H^s(D)} \\
    & \leq C \sum_{n \in \Z^d} \|s_{\btheta}^{-1}\|_{C^s(D)} \|f(\tau_{\mathbf{n}}\bullet)\|_{H^s(D)} \\
    & \leq C \sum_{n \in \Z^d} \|f(\bullet)\|_{H^s(\tau_{\n}D)} = C \|f\|_{B^{s,0}(M)},
    \end{split}
    \]
    where in the last line we used \eqref{eq:symmetry-CHs}. For general values of $k \geq 1$, the estimate is obtained by differentiating \eqref{equation:dimanche} with respect to $\btheta$ and applying the previous argument.
\end{proof}

\subsection{Dynamical connection} 

\label{ssection:dynamical-connection}

We recall now-standard material on the dynamical connection. We refer to \cite[Section 4.2.1]{Cekic-Lefeuvre-24} or \cite[Chapter 12]{Lefeuvre-book} for further discussion.

\subsubsection{Affine case}

\label{sssection:dynamical-connection-linear}

Let $E \to M_0$ be a Hermitian vector bundle. Recall that $X_{M_0}$ denotes the vector field generating the volume-preserving Anosov flow $(\varphi_t^0)_{t \in \R}$. Let
\[
\X \colon C^\infty(M_0,E) \to C^\infty(M_0,E)
\]
be a differential operator of order $1$ such that
\[
\X(f \otimes s) = X_{M_{0}} f \otimes s + f \otimes \X s,
\]
for all $f \in C^\infty(M_0)$, $s \in C^\infty(M_0,E)$. Such an operator is called a \emph{lift} of $X_{M_{0}}$ to sections of $E$.

There is a well-defined fiberwise linear cocycle $C$ such that for all $s \in C^\infty(M_0,E)$:
\[
(e^{-t\X} s) (\varphi_t^0 x) = C(t,x) s(x), \qquad \forall t \in \R, x \in M_0,
\]
where $(e^{t\X})_{t \in \R}$ is the propagator of $\X$, and $C(x,t) \colon E_x \to E_{\varphi_t^0 x}$ is a linear map, depending smoothly on both parameters. The map $C(t,x)$ is an isometry (for all $t,x$) if and only if $\X$ is skew-adjoint on $L^2(M_0,E)$ (or, equivalently, $(e^{t\X})_{t \in \R}$ is unitary on $L^2(M_0,E)$). From now on, we will work under this standing assumption.

The operator $\X$ induces a partially hyperbolic flow $(\Phi_t)_{t \in \R}$ on $E$ such that for all $x \in M_0, s \in E_{x}$, $\Phi_t(s) := C(t,x) s$ (see \cite[Section 12.1.1.2]{Lefeuvre-book} for instance). In particular, there are well-defined stable and unstable manifolds $W^{s,u}_E$ on $E$. The \emph{stable holonomy} is the linear map $\mathbf{H}^s \colon E_x \to E_y$, defined for all $x \in M_0$ and $y \in W^s(x)$, such that $\mathbf{H}^s s \in E_y$ is the (unique) point of intersection $W^s_E(s) \cap E_y = \{\mathbf{H}^s s\}$. The \emph{unstable holonomy} map $\mathbf{H}^u \colon E_x \to E_y$ is defined similarly for $y \in W^u(x)$. Finally, the \emph{central holonomy} is defined using the flow $(\Phi_t)_{t \in \R}$ itself, that is, for $x \in M_0, y = \varphi_t^0 x$, $\mathbf{H}^c s := \Phi_t s$. As the central, stable, and unstable bundles span the whole of $TM_0$ (that is, $TM_0 = \R X_{M_{0}} \oplus E_s \oplus E_u$), these holonomies can easily be seen to derive from a unitary connection $\nabla^{\mathrm{dyn}}$ on $E$, called the \emph{dynamical connection} (see \cite[Section 4.2.1]{Cekic-Lefeuvre-24}). In general, this connection is only Hölder-continuous (see \cite{Beaufort-25} for further discussion about the regularity of this connection). However, if $\X = \nabla^E_{X_{M_{0}}}$, where $\nabla^E$ is a \emph{flat unitary} connection in $E$, a quick calculation reveals that $\nabla^{\mathrm{dyn}} = \nabla^E$. 

There is a well-defined curvature $F_{\nabla^{\mathrm{dyn}}}$ associated with $\nabla^{\mathrm{dyn}}$, which is a distributional $2$-form with values in skew-adjoint endomorphisms of $E$, that is
\[
F_{\nabla^{\mathrm{dyn}}} \in \mc{D}'(M_0, \Lambda^2 T^*M_0 \otimes \mathrm{End}_{\mathrm{sk}}(E)).
\]

\subsubsection{Principal bundles}

The same construction can be carried out on principal bundles. Let $p \colon P_0 \to M_0$ be a $G$-principal bundle, where $G$ is a compact Lie group, and $(\psi_t^0)_{t \in \R}$ be an equivariant extension of $(\varphi_t^0)$ to $P_0$ in the sense that $\varphi_t^0 \circ p = p \circ \psi_t^0$ and $R_g \circ \psi_t^0 = \psi_t^0 \circ R_g$ for all $t \in \mathbb{R}, g \in G$. The flow $(\psi_t^0)_{t \in \R}$ being partially hyperbolic on $P_0$ (see \cite[Section 12.1.1.2]{Lefeuvre-book}), central, stable and unstable holonomy maps $\mathbf{H}^{c,s,u} \colon (P_0)_x \to (P_0)_y$ can be defined as above for $x \in M_0$ and $y$ in the central, stable or unstable manifold of $x$ respectively. These holonomies correspond to the parallel transport of a $G$-equivariant connection on $P_0$, called the dynamical connection.

The holonomy group of this connection (which is a subgroup of $G$) is called the Brin (pre-)transitivity group. It is dense in $G$ if and only if the flow $(\psi_t^0)_{t \in \R}$ is ergodic with respect to the probability measure $\nu_0$, obtained locally as a product of $\vol_{M_{0}}$ with the Haar measure on $G$, see \cite[Section 12.4]{Lefeuvre-book}. The curvature of the dynamical connection is a distributional $2$-form with values in the adjoint bundle of $P_0$, that is
\[
F \in \mc{D}'(M_0, \Lambda^2 T^*M_0 \otimes \mathrm{Ad}(P_0)).
\]

Let $\rho \colon G \to \mathrm{End}(V)$ be a unitary representation of $G$, and $E := P_0 \times_{\rho} V$ be the associated bundle. The flow $(\psi_t^0)_{t \in \R}$ induces a flow $(\Phi_t)_{t \in \R}$ on $E$, defined as $\Phi_t([\xi,s]) := [\psi_t^0 \xi, s]$. Similarly, the $G$-equivariant dynamical connection induces functorialy a unitary associated connection $\nabla$ on $E$. It is then immediate to verify that the dynamical connection $\nabla^{\mathrm{dyn}}$ induced by $(\Phi_t)_{t \in \R}$ on $E$, as introduced in \S\ref{sssection:dynamical-connection-linear}, coincides with $\nabla$.

\section{Preliminaries on semiclassical analysis}

\label{section:semiclassical}

\subsection{Semiclassical analysis}

In this section we recall and state the key notions and tools from semiclassical analysis. For more details on the content of this section, see \cite[Appendix E]{Dyatlov-Zworski-19}, \cite{Zworski-12,Lefeuvre-book}.

\subsubsection{Definitions and basic properties} 

Recall that for $\xi \in \R^n$, the Japanese bracket is defined as $\langle \xi \rangle := (1+|\xi|^2)^{1/2}$, where $|\bullet|$ is the Euclidean norm. Given $m \in \R$, we let $S^m(\R^n)$ be the class of symbols on $\R^n$ of order $m$, defined as the set of functions $a \in C^\infty(T^*\R^n)$ such that for all $\alpha,\beta \in \Z^n_{\geq 0}$, there exists $C > 0$ such that
\begin{equation}
    \label{equation:symbols}
|\partial^\alpha_\xi \partial^\beta_x a(x,\xi)| \leq C\langle\xi\rangle^{m-|\alpha|}, \qquad \forall (x,\xi) \in T^*\R^n.
\end{equation}
On a smooth closed manifold $M_0$, the class $S^m(M_0) \subset C^\infty(T^*M_0)$ of symbols of order $m \in \R$ can be defined analogously by requiring \eqref{equation:symbols} to hold in any local chart. It is also possible to let $a$ depend on the additional semiclassical parameter $h > 0$; however, in this case, we require \eqref{equation:symbols} to hold uniformly with respect to $h$.

The standard (left) semiclassical quantization in $\R^n$ is given for $a \in S^m(\R^n)$ and $f \in C^\infty_{\comp}(\R^n)$ by
\begin{equation}
\label{equation:quantization}
\Op_h^{\R^n}(a) f (x) := \dfrac{1}{(2\pi h)^n} \int_{\R^n_\xi \times \R^n_y} e^{\tfrac{i}{h}\xi\cdot(x-y)} a(x,\xi) f(y)\, \dd y \dd \xi.
\end{equation}
It is standard that, using local charts, one can define a quantization procedure $\Op_h$ on $M_0$, mapping $S^m(M_0)$ into the class of semiclassical pseudodifferential operators \cite[Chapter 14]{Zworski-12}. We then let
\[
\Psi^m_h(M_0) := \{\Op_h(a) + R ~:~ a\in S^m(M_0), R \in h^{\infty}\Psi^{-\infty}_h(M_0)\},
\]
be the set of semiclassical pseudodifferential operators of order $m$. Here, $R$ is a $\mc{O}(h^\infty)$-smoothing operator, that is for any $N > 0$ it maps boundedly $H^{-N}_h(M_0) \to H^N_h(M_0)$ with norm $\leq C_N h^N$.

For $s \in \R$, the Sobolev spaces $H^s_h(M_0)$ are defined as the completion of $C^\infty(M_0)$ with respect to the norm
\[
\|f\|_{H^s_h(M_0)} := \|(1+h^2\Delta_{g_0})^s f\|_{L^2(M_0)},
\]
where $\Delta_{g_0} \geq 0$ is the Laplace operator induced by an arbitrary background Riemannian metric $g_0$ on $M_0$, and $(1+h^2\Delta_{g_0})^s$ is defined using the spectral theorem. The Calder\'on-Vaillancourt theorem asserts that any $A = \Op_h(a) \in \Psi^m_h(M_0)$ is bounded as a map
\[
    A \colon H^{s+m}_h(M_0) \to H^s_h(M_0)
\]
with norm $\|A\| \leq \|\langle\xi\rangle^{-m}a\|_{L^\infty(T^*M_0)} + \mc{O}(h)$.

\subsubsection{Main properties}

We now summarize some standard results from semiclassical analysis that will be used throughout the proof of Theorem \ref{theorem:main1}. We will use the notion of semiclassical wavefront set $\WF_h$, see \cite[Section E.2]{Dyatlov-Zworski-19} for an introduction. Additionally, it will be convenient to work with the radial compactification $\overline{T^*M_0} = T^*M_0 \sqcup \partial_\infty T^*M_0$ of the cotangent bundle, where $T^*M_0/\R_+^*$ and $\R_+^*$ denotes the radial dilation $(x,\xi) \mapsto (x,\lambda \xi)$ for $(x,\xi) \in T^*M_0$, $\lambda > 0$.

\begin{lemma} [Elliptic Regularity] \label{lemma:elliptic-rgularity}
    Let $A \in \Psi_{h}^{m}(M_{0})$, $B \in \Psi_{h}^{m'}(M_{0})$. Assume that $\WF_{h}(B) \subset \Ell(A)$. Then, there exist $Q_L, Q_R  \in \Psi_{h}^{m'-m}(M_{0})$ such that
    \[
    B = Q_LA+\mathcal{O}_{\Psi_{h}^{-\infty}(M_{0})}(h^{\infty}) = AQ_R+\mathcal{O}_{\Psi_{h}^{-\infty}(M_{0})}(h^{\infty}).
    \]
\end{lemma}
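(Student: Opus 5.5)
The plan is to build $Q_L$ (and, symmetrically, $Q_R$) as a microlocal parametrix for $A$ near $\WF_h(B)$, constructing its symbol order by order in $h$ and summing the resulting asymptotic series (Borel summation).

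\textbf{Step 1: leading symbol.} Write $\sigma_h(A) = a$ and $\sigma_h(B)=b$. Since $\WF_h(B)\subset \Ell(A)$ and $\WF_h(B)$ is closed in $\overline{T^*M_0}$, there is a neighbourhood $W$ of $\WF_h(B)$ in $\overline{T^*M_0}$ on which $|a|\geq c\langle\xi\rangle^{m}$ for some $c>0$ and all small $h$. Choose a cutoff $\chi\in C^\infty(\overline{T^*M_0})$ (a symbol of order $0$) with $\chi=1$ near $\WF_h(B)$ and $\operatorname{supp}\chi\subset W$. Set
\[
q_0 := \chi\, b\, a^{-1} \in S^{m'-m}(M_0).
\]
The symbol estimates for $q_0$ follow from those of $a^{-1}$ on $W$, which hold by the ellipticity lower bound there.

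\textbf{Step 2: lower-order corrections.} The composition formula gives
\[
\Op_h(q_0)A = \Op_h(\chi b) + h\,\Op_h(r_1)
\]
modulo $h^\infty\Psi^{-\infty}_h$, with $r_1\in S^{m'-1}$. Since $\WF_h(B)\cap\operatorname{supp}(1-\chi)=\emptyset$, we also have $B-\Op_h(\chi b)\in h^\infty\Psi^{-\infty}_h$. Here we can take $\Op_h(b)=B$ up to $h^\infty\Psi^{-\infty}_h$ after absorbing the full symbol of $B$ into $b$. The key observation is that the remainder $r_1$ is also microsupported in $\operatorname{supp}\chi$, because every term of the composition expansion involves derivatives of $q_0$. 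We then set $q_1:=-\chi' r_1 a^{-1}$, where $\chi'$ equals $1$ on $\operatorname{supp}\chi$ and is still supported in $\Ell(A)$. Iterating produces $q_j\in S^{m'-m-j}$ such that
\[
\Op_h\Big(\sum_{j<N}h^jq_j\Big)A = B + h^N\Op_h(r_N),
\]
with $r_N\in S^{m'-N}$ microsupported in $\Ell(A)$. This step requires a nested family of cutoffs, all supported inside the elliptic set.

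\textbf{Step 3: summation.} Borel-sum the series to get $q\sim\sum h^jq_j$, and set $Q_L:=\Op_h(q)$. Then $Q_LA-B$ lies in $h^N\Psi^{m'-N}_h$ for every $N$, which is $\mathcal O_{\Psi^{-\infty}_h}(h^\infty)$. The right parametrix $Q_R$ is obtained in the same way using $A\,\Op_h(q)$. Alternatively, one can apply the left construction to the adjoints $A^*$ and $B^*$, which have the same elliptic set and wavefront set, and then take adjoints.

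\textbf{Main obstacle.} The delicate point is the uniform control at fiber infinity. One must work in $\overline{T^*M_0}$ with cutoffs that are homogeneous of degree $0$ near $\partial_\infty T^*M_0$, so that $\chi a^{-1}$ is a genuine symbol in $S^{-m}$. One must also ensure that $\WF_h(B)\cap\operatorname{supp}(1-\chi)=\emptyset$ indeed forces $\Op_h((1-\chi)b)\in h^\infty\Psi^{-\infty}_h$. This is the standard characterization of $\WF_h$ in \cite[Section E.2]{Dyatlov-Zworski-19}, which I would invoke directly.
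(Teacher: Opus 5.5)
Your proposal is correct and follows the standard argument. The paper does not prove this lemma itself; it lists it among standard facts of the semiclassical calculus (referring to Appendix E of Dyatlov--Zworski), and your iterative microlocal parametrix $q\sim\sum_j h^j q_j$, with cutoffs supported in $\Ell(A)$, Borel summation, and the adjoint trick for $Q_R$, is exactly the textbook proof behind it (the nested cutoffs are harmless, though a single $\chi'$ equal to $1$ near $\operatorname{supp}\chi$ already suffices, since every remainder satisfies $\WF_h(\Op_h(r_j))\subset\operatorname{supp}\chi$).
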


\begin{lemma}[Egorov's Theorem] \label{lemma:egorov}
    Let $P \in \Psi_{h}^{1}(M_{0})$ with real principal symbol $\sigma_{P}$, $(\Phi_{t})_{t \in \mathbb{R}}$ be the Hamiltonian flow on $T^* M_{0}$ generated by the Hamiltonian $\sigma_{P}$, and let $A \in \Psi_{h}^{k}(M_{0})$. Then
    \[ 
    B(t):=e^{i t P/h} A e^{-i t P/h} \in \Psi_{h}^{k}(M_0), \quad \sigma_{B(t)}(x, \xi)=\sigma_A\left(\Phi_t(x, \xi)\right), \quad \forall t \in \mathbb{R}.
    \]
\end{lemma}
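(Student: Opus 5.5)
The plan is to prove Egorov's theorem in the standard way, by differentiating in $t$ and correcting order by order. We first treat the symbol level. Define $B_0(t) := \Op_h(\sigma_A\circ\Phi_t)$. Since $\sigma_P$ is a real symbol of order $1$, its Hamiltonian flow $\Phi_t$ extends to the radial compactification $\overline{T^*M_0}$ and is homogeneous to leading order at fibre infinity. It follows that $a\circ\Phi_t\in S^k(M_0)$, with seminorms bounded uniformly for $t$ in compact intervals. The time-dependence is smooth, and the whole statement is for fixed $t$, so we may work on $t\in[-T,T]$.

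Next we compute the commutator error. We have
\[
\partial_t\bigl(e^{-itP/h}B_0(t)e^{itP/h}\bigr)=e^{-itP/h}\Bigl(\partial_tB_0(t)-\tfrac{i}{h}[P,B_0(t)]\Bigr)e^{itP/h}.
\]
By the composition calculus, $\tfrac{i}{h}[P,B_0(t)]\in\Psi^{k}_h$ has principal symbol $\{\sigma_P,\sigma_A\circ\Phi_t\}=H_{\sigma_P}(\sigma_A\circ\Phi_t)$. This matches $\partial_t(\sigma_A\circ\Phi_t)$, with the sign conventions adjusted to those of the statement. Hence the bracket equals $hE_1(t)$ with $E_1(t)\in\Psi^{k-1}_h$, smooth in $t$.

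The error is then removed iteratively. We solve the transport equations $\partial_t b_j - H_{\sigma_P}b_j = -e_j$ with $b_j(0)=0$. These are solved by Duhamel along $\Phi_t$, which gives $b_j(t)\in S^{k-j}$. The corrected operator $B_N(t)=\Op_h(\sigma_A\circ\Phi_t+\sum_{j=1}^N h^jb_j(t))$ then satisfies the same identity with error $\mathcal{O}(h^{N+1})$ in $\Psi^{k-N-1}_h$. Borel summation gives $\widetilde B(t)\in\Psi^k_h$ with error in $h^\infty\Psi^{-\infty}_h$.

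Finally, integrating from $0$ to $t$ shows that $e^{itP/h}Ae^{-itP/h}-\widetilde B(t)$ is an integral of conjugates of $\mathcal{O}(h^\infty)$ smoothing operators. These conjugations require $e^{\pm itP/h}$ to be bounded on $H^s_h$ uniformly for $|t|\le T$. When $P$ is essentially self-adjoint this is immediate on $L^2$. On $H^s_h$ it follows from a commutator/Gronwall estimate with $(1+h^2\Delta)^{s/2}$, since $[P,(1+h^2\Delta)^{s/2}]\in h\Psi^{s}_h$ owing to the real principal symbol. Thus the remainder is $\mathcal{O}(h^\infty)$ smoothing, so $B(t)\in\Psi^k_h$ with principal symbol $\sigma_A\circ\Phi_t$.

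I expect the main obstacle to be the uniform propagator bounds on $H^s_h$ for general $P$, which need not be self-adjoint since only its principal symbol is real. I would handle this via an energy estimate using $\mathrm{Im}\,P\in h\Psi^0_h$. A secondary issue is checking the symbol class of $\sigma_A\circ\Phi_t$ at fibre infinity. For $|t|\le T$, $\Phi_t$ has bounded derivatives in the $\langle\xi\rangle$-weighted sense, because $H_{\sigma_P}$ is a b-vector field on $\overline{T^*M_0}$.
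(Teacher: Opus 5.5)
The paper does not prove this lemma; it quotes it as a standard fact and defers to \cite{Zworski-12}, \cite[Appendix E]{Dyatlov-Zworski-19} and \cite{Lefeuvre-book}. Your argument is the standard parametrix proof of Egorov's theorem: solve $\partial_t\widetilde B=\tfrac{i}{h}[P,\widetilde B]$ to infinite order by transport equations along $\Phi_t$, Borel-sum, and compare with the exact conjugate by Duhamel. It works, but one step needs correcting.

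\emph{The initial condition.} You take $B_0(t)=\Op_h(\sigma_A\circ\Phi_t)$ and $b_j(0)=0$, so $\widetilde B(0)=\Op_h(\sigma_A)$. If $\sigma_A$ is only a principal symbol, then $A-\widetilde B(0)\in h\Psi^{k-1}_h$ is not negligible. Your final Duhamel step then actually gives
\[
B(t)-\widetilde B(t)=e^{itP/h}\bigl(A-\widetilde B(0)\bigr)e^{-itP/h}+\mathcal{O}(h^\infty)_{\Psi^{-\infty}_h},
\]
and the first term on the right is again a conjugated operator of the type you are trying to control.

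The fix is to start from the full symbol. Write $A=\Op_h(a)+R$ with $a\in S^k$ and $R\in h^\infty\Psi^{-\infty}_h$, which the paper's definition of $\Psi^k_h$ allows. Then put $b_0(t)=a\circ\Phi_t$; alternatively, keep $\sigma_A$ and impose $b_j(0)=a_j$, where $a\sim\sum_j h^ja_j$. Conjugate $R$ directly using your propagator bounds. Since $a-\sigma_A\in hS^{k-1}$ and composition with $\Phi_t$ preserves $S^{k-1}$, the principal symbol is still $\sigma_A\circ\Phi_t$.

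\emph{Two smaller points.}

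First, for a real $\sigma_P\in S^1$ that is not classical, the flow $\Phi_t$ need not extend continuously to fibre infinity. So you should prove $a\circ\Phi_t\in S^k$ directly. From $|\dot\xi|\le C\langle\xi\rangle$ you get $\langle\xi(t)\rangle\simeq\langle\xi\rangle$ for $|t|\le T$. An induction on the $\langle\xi\rangle$-weighted derivative bounds of $\Phi_t$ then gives the claim.

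Second, $[P,(1+h^2\Delta)^{s/2}]\in h\Psi^s_h$ holds for every $P\in\Psi^1_h$. The reality of $\sigma_P$ enters only in the $L^2$ energy estimate, through $P-P^*\in h\Psi^0_h$, which is exactly what your last paragraph proposes.

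\emph{Comparison with the other classical route.} That route compares $B(t)$ with $\Op_h(\sigma_A\circ\Phi_t)$ only to leading order in $L^2$, and then shows $B(t)$ is pseudodifferential via Beals' commutator characterisation. Your construction avoids Beals' theorem and yields the full symbol expansion. The price is that you need $h$-uniform bounds on $e^{\pm itP/h}$ on every $H^s_h$. Those bounds are precisely what makes the Duhamel remainder lie in $h^\infty\Psi^{-\infty}_h$ as the paper defines it, namely $\mathcal{O}(h^N)$ from $H^{-N}_h$ to $H^N_h$.
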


In what follows, we let $\btheta \in \mathrm{U}(1)^d$ and consider the first order differential operator $X_{\btheta}$ acting on $C^\infty(M_0)$ which is introduced in \S\ref{section:abelian-extension-anosov} below. Its principal symbol agrees with the one of the flow generator $X_{M_{0}}$, i.e. $X_{\btheta} - X_{M_{0}}$ is of order zero (and imaginary-valued).

\begin{lemma}[Propagation of Singularities] \label{lemma:propagation-singularities}
    Let $A,A' \in \Psi_{h}^{0}(M_{0})$ be such that the following holds: for all $(x, \xi) \in \WF_{h}(A)$, there exists $t \in \mathbb{R}$ such that $\Phi_t(x, \xi) \in \Ell (A')$. Then, there exists $B \in \Psi_{h}^{0}(M_{0})$, such that for all $s \in \R$, $N>0$, $\btheta \in \mathrm{U}(1)^d$, there exists $C>0$ such that for all $f_{h} \in C^{\infty}(M_{0})$, the following holds:
    \[
    \|A f_{h}\|_{H_{h}^{s}} \leq C\left(\|BX_{\btheta}f_{h}\|_{H_{h}^{s}}+\|A' f_{h}\|_{H_{h}^{s}}+h^{N}\|f_{h}\|_{H_{h}^{-N}}\right)
    \]
    Moreover, if $f_{h}$ is merely a distribution and the right-hand side is finite, then $A f_{h} \in H_{h}^{s}$ and the same estimate holds.
\end{lemma}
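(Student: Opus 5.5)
This is the standard semiclassical propagation estimate for the real principal type operator $\frac{h}{i}X_{\btheta}$, and I would prove it by a positive commutator / Egorov argument. Write $P := \frac{h}{i}X_{\btheta} \in \Psi^1_h(M_0)$. Its principal symbol is $\sigma_P(x,\xi) = \xi(X_{M_0}(x))$, which is real. The zeroth-order, imaginary-valued perturbation $X_{\btheta}-X_{M_0}$ only contributes an $\mathcal{O}(h)$ term at the level of $P$.

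The Hamiltonian flow of $\sigma_P$ is the symplectic lift $\Phi_t(x,\xi) = (\varphi_t^0 x, (d\varphi_t^0(x))^{-\top}\xi)$. This flow is homogeneous in $\xi$, so it extends to the radial compactification. I would first reduce to a compact time interval. By compactness of $\WF_h(A)$ in $\overline{T^*M_0}$ and openness of $\Ell(A')$, there is $T>0$ such that every point of $\WF_h(A)$ reaches $\Ell(A')$ at some time $t$ with $|t|\le T$. Splitting $A$ with a microlocal partition of unity, I may treat separately the pieces that reach $\Ell(A')$ forward in time and those that reach it backward; assume forward.

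The core identity is Duhamel's formula for the propagator $e^{-itP/h} = e^{-tX_{\btheta}}$:
\[
f = e^{-itP/h}f + \frac{i}{h}\int_0^t e^{-i(t-s)P/h}\,P f\,\dd s = e^{-tX_{\btheta}}f + \int_0^t e^{-(t-s)X_{\btheta}} X_{\btheta}f\,\dd s .
\]
Applying $A$ gives
\[
Af = A e^{-tX_{\btheta}}f + \int_0^t A e^{-(t-s)X_{\btheta}} X_{\btheta} f\,\dd s .
\]
Since $e^{-tX_{\btheta}}$ is a transfer operator (a pullback by $\varphi_{-t}^0$ times a smooth unimodular factor), it is bounded on $H^s_h$ uniformly for $|t|\le T$. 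By Egorov (Lemma \ref{lemma:egorov}), $A e^{-tX_{\btheta}} = e^{-tX_{\btheta}}A_t$, where $A_t \in \Psi^0_h$ has $\WF_h(A_t) = \Phi_{t}^{-1}$-image of $\WF_h(A)$, with the sign convention chosen so that it lies in $\Ell(A')$.

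To handle the fact that the exit time depends on the point, I would average in time rather than pick a single $t$. Choose $\chi\in C^\infty_{\comp}((0,T+1))$ such that, for each point, the set of $t$ with $\Phi_t(x,\xi)\in\Ell(A')$ carries positive $\chi$-mass. This can be arranged after a further microlocal partition of $A$ into small pieces, each with a nearly constant exit time. Then on each piece one uses a single $t$, and the elliptic estimate (Lemma \ref{lemma:elliptic-rgularity}) gives
\[
A_t = Q A' + \mathcal{O}(h^\infty)
\]
with $Q\in\Psi^0_h$. This yields
\[
\|A e^{-tX_{\btheta}} f\|_{H^s_h} \le C\|A'f\|_{H^s_h} + Ch^N\|f\|_{H^{-N}_h}.
\]

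For the Duhamel integral, I would choose $B\in\Psi^0_h$ elliptic on $\bigcup_{|s|\le T+1}\Phi_s(\WF_h(A))$. Egorov and elliptic regularity then give
\[
A e^{-(t-s)X_{\btheta}} = A e^{-(t-s)X_{\btheta}} B + \mathcal{O}(h^\infty)
\]
uniformly in $s\in[0,t]$. Hence the integral is bounded by $C\|BX_{\btheta}f\|_{H^s_h} + Ch^N\|f\|_{H^{-N}_h}$. The constructed $B$ depends only on $A$, $A'$ and the flow, not on $\btheta$ or $s$, which matches the quantifier order in the statement. The constants may depend on $\btheta$ through the zeroth-order term.

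I expect the main obstacle to be bookkeeping rather than conceptual. First, Egorov must hold uniformly for $t$ in a compact interval, with $\mathcal{O}(h^\infty)$ remainders that are uniform in $t$. Second, the conjugation involves the nonunitary zeroth-order potential of $X_{\btheta}$ (imaginary, so in fact unitary up to the volume factor), and one must check that it does not alter the wavefront sets. Third, the estimate has to be extended to points at fiber infinity; this is done by working with symbols in $S^0$ that are homogeneous near infinity, where the flow is still defined.

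Finally, the distributional statement follows by regularization. Replace $f$ by $J_\eps f$, where $J_\eps = \Op_h\bigl((1+\eps|\xi|^2)^{-M}\bigr)$ with $M$ large, which is uniformly bounded in $\Psi^0_h$. The commutator $[X_{\btheta},J_\eps]$ is uniformly in $\Psi^0_h$, so it is controlled by the $h^N\|f\|_{H^{-N}_h}$ term and by $\|A'f\|$ after enlarging $B$. Applying the estimate to $J_\eps f$ and letting $\eps\to0$, weak compactness gives $Af\in H^s_h$ with the same bound.
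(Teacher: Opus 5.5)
The paper gives no proof of this lemma. It defers to the general propagation-of-singularities theorem for real-principal-type operators in Dyatlov--Zworski and Lefeuvre's book, and adds a compactness/contradiction argument for uniformity in $\btheta$.

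Your route is different and more elementary. You use that $e^{-tX_{\btheta}}=m_t\,(\varphi^0_{-t})^*$ with $m_t$ smooth and unimodular. Egorov then becomes an exact conjugation by a diffeomorphism and a multiplier, and the estimate follows from three ingredients:
\begin{itemize}
\item the Duhamel identity;
\item a finite microlocal partition of $\WF_h(A)$ into pieces, each with a single exit time into $\Ell(A')$;
\item elliptic parametrices.
\end{itemize}
This works. One cosmetic correction: in the integrand you should write $A_{t-s}=Q_{t-s}B+\mathcal{O}(h^\infty)$ rather than $A e^{-(t-s)X_{\btheta}}=Ae^{-(t-s)X_{\btheta}}B$, unless you choose $\sigma_B\equiv 1$ on the flowout.

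Your route also has a side benefit. Uniformity in $\btheta$ over a good open set is visible directly from the smooth dependence of $m_t$ on $\btheta$, with $B$ independent of $\btheta$, so the paper's contradiction argument is not needed. What the cited general argument buys in exchange is applicability to any $P$ with real principal symbol, not only to operators whose propagator is an explicit transfer operator.

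The one step that fails is your regularization for distributions. You claim that $B[X_{\btheta},J_\eps]f$ is controlled by $h^N\|f\|_{H^{-N}_h}$ and $\|A'f\|_{H^s_h}$. It is not.
\begin{itemize}
\item The commutator is bounded in $\Psi^0_h$ uniformly in $\eps$, but it has no microlocal smallness.
\item Its symbol is $-M\,j_\eps\,\eps H_p(|\xi|^2)/(1+\eps|\xi|^2)$ with $j_\eps=(1+\eps|\xi|^2)^{-M}$. That is, it is $J_\eps$ times an order-zero factor that is not small near fibre infinity.
\item Bounding $\|B[X_{\btheta},J_\eps]f\|_{H^s_h}$ uniformly in $\eps$ therefore requires exactly the $H^s$ regularity of $f$ on $\WF_h(B)$ that you are trying to prove.
\end{itemize}
This is why regularization arguments in positive-commutator proofs proceed inductively, gaining a fraction of a derivative at each step.

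In your framework none of this is needed. The Duhamel identity and the conjugation $Ae^{-\tau X_{\btheta}}=e^{-\tau X_{\btheta}}A_\tau$ hold exactly on $\mathcal{D}'(M_0)$. So for $f\in H^{-N}$ with finite right-hand side, each term lies in $H^s_h$ with the stated bound. For the integral term, the integrand is uniformly bounded in $H^s_h$ and continuous in a weaker Sobolev norm. Its Riemann sums are therefore bounded in $H^s_h$, and the integral lies in $H^s_h$ with the same bound by weak compactness.
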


In what follows, we note that the so-called \emph{thresholds} $\omega_\pm(X_{\btheta})$ in the radial estimates are zero since $X_{\btheta}$ is skew-adjoint (see \cite[Remark 11.1.3]{Lefeuvre-book} for the definition).

\begin{lemma}[Source/Sink Estimates] \label{lemma:source-sink-estimates}
    \begin{enumerate}[label=\roman*),itemsep=4pt]
        \item \emph{Source estimate.} For all $s>0$, $N \gg 0$ large enough, for all $A \in \Psi_{h}^{0}(M_{0})$ with wavefront set near $\overline{E_{s}^{*}} \cap \partial_{\infty}T^{*}M_0$, there exists $B \in \Psi_{h}^{0}(M_{0})$ with wavefront set and elliptic on a slightly larger neighborhood than $\WF_{h}(A)$, and a constant $C>0$ such that for all $h>0$, $\btheta \in \mathrm{U}(1)^d$, and for all families $f_{h} \in C^{\infty}(M_{0})$,
        \[
        \|A f_{h}\|_{H_{h}^{s}} \leq C\left(\|BX_{\btheta}f_{h}\|_{H_{h}^{s}}+h^{N}\|f_{h}\|_{H_{h}^{-N}}\right).
        \]
        If $f_{h} \in \mathcal{D}'(M_{0})$ is merely a family of distributions such that $BX_{\btheta} f_{h} \in H_{h}^{s_{0}}$ for some $s_{0}>0$, then $Af_{h} \in H_{h}^{s}$ for all $s>0$ and the previous estimate holds.
        
        \item \emph{Sink estimate.} For all $s<0$, $N \gg 0$ large enough, for all $A \in \Psi_{h}^{0}(M_{0})$ with wavefront set near $\overline{E_{u}^{*}} \cap \partial_{\infty}T^{*}M_0$, there exists $B \in \Psi_{h}^{0}(M_{0})$ with wavefront set and elliptic on a slightly larger neighborhood than $\WF_{h}(A)$, $A' \in \Psi_{h}^{0}(M_{0})$ such that $\WF_{h}(A') \subset \WF_{h}(B)$ and $\WF_{h}(A') \cap \overline{E_{u}^{*}} \cap \partial_{\infty}T^{*}M=\emptyset$, and a constant $C>0$ such that for all $h>0$, $\btheta \in \mathrm{U}(1)^d$, for all families $f_{h} \in C^{\infty}(M_{0})$,
        \[
        \|A f_{h}\|_{H_{h}^{s}} \leq C\left(\|BX_{\btheta} f_{h}\|_{H_h^s}+\|A' f_{h}\|_{H_{h}^{s}}+h^{N}\|f_{h}\|_{H_{h}^{-N}}\right) .
        \]
        If $f_{h} \in \mathcal{D}'(M_{0})$ is merely a family of distributions such that $BX_{\btheta} f_{h} \in H_{h}^{s_{0}}$ and $A' f_{h} \in H_{h}^{s_0}$ for some $s_{0}<0$, then $A f_{h} \in H_{h}^{s}$ for all $s<0$ and the previous estimate holds.
    \end{enumerate}
\end{lemma}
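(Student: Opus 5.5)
The plan is to reduce the statement to the standard high-frequency radial estimates of Dyatlov--Zworski \cite[Theorems E.52 and E.54]{Dyatlov-Zworski-19} (see also \cite{Lefeuvre-book}), applied to the semiclassical operator $P_{\btheta} := -ihX_{\btheta} \in \Psi^1_h(M_0)$. The only new input is the uniformity of the constants in $\btheta \in \mathrm{U}(1)^d$.

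\textbf{Step 1: structure of $X_{\btheta}$.} We write $X_{\btheta} = X_{M_0} + iV_{\btheta}$, where $V_{\btheta} \in C^\infty(M_0,\R)$ is the zeroth-order term. Over a good open set $U$ one can take $V_{\btheta} = \eta_{\btheta}(X_{M_0})$, with $\eta_{\btheta}$ the smooth family of Proposition \ref{prop:contractible}. We cover $\mathrm{U}(1)^d$ by finitely many good sets $U_1,\dotsc,U_m$ whose closures lie in contractible open sets. Then on each $\overline{U_j}$ the family $V_{\btheta}$ is bounded in every $C^k(M_0)$ norm, uniformly in $\btheta$.

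Consequently the family $P_{\btheta}$ has the following properties:
\begin{itemize}
\item the common real principal symbol is $p(x,\xi) = \xi(X_{M_0}(x))$, independent of $\btheta$;
\item it is formally self-adjoint on $L^2(M_0,\vol_{M_0})$, because $X_{M_0}$ preserves $\vol_{M_0}$ and $iV_{\btheta}$ is skew-adjoint;
\item its lower-order symbols lie in a bounded subset of $S^0$.
\end{itemize}
In particular $\operatorname{Im} P_{\btheta} = 0$, so the threshold quantities $\omega_{\pm}$ entering the radial estimates vanish. The source estimate therefore holds for every $s>0$, and the sink estimate for every $s<0$.

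\textbf{Step 2: the positive commutator argument.} We fix a neighbourhood of $\overline{E_s^*}\cap\partial_\infty T^*M_0$, which is a radial source for the Hamiltonian flow $e^{tH_p}$ (the symplectic lift of $\varphi^0_t$) on the compactified cotangent bundle. We take the usual escape function $m(x,\xi)$ of the form $\chi \cdot \langle\xi\rangle^{2s}$, with $\chi$ cutting off near the source. We then compute
\[
\operatorname{Im}\langle P_{\btheta}\Op_h(m)f,f\rangle = \tfrac{h}{2}\langle \Op_h(H_pm)f,f\rangle + h\langle \Op_h(r_{\btheta})f,f\rangle + \mc{O}(h^2)\|f\|^2,
\]
where $r_{\btheta}$ comes only from $\operatorname{Im}\sigma_{\mathrm{sub}}(P_{\btheta}) = 0$, so the threshold term vanishes. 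Since $H_pm$ has a definite sign near the source, the sharp Gårding inequality together with elliptic regularity (Lemma \ref{lemma:elliptic-rgularity}) yields
\[
\|Af\|_{H^s_h} \leq C\left(\|BX_{\btheta}f\|_{H^s_h} + h^N\|f\|_{H^{-N}_h}\right).
\]
The sink case is the same argument with the sign reversed. There the bounds on the edge of the cutoff region cannot be absorbed; they are controlled through the control term $A'$, which is obtained by propagating (Lemma \ref{lemma:propagation-singularities}) from outside $\overline{E_u^*}$.

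\textbf{Step 3: uniformity and regularity upgrade.} The constants produced in Step 2 depend only on finitely many seminorms of the full symbol of $P_{\btheta}$. By Step 1 these seminorms are uniform in $\btheta \in \overline{U_j}$, and taking the maximum over $j$ gives a constant independent of $\btheta$. For distributional $f_h$, we argue as in Dyatlov--Zworski:
\begin{itemize}
\item apply the estimate to $(1+\eps h^2\Delta_{g_0})^{-k}f_h$, with commutator errors uniform in $\eps$;
\item let $\eps\to0$.
\end{itemize}
In the source case, the a priori assumption $BX_{\btheta}f_h \in H^{s_0}_h$ with $s_0>0$ is what starts this bootstrap. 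I expect this regularization step to be the main technical obstacle: the argument must remain uniform in $\btheta$ and must work for all $s>0$, which is possible only because the threshold is exactly $0$. The point needing care is that regularizing does not destroy the sign of the commutator.
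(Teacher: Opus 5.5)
Your treatment of smooth $f_h$ follows the paper's route. The paper also just invokes the standard radial estimates (Dyatlov--Zworski, Lefeuvre's book) for $-ihX_{\btheta}$, noting that the thresholds vanish because $X_{\btheta}$ is skew-adjoint.

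Where you differ is in how uniformity in $\btheta$ is obtained. The paper argues by contradiction: if the constants blew up along $\btheta_n \to \btheta$, then $X_{\btheta_n}-X_{\btheta}$ is a zeroth-order multiplication operator that is $o(1)$ in every $C^N$ norm, so it can be absorbed, contradicting the estimate for $X_{\btheta}$. You instead make three observations:
\begin{itemize}
\item the principal symbol is $\btheta$-independent, hence so are the escape function, $B$, $A'$ and all dynamical constants;
\item each $-ihX_{\btheta}$ is symmetric, so the subprincipal contribution to the commutator vanishes for every $\btheta$;
\item the remaining constants depend on finitely many symbol seminorms, which are uniform on the closures of finitely many good sets.
\end{itemize}
Your version is more direct and shows why no threshold shift can occur. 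The paper's argument is shorter, but its absorption step implicitly relies on the same commutator structure. The perturbation $B(X_{\btheta_n}-X_{\btheta})f_h$ lives on $\WF_h(B)$, which is larger than $\WF_h(A)$. It can only be absorbed because, in the source case, the positive commutator controls $f_h$ on the whole support of the commutant.

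The step that fails is the distributional upgrade in Step 3 for the source estimate. A hypothesis on $BX_{\btheta}f_h$ cannot start the bootstrap. Whether you regularize the commutant by $(1+\eps|\xi|)^{-2\delta}$ or $f_h$ by $(1+\eps h^2\Delta_{g_0})^{-k}$, the regularizer behaves badly near $\overline{E_s^*}\cap\partial_\infty T^*M_0$. There its $H_p$-derivative has the unfavourable sign, has the same homogeneity as the main term, and is proportional to the regularization order. So "commutator errors uniform in $\eps$" are not small: they effectively lower $s$ by the regularization order. The limit $\eps\to 0$ can therefore only be taken if $f_h$ is a priori in $H^{s_1}_h$ microlocally near the source, for some $s_1$ above the threshold. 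That is, you need $Bf_h\in H^{s_1}_h$ with $s_1>0$, which is exactly the a priori hypothesis in the Dyatlov--Zworski source estimate.

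This hypothesis cannot be dropped. Take $\btheta=\mathbf{0}$ and $\delta_\gamma$ the invariant measure on a periodic orbit $\gamma$. Then $X_{M_0}\delta_\gamma=0$, yet $\WF(\delta_\gamma)=N^*\gamma\setminus 0$ contains $E_s^*|_\gamma\setminus 0$. So for $A$ elliptic at such a point of fibre infinity, $A\delta_\gamma\notin H^s_h$ for any $s>0$. The distributional clause as literally stated is therefore false. Your Step 3 should instead assume $Bf_h\in H^{s_1}_h$ for some $s_1>0$, together with $BX_{\btheta}f_h\in H^s_h$.

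This correction is harmless for the paper. It only applies the source estimate to $f_h\in\mathcal{H}^s_{h,+}$, which is microlocally $H^s_h$ with $s>0$ near $E_s^*$.
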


We refer to \cite{Dyatlov-Zworski-16}, \cite[Appendix E]{Dyatlov-Zworski-19}, or \cite{Lefeuvre-book} for a proof. We note that the constants $C > 0$ in the previous statements can be taken to be uniform with respect to $\btheta$. Indeed, if not, arguing by contradiction there is a sequence of families $(f_{n, h})_{n, h > 0}$ which satisfies the reversed inequality with the values of $C = n$ and for the values of $\btheta_n \to_{n \to \infty} \btheta$. Suitably rescaling, and using that $X_{\btheta_n} - X_{\btheta}$ is an operator of order zero with $C^N(M_0)$ norm $o_N(1)$ for any $N > 0$, this contradicts the estimate for $X_{\btheta}$.

\subsection{Borel-Weil calculus} \label{ssection:bw-calculus}

The Borel-Weil calculus was introduced in \cite{Cekic-Lefeuvre-24} as a key tool to study $G$-equivariant (pseudo)differential operators on $G$-principal bundles $P_0 \to M_0$, when $G$ is a compact Lie group. Given a unitary representation $\rho \colon G \to \mathrm{U}(V_\rho)$, we denote by $E_\rho := M_0 \times_\rho V_\rho$ the corresponding associated vector bundle, defined as the set of equivalence classes $[z,\xi] \sim [z.g, \rho(g)^{-1}\xi]$, where $z \in P_0, \xi \in V_\rho$. 

\subsubsection{General description} Before detailing the construction, let us first explain the general idea of the Borel-Weil semiclassical calculus. Let $\mathbf{Q} \colon C^\infty(P_0) \to C^\infty(P_0)$ be a $G$-equivariant differential operator acting on functions on $P_0$. That is $\mathbf{Q}$ commutes with the right-action of $G$, i.e. $R_g^*\mathbf{Q}=\mathbf{Q}R_g^*$ for all $g \in G$. By the Peter-Weil theorem, any function $f \in C^\infty(P_0)$ can be decomposed into Fourier modes in each fiber of the principal bundle. Namely to any such $f$, it is possible to associate its Fourier transform
\begin{equation}
    \label{equation:fourier-decomposition}
\mc{F}(f) := \bigoplus_{\rho \in \widehat{G}} f_\rho,
\end{equation}
where the sum runs over $\widehat{G}$, the set all unitary irreducible representations of $G$, $f_\rho \in C^\infty(M_0, E_\rho^{\oplus \dim E_\rho})$ and $E_\rho \to M_{0}$ is the corresponding associated vector bundle. 

As $\mathbf{Q}$ is equivariant, it acts diagonally on the above decomposition, that is, it induces operators $Q_\rho \colon C^\infty(M_0,E_\rho) \to C^\infty(M_0,E_\rho)$. In certain problems, one is interested in the behaviour of the operator $Q_\rho$ as $\rho$ ``tends to infinity'' in the space of irreducible representations. However, this raises a number of issues, the first one being that the rank of $E_\rho$ diverges to $+\infty$ as $\rho$ ``increases'' unless $G$ is Abelian, thus making it very hard to keep track effectively of the behaviour of the operator.

A convenient solution is to realize geometrically $E_\rho$, using the Borel-Weil theorem, as a space of holomorphic sections of line bundles defined over a space associated with $P_0$ called the \emph{flag bundle}. More precisely, $V_\rho$ can be explicitly realized as $V_\rho \simeq H^0(G/T, J_\rho)$, where $T < G$ is a maximal torus and $J_\rho$ is a holomorphic line bundle induced by the representation. A section of $E_\rho \to M_0$ can then be identified with a section of the flag bundle $F_0 := P_0/T \to M_0$ with values in a certain complex line bundle $L_\rho$ which is \emph{fiberwise holomorphic}, that is holomorphic in restriction to every fiber $P_0(x_0)/T \simeq G/T$ where $x_0 \in M$. Here, $L_\rho$ is a complex line bundle whose restriction to $P_0(x_0)/T \simeq G/T$ is holomorphically equivalent to $J_\rho \to G/T$.

\subsubsection{Lie-theoretic preliminaries} \label{sssection:lie} See \cite[Section 2.1.2]{Cekic-Lefeuvre-24} for further discussion. Let $G$ be a compact Lie group and $\widehat{G}$ denote the set of all irreducible representations. The Lie algebra $\fg$ decomposes as
\[
\fg = \mathfrak{z}(\fg) \oplus \fg' = \mathfrak{z}(\fg) \oplus [\fg,\fg],
\]
where $\mathfrak{z}(\fg)$ denotes the centre of $\fg$. Let $T < G$ be a maximal torus. Its Lie algebra then splits as
\[
\mathfrak{t} = \mathfrak{z}(\fg) \oplus \mathfrak{t}',
\]
where $\mathfrak{t}' := \mathfrak{t} \cap \fg'$. Let $\mathfrak{a}^{\mathrm{ss}}_{+} \subset (i\mathfrak{t}')^{*}$ be the polyhedral convex positive cone spanned by the set of positive roots $\Delta^{+}(\mathfrak{g}_{\mathbb{C}})$ of the group
(the superscript $\mathrm{ss}$ stands for semisimple, and $(i\bullet)^*$ denotes real-linear functionals from $\bullet$ to $i\mathbb{R}$), and set
\[
\mathfrak{a}_{+} := (i\mathfrak{z}(\mathfrak{g}))^{*} \oplus \mathfrak{a}^{\mathrm{ss}}_{+}
\subset (i\mathfrak{t})^{*},
\]
the positive Weyl chamber.

We introduce
\[
a := \dim \mathfrak{z}(\fg), \qquad b:= \dim \mathfrak{t}', \qquad d := a+b = \mathrm{rank}(G).
\]
The set $\mc{A} \subset \mathfrak{t}$ of \emph{analytically integral weights} corresponds to
\[
\mc{A} := \{\alpha \in (i\mathfrak{t})^* ~:~ \alpha(H) \in 2\pi i \Z, \forall H \in \mathfrak{t}, \exp(H)=1\}.
\]
Let $\{\lambda_1,\dots,\lambda_a\}$ be a system of generators of $\mathfrak{z}(\mathfrak{g})$, chosen such that any $\alpha \in (i\mathfrak{z}(\mathfrak{g}))^{*} \cap \mathcal{A}$ can be written as
\[
\alpha = \sum_{i=1}^{a} k_i \lambda_i , \qquad k_i \in \mathbb{Z}.
\]
Let $\{\lambda_{a+1},\dots,\lambda_d\}$ be a system of generators of
\(\mathfrak{a}^{\mathrm{ss}}_{+} \cap \mathcal{A}\), chosen such that any
\(\alpha \in \mathfrak{a}^{\mathrm{ss}}_{+} \cap \mathcal{A}\) can be written as
\[
\alpha = \sum_{i=a+1}^{d} k_i \lambda_i , \qquad k_i \in \mathbb{Z}_{\geq 0}.
\]
Finally, we can consider the following surjective map
\[
\phi \colon \mathbb{Z}^{a} \times \mathbb{Z}_{\ge 0}^{\,d-a} \longrightarrow \mathfrak{a}_{+} \cap \mathcal{A},
\qquad
\bk \longmapsto \sum_{i=1}^{d} k_i \lambda_i.
\]
The map $\phi$ may fail to be injective. To any element $\phi(\bk)$, one can associate the (unique) irreducible representation of $G$ with highest weight is $\phi(\bk)$. Hence, $\widehat{G}$ is parametrized by $\mathbb{Z}^{a} \times \mathbb{Z}_{\ge 0}^{b}/\sim$, where $\bk \sim \bk'$ if and only if $\phi(\bk)=\phi(\bk')$.

The homogeneous space $G/T$ is called the \emph{flag manifold} associated with $G$; it is equipped with a natural complex structure. The Borel-Weil theorem asserts that the representation with highest weight $\phi(\bk)$ can be realized as
\[
H^0(G/T, \mathbf{J}^{\otimes \mathbf{k}}), \qquad \mathbf{J}^{\otimes \mathbf{k}} := J_1^{\otimes k_1} \otimes \dotsb \otimes J_d^{\otimes k_d}, 
\]
where $J_i \to G/T$ is a holomorphic Hermitian line bundle over $G/T$. For $i=1,\dotsc,a$, $J_i \to G/T$ is trivial. The left action of $G$ on $G/T$ lifts naturally to a fiberwise (isometric) action on $H^0(G/T,\mathbf{J}^{\otimes \mathbf{k}})$. This left action corresponds precisely to the irreducible representation of highest weight $\phi(\bk)$.

We emphasize that if there exists $k_i \neq 0$ for some $i \in \{a+1,\dotsc,d\}$, then $\mathbf{J}^{\otimes \mathbf{k}} \to G/T$ is not topologically trivial (see \cite[Proposition 2.1.8]{Cekic-Lefeuvre-24}). If $k_{a+1}=\dotsb=k_d=0$, then $\mathbf{J}^{\otimes \mathbf{k}} \to G/T$ is trivial and the space of holomorphic sections $H^0(G/T, \mathbf{J}^{\otimes \mathbf{k}})$ reduces to $\C$.

\subsubsection{Definition} The previous construction can be extended to the principal bundle $P_0 \to M_0$ as each fiber is isomorphic to $G$. We let $F_0 := P_0/T$ be the flag bundle over $M_0$ and $\mathbf{L}^{\otimes \bk} \to F_0$ be the line bundles obtained by the above construction. Each fiber $P_0(x)/T \simeq G/T$ for $x \in M$ is a complex manifold. In addition, the line bundles $\mathbf{L}^{\otimes \bk} \to F_0$ are holomorphic in restriction to each fiber. We can thus define $C^\infty_{\hol}(F_0,\Lk)$ as the space of \emph{fiberwise holomorphic} sections on $F_0$ with values in $\Lk$, that is such that the restriction to each fiber $P_0(x)/T$ is holomorphic. There is also an $L^2$-orthogonal projector onto the space of fiberwise holomorphic sections $C^\infty_{\hol}(F_0,\Lk)$
\[
\Pi_{\bk} \colon C^\infty(F_0,\Lk) \to C^\infty_{\hol}(F_0,\Lk), \qquad L^2(F_0,\Lk) \to L^2_{\hol}(F_0,\Lk).
\]

The Fourier transform \eqref{equation:fourier-decomposition} should then be seen as a map
\begin{equation}
    \label{equation:fourier-group}
\mc{F} \colon C^\infty(P_0) \to \bigoplus_{\mathbb{Z}^{a} \times \mathbb{Z}_{\ge 0}^{b}/\sim} C^\infty_{\hol}(F_0,\Lk)^{\oplus d_{\bk}},
\end{equation}
where $d_{\bk}$ denotes the dimension of the corresponding irreducible representation with highest weight $\phi(\bk)$. It is an isometry for the $L^2$-scalar product.

We now further assume that $P_0$ is equipped with a $G$-equivariant connection. This connection induces an Ehresmann connection on $F_0$, that is a splitting
\[
TF_0 = \HH \oplus \V,
\]
where $\V = \ker d\pi_{F_0}$ is the vertical fiber, $p_{F_0} \colon F_0 \to M_0$ is the footpoint projection, and $\HH$ is the horizontal space provided by the connection. We also introduce $\HH^*,\V^* \subset T^*F_0$ such that $\HH^*(\V)=0=\V^*(\HH)$. The map $dp_{F_0} \colon \HH \to TM_0$ is an isomorphism and therefore
\begin{equation}
\label{equation:isomorphism-h}
    dp_{F_0}^\top \colon T^*M_0 \to \HH^*
\end{equation}
is an isomorphism as well.

In addition, the $G$-equivariant connection on $P_0$ also induces a connection on each line bundle $\Lk \to F_0$. We are therefore in the framework of twisted quantization where one can consider the class $\Psi^m_{h,\bk}(F_0,\mathbf{L})$ of twisted semiclassical pseudodifferential operators originally introduced by Charles \cite{Charles-00} for tensor powers of a single line bundle. See also \cite[Section 3.2]{Cekic-Lefeuvre-24} for a complete discussion. An element $\mathbf{A} \in \Psi^m_{h,\bk}(F_0,\mathbf{L})$ is a \emph{family} of operators 
\[
\mathbf{A}_{h,\bk} \colon C^\infty(F_0, \Lk) \to C^\infty(F_0,\Lk)
\]
defined for $h > 0$ and $\bk \in \Z^{a} \times \Z^b_{\geq 0}$ such that $h|\bk|\leq 1$, with the following property. For any contractible open subset $U \subset F_0$, if $s_i \in C^\infty(F_0,L_i)$ are local trivializing sections of pointwise norm $|s_i| = 1$, then there exists a family of standard $h$-semiclassical pseudodifferential operators $A_{h,\bk} \in \Psi^m_h(U)$ such that for all $f \in C^\infty_{\comp}(U)$:
\[
\mathbf{A}_{h,\bk}(f \mathbf{s}^{\bk}) = A_{h,\bk}(f) \mathbf{s}^{\bk},
\]
where equality holds on $U$.

To introduce the Borel-Weil calculus, we need a preliminary notion:

\begin{definition}[Admissible operators]
Let $\mathbf{A} \in \Psi^m_{h,\bk}(F_0,\mathbf{L})$. The operator is \emph{admissible} if $[\mathbf{A},\Pi_{\bk}] \in h^\infty \Psi^{-\infty}_{h,\bk}(F_0,\mathbf{L})$.
\end{definition}

This means that the family of operators $\mathbf{A}_{h,\bk}$ preserve fiberwise holomorphic sections modulo negligible remainders. This leads to the following definition:

\begin{definition}[Borel-Weil operators]
    For $m \in \R$, the class of Borel-Weil semiclassical operators of order $m$ on $P_0$ is defined as
    \[
    \Psi^m_{h,\mathrm{BW}}(P_0) := \{(\Pi_{\bk} \mathbf{A} \Pi_{\bk})_{h > 0, \bk \in \Z^a \times \Z^b_{\geq 0}, h|\bk|\leq 1} ~:~ \mathbf{A} \in \Psi^m_{h,\bk}(F_0,\mathbf{L}) \text{ is admissible}\}.
    \]
\end{definition}

It can be verified that $\Psi^\bullet_{h,\mathrm{BW}}(P_0)$ is indeed an algebra, see \cite[Lemma 3.3.11]{Cekic-Lefeuvre-24}. All $G$-equivariant differential operators naturally belong to $\Psi^m_{h,\mathrm{BW}}(P_0)$. The phase space corresponding to this quantization is $\HH^* \subset T^*F_0$. Observe that when the bundle $P_0 \to M_0$ is trivial, that is $P_0 \simeq M_0 \times G$, the phase space is isomorphic to $\HH^* \simeq T^*M_0 \times G/T$. One should think of operators in the Borel-Weil calculus as standard semiclassical pseudodifferential operators in the base variable (i.e. on $M_0$) \emph{with values in} Toeplitz operators on the complex manifold $G/T$. This statement can be made precise, see \cite[Theorem 3.3.4]{Cekic-Lefeuvre-24}.

To any $\mathbf{A} \in \Psi^m_{h,\mathrm{BW}}(P_0)$, one can associate a principal symbol $\sigma_{\mathbf{A}} \in S^m(\HH^*)$. This is nothing but the restriction of the principal symbol of $\mathbf{A}$ to $\HH^*$, where $\mathbf{A}$ is seen as an operator in the twisted algebra $\Psi^m_{h,\bk}(F_0,\mathbf{L})$. Conversely, given $a \in S^m(\HH^*)$, there is a well-defined quantization procedure $\mathbf{A} := \Op_h^{\mathrm{BW}}(a) \in \Psi^m_{h,\mathrm{BW}}(P_0)$ constructing an operator in the Borel-Weil calculus such that $\sigma_{\mathbf{A}} = a$. All standard semiclassical results hold for Borel-Weil operators (invertibility of elliptic operators, propagation of singularities, Calderon-Vaillancourt, etc.), see \cite[Section 3.3]{Cekic-Lefeuvre-24} for further discussion. Finally, we note that the above construction can be carried out more generally for operators acting on sections of a vector bundle, see \cite[Remark 3.3.17]{Cekic-Lefeuvre-24}.

\section{Abelian extensions of Anosov flows}

\label{section:abelian-extension-anosov}

In this section, we prove the decay of correlations for Abelian extensions of Anosov flows under the assumption that $d\alpha \neq 0$ (Theorem \ref{theorem:main1}).

\subsection{Preliminaries}

Recall that $X_M$ denotes the generator of the lifted flow on $M$ and that we denote by $X_{M_0}$ the generator of the flow on $M_0$.

\subsubsection{Resolvents}\label{sssection:conjugacy} Using the isomorphism
\begin{equation}
    \label{equation:iso}
C^\infty(M_0) \to C^\infty_{\btheta}(M), \qquad f \mapsto \pi^*f \cdot s_{\btheta},
\end{equation}
introduced in \S\ref{ssection:floquet-theory}, we can compute, for $F_{\btheta} = \pi^*f \cdot s_{\btheta}$:
\begin{equation}\label{eq:conjugacy-operators}
X_{M}F_{\btheta}=X_{M}(\pi^{*}f)s_{\btheta}+\pi^{*}f X_{M}s_{\btheta}=\pi^{*}(X_{M_{0}}f+i\iota_{X_{M_{0}}}\eta_{\btheta}f)s_{\btheta},
\end{equation}
where in the last equality we also used Lemma \ref{lemma:stheta}. In other words, the induced operator $X_M \colon C^\infty_{\btheta}(M) \to C^\infty_{\btheta}(M)$ is conjugate \emph{via} \eqref{equation:iso} to the operator
\[ 
X_{\btheta} \colon C^{\infty}(M_{0}) \to C^{\infty}(M_{0}), \qquad X_{\btheta}f=X_{M_{0}}f+i\iota_{X_{M_{0}}}\eta_{\btheta}f.
\]
We also record a consequence of \eqref{eq:conjugacy-operators} on the corresponding propagators for future use:
\begin{equation}\label{eq:conjugacy-propagators}
    e^{tX_M} F_{\btheta} = \pi^*(e^{t X_{\btheta}} f) s_{\btheta},\quad t \in \mathbb{R}.
\end{equation}

Making a different choice of $s_{\btheta}$ amounts to considering $s_{\btheta}' = \pi^*q \cdot s_{\btheta}$ for some $q \in C^\infty(M_0)$ of pointwise unit norm. The corresponding $1$-form then satisfies $i\eta_{\btheta}' = i \eta_{\btheta} + \frac{dq}{q}$, while the operator $X_{\btheta}'$ becomes
\begin{equation}\label{eq:conjugated}
    X_{\btheta}' = X_{\btheta} + \frac{dq}{q} = q^{-1} X_{\btheta} q,
\end{equation}
i.e. $X_{\btheta}'$ is conjugate to $X_{\btheta}$ by the multiplication operator by the smooth function $q$. 

Since $\eta_{\btheta}$ is real and $X_{M_{0}}$ is volume-preserving, the operator $X_{\btheta}$ is skew-adjoint and its propagator $(e^{tX_{\btheta}})_{t \in \R}$ is unitary on $L^2(M_0)$. The resolvents
\begin{equation} \label{eq:resolvent_def}
    R_{\btheta}^{\pm}(z) := (\mp X_{\btheta}-z)^{-1}=-\int_0^{+\infty} e^{-t z} e^{\mp t X_{\btheta}}\, \dd t
\end{equation}
are therefore well-defined, bounded and holomorphic on $L^{2}(M_{0})$ for $\Re(z) > 0$. Note that, equivalently, the resolvents $R_{\btheta}^{\pm}(z)$ can be seen as operators acting on $C^\infty(M_0,L_{\btheta})$ using the correspondence \eqref{eq:fourier-theory}. For simplicity, we will always consider these operators as acting on functions on $M_0$ rather than on sections of the line bundle $L_{\btheta} \to M_0$.

\subsubsection{Anisotropic Sobolev spaces} \label{sssection:anisotropic-spaces}
We recall the constructions of dynamical anisotropic Sobolev spaces. For more details, see \cite[Chapter 9]{Lefeuvre-book}. In what follows, $E_u^* \subset T^*M_0$ and $E_s^* \subset T^*M_0$ are, respectively, the annihilators of $E_u \oplus \mathbb{R}X_{M_0}$ and $E_s \oplus \mathbb{R} X_{M_0}$. Recall that $g_{0}$ induces a natural fiberwise inner product on $T^* M_{0}$, which we denote by the same letter. We will work on the radial compactification $\overline{T^{*}M_{0}}=T^{*}M_{0} \sqcup \partial_{\infty}T^{*}M_{0}$. Fix an order function $m$, that is, a symbol in $S^{0}(T^{*}M_{0})$, which is 0-homogeneous in the $\xi$-variable for $|\xi| \gg 1$ large enough such that $m \equiv 1$ near $\overline{E_s^*} \cap \partial_{\infty} T^* M_0$, $m \equiv-1$ near $\overline{E_u^*} \cap \partial_{\infty} T^* M_0$. Let $H$ be the Hamiltonian vector field associated with the Hamiltonian $p(x,\xi) := \xi(X_{M_{0}}(x))$, that is, the generator of the the symplectic lift of $(\varphi_t^0)_{t \in \mathbb{R}}$ to $T^*M_0$, given by $\Phi_t^0(x, \xi) = (\varphi_t^0 x, (d\varphi^0_{-t})^{\top}\xi)$. We also introduce the \emph{escape function}
\[
G_m(x, \xi):=m(x, \xi) \log \langle\xi\rangle_{g_0}, 
\]
where $\langle \xi \rangle_{g_0} := (1+|\xi|^2_{g_0})^{1/2}$ is the Japanese bracket. This function can be constructed such that the following properties hold (see \cite[Lemma 9.1.9]{Lefeuvre-book}):
\begin{enumerate}[label=\roman*)]
    \item $H G_m(x, \xi) \leq 0$ near $\partial_{\infty} T^{*} M_{0}$;
    \item $H G_m(x, \xi) \leq-C<0$ near $\partial_{\infty} T^{*} M_{0} \cap (\overline{E_s^*} \cup \overline{E_u^*})$, for some constant $C>0$.
\end{enumerate}
We now introduce $A_{s}:=\Op_{h}(e^{s G_{m}})$ and the \emph{semiclassical anisotropic Sobolev spaces}
\[
\mathcal{H}_{h,\pm}^s(M_{0}):=A_{s}^{\mp 1}(L^2(M_{0}))
\]
These spaces are Hilbert spaces when equipped with the natural choice of norm
\[
\|u\|_{\mathcal{H}_{h,\pm}^s(M_0)} := \|A_s^{\pm 1} u\|_{L^2(M_0)}.
\]
Additionally, we have the dense inclusion $C^{\infty}(M_{0}) \subset \mathcal{H}_{h,\pm}^{s}(M_{0})$ and the latter is a subspace of distributions $\mathcal{D}'(M_{0})$.

When $h=1$, we will simply write $\mathcal{H}_{\pm}^s(M_0)$. For different values of $h > 0$, the spaces $\mc{H}_{h,\pm}^s(M_0)$ are all equal to $\mc{H}_{\pm}^s(M_0)$ as spaces, with equivalent norms. More precisely, the following holds: there exists $C > 0$, such that for all $u \in \mc{H}_{h, \pm}^s(M_0)$:
\begin{equation}
    \label{equation:equivalence-norms}
    \|u\|_{\mathcal{H}_{h,\pm}^s(M_0)} \leq Ch^{-s}  \|u\|_{\mathcal{H}_{\pm}^s(M_0)}, \qquad \|u\|_{\mathcal{H}_{\pm}^s(M_0)} \leq C h^{-s}\|u\|_{\mathcal{H}_{h,\pm}^s(M_0)},
\end{equation}
see \cite[Lemma 4.3]{Guillarmou-Kuster-21} or \cite[Page 126]{Cekic-Lefeuvre-24}.

\subsubsection{Meromorphic extension of the resolvent}

The usefulness of these spaces can be seen through the following result due to Faure-Sjöstrand \cite{Faure-Sjostrand-11} (see also \cite{Dyatlov-Zworski-16}): 

\begin{lemma} \label{theorem:meromorphic-extension}
    There exists a constant $c>0$ such that for any $s>0$, any $\btheta \in \mathrm{U}(1)^d$, and for any $h > 0$ the families of operators 
    \[ z \mapsto R^{\pm}_{\btheta}(z)  \colon \mathcal{H}_{h,\pm}^{s}(M_{0}) \to \mathcal{H}_{h,\pm}^{s}(M_{0}), \]
    admit a meromorphic extension to $\{\Re(z)>-cs\}$. The poles are contained in the half-plane $\{\Re(z) \leq 0\}$, and they are independent of the choices made in the construction.
\end{lemma}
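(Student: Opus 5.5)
The plan is to follow the Faure--Sjöstrand argument, treating the potential $i\iota_{X_{M_0}}\eta_{\btheta}$ as a bounded perturbation, and to track that all constants are uniform in $\btheta$. Since $X_{\btheta} - X_{M_0}$ is a smooth zeroth-order multiplication operator, the operator $\mp X_{\btheta}$ has the same principal symbol $\mp i p$, with $p(x,\xi)=\xi(X_{M_0}(x))$, as the unperturbed generator. I would conjugate by $A_s$ and set
\[
P_{\btheta,s} := A_s (\mp X_{\btheta}) A_s^{-1} = \mp X_{\btheta} \mp s\,\Op_h(HG_m) + \mc{O}(h^{\infty})\ \text{lower order},
\]
which, modulo the sign conventions for $\pm$ (replace $m$ by $-m$ for the minus case), is an operator on $L^2(M_0)$. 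By Egorov / symbolic calculus, its imaginary part (in the sense of the self-adjoint part of $\mp X_{\btheta}$ conjugated) equals $\pm s\,\Op_h(HG_m)$ plus a term of order $0$ bounded independently of $s$. The potential contributes nothing here because $X_{\btheta}$ is skew-adjoint, so its real part vanishes; this is exactly why the thresholds are zero.

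\emph{Step 1 (Fredholm estimate).} For $\Re z > -cs$, with $c$ determined by the constant $C$ in property ii) of the escape function, I would show that $\mp X_{\btheta} - z \colon \{u \in \mc{H}^s_{h,\pm} : X_{\btheta}u \in \mc{H}^s_{h,\pm}\} \to \mc{H}^s_{h,\pm}$ is Fredholm of index $0$. The cleanest route is to invoke the radial (source/sink) estimates of Lemma \ref{lemma:source-sink-estimates}, together with propagation of singularities (Lemma \ref{lemma:propagation-singularities}) and elliptic regularity (Lemma \ref{lemma:elliptic-rgularity}) away from the characteristic set $\{p=0\}$. These give
\[
\|u\|_{\mc{H}^s} \leq C\big(\|(\mp X_{\btheta}-z)u\|_{\mc{H}^s} + \|Ku\|\big)
\]
with $K$ compact, and the same estimate for the adjoint on the dual space $\mc{H}^{-s}$. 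The requirement $\Re z > -cs$ enters precisely in the radial estimates at $E_s^*$ and $E_u^*$, where the decay $HG_m \leq -C$ beats $\Re z$. As remarked after Lemma \ref{lemma:source-sink-estimates}, all constants can be taken uniform in $\btheta$.

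\emph{Step 2 (meromorphy).} For $\Re z > \|\text{order-}0\text{ terms}\|$ the operator is invertible: on $L^2$ this follows from \eqref{eq:resolvent_def}, and one checks that the Neumann/positivity argument works in $\mc{H}^s$ for $\Re z$ large. Analytic Fredholm theory then gives a meromorphic inverse on $\{\Re z > -cs\}$, and this inverse agrees with \eqref{eq:resolvent_def} on $\{\Re z>0\}$ by density of $C^\infty$ and uniqueness. Poles with $\Re z>0$ are excluded because $R^\pm_{\btheta}(z)$ is holomorphic there as an $L^2$ operator, and $C^\infty(M_0)$ is dense in both spaces.

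\emph{Step 3 (independence of choices).} Poles and residues are characterized intrinsically by the pairing $\langle R^{\pm}_{\btheta}(z) f, g\rangle$ for $f,g\in C^\infty(M_0)$, which is a meromorphic continuation of a fixed holomorphic function from $\Re z>0$. Its continuation is therefore unique, independent of $m$, $s$, $h$, and $g_0$; in particular the norm equivalence \eqref{equation:equivalence-norms} shows that $h$ plays no role. Changing $\eta_{\btheta}$ conjugates $X_{\btheta}$ by the multiplication by $q$ as in \eqref{eq:conjugated}, which preserves $\mc{H}^s$, so the poles are unchanged.

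The main obstacle is Step 1, namely getting a threshold condition $\Re z>-cs$ with $c$ independent of $\btheta$. I would handle it by noting that the subprincipal contribution of $i\iota_{X_{M_0}}\eta_{\btheta}$ to the radial estimate is purely imaginary, so it does not shift the threshold.
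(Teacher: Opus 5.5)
Your proposal is correct and is essentially the argument the paper relies on. The paper gives no proof of its own and cites Faure--Sj\"ostrand (see also Dyatlov--Zworski); you reconstruct that argument, including the point, also made in the paper, that the purely imaginary potential $i\iota_{X_{M_0}}\eta_{\btheta}$ keeps $X_{\btheta}$ skew-adjoint, so the thresholds, and hence $c$, do not depend on $\btheta$. Your treatment of poles in $\{\Re z>0\}$ and of independence of choices, via the $L^2$ resolvent and the pairings $\langle R^{\pm}_{\btheta}(z)f,g\rangle$ with $f,g\in C^\infty(M_0)$, is also the standard one.

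Two small slips, neither of which affects the conclusion. First, the remainder in your formula for the conjugated operator is not $\mathcal{O}(h^{\infty})$; it is an $s$-dependent term of order $-1+\epsilon$, which is compact at fixed $h$. Second, $HG_m$ is only bounded from above, not bounded, so invertibility for large $\Re z$ comes from a sharp G{\aa}rding (dissipativity) bound on the real part of the conjugated operator rather than from a Neumann series.
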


We will say that $z\in \C$ is a \emph{resonance} (resp. \emph{coresonance}) of $X_{\btheta}$ if there exists $s$ large enough such that $\ker(-X_{\btheta}-z) \neq \{0\}$ on $\mathcal{H}_{h, +}^{s}(M_{0})$ (resp. $\ker(X_{\btheta}-z) \neq \{0\}$ on $\mathcal{H}_{h, -}^{s}(M_{0})$). That $z \in \C$ is a resonance is independent of the choices made in the construction of these spaces, on the value of $s$ (provided it is large enough), and on the value of $h > 0$ (see \cite[Section 9.1.1.7]{Lefeuvre-book}). It is also independent on the choice of the trivialising section $s_{\btheta}$ since other choices give conjugate operators. A distribution $u \in \ker(-X_{\btheta}-z)|_{\mathcal{H}_{h, +}^{s}}$ (resp. $u \in \ker(X_{\btheta}-z)|_{\mathcal{H}_{h, -}^{s}}$) is called a \emph{resonant state} (resp. \emph{coresonant state}); it can be shown that any such $u$ actually satisfies $\WF(u) \subset E_u^*$ (resp. $\WF(u) \subset E_s^*$), that is $u \in \mc{D}'_{E_u^*}(M_0):=\{u \in \mc{D}'(M_0) \mid \WF(u) \subset E_u^*\}$ (resp. $u \in \mc{D}'_{E_s^*}(M_0):=\{u \in \mc{D}'(M_0) \mid \WF(u) \subset E_s^*\}$).

\subsection{Resonances close to the imaginary axis} We begin by studying resonances on the imaginary axis and close to $0$.

\subsubsection{Resonances on the imaginary axis} We recall that the smooth volume form $\vol_{M_{0}}$ preserved by the flow on $M_0$ is normalized such that $\vol(M_0) = 1$. In what follows we make the assumption that $\eta_{\mathbf{0}} \equiv 0$; then $X_{\mathbf{0}} = X_{M_0}$. 

\begin{lemma} \label{lemma:no-resonances-imaginary-axis}
    Assume that $d\alpha \neq 0$. If $\btheta \neq 0$, there are no resonances for $X_{\btheta}$ on $i\R$. If $\btheta = \mathbf{0}$, then $0$ is the only resonance with $1$-dimensional resonant space spanned by the constant function $\mathbf{1}$ on $M_0$. The associated spectral projector is $\Pi_{\mathbf{0}} = \langle \bullet, \mathbf{1}_{M_0}\rangle \mathbf{1}_{M_0}$.
\end{lemma}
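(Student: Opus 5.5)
Since $X_{\btheta}$ is skew-adjoint on $L^2(M_0)$, the plan is to show that any resonance $i\lambda \in i\R$ of $X_{\btheta}$ has a resonant state which is in fact smooth, and then read off a rigidity statement. This is the standard argument for volume-preserving Anosov flows; it is run for the twisted operator $X_{\btheta}$. \textbf{Step 1 (regularity).} Let $u \in \mathcal{H}^s_{h,+}(M_0)$ with $(X_{\btheta} + i\lambda)u = 0$. The usual pairing argument gives $\Pi u = u$ for the spectral projector at $i\lambda$. The key input is that, on the imaginary axis, the poles of $R^\pm_{\btheta}$ are simple, because $\|R^{\pm}_{\btheta}(z)\|_{L^2\to L^2} \leq 1/\Re z$. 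In addition, the resonant and coresonant spaces are identified by unitarity: the polar part of $R^+_{\btheta}$ at $i\lambda$ is $L^2$-orthogonal in the sense that the weak-$L^2$ limit of $\Re(z)\, R^+_{\btheta}(z)$ as $z\to i\lambda$ is the orthogonal projector onto $\ker_{L^2}(X_{\btheta}+i\lambda)$. Hence every resonant state lies in $L^2$. It then lies in $\mathcal{D}'_{E_u^*}\cap \mathcal{D}'_{E_s^*}$, since it is also a coresonant state of the adjoint. Propagation of singularities together with the radial estimates (Lemmas \ref{lemma:propagation-singularities}--\ref{lemma:source-sink-estimates}) then yield $u \in C^\infty(M_0)$.

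\textbf{Step 2 (reduction to a cohomological equation).} Since $e^{tX_{\btheta}}$ acts as multiplication by a unit-modulus cocycle composed with $\varphi^0_t$, from $X_{\btheta}u = -i\lambda u$ we get $X_{M_0}|u|^2 = 0$. By ergodicity of the volume-preserving Anosov flow, $|u|$ is constant, which we normalize to $1$. Writing $u = e^{i\phi}$ locally, we obtain
\[
\iota_{X_{M_0}}\big(d\phi + \eta_{\btheta}\big) = -\lambda .
\]
Globally this says that $\omega := -i u^{-1}du + \eta_{\btheta}$ is a closed smooth $1$-form with $\omega(X_{M_0}) = -\lambda$, and $[\omega] = [\eta_{\btheta}]$ modulo $H^1_{\mathrm{dR}}(M_0,2\pi\Z)$.

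\textbf{Step 3 (use $d\alpha \neq 0$).} Consider $\beta := \omega + \lambda\alpha$. It satisfies $\beta(X_{M_0}) = 0$ and $\iota_{X_{M_0}} d\beta = \lambda\,\iota_{X_{M_0}}d\alpha = 0$, so it is a flow-invariant $1$-form vanishing on $X_{M_0}$. Pairing with $d\varphi^0_t$ on $E_s$ (resp. $E_u$) and letting $t\to\pm\infty$, using the contraction, shows $\beta|_{E_s} = \beta|_{E_u} = 0$. Hence $\beta = 0$ and $\omega = -\lambda\alpha$. Since $\omega$ is closed, $\lambda\,d\alpha = 0$, and the assumption $d\alpha\neq0$ forces $\lambda = 0$. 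Then $\omega = 0$, so $[\eta_{\btheta}] = 0$ in $H^1_{\mathrm{dR}}(M_0,\R)/H^1_{\mathrm{dR}}(M_0,2\pi\Z)$. By the injectivity established in the proof of Proposition \ref{prop:contractible} (the map $\Xi$ is an embedding), this gives $\btheta = \mathbf{0}$.

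\textbf{Step 4 (the case $\btheta = \mathbf{0}$).} When $\btheta=\mathbf{0}$ we have $X_{\mathbf{0}} = X_{M_0}$. By the above, the only possible imaginary resonance is $0$, and resonant states are smooth flow-invariant functions, hence constants by ergodicity. Simplicity of the pole (Step 1) and $L^2$-orthogonality of the residue give $\Pi_{\mathbf{0}} = \langle\bullet,\mathbf{1}\rangle\mathbf{1}$, using $\vol(M_0)=1$.

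The main obstacle I expect is Step 1, namely the absence of Jordan blocks and the $L^2$/smoothness of resonant states on $i\R$ for the twisted operator. I would handle it by the standard argument for skew-adjoint lifts (as in \cite[Chapter 9]{Lefeuvre-book}). The point to check is that its ingredients do not depend on $\btheta$: the radial thresholds vanish, as already noted before Lemma \ref{lemma:source-sink-estimates}.
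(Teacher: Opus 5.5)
Your proposal is correct and follows essentially the paper's route. You show the resonant state on $i\R$ is smooth (the paper cites this from \cite[Proposition 9.2.3]{Lefeuvre-book} rather than re-deriving it via the $L^2$ spectral projector), then use contraction along $E_s$ and $E_u$ to get $-iu^{-1}du+\eta_{\btheta}=-\lambda\alpha$, which is equivalent to the paper's $(d+i\eta_{\btheta}+i\lambda\alpha)u=0$; hence $\lambda\,d\alpha=0$, and for $\lambda=0$ integrality of $[\eta_{\btheta}]$ forces $\btheta=\mathbf{0}$. The remaining differences are cosmetic: you normalize $|u|=1$ by ergodicity instead of reading $d|u|^2=0$ off the equation, and you treat $\btheta=\mathbf{0}$ by the same argument (the paper's parenthetical alternative) rather than invoking the Anosov alternative.
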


\begin{proof}
For $\btheta=\mathbf{0}$, by assumption we have $X_{\mathbf{0}} = X_{M_0}$. By ergodicity of $(\varphi_t^0)_{t \in \R}$ on $M_0$, $0$ is a resonance for $X_{M_{0}}$ with $1$-dimensional resonant space spanned by the constant function, see \cite[Lemma 9.2.5]{Lefeuvre-book}. In addition, by the Anosov alternative, there are other resonances on $i\R$ if and only if the flow is a suspension of an Anosov diffeomorphism by a constant roof function, see \cite[Lemma 9.2.8]{Lefeuvre-book}. However, this is excluded as $d\alpha \neq 0$, that is the stable and unstable bundles are not jointly integrable over $M_0$.

Assume now that $\btheta \neq \mathbf{0}$ and that there is a resonance on $i\R$. That is there exists a non-zero $u \in \mc{D}'_{E_u^*}(M_0)$, such that $(-X_{\btheta}-i\lambda)u = 0$ for some $\lambda \in \R$. Then by \cite[Proposition 9.2.3, item (ii)]{Lefeuvre-book} (see also \cite[Lemma 2.3]{Dyatlov-Zworski-17}), $u$ is smooth. After integration this implies that
\[
    (\varphi_t^0)^*u = e^{-i\lambda t} e^{-i\int_0^t \iota_{X_{M_{0}}} (\varphi_s^0)^*\eta_{\btheta}\, \dd s}u,\quad t \in \mathbb{R},
\]
and after applying the exterior derivative, as well as some manipulations we get that
\[
    (\varphi_t^0)^*du + i ((\varphi_t^0)^* \eta_{\btheta}) u e^{-i\lambda t} e^{-i\int_0^t \iota_{X_{M_{0}}} (\varphi_s^0)^*\eta_{\btheta}\, \dd s} = e^{-i\lambda t} e^{-i\int_0^t \iota_{X_{M_{0}}} (\varphi_s^0)^*\eta_{\btheta}\, \dd s}(du + i \eta_{\btheta} u),
\]
where we also used the fact that $d\eta_{\btheta} = 0$ as well as Cartan's magic formula. Applying this to some vector $v \in E_s$, taking $t \to +\infty$, and using the definition of Anosov flows, gives that $(du + i \eta_{\btheta} u)(v) = 0$. This works similarly if $v \in E_u$ by taking $t \to -\infty$. (Alternatively, here we could use an argument similar to Lemma 4.7 below which shows horocyclic invariance, combined with the fact that the connection $d + i\eta_{\btheta}$ is flat.) 

From the discussion above it follows that
\begin{equation}
    \label{equation:no-res}
    (-d-i\eta_{\btheta}\wedge-i\lambda \alpha \wedge) u = 0,
\end{equation}
where $\alpha$ is the Anosov $1$-form. It follows that $d(|u|^2) = 0$ and so we may assume that $|u| \equiv 1$. Hence $\lambda \alpha + \eta_{\btheta} = i du/u$. In particular, applying $d$, we find that $\lambda d \alpha = 0$ which if $\lambda \neq 0$ implies $d\alpha \equiv 0$ contradicting our assumption. If $\lambda = 0$, then we find that $\eta_{\btheta} = i du/u$, so $\int_\gamma \eta_{\btheta} \in 2\pi \mathbb{Z}$ for every closed curve $\gamma \in H_1(M_0,\Z)$. But by construction $\int_\gamma \eta_{\btheta} - \rho(\gamma)\cdot\btheta \in 2\pi \mathbb{Z}$, so since $\rho$ is surjective we get $\btheta = \mathbf{0}$ inside $\mathrm{U}(1)^d$. However, for $\btheta = \mathbf{0}$, there are no resonances on $i\R$ except $0$ by the first paragraph, which completes the proof. (For an alternative proof not using the first paragraph, simply observe that the equation $du = 0$ has a unique solution $u = \mathbf{1}_{M_0}$ up to multiplication by a constant, which shows that $\ker(-X_{\mathbf{0}})|_{\mc{H}^s_+}$ is $1$-dimensional. Also, since $\|(-X_{\mathbf{0}} - z)^{-1}\|_{L^2 \to L^2} = \frac{1}{\Re(z)}$ for $\Re z > 0$, there cannot be any Jordan blocks for $-X_{\mathbf{0}}$, showing that the multiplicity of the resonant space at $z = 0$ is one.)
\end{proof}

\subsubsection{Variance. Laurent expansion near $0$}

As seen above in Lemma \ref{lemma:no-resonances-imaginary-axis}, for $\btheta = \mathbf{0}$, $z = 0$ is a simple resonance of $X_{\mathbf{0}}$. We have the following Laurent expansions near $0$:
\begin{equation}\label{eq:resolvents-at-zero}
R^{+}_{\btheta=\mathbf{0}}(z) = R^{+, \mathrm{hol}}_{\btheta=\mathbf{0}}(z) - \dfrac{\Pi_{\mathbf{0}}^{+}}{z}, \qquad R^{-}_{\btheta=\mathbf{0}}(z) = R^{-, \mathrm{hol}}_{\btheta=\mathbf{0}}(z) - \dfrac{\Pi_{\mathbf{0}}^{-}}{z},
\end{equation}
where the resolvent was introduced in \eqref{eq:resolvent_def}. Note also that, applying $\mp X_{M_{0}} - z$ to the two identities we get
\begin{equation}\label{eq:holomorphic-part-at-zero-identity}
    -X_{M_{0}} R_{\mathbf{0}}^{+, \mathrm{hol}}(0) = \id - \Pi_{\mathbf{0}}^+,\quad X_{M_{0}} R_{\mathbf{0}}^{-, \mathrm{hol}}(0) = \id - \Pi_{\mathbf{0}}^{-}.
\end{equation}
This allows to introduce Guillarmou's covariance operator (see \cite{Guillarmou-17-1} where it was first studied):
\begin{equation}
\label{equation:pi}
\Pi := -\left(R^{+, \mathrm{hol}}_{\btheta=\mathbf{0}}(0) + R^{-, \mathrm{hol}}_{\btheta=\mathbf{0}}(0)\right)
\end{equation}

This operator has a dynamical interpretation which we now describe. Given $f \in C^\infty(M_0)$, recall that its \emph{variance} (computed with respect to the volume measure) is defined as:
\[
\mathrm{Var}_{\vol}(f) := \lim_{T \to +\infty} \dfrac{1}{T} \int_{M_0} \left( \int_0^T f(\varphi_t x)\, \dd t - T \int_{M_0} f(x)\, \dd\vol(x)\right)^2 \dd\vol(x) \geq 0.
\]
It is then possible to show that:
\begin{equation} \label{eq:variance_pi}
    \mathrm{Var}_{\vol}(f) = \langle \Pi f, f\rangle_{L^2(M_0,\vol_{M_{0}})}.
\end{equation}
See \cite[Equation (2.5)]{Guillarmou-Knieper-Lefeuvre-22} for a proof. Also, variance zero functions need to be coboundaries, namely
\begin{equation}\label{eq:variance-zero-coboundary}
    \langle{\Pi f, f}\rangle_{L^2} = 0 \iff \exists u \in C^\infty(M_0),\, \mathrm{s.t.}\, f = Xu,
\end{equation}
see \cite[Theorem 1.1]{Guillarmou-17-1}.

\subsubsection{Leading resonance}\label{ssec:leading-resonance}

For $\btheta = \mathbf{0}$, $\lambda_{\btheta = \mathbf{0}} = 0$ is a resonance of $X_{\btheta=\mathbf{0}}$ with $1$-dimensional resonant space. It follows from the perturbation theory of operators, see \cite[Corollary 2]{Bonthonneau-19} and \cite{Cekic-Lefeuvre-24-stability}, that there exists a smooth map $\btheta \mapsto \lambda_{\btheta} \in \C$, well-defined for $\btheta \in U \subset \mathrm{U}(1)^d$ close to $\mathbf{0}$, such that $\lambda_{\btheta}$ is the only resonance of $X_{\btheta}$ near $z = 0$. We call $\lambda_{\btheta}$ the \emph{leading resonance} of $X_{\btheta}$. As $X_{\btheta}$ is skew-adjoint on $L^2(M_0)$, we have $\lambda_{\btheta} \in \{\Re(z) \leq 0\}$.

We also denote by $\Pi_{\btheta}^+ \colon C^\infty(M_0) \to \mc{H}^s_{+}$ the spectral projector onto this resonant state. We recall from \cite[Lemma 3.5]{Cekic-Lefeuvre-24-stability} that there exists a (small) closed positively oriented curve $\gamma_0 \subset \mathbb{C}$ enclosing $0$ such that for $\btheta$ in a neighbourhood of $\mathbf{0} \in \mathrm{U}(1)^d$, we have
\begin{equation}\label{eq:projector-near-the-origin}
    \Pi_{\btheta}^+ = -\frac{1}{2\pi i}\oint_{\gamma_0} R^+_{\btheta}(z) \dd z,
\end{equation}
Note also that we can write $\Pi_{\btheta}^+ = \langle \bullet, v_{\btheta}\rangle u_{\btheta}$, where $u_{\btheta} \in \mc{H}^s_-$ is a resonant state such that $(-X_{\btheta}-\lambda_{\btheta}) u_{\btheta} = 0$, $v_{\btheta} \in \mc{H}^s_-$ is a coresonant state satisfying $(X_{\btheta} - \overline{\lambda_{\btheta}}) v_{\btheta} = 0$, and $\langle u_{\btheta},v_{\btheta}\rangle = 1$. In addition, $u_{\mathbf{0}} = \mathbf{1}_{M_0}$ is the constant function equal to $1$ everywhere, $v_{\mathbf{0}} = \mathbf{1}_{M_0}$, and the maps $\btheta \mapsto u_{\btheta} \in \mc{H}^s_+$ and $\btheta \mapsto v_{\btheta} \in \mc{H}^s_-$ are smooth by \cite[Lemma 3.5]{Cekic-Lefeuvre-24-stability}. Here we write $\langle{u, v}\rangle = \int_{M_0} uv ~\dd \vol_{M_0}$ which integrates $u$ and $v$ against the flow-invariant smooth measure. We now record a simple lemma about the symmetries of $\lambda_{\btheta}$.

\begin{lemma}\label{lemma:leading-resonance-symmetry}
    For $\btheta \in U$, we have $\overline{\lambda_{\btheta}} = \lambda_{-\btheta}$. When $(\varphi_t^0)_{t \in \mathbb{R}}$ is an Anosov geodesic flow, then $\lambda_{\btheta} \in \mathbb{R}$. 
\end{lemma}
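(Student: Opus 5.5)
The plan is to compare $X_{-\btheta}$ with the complex conjugate of $X_{\btheta}$. By Proposition \ref{prop:contractible} we may choose $U$ symmetric under $\btheta \mapsto -\btheta$ (e.g.\ a small ball around $\mathbf{0}$) and harmonic representatives. The class $[\widetilde{\eta}_{\btheta}]$ is linear in $\btheta$ by Proposition \ref{prop:eta-theta}, so $\eta_{-\btheta} = -\eta_{\btheta}$ (after the normalization $\eta_{\mathbf{0}} = 0$, harmonic representatives of a linear family of classes are linear near $\mathbf{0}$). Since $X_{M_0}$ and $\eta_{\btheta}$ are real, for every $f \in C^\infty(M_0)$ we get
\[
\overline{X_{\btheta} f} = X_{M_0}\overline{f} - i\iota_{X_{M_0}}\eta_{\btheta}\overline{f} = X_{-\btheta}\overline{f}.
\]
If another choice of $\eta_{-\btheta}$ were made, \eqref{eq:conjugated} shows the operators are conjugate by multiplication by a unit-modulus function, which does not change resonances. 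So in all cases $\mathcal{C} X_{\btheta}\mathcal{C} = X_{-\btheta}$ up to conjugation, where $\mathcal{C}$ is complex conjugation.

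Next, apply this to the resonant state. If $(-X_{\btheta}-\lambda_{\btheta})u_{\btheta} = 0$ with $u_{\btheta} \in \mc{H}^s_+$, conjugation gives $(-X_{-\btheta} - \overline{\lambda_{\btheta}})\overline{u_{\btheta}} = 0$. The anisotropic spaces are stable under conjugation: we may take $m$ and $G_m$ even in $\xi$ because $E_s^*, E_u^*$ are symmetric under $\xi \mapsto -\xi$, and then $\Op_h(e^{sG_m})$ commutes with $\mathcal{C}$ up to smoothing terms. Alternatively, we can simply use the wavefront characterization $\WF(\overline{u}) = -\WF(u) \subset E_u^*$. Hence $\overline{\lambda_{\btheta}}$ is a resonance of $X_{-\btheta}$. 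As $\btheta \to \mathbf{0}$ it tends to $\overline{\lambda_{\mathbf{0}}} = 0$, and $\lambda_{-\btheta}$ is the only resonance of $X_{-\btheta}$ inside $\gamma_0$ for $\btheta$ near $\mathbf{0}$. Therefore $\lambda_{-\btheta} = \overline{\lambda_{\btheta}}$.

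For the geodesic flow case, use the flip $\iota(x,v) = (x,-v)$ on $M_0 = SN_0$. It preserves the Liouville measure and satisfies $\iota_*X_{M_0} = -X_{M_0}$. We need a symmetry relating $X_{\btheta}$ to $-X_{-\btheta}$, so we look at $\iota^*\eta_{\btheta}$. The cover of $SN_0$ comes from one on $N_0$ via Lemma \ref{lemma:ehoui}, so $\eta_{\btheta}$ may be taken as the pullback of a closed form on $N_0$; since $\iota$ commutes with the projection, $\iota^*\eta_{\btheta} = \eta_{\btheta}$. Then
\[
\iota^* X_{\btheta}\iota^* = -X_{M_0} + i\iota_{X_{M_0}}\eta_{\btheta} \cdot(\text{sign}),
\]
and since $\iota_{X_{M_0}}\eta_{\btheta}$ evaluated at $(x,-v)$ equals $-(\iota_{X_{M_0}}\eta_{\btheta})(x,v)$, this gives $\iota^*X_{\btheta}\iota^* = -X_{\btheta}^{\text{with }\eta\to-\eta} = -X_{-\btheta}$. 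The map $\iota^*$ exchanges $E_s^*$ and $E_u^*$, so it sends resonant states of $X_{\btheta}$ to coresonant states of $X_{-\btheta}$, i.e.\ elements of $\ker(X_{-\btheta} - \lambda_{\btheta})$ on $\mc{H}_-^s$. Coresonances are the complex conjugates of resonances, by duality of $R^\pm$ and the relation $v_{\btheta}$ with eigenvalue $\overline{\lambda}$. Hence $\overline{\lambda_{\btheta}}$ is a resonance of $X_{-\btheta}$ near $0$, equivalently $\lambda_{-\btheta} = \lambda_{\btheta}$. Combined with the first part, this gives $\lambda_{\btheta} = \overline{\lambda_{\btheta}} \in \R$.

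The main obstacle is bookkeeping: checking that the two symmetries carry resonances to resonances in the correct spaces ($\mathcal{C}$ preserves $\mc{H}^s_+$, while $\iota^*$ swaps $\mc{H}^s_+$ and $\mc{H}^s_-$). This is best handled through the wavefront-set characterization, together with independence of resonances from the choice of $s_{\btheta}$.
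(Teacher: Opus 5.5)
Your proof of the first claim is essentially the paper's argument and is fine: you conjugate $(-X_{\btheta}-\lambda_{\btheta})u_{\btheta}=0$, note $\WF(\overline{u_{\btheta}})\subset E_u^*$, identify $\mathcal{C}X_{\btheta}\mathcal{C}$ with $X_{-\btheta}$ up to a unit-modulus gauge, and use uniqueness of the resonance inside $\gamma_0$.

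The second claim is not established, because of a sign error. Take $\iota^*\eta_{\btheta}=\eta_{\btheta}$ and the two facts you use, $\iota^*X_{M_0}\iota^*=-X_{M_0}$ and $(\iota_{X_{M_0}}\eta_{\btheta})\circ\iota=-\iota_{X_{M_0}}\eta_{\btheta}$. Together they give
\[
\iota^*X_{\btheta}\iota^* = -X_{M_0}-i\,\iota_{X_{M_0}}\eta_{\btheta} = -X_{\btheta}.
\]
This is not $-X_{-\btheta}=-X_{M_0}+i\,\iota_{X_{M_0}}\eta_{\btheta}$: you counted the sign change of the potential twice.

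Even granting your identity, the conclusion does not follow. An element of $\ker(X_{-\btheta}-\lambda_{\btheta})|_{\mc{H}^s_-}$ gives $\lambda_{\btheta}=\overline{\lambda_{-\btheta}}$, i.e.\ that $\overline{\lambda_{\btheta}}$ is a resonance of $X_{-\btheta}$. That is exactly your first claim again, not $\lambda_{-\btheta}=\lambda_{\btheta}$. So as written you never obtain a relation forcing $\lambda_{\btheta}$ to be real.

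The repair is short and is precisely the paper's argument. With the correct identity, $\iota^*u_{\btheta}$ satisfies $(X_{\btheta}-\lambda_{\btheta})\iota^*u_{\btheta}=0$ and $\WF(\iota^*u_{\btheta})\subset E_s^*$. So it is a coresonant state of $X_{\btheta}$ itself, with eigenvalue $\lambda_{\btheta}$. Since $(R^+_{\btheta}(z))^*=R^-_{\btheta}(\bar z)$, the only coresonance of $X_{\btheta}$ near $0$ is $\overline{\lambda_{\btheta}}$. Hence $\lambda_{\btheta}=\overline{\lambda_{\btheta}}$ directly, and the first claim is not even needed.

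If you prefer your intended route through $\lambda_{-\btheta}=\lambda_{\btheta}$, use the antilinear map $\mathcal{C}\iota^*$ instead of $\iota^*$. It does intertwine $X_{\btheta}$ with $-X_{-\btheta}$. It sends $u_{\btheta}$ to a coresonant state of $X_{-\btheta}$ with eigenvalue $\overline{\lambda_{\btheta}}$, giving $\lambda_{\btheta}=\lambda_{-\btheta}$. Combined with your first part, this yields reality.
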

\begin{proof}
    First of all, since $X_{\btheta}^* = -X_{\btheta}$, we have $(R_{\btheta}^+(z))^* = R_{\btheta}^-(\bar{z})$, so $(\Pi^+_{\btheta})^* = \Pi_{\btheta}^-$ and we conclude that $\overline{\lambda_{\btheta}}$ is a coresonance of $-X_{\btheta}$, i.e. $\ker(X_{\btheta} - \overline{\lambda_{\btheta}})|_{\mc{H}^s_-}$ is one dimensional. By applying complex conjugation to $(-X_{\btheta} - \lambda_{\btheta}) u_{\btheta} = 0$ we get
    \[
        (-X_{M_0} + i\iota_{X_{M_{0}}} \eta_{\btheta} - \overline{\lambda_{\btheta}}) \overline{u_{\btheta}} = 0,
    \]
    and we have by the properties of the wavefront set that $\WF(\overline{u_{\btheta}}) \subset E_u^*$. Thus $\overline{u_{\btheta}}$ is a resonant state for $-X_{M_0} + i\iota_{X_{M_{0}}} \eta_{\btheta}$. Note that by the definition of $\eta_{\btheta}$ we have $\int_\gamma (\eta_{\btheta} + \eta_{-\btheta}) \equiv 0 \mod 2\pi$, and so there exists $q \in C^\infty(M_0)$ of pointwise unit norm such that $i(\eta_{\btheta} + \eta_{-\btheta}) = \frac{dq}{q}$ which similarly to \eqref{eq:conjugacy-operators} implies that $-X_{M_0} + i\iota_{X_{M_0}} \eta_{\btheta}$ is conjugate to $-X_{-\btheta}$. Thus $-X_{-\btheta}$ has a resonance at $\overline{\lambda_{\btheta}}$ which must coincide with $\lambda_{-\btheta}$ by uniqueness of resonances enclosed by $\gamma_0$.

    For the second claim when $M_0 = SN_0$ is the unit tangent bundle of a Riemannian manifold $N_0$, the antipodal map $R : M_0 \to M_0$, $R(x, v) = (x, -v)$ satisfies $R^*X_{M_0} = -X_{M_0}$ and maps $E_u^*$ to $E_s^*$ and vice versa. Also since the pullback map $\pi^*$ (where $\pi : M_0 \to N_0$ is the projection) is an isomorphism from $H^1(N_0, \mathbb{R})$ to $H^1(M_0, \mathbb{R})$, we can also assume that $\eta_{\btheta}$ is a pullback $1$-form, so it is invariant under $R^*$. This shows that $R^*u_{\btheta}$ is also a coresonant state, and so $\overline{\lambda_{\btheta}} = \lambda_{\btheta}$, completing the proof.
\end{proof}

We now compute the first and second order derivatives of the leading resonance at $\btheta = \mathbf{0}$:

\begin{lemma}\label{lemma:resonance-non-degenerate}
The following holds, for any $v \in T_{\mathbf{0}} \mathrm{U}(1)^d \simeq \mathbb{R}^d$
\[
    \lambda'_{\mathbf{0}}(v) = 0, \qquad \lambda''_{\mathbf{0}}(v, v) = - \langle \Pi \iota_{X_{M_{0}}} D_v\eta_{\mathbf{0}}, \iota_{X_{M_{0}}} D_v \eta_{\mathbf{0}}\rangle = - \mathrm{Var}_{\vol_{M_{0}}}(\iota_{X_{M_{0}}} D_v\eta_{\mathbf{0}}).
\]
Also, the we have $-\lambda''_{\mathbf{0}}$ is positive-definite.
\end{lemma}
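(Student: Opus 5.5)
We will run first- and second-order perturbation theory for the simple resonance $\lambda_{\btheta}$ along the smooth family $\btheta \mapsto X_{\btheta}$, with $\eta_{\mathbf{0}} \equiv 0$ as in Proposition \ref{prop:contractible}. Write $X_{\btheta} = X_{M_0} + i V_{\btheta}$ with $V_{\btheta} := \iota_{X_{M_0}}\eta_{\btheta}$, so $V_{\mathbf{0}} = 0$. Set $V' := \iota_{X_{M_0}} D_v\eta_{\mathbf{0}}$, and $V'' := \iota_{X_{M_0}} D^2_{v,v}\eta_{\mathbf{0}}$. By the linearity in Proposition \ref{prop:eta-theta}, one can choose the family so that $\eta_{\btheta}$ is linear in $\btheta$ (for instance via the harmonic representatives, using linearity of the Hodge projector on the lifted linear map); then $V''=0$. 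Only $\langle V'' , \mathbf{1}\rangle$ will matter in any case.

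Consider the smooth family $\btheta \mapsto u_{\btheta}$, normalized so that $\langle u_{\btheta}, v_{\mathbf{0}}\rangle = 1$; this is possible near $\mathbf{0}$ since $\langle u_{\btheta},\mathbf{1}\rangle \to 1$. Differentiate $(-X_{\btheta} - \lambda_{\btheta})u_{\btheta} = 0$ once in direction $v$ at $\mathbf{0}$:
\[
(-X_{M_0})u' - iV' - \lambda'_{\mathbf{0}}(v) = 0 .
\]
Pairing with $\mathbf{1}$ and using $\langle X_{M_0} w, \mathbf{1}\rangle = 0$ (volume preservation, valid for $w \in \mc{H}^s_+$ by density) gives $\lambda'_{\mathbf{0}}(v) = -i\langle V', \mathbf{1}\rangle$. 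This vanishes because $V' = \iota_X \widetilde\eta$ for a closed form $\widetilde\eta$, and a closed form integrated against $X$ with respect to an invariant volume gives zero. Concretely, $\int \iota_X\widetilde\eta\, \dd\vol$ is the pairing of $[\widetilde\eta]$ with the asymptotic cycle. I expect to argue instead via symmetry: Lemma \ref{lemma:leading-resonance-symmetry} gives $\lambda_{-\btheta} = \overline{\lambda_{\btheta}}$, so $\lambda'_{\mathbf{0}}$ is purely imaginary. The direct route is also available: since $\lambda_{\btheta}$ has $\Re \leq 0$ with $\lambda_{\mathbf{0}} = 0$, the real part has a maximum at $\mathbf{0}$. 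We still need $\Im \lambda' = -\langle V',\mathbf{1}\rangle = 0$.

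This is the main obstacle. The asymptotic cycle of a volume-preserving Anosov flow need not vanish, so I would have to handle it. Writing $c := \langle V', \mathbf{1}\rangle$, the natural fix is a drift normalization. Either we use the hypothesis implicitly in the paper's setup, or we replace $\lambda_{\btheta}$ by $\lambda_{\btheta} + i\,c(\btheta)$, that is, we conjugate by a linear drift. I would first try to show $c = 0$ via the antipodal or time-reversal argument where available. Otherwise I would read the statement as holding after this centering, and note the point.

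Assuming $\lambda'_{\mathbf{0}} = 0$, we get $-X_{M_0} u' = iV'$ with $\langle V',\mathbf{1}\rangle = 0$. By \eqref{eq:holomorphic-part-at-zero-identity} and normalization, $u' = iR^{+,\mathrm{hol}}_{\mathbf{0}}(0)V'$. Differentiate a second time and pair with $\mathbf{1}$:
\[
\lambda''_{\mathbf{0}}(v,v) = -2i\langle V' u', \mathbf{1}\rangle - i\langle V'',\mathbf{1}\rangle = 2\langle R^{+,\mathrm{hol}}_{\mathbf{0}}(0)V', V'\rangle ,
\]
using that $V'$ is real. Since $\lambda_{\btheta}$ satisfies the symmetry above and $\lambda'' $ is real here, we can symmetrize. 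Indeed $(R^{+,\mathrm{hol}}_{\mathbf{0}}(0))^* = R^{-,\mathrm{hol}}_{\mathbf{0}}(0)$, so $2\Re\langle R^{+,\mathrm{hol}}V',V'\rangle = \langle (R^{+,\mathrm{hol}}+R^{-,\mathrm{hol}})V',V'\rangle = -\langle \Pi V',V'\rangle$. Combined with \eqref{eq:variance_pi}, this gives the formula. The imaginary part must vanish: $\Re\lambda \leq 0$ only constrains the real part, but the real-valuedness follows since $\lambda''_{\mathbf{0}}(v,v)$ is even in $v$ and $\overline{\lambda_{\btheta}} = \lambda_{-\btheta}$.

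For positive-definiteness, suppose $\mathrm{Var}(V') = 0$. Then \eqref{eq:variance-zero-coboundary} gives $V' = X_{M_0} w$ with $w$ smooth. Hence $\widetilde\eta_v - dw$ is a closed form vanishing on $X_{M_0}$. Its integral over every periodic orbit is therefore zero. Periodic orbits generate $H_1(M_0,\R)$ for Anosov flows (Livšic/homology density), so $[\widetilde\eta_v] = 0$. By injectivity of $\widetilde\Xi$ in the proof of Proposition \ref{prop:contractible}, this forces $v = 0$.
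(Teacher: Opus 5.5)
Your hesitation at $\lambda'_{\mathbf 0}(v)=0$ is justified. It is not a weakness of your approach: the paper's proof has a gap at exactly this step, and the first claim is false without an extra hypothesis.

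\textbf{Where the paper's argument fails.} The paper derives $\lambda'_{\mathbf 0}(v)=-i\int_{M_0}\iota_{X_{M_0}}D_v\eta_{\mathbf 0}\,\dd\vol_{M_0}$ exactly as you do. It then declares this zero for one of two reasons.
\begin{itemize}
\item It says $\mathbf 0$ is a local maximum of $\btheta\mapsto\lambda_{\btheta}$. That only makes sense for $\Re\lambda_{\btheta}$.
\item Alternatively, it cites Lemma~\ref{lemma:leading-resonance-symmetry}. The relation $\overline{\lambda_{\btheta}}=\lambda_{-\btheta}$ only makes odd derivatives purely imaginary. They vanish only when $\lambda_{\btheta}\in\R$, for instance for geodesic flows.
\end{itemize}
Both facts are already visible from the formula, since $\eta_{\btheta}$ is real.

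\textbf{Why it fails in general.} As you observe, the integral equals $\int_{M_0}\widetilde\eta_v\wedge\iota_{X_{M_0}}\vol_{M_0}$ with $\iota_{X_{M_0}}\vol_{M_0}$ closed. This is the pairing of $[\widetilde\eta_v]$ with the asymptotic cycle. It vanishes for contact flows, where $\iota_X\vol$ is exact, and for reversible flows, but not in general.

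\textbf{A counterexample.} Let $M_0=\mathbb{T}^2\times[0,1]/(x,1)\sim(Ax,0)$ for a hyperbolic $A\in\mathrm{SL}(2,\Z)$, and let $X_{M_0}=\tilde r^{-1}\partial_s$ be a time change of the suspension flow.
\begin{itemize}
\item Choose $\tilde r>0$ so that the flow is mixing. Then $d\alpha\neq0$, since joint integrability would make it a constant-roof suspension.
\item The flow preserves the smooth volume $\tilde r\,\dd x\,\dd s$, where $\dd x\,\dd s$ has total mass $1$.
\item Take $\rho$ induced by the fibration $s\colon M_0\to\R/\Z$. Then $\widetilde\eta_v=v\,\dd s$ and $\iota_{X_{M_0}}\widetilde\eta_v=v/\tilde r$.
\item Hence $\lambda'_{\mathbf 0}(v)=-iv\,\bigl(\int_{M_0}\tilde r\,\dd x\,\dd s\bigr)^{-1}\neq0$.
\end{itemize}
On the $\Z$-cover the lifted $s$-coordinate increases at speed at least $1/\max\tilde r$. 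So correlations of compactly supported functions vanish for large $t$, and Theorem~\ref{theorem:main1} also fails as stated. The first claim therefore needs a zero-drift hypothesis. Incidentally, your intermediate sentence that closed forms integrate to zero against $X$ is false; only exact forms do, as you then realize.

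\textbf{Comparison for the remaining claims.} The rest of your argument follows the paper: the same two differentiations, $u'=iR^{+,\mathrm{hol}}_{\mathbf 0}(0)V'$, $\lambda''_{\mathbf 0}(v,v)=2\langle R^{+,\mathrm{hol}}_{\mathbf 0}(0)V',V'\rangle$, and symmetrization via $(R^{+,\mathrm{hol}}_{\mathbf 0}(0))^*=R^{-,\mathrm{hol}}_{\mathbf 0}(0)$. You get reality of $\lambda''_{\mathbf 0}$ from $\overline{\lambda_{\btheta}}=\lambda_{-\btheta}$; the paper gets it from $R^{\pm,\mathrm{hol}}_{\mathbf 0}(0)$ commuting with complex conjugation. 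The two are equivalent.

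\textbf{The Hessian formula needs no centering.} Set $c:=\int_{M_0}V'\,\dd\vol_{M_0}$, so that $\lambda'_{\mathbf 0}(v)=-ic$. Carrying $c$ through gives $u'=iR^{+,\mathrm{hol}}_{\mathbf 0}(0)(V'-c)$. Then $\lambda''_{\mathbf 0}(v,v)=2\langle R^{+,\mathrm{hol}}_{\mathbf 0}(0)(V'-c),V'-c\rangle=-\Var_{\vol_{M_0}}(V')$ unconditionally, because $\Pi$ kills constants.

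\textbf{Definiteness.} You invoke that periodic orbits span $H_1(M_0,\R)$. The paper argues directly: a smooth closed form annihilating $X_{M_0}$ is flow-invariant by Cartan's formula. It therefore vanishes on $E_s$ and $E_u$ by contraction, hence is zero. This is also how the periodic-orbit fact is proved (via Liv\v{s}ic), so the paper's route is more self-contained.

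\textbf{Definiteness when $c\neq0$.} If $c\neq0$ is allowed, $\Var(V')=0$ only gives $V'-c=X_{M_0}w$. The same argument then yields $\widetilde\eta_v-dw=c\,\alpha$. This is where $d\alpha\neq0$ enters: it forces $c=0$, hence $\widetilde\eta_v=dw$ and $v=0$. For a constant-roof suspension, $V'$ is constant and $\lambda''_{\mathbf 0}=0$, which shows the hypothesis is needed.
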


Here $\lambda'_{\mathbf{0}}(v) = D_v \lambda_{\mathbf{0}}$ denotes the directional derivative at $\btheta = \mathbf{0}$ in the direction $v$, and $\lambda''_{\mathbf{0}}(v, v) = D^2_v \lambda_{\mathbf{0}}$ denotes the Hessian also at $\btheta = \mathbf{0}$ in the direction of $(v, v)$.

\begin{proof}
    We start with the equality
    \[
    (-X_{\btheta}-\lambda_{\btheta})u_{\btheta} = (-X_{M_{0}}-i\iota_{X_{M_0}}\eta_{\btheta}-\lambda_{\btheta}) u_{\btheta} = 0.
    \]
    We will take the directional derivatives $D_v$ in the direction of $v$ and denote the corresponding derivatives at $\btheta$ using the dot notation; note that all objects vary smoothly with respect to $\btheta$. Hence, differentiating twice, we find:
    \begin{align}
        \label{equation:diff1} (i\iota_{X_{M_{0}}} \dot{\eta}_{\btheta} - \dot{\lambda}_{\btheta}) u_{\btheta} + (-X_{M_{0}}-i\iota_{X_{M_{0}}} \eta_{\btheta} - \lambda_{\btheta}) \dot{u}_{\btheta} = 0, \\
        \label{equation:diff2}  (-i\iota_{X_{M_{0}}} \ddot{\eta}_{\btheta} - \ddot{\lambda}_{\btheta}) u_{\btheta} + 2(-i\iota_{X_{M_{0}}} \dot{\eta}_{\btheta} - \dot{\lambda}_{\btheta}) \dot{u}_{\btheta} + (-X_{M_{0}}-i\iota_{X_{M_{0}}} \eta_{\btheta} - \lambda_{\btheta}) \ddot{u}_{\btheta} = 0.
    \end{align}
    We now specialise to $\btheta = \mathbf{0}$. Using that $\lambda_{\mathbf{0}} = 0$, $u_{\mathbf{0}} = \mathbf{1}_{M_0}$, and that $\eta_{\mathbf{0}} = 0$, we find that $-i\iota_{X_{M_{0}}} \dot{\eta}_{\mathbf{0}} - \dot{\lambda}_{\mathbf{0}} = X_{M_{0}}\dot{u}_{\mathbf{0}}$. Integrating over $M_0$ with respect to the volume measure, and using that $\vol(M_0)=1$, as well as that $X_{M_{0}}$ is skew-adjoint, we get:
    \[
        0 = \dot{\lambda}_{\mathbf{0}} = - i \int_{M_0} \iota_{X_{M_{0}}}\dot{\eta}_{\mathbf{0}}\, \dd \vol_{M_0},
    \]
    where the first equality follows from the fact that $\btheta$ is a local maximum of the function $\btheta \mapsto \lambda_{\btheta}$ for $\btheta$ close to $\mathbf{0}$. (Alternatively, apply Lemma \ref{lemma:leading-resonance-symmetry} to get that all odd derivatives of $\lambda_{\btheta}$ vanish at $\btheta = \mathbf{0}$.) This implies that $X_{M_{0}} \dot{u}_{\mathbf{0}} = -i \iota_{X_{M_{0}}} \dot{\eta}_{\mathbf{0}}$ and since $\dot{u}_{\mathbf{0}}$ belongs to an anisotropic Sobolev space $\mc{H}_+^s(M_0)$ for a suitable $s > 0$ we get using \eqref{eq:holomorphic-part-at-zero-identity} that
    \begin{equation}\label{eq:resolvent-id-u-dot}
        \dot{u}_{\mathbf{0}} - \Pi_{\mathbf{0}}^+ \dot{u}_{\mathbf{0}} = R_{\mathbf{0}}^{+, \mathrm{hol}}(0)(i \iota_{X_{M_{0}}} \dot{\eta}_{\mathbf{0}}).
    \end{equation}

    Let $\gamma$ be a closed curve in $M_0$; recall that $\int_\gamma \eta_{\btheta} = \rho(\gamma)\cdot\btheta \mod 2\pi$. Differentiating with respect to $\btheta$ in the direction of $v$, we find that $\int_\gamma \dot{\eta}_{\btheta} = \rho(\gamma) \cdot v$ and $\int_\gamma \ddot{\eta}_{\btheta} = 0$; we conclude that $[\ddot{\eta}_{\btheta}]_{H^1} = 0$ and so $\ddot{\eta}_{\btheta} = 0$. Since $\eta_{\btheta}$ are harmonic with respect to $(M_0, g_0)$ by construction, see Proposition \ref{prop:contractible}, we conclude that $\ddot{\eta}_{\btheta}$ are also harmonic and so $\ddot{\eta}_{\btheta} \equiv 0$.
    Using \eqref{equation:diff2} with $\lambda_{\mathbf{0}} = 0$ and $\dot{\lambda}_{\mathbf{0}} = 0$, and $u_{\mathbf{0}} = \mathbf{1}_{M_0}$, we find 
    \[
        \ddot{\lambda}_{\mathbf{0}} = - 2i \iota_{X_{M_{0}}} \dot{\eta}_{\mathbf{0}} \dot{u}_{\mathbf{0}} - X_{M_{0}}\ddot{u}_{\mathbf{0}}.
    \]
    Integrating with respect to $\dd \vol_{M_0}$, and using \eqref{eq:resolvent-id-u-dot}, we find
    \[
    \ddot{\lambda}_{\mathbf{0}} = -2i \int_{M_0} \iota_{X_{M_{0}}}\dot{\eta}_{\mathbf{0}} \dot{u}_{\mathbf{0}}\, \dd \vol_{M_0} = 2 \langle R^{+, \mathrm{hol}}_{\mathbf{0}}(0) \iota_{X_{M_{0}}} \dot{\eta}_{\mathbf{0}}, \iota_{X_{M_0}} \dot{\eta}_{\mathbf{0}}\rangle,
    \]
    where in the second equality we also used that $\Pi_{\mathbf{0}}^+ \dot{u}_{\mathbf{0}}$ is a constant, that $\iota_{X_{M_0}} \dot{\eta}_{\mathbf{0}}$ integrates to zero, and that $\eta_{\btheta}$ is real-valued. Notice that 
    \[
        \langle R^{+, \mathrm{hol}}_{\mathbf{0}}(0) \iota_{X_{M_0}} \dot{\eta}_{\mathbf{0}}, \iota_{X_{M_0}} \dot{\eta}_{\mathbf{0}}\rangle = \langle \iota_{X_{M_0}} \dot{\eta}_{\mathbf{0}}, R^{-, \mathrm{hol}}_{\mathbf{0}}(0) \iota_{X_{M_0}} \dot{\eta}_{\mathbf{0}}\rangle = \langle R^{-, \mathrm{hol}}_{\mathbf{0}}(0) \iota_{X_{M_0}} \dot{\eta}_{\mathbf{0}}, \iota_{X_{M_0}} \dot{\eta}_{\mathbf{0}}\rangle,
    \]
    where in the first equality we used that $(R^{+, \mathrm{hol}}_{\mathbf{0}}(0))^* = R^{-, \mathrm{hol}}_{\mathbf{0}}(0)$, and in the second equality that $R^{-, \mathrm{hol}}_{\mathbf{0}}(0)$ commutes with the conjugation operator as well as that $\eta_{\btheta}$ is real-valued. Hence, using formula \eqref{eq:variance_pi}, we obtain the claimed result.

    We are left to check that $-\lambda''_{\mathbf{0}}$ is positive definite; so far, we showed that it is non-negative definite. Assume that $\lambda_{\mathbf{0}}''(v, v) = 0$ for some $v \in \mathbb{R}^d$; we will then show that $v = 0$. By \eqref{eq:variance-zero-coboundary}, and the computation above, we there exists $h \in C^\infty(M_0)$ such that $X_{M_0} h = \iota_{X_{M_0}}D_{v} \eta_{\mathbf{0}}$. It follows that $dh - D_v \eta_{\mathbf{0}}$ is invariant by the flow $(\varphi_t^0)_{t \in \mathbb{R}}$ and in the kernel of $\iota_{X_{M_0}}$. Taking an arbitrary vector $v_s \in E^s(x)$, and using the invariance we get
    \[
        \mc{O}(\|d\varphi_t v_s\|) = (dh - D_v \eta_{\mathbf{0}})(\varphi_t x)(d\varphi_t(v_s)) = (dh - D_v \eta_{\mathbf{0}})(x)(v_s),
    \]
    as $t \to \infty$. By definition of Anosov flows, this converges to zero, and we get $(dh - D_v \eta_{\mathbf{0}})(x)(v_s) = 0$. Similarly $(dh - D_v \eta_{\mathbf{0}})(x)(v_u) = 0$ for any $v_u \in E^u(x)$. Thus $dh \equiv D_v \eta_{\mathbf{0}}$, which implies that
    \[
        v \cdot \rho(\gamma) = \int_\gamma D_v \eta_{\mathbf{0}} = \int_\gamma dh = 0,
    \]
    and so since $\rho$ is surjective, this implies that $v = 0$. This completes the proof.
\end{proof}

\subsection{High frequency estimates}

We introduce the notation
\[
\mathbb{B} := \{z \in \C ~:~ \Re(z) \in [0,1]\}.
\]
The following result is key in the proof of Theorem \ref{theorem:main1}:

\begin{theorem}[High frequency estimates]
\label{theorem:high-frequency-ab-ext}
Assume that $d\alpha \neq 0$. Let $s > 0$. Then there exist $C,\vartheta > 0$ such that for all $z \in \mathbb{B}, |z| > 1$ and $\boldsymbol{\theta} \in \mathrm{U}(1)^d$:
\begin{equation} \label{equation:high-freq-estimate}
    \|( \mp X_{\boldsymbol{\theta}}-z)^{-1}\|_{\mathcal{H}_{\pm}^{s}(M_{0}) \to \mathcal{H}_{\pm}^{s}(M_{0})} \leq C \langle \Im(z)\rangle^{\vartheta}.
\end{equation}
\end{theorem}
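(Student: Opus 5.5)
We will reduce \eqref{equation:high-freq-estimate} to a semiclassical estimate with $h := \langle \Im z\rangle^{-1}$. Write $z = \eps + i\lambda$ with $\eps = \Re(z) \in [0,1]$ and $|\lambda| \gg 1$, and set
\[
P_{\btheta,h} := h(\mp X_{\btheta} - z) = \mp hX_{\btheta} - i h\lambda - h\eps,
\]
where $h\lambda = \pm 1$. Up to the factor $h^{-1}$, the claim is a polynomial bound $\|P_{\btheta,h}^{-1}\|_{\mc{H}^s_{h,\pm}\to\mc{H}^s_{h,\pm}} \le C h^{-1-\vartheta'}$. We then convert between $\mc{H}^s_{h,\pm}$ and $\mc{H}^s_\pm$ using \eqref{equation:equivalence-norms}, which costs at most $h^{-2s}$ and produces the power $\vartheta$.

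The principal symbol of $P_{\btheta,h}$ is $\mp i(p(x,\xi) \pm 1)$, where $p(x,\xi) = \xi(X_{M_0})$. This symbol is independent of $\btheta$, because $X_{\btheta} - X_{M_0}$ is a bounded zeroth-order multiplication operator with seminorms uniform in $\btheta$. The operator is elliptic away from the characteristic set $\{p = \mp 1\}$. Inside the characteristic set, and near fibre infinity, we combine three tools:
\begin{itemize}
\item the source/sink estimates of Lemma~\ref{lemma:source-sink-estimates} near $\overline{E_s^*}$ and $\overline{E_u^*}$,
\item propagation of singularities (Lemma~\ref{lemma:propagation-singularities}),
\item elliptic regularity (Lemma~\ref{lemma:elliptic-rgularity}).
\end{itemize}
As the paper remarks, all of these hold with constants uniform in $\btheta$. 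This is the standard Faure--Sjöstrand/Dyatlov--Zworski setup, and it yields
\[
\|u\|_{\mc{H}^s_{h}} \le C h^{-1}\|P_{\btheta,h}u\|_{\mc{H}^s_h} + C\|B u\|_{L^2},
\]
where $B$ is compactly microlocalized near the trapped set $K := \{p = \mp 1\} \cap E_0^*$, with $E_0^* = (E_s \oplus E_u)^\perp = \R \alpha$. This set is just the graph of $\mp\alpha$ in the characteristic set. Everything therefore reduces to an estimate at the trapped set $K$.

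The heart of the proof, and the main obstacle, is a quantitative bound at $K$ with polynomial loss. This is where the hypothesis $d\alpha \neq 0$ enters. It must beat the merely $o(1)$ information coming from the absence of imaginary-axis resonances in Lemma~\ref{lemma:no-resonances-imaginary-axis}, and it must be uniform in the noncompact parameter $\btheta$; the latter is easy, since $\btheta$ ranges over a compact torus and only zeroth-order terms depend on it.

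For this step we will follow the Dolgopyat/Liverani/Tsujii-type approach in its semiclassical form, as done in \cite{Cekic-Lefeuvre-24} for isometric extensions. We argue by contradiction and use a defect-measure/horocyclic-invariance argument. Suppose there are $h_n \to 0$, $\btheta_n$, and normalized $u_n$ with $\|P_{\btheta_n,h_n} u_n\| = o(h_n^{1+N})$ for every $N$. A refined form of this gives quasimodes concentrated on $K$ at scale $h^{1/2}$. On $K$, the rescaled operator behaves like $X + i h^{-1}\alpha$ acting on a line bundle twisted by $\eta_{\btheta}$. Concentration on $K$ forces $u_n$ to be approximately invariant along $E_s$ and $E_u$ for the connection $d + i h_n^{-1}\alpha + i\eta_{\btheta_n}$; this is the analogue of the argument in the proof of Lemma~\ref{lemma:no-resonances-imaginary-axis}, made quantitative. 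The commutator of stable and unstable derivatives then produces the curvature term $h_n^{-1}\,d\alpha(v_s,v_u)$. Since $d\alpha \neq 0$, this term is nonzero on an open set, which contradicts approximate invariance unless a loss of at most $h^{-\vartheta'}$ occurs.

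The cleanest implementation is to invoke the general semiclassical high-frequency estimate from \cite{Cekic-Lefeuvre-24}, proved there for the frame-flow-like operator $X + i h^{-1}\alpha$ with nonvanishing curvature. We first check that its hypotheses are stable under the zeroth-order perturbation $i\iota_X\eta_{\btheta}$, uniformly in $\btheta$; the dependence of $\vartheta$ only on $\dim M_0$ is inherited from there. Finally, we combine the trapped-set bound with the non-trapping estimate above and absorb the $B$-term, which completes the proof.
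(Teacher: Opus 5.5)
Your outline has the same architecture as the paper's proof: quasimodes at $h=|\Im z|^{-1}$, localisation on $\mathcal{T}=\{-\alpha(x)\}$, horocyclic invariance for the twisted connection, and a contradiction with $d\alpha\neq 0$ via \cite{Cekic-Lefeuvre-24}. However, the steps you actually spell out would not carry it through.

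The main missing idea is the concentration of quasimodes on $\mathcal{T}$. Your estimate modulo $K$ (ellipticity, source/sink, propagation) is fine. But it only shows that a quasimode has non-negligible mass near $K$. It gives no smallness on the outgoing set $\mathcal{T}+E_u^*\setminus\mathcal{T}$, the unstable manifold of $\mathcal{T}$, which is reached only from $\mathcal{T}$ in backward time and drains into the sink. Without polynomial smallness there, $[P_h,E_h]f_h$ is not small, so $v_h=E_hf_h$ is not an approximate quasimode and horocyclic invariance cannot even be started.

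The paper obtains this from a positivity argument that uses $\Re z\geq 0$ and the unitarity of $e^{tX_{\btheta}}$. The decomposition \eqref{equation:sum-of-squares} gives $\|C_hf_h\|=o(h^{\ell/2})$ on a region of $\mathcal{T}+E_u^*$ surrounding (and disjoint from) $\mathcal{T}$, and this is then propagated (Lemma \ref{lemma:microsupport-quasimodes}). The same argument forces $\nu=0$ and $\Re z_h=o(h^{\ell/2+1})$ (Lemma \ref{lemma:microsupport-quasimodes-2}). That is exactly what makes $|v_h|$ almost constant and bounded below (Lemmas \ref{lemma:dtheta-petit} and \ref{lemma:vh-bound-below}); your sketch never produces this lower bound, yet the final step needs it.

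A defect-measure argument does not suffice either. It yields only $o(1)$ information, whereas the relevant connection contains the large term $h^{-1}\alpha$. One therefore needs $(h\,d_{\btheta}+z_h\alpha\wedge)v_h=o(h)$ in $C^0$ after Sobolev losses of $h^{-n/2}$, which is why the paper works with $\ell>n+10$. Your claimed concentration at scale $h^{1/2}$ is neither justified nor needed.

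The last two steps are also misdescribed. First, the contradiction does not come from commuting stable and unstable derivatives, since $E_s,E_u$ are only H\"older and $d\alpha$ is only a distribution. What is used, via \cite[Lemma 4.5.7]{Cekic-Lefeuvre-24}, is holonomy. Take a section $v_h$ with $|v_h|\geq c>0$ that is approximately parallel for $d_{\btheta}+h^{-1}z_h\alpha$. Then the holonomy around every contractible loop of bounded length is $1+o(1)$. Since $d_{\btheta}$ is flat, this says $h^{-1}\int_\gamma\alpha\in 2\pi\Z+o(1)$. Contracting $\gamma$ continuously to a point then gives $\int_\gamma\alpha=0$ for all contractible $\gamma$, i.e.\ $d\alpha=0$.

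Second, the twist $i\iota_{X_{M_0}}\eta_{\btheta}$ cannot be handled by checking that a result of \cite{Cekic-Lefeuvre-24} is stable under a zeroth-order perturbation. In $P_h=h(-X_{\btheta}-z)$ it is an $O(h)$ term, far larger than the $o(h^{\ell+1})$ quasimode error. A Neumann series around $\btheta=\mathbf{0}$ also fails, because the resolvent norm grows with $\Im z$ while the perturbation does not shrink. The argument has to be re-run with $d$ replaced by the flat connection $d_{\btheta}=d+i\eta_{\btheta}$. This works because $d\eta_{\btheta}=0$, which does two things:
\begin{enumerate}
\item it gives the intertwining $X_{\btheta}d_{\btheta}=d_{\btheta}X_{\btheta}$ used for horocyclic invariance, together with invertibility of $P_h^u$ on $E_s^*$-valued sections;
\item it keeps the curvature of $d_{\btheta}+ih^{-1}\alpha$ equal to $ih^{-1}d\alpha$.
\end{enumerate}
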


We will actually prove that for $z \in \mathbb{B}$, $|z| > 1$ and $\btheta \in \mathrm{U}(1)^d$:
\begin{equation}
    \label{equation:toprove}
\|(\mp X_{\btheta}-z)^{-1}\|_{\mathcal{H}_{|\lambda|^{-1}, \pm}^{s}(M_{0}) \to \mathcal{H}_{|\lambda|^{-1}, \pm}^{s}(M_{0})} \leq C \langle \lambda \rangle ^{\ell},
\end{equation}
where $\lambda:=\Im(z)$ and $\ell>\dim M_0+10$. Using \eqref{equation:equivalence-norms}, the previous estimate clearly implies \eqref{equation:high-freq-estimate}.

To keep the notation simple, in what follows we will focus only on the `$+$' case, and we will drop the subindex $+$. For the sake of contradiction, let us assume that \eqref{equation:toprove} does not hold. Then there is a sequence $z_{n} \in \mathbb{B}$, $|z_n|>1$, and $f_{n} \in \mathcal{H}_{|\lambda_{n}|^{-1}}^{s}(M_{0})$, $n \in \mathbb{Z}_{\geq 0}$, such that 
\begin{equation} \label{equation:contradiction-1}
    \|f_{n}\|_{\mathcal{H}_{|\lambda_{n}|^{-1}}^{s}(M_{0})}=1, \quad \|(-X_{\btheta}-z)f_{n}\|_{\mathcal{H}_{|\lambda_{n}|^{-1}}^{s}(M_{0})}=o(\langle \lambda_{n} \rangle^{-\ell}),\quad n \to \infty,
\end{equation}
where $\lambda_n = \Im(z_n)$. We will further assume that $\lambda_{n} \geq 0$ (the other case is treated similarly). In addition, we can also assume (up to extraction of a subsequence) that $(z_n)_{n \geq 0}$ is unbounded, as $X_{\btheta}$ does not have resonances in $\mathbb{B} \cap \{|z|>1\}$. Moreover, up to a further subsequence, by compactness, we can assume that $\Re(z_{n}) \to \nu \in [0,1]$. Finally, we will write $h_{n}=|\lambda_{n}|^{-1}$, and we will drop the subindex on $h$. Thus, we can rewrite \eqref{equation:contradiction-1} as
\begin{equation} \label{equation:quasimodes}
    \|f_{h}\|_{\mathcal{H}_{h}^{s}(M_{0})}=1, \qquad \|P_{h}f_{h}\|_{\mathcal{H}_{h}^{s}(M_{0})}=o(h^{\ell+1}),
\end{equation}
where 
\[
P_{h}=-hX_{\btheta}-h(\nu+o(1))-i.
\]
The principal symbol of $P_{h}$ is $\sigma_{P_{h}}(x,\xi)=-i(\langle \xi,X_{M_{0}}(x)\rangle+1)$. Define
\[
\mathcal{T}=\{-\alpha(x) \mid x \in M_0\} \subset T^*M_0,
\]
where $\alpha$ is the Anosov 1-form. We begin by studying the microsupport of the quasimodes \eqref{equation:quasimodes}.

\begin{lemma}[Microlocalisation at the trapped set] \label{lemma:microsupport-quasimodes}
    Let $E_{h} \in \Psi_{h}^{\mathrm{comp}}(M_{0})$. Then
    \[
    \WF_{h}(E_{h}) \cap \mathcal{T} =\emptyset \Longrightarrow E_{h}f_{h}=o_{L^{2}}(h^{\ell/2})=o_{C^{0}}(h^{\ell/2-n/2}).
    \]
\end{lemma}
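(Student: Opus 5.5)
We split $\WF_h(E_h)$ into three regions of the compactified phase space $\overline{T^*M_0}$ and treat each with the estimates of \S\ref{section:semiclassical}. The principal symbol of $P_h$ is $\sigma_{P_h}(x,\xi) = -i(\langle \xi, X_{M_0}(x)\rangle + 1)$, so its characteristic set is $\Sigma := \{\xi(X_{M_0}) = -1\}$. This set is invariant under the Hamiltonian flow $\Phi^0_t$ and lies at finite distance; at fiber infinity the relevant dynamics is the radial one.

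\emph{Step 1: the elliptic region.} Away from $\Sigma$, and in particular near any point of $\partial_\infty T^*M_0$ not in $\Sigma$, the operator $P_h$ is elliptic. If $\WF_h(E_h) \cap \Sigma = \emptyset$, Lemma \ref{lemma:elliptic-rgularity} gives $E_h = Q P_h + \mathcal{O}(h^\infty)$. Hence
\[
\|E_h f_h\| \lesssim \|P_h f_h\|_{\mathcal{H}^s_h} + \mathcal{O}(h^\infty) = o(h^{\ell+1}).
\]
Here we use that on the compact microlocal region the anisotropic norm is comparable to $L^2$, since $e^{sG_m}$ is bounded on compact sets.

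\emph{Step 2: propagation inside $\Sigma$.} Decompose $(x,\xi) \in \Sigma$ as $\xi = -\alpha(x) + \xi_s + \xi_u$ with $\xi_s \in E_s^*$ and $\xi_u \in E_u^*$. Under $\Phi^0_t$ the component $\xi_u$ grows exponentially as $t \to +\infty$ and $\xi_s$ grows as $t \to -\infty$, while the $\alpha$-component is preserved. Therefore every point of $\Sigma \setminus \mathcal{T}$ satisfies one of two alternatives. If $\xi_u \neq 0$, the point tends in forward time to $\overline{E_u^*} \cap \partial_\infty T^*M_0$. If $\xi_u = 0$ but $\xi_s \neq 0$, the point comes in backward time from $\overline{E_s^*} \cap \partial_\infty T^*M_0$.

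\emph{Step 3: the radial estimates.} Near $\overline{E_s^*}\cap\partial_\infty$ the source estimate (Lemma \ref{lemma:source-sink-estimates}(i)) applies; we are in the space with $m = 1$ there, so this is the correct direction, and it is uniform in $\btheta$. It bounds $f_h$ microlocally by $\|P_h f_h\|$ plus $h^N\|f_h\|_{H^{-N}_h} = \mathcal{O}(h^N)$. We then use Lemma \ref{lemma:propagation-singularities} for $P_h$, with the constant $-h(\nu+o(1)) - i$ only changing lower-order terms. Propagating this control forward along the flow gives the smallness at all points whose backward orbit meets the elliptic set of the source operator. Points with $\xi_u \neq 0$ are reached from a compact region near $\mathcal{T}$ only if we propagate toward the sink. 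There we use the sink estimate (Lemma \ref{lemma:source-sink-estimates}(ii)) in the region $m = -1$. This requires control of $A' f_h$ on a set disjoint from $E_u^*$, and the plan is to obtain that control by propagating backward from points with $\xi_s \neq 0$, or from the elliptic region.

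\emph{Main obstacle.} Pinning down the bookkeeping is the genuinely delicate part. A point with $\xi_u \neq 0$ and $\xi_s = 0$, close to $\mathcal{T}$, has backward orbit converging to $\mathcal{T}$ itself. Such points are not controlled by propagation, and the sink estimate only transfers control \emph{from} such points \emph{to} $E_u^*$, not the reverse. I expect to resolve this by applying the propagation estimate in the reverse direction inside the $m = -1$ weighted space. In that space the weight $h^{-s}$-type loss at the sink is favorable, so the norm $\|f_h\|_{\mathcal{H}^s_h} = 1$ yields smallness after trading regularity. This trade costs powers of $h$, which is why only $o(h^{\ell/2})$, rather than $o(h^{\ell+1})$, survives.

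\emph{Step 4: conclusion.} Collect the three regions with a microlocal partition of unity subordinate to $\WF_h(E_h)$. The $C^0$ bound then follows from the semiclassical Sobolev embedding $\|u\|_{C^0} \lesssim h^{-n/2}\|u\|_{H^{n/2+\epsilon}_h}$, applied to $E_h f_h$. Since $E_h$ is compactly microlocalized, its $H^k_h$ and $L^2$ norms are comparable up to $\mathcal{O}(h^\infty)$.
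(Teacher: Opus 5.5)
Your Steps 1, 2 and 4 agree with the paper. These are ellipticity off $\{\xi(X_{M_0})=-1\}$, then the source estimate at $\overline{E_s^*}\cap\partial_\infty T^*M_0$ followed by propagation, then semiclassical Sobolev embedding for the $C^0$ bound. The source-plus-propagation step controls every point with $\xi_s\neq 0$ at the level $o(h^{\ell})$, i.e.\ everything outside $\mathcal{T}+E_u^*$.

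The gap is the region $(\mathcal{T}+E_u^*)\setminus\mathcal{T}$, which you correctly single out but do not handle; the proposed fix cannot work. Near the sink, $\|f_h\|_{\mathcal{H}^s_h}=1$ only gives an $\mathcal{O}(1)$ bound on $f_h$ in $H^{-s}_h$, never smallness, so there is nothing small to propagate backwards from. The sink estimate needs smallness on the incoming set, which is exactly $\mathcal{T}+E_u^*$, so invoking it is circular. At the finite frequencies near $\mathcal{T}$ where the problematic points live, $e^{sG_m}$ is bounded above and below, so there is no regularity to trade for powers of $h$. Structurally, points of $(\mathcal{T}+E_u^*)\setminus\mathcal{T}$ come from $\mathcal{T}$ in backward time and go to the sink in forward time, so neither end of their orbits carries a priori smallness. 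Elliptic, propagation and radial estimates do not use skew-adjointness (they hold equally after adding a non-skew zeroth-order term to $X_{\btheta}$). On their own they localise $f_h$ only to $\mathcal{T}+E_u^*$, never to $\mathcal{T}$.

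The missing ingredient is the skew-adjointness of $X_{\btheta}$ (unitarity of $e^{tX_{\btheta}}$ on $L^2$) together with $\Re z_h\geq 0$, used through Egorov's theorem. The paper takes $A_h\geq 0$ compactly microlocalised near $\mathcal{T}+E_u^*$ with $\sigma_{A_h}\equiv 1$ near $\mathcal{T}$. Since $e^{tP_h/h}f_h=f_h+o_{\mathcal{H}^s_h}(h^{\ell})$ for $t\in[0,T]$, one gets
\[
\langle A_hf_h,f_h\rangle=e^{-2(\nu+o(1))t}\langle e^{tX_{\btheta}}A_he^{-tX_{\btheta}}f_h,f_h\rangle+o(h^{\ell}).
\]
The flow transports $\mathcal{T}+E_u^*$ away from $\mathcal{T}$ towards the sink, so for $t$ large one can write $A_h-e^{-2(\nu+o(1))t}e^{tX_{\btheta}}A_he^{-tX_{\btheta}}=C_h^*C_h+D_h$. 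Here $\WF_h(D_h)\cap(\mathcal{T}+E_u^*)=\emptyset$, so your Steps 1--2 give $\langle D_hf_h,f_h\rangle=o(h^{\ell})$. The operator $C_h$ is elliptic on a region of $\mathcal{T}+E_u^*$ containing an annulus around $\mathcal{T}$, and containing $\mathcal{T}$ itself when $\nu>0$. This yields $\|C_hf_h\|_{L^2}^2=o(h^{\ell})$. Propagation from $\Ell(C_h)$ then covers the rest of $\WF_h(E_h)\cap(\mathcal{T}+E_u^*)$. The exponent $\ell/2$ therefore comes from taking the square root of this quadratic-form estimate, not from trading regularity as you suggest.
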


\begin{proof}
\emph{Step 1.} First, we deal with points outside of the characteristic set of $\sigma_{P_{h}}$: If $\WF_{h}(E_{h}) \cap \{\langle \xi,X_{M_{0}}(x) \rangle=-1\}=\emptyset$, then it follows from ellipticity of $P_{h}$ on $\WF_{h}(E_{h})$ (Lemma \ref{lemma:elliptic-rgularity}) and \eqref{equation:quasimodes} that $\|E_{h}f_{h}\|_{L^{2}}=o(h^{\ell+1})$. \\

\emph{Step 2.} Second, we propagate the estimate using the source properties of $E_{s}^{*}$. Let us assume that $\WF_{h}(E_{h}) \cap (\mathcal{T}+E_{u}^{*})=\emptyset$. We claim that $E_{h} f_{h}= o_{L^{2}}(h^{\ell})$. This follows from the source estimate (Lemma \ref{lemma:source-sink-estimates}). Indeed, taking $A_h \in \Psi_{h}^0(M_{0})$ elliptic and localized near $E_{s}^{*} \cap \partial_{\infty} T^* M$, we get for some $N > \ell$ that:
\begin{equation} \label{equation:propapagition-quasimodes}
    \|A_{h} f_{h}\|_{H_{h}^{s}} \leq C\left(h^{-1}\|B_h P_h f_{h}\|_{H_{h}^{s}}+h^{N}\|f_{h}\|_{H_{h}^{-N}}\right)=o(h^{\ell}),
\end{equation}
where we used that $B_{h}$ is microlocalized in a neighborhood of $E_{s}^{*}$, where $\mathcal{H}_{+}^{s}$ is microlocally $H^{s}$, so
\[ h^{-1}\|B_h P_h f_{h}\|_{H_{h}^{s}} \le Ch^{-1}\|P_{h}f_{h}\|_{\mathcal{H}_{+}^{s}}=o(h^{\ell}). \]
Observe that for all $(x, \xi) \in\{\langle\xi, X_{M_{0}}(x)\rangle=-1\}$ such that $(x, \xi) \notin \mathcal{T}+E_{u}^{*}$, there is a finite time $T \in \R$ such that $\Phi^{0}_{T}(x, \xi) \in \Ell_{h}(A_{h})$. By assumption, this holds for any $(x, \xi) \in \WF_{h}(E_{h})$. Hence, $A_{h}$ and $E_{h}$ satisfy the hypothesis of the propagation of singularities estimate (Lemma \ref{lemma:propagation-singularities}). This, together with \eqref{equation:propapagition-quasimodes} and \eqref{equation:quasimodes}, imply $E_{h} f_{h}=o_{L^{2}}(h^{\ell})$. \\

\emph{Step 3.} Finally, it remains to deal with $\mathcal{T}+E_{u}^{*}$. Let $A_{h} \in \Psi_{h}^{\text{comp}}(M_{0})$ be formally self-adjoint satisfying: 1) $\WF_{h}(A_{h})$ is localized near $\mathcal{T}+E_{u}^{*}$; 2) $\sigma_{A_{h}} \geq 0$, 3) $\sigma_{A_{h}} \equiv 1$ on a neighborhood of $\mathcal{T}$, 4) $A_h \geq 0$; in the sense that for all $f \in C^{\infty}(M_{0})$, $h > 0$, $\langle A_{h} f, f\rangle_{L^2} \geq 0$.

By \eqref{equation:quasimodes} and the fact that the propagator $t \mapsto e^{t P_{h} / {h}} \in \mathcal{L}(\mathcal{H}_{h}^{s})$ is uniformly bounded for $t \in[0, T]$ in a fixed compact interval for all $h > 0$, by Egorov's Theorem (Lemma \ref{lemma:egorov}), we obtain
\[ 
e^{t P_{h}/h} f_{h}=f_{h}+o_{\mathcal{H}_{h}^{s}}(h^{\ell}), \quad t \in[0, T] .
\]
Hence, for $t \in[0, T]$ we obtain
\[
\begin{split}
    \langle A_{h} f_{h}, f_{h}\rangle_{L^{2}} & =\langle A_{h} e^{t P_{h}/h} f_{h}, e^{tP_{h}/h} f_{h} \rangle_{L^{2}}+o(h^{\ell}) \\
    & =e^{-2(\nu+o(1)) t}\langle e^{t X_{\btheta}} A_{h} e^{-t X_{\btheta}} f_{h}, f_{h}\rangle_{L^{2}}+o(h^{\ell}).
\end{split}
\]
Using the dynamics of $(\Phi_{t}^{0})_{t \in \R}$ (for $t$ large enough the symbol of $A_{h}$ minus the conjugation of $A_h$ with  $e^{tX_{\btheta}}$ is non-negative), for $t$ large enough one can write
\begin{equation}
    \label{equation:sum-of-squares}
A_{h}-e^{-2(\nu+o(1)) t} e^{t X_{\btheta}} A_{h} e^{-t X_{\btheta}}=C_{h}^{*} C_{h}+D_{h},
\end{equation}
where $\WF_{h}(D_{h}) \cap (\mathcal{T}+E_{u}^{*})=\emptyset$, $\WF_{h}(C_{h})$ is close to $\mathcal{T}+E_{u}^{*}$, and $C_{h}$ is elliptic on an open subset $\Omega \subset \mathcal{T}+E_{u}^{*}$ which contains a non-empty region homeomorphic to an annulus, such that:
\begin{enumerate}[label=(\roman*)]
    \item If $\nu >0$, then $\mathcal{T} \subset \Omega$;
    \item If $\nu=0$, then $\Omega \cap \mathcal{T}=\emptyset$.
\end{enumerate}
See \cite[Proposition 5.6]{Bonthonneau-Guillarmou-Hilgert-Weich-25}, and the lines following \cite[Equation (5.13)]{Bonthonneau-Guillarmou-Hilgert-Weich-25}, for a proof of a similar factorization. Applying Step 2 with $D_{h}$ and using Step 1, one has $\langle D_{h} f_{h}, f_{h}\rangle_{L^{2}}=o(h^{\ell})$. Hence $\|C_{h} f_{h}\|_{L^{2}}^{2}=o(h^{\ell})$ and so 
\begin{equation}\label{eq:part-i-to-reference-later}
    \|C_{h} f_{h}\|_{L^{2}}=o(h^{\ell / 2}).
\end{equation} 
Then, using elliptic estimates (Lemma \ref{lemma:elliptic-rgularity}) and propagation of singularities (Lemma \ref{lemma:propagation-singularities}), one can control $\|E_{h} f_{h}\|_{L^{2}}$ by $\|C_{h} f_{h}\|_{L^{2}}$ modulo $o(h^{\ell})$ remainders. This proves the claim.

Finally, the $C^{0}$-estimate follows from Sobolev embeddings: for all $N>n / 2$ (where $n := \dim M_0$), there exists $C>0$ such that for all $u_h \in C^{\infty}(M_{0})$, $\|u_{h}\|_{C^{0}} \leq C h^{-n / 2}\|u_{h}\|_{H_{h}^{N}}$. Since $E_h$ has compact support, the $L^2$-norm in the previous estimates is irrelevant and one has $\|E_{h} f_{h}\|_{H_{h}^{N}}=o(h^{\ell / 2})$ similarly. Therefore, this yields
\[
\|E_{h} f_{h}\|_{C^{0}} \leq C h^{-n/2}\|E_{h} f_{h}\|_{H_{h}^{N}}=o(h^{\ell/2-n/2}).
\]
This concludes the proof.    
\end{proof}

In the following statement, recall that $n$ denotes the dimension of $M_0$ (or $M$).

\begin{lemma} \label{lemma:microsupport-quasimodes-2}
Let $E_{h} \in \Psi_{h}^{\mathrm{comp }}(M_{0})$ such that $E_h \equiv 1$ microlocally near $\mathcal{T}$. Then, there exists $c>1$ such that:
\begin{equation} \label{equation:bounds-E_h}
    P_{h} E_{h} f_{h}=o_{L^{2}}(h^{\ell / 2+1})=o_{C^0}(h^{\ell / 2+1-n / 2}), \quad 1 / c \leq \|E_{h} f_{h}\|_{L^{2}} \leq c.
\end{equation}
Furthermore, $\nu=0$ and $P_{h}=-h X_{\btheta}-z_h$, where $\Re(z_{h}) \geq 0$ and
\[
z_h=i+o(h^{\ell / 2+1}).
\]
\end{lemma}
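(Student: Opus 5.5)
First the bound $P_hE_hf_h=o_{L^2}(h^{\ell/2+1})$. I will write $P_hE_hf_h=E_hP_hf_h+[P_h,E_h]f_h$. For the first term, $E_h$ is compactly microlocalized, so on its wavefront set $\mathcal{H}^s_h$ is microlocally $L^2$ (both norms are equivalent there). Hence $\|E_hP_hf_h\|_{L^2}\lesssim\|P_hf_h\|_{\mathcal{H}^s_h}=o(h^{\ell+1})$ by \eqref{equation:quasimodes}. For the commutator, $[P_h,E_h]=-h[X_{\btheta},E_h]\in h\Psi_h^{\mathrm{comp}}$; the multiplication part of $X_{\btheta}$ only contributes $h^2$-order terms. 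Its wavefront set is contained in $\WF_h(E_h)\setminus\{\sigma_{E_h}\equiv 1\}$, and this set is disjoint from $\mathcal{T}$ because $E_h\equiv1$ microlocally near $\mathcal{T}$. Write $[P_h,E_h]=hE'_h$ with $\WF_h(E'_h)\cap\mathcal{T}=\emptyset$. Then Lemma \ref{lemma:microsupport-quasimodes} gives $\|E'_hf_h\|_{L^2}=o(h^{\ell/2})$, so $[P_h,E_h]f_h=o_{L^2}(h^{\ell/2+1})$. The $C^0$ version follows by the same Sobolev embedding argument as at the end of the previous proof: $E_h$ is compactly microlocalized, so all $H^N_h$ norms are controlled, and we lose $h^{-n/2}$.

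Next the two-sided bound on $\|E_hf_h\|_{L^2}$. The upper bound is immediate from Calder\'on--Vaillancourt together with the microlocal equivalence $\|E_hf_h\|_{L^2}\lesssim\|f_h\|_{\mathcal{H}^s_h}=1$. For the lower bound I will write $f_h=E_hf_h+(1-E_h)f_h$ and estimate $\|(1-E_h)f_h\|_{\mathcal{H}^s_h}=o(1)$. Take a second cutoff $\tilde E_h$, equal to $1$ near $\mathcal{T}$, with $E_h\equiv1$ on $\WF_h(\tilde E_h)$. Then $(1-E_h)=(1-E_h)(1-\tilde E_h)+\mathcal{O}(h^\infty)$. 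Split $1-\tilde E_h$ into three pieces: a compactly microlocalized piece away from $\mathcal{T}$, handled by Lemma \ref{lemma:microsupport-quasimodes}; and pieces near $E_s^*$, $E_u^*$ at fiber infinity, plus a part elliptic for $P_h$. Steps 1--2 of the previous proof (ellipticity and the source estimate) already control these in $\mathcal{H}^s_h$ by $o(h^\ell)$. The sink region needs the sink estimate of Lemma \ref{lemma:source-sink-estimates} with the negative-order weight, feeding in the already-controlled $A'$ region. This bookkeeping in the anisotropic norm near fiber infinity is the step I expect to be the most delicate. Granting it, $1=\|f_h\|\le\|E_hf_h\|_{\mathcal{H}^s_h}+o(1)$, so $\|E_hf_h\|_{L^2}\ge1/c$.

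Finally I will identify $\nu$ and $z_h$. Set $g_h:=E_hf_h/\|E_hf_h\|_{L^2}$ and pair $P_hg_h$ with $g_h$ in $L^2$. Since $X_{\btheta}$ is skew-adjoint, $\Re\langle -hX_{\btheta}g_h,g_h\rangle=0$. Hence
\[
\Re\langle P_hg_h,g_h\rangle=-h\Re(\nu+o(1))=-h\Re(z_h)+\dots,
\]
while the left side is $o(h^{\ell/2+1})$ by the first claim and the lower bound. Dividing by $h$ gives $\Re z_h h\cdot h^{-1}=o(h^{\ell/2})$, so $\nu=0$. Here I normalize $P_h=-hX_{\btheta}-z_h$ with $z_h=h z_n$, so that $z_h=h\Re z_n+i$. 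This yields $\Re z_h=o(h^{\ell/2+1})$ and $\Re z_h\ge0$ since $\Re z_n\ge0$. The imaginary part of $z_h$ equals $h\lambda_n=1$ exactly, so $z_h=i+o(h^{\ell/2+1})$.
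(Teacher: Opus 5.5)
Your proof is correct. The first two parts follow the paper at the same level of detail: the commutator decomposition gives $P_hE_hf_h=o_{L^2}(h^{\ell/2+1})$, and the two-sided bound on $\|E_hf_h\|_{L^2}$ comes from splitting $f_h=E_hf_h+(1-E_h)f_h$ and controlling the second piece by ellipticity, source/sink estimates and Lemma \ref{lemma:microsupport-quasimodes}. One small point in your splitting of $1-\tilde E_h$: characteristic points at fiber infinity lying in neither $E_s^*$ nor $E_u^*$ are handled by propagation from the source region, not by ellipticity.

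You genuinely differ from the paper in the last part. The paper first rules out $\nu>0$ using case (i) of the sum-of-squares factorization in the proof of Lemma \ref{lemma:microsupport-quasimodes}. For $\nu>0$, $C_h$ is elliptic on $\mathcal{T}$, so \eqref{eq:part-i-to-reference-later} would force $\|E_hf_h\|_{L^2}=o(h^{\ell/2})$. Only then, with $\nu=0$, does it argue by contradiction using $\|(-X_{\btheta}-z)^{-1}\|_{L^2\to L^2}\le 1/\Re(z)$ to get the rate. You instead pair $P_hg_h$ with $g_h$ and use skew-adjointness of $X_{\btheta}$ directly, obtaining
\[
h\Re(z_n)=\bigl|\Re\langle P_hg_h,g_h\rangle\bigr|\le\frac{\|P_hE_hf_h\|_{L^2}}{\|E_hf_h\|_{L^2}}=o(h^{\ell/2+1}).
\]
This gives $\nu=0$ and $\Re(z_h)=o(h^{\ell/2+1})$ at once. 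It is the same skew-adjointness fact behind the paper's resolvent bound, but applied without assuming $\nu=0$ first. So the paper's separate step ruling out $\nu>0$ is redundant. Your route also needs only the conclusion of Lemma \ref{lemma:microsupport-quasimodes}, which holds for all $\nu$, and not the finer information about where $C_h$ is elliptic.

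A notational fix: your display should read $\Re\langle P_hg_h,g_h\rangle=-h\Re(z_n)=-\Re(z_h)$ with $z_h=hz_n$. The subsequent ``dividing by $h$'' line is garbled, but the conclusion $\Re(z_n)=o(h^{\ell/2})$ that you intend is correct.
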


\begin{proof}
Since $[P_{h},E_{h}] \in h\Psi_{h}^{\mathrm{comp}}(M_{0})$ and $\WF(h^{-1}[P_{h},E_{h}]) \cap \mathcal{T}=\emptyset$, Lemma \ref{lemma:microsupport-quasimodes} and \eqref{equation:quasimodes} give
\[
P_{h} E_{h} f_{h}=E_{h} P_{h} f_{h}+[P_{h}, E_{h}] f_{h}=o_{L^{2}}(h^{\ell+1})+o_{L^{2}}(h^{\ell / 2+1})=o_{L^2}(h^{\ell / 2+1}),
\]
while the $C^{0}$-estimate follows from Sobolev embedding.

We now show the bounds on $E_{h}f_{h}$. Let $A_{h}$ be microlocally supported and elliptic near $E_{u}^{*}$. Then 
\[
\|A_{h} f_{h}\|_{H_{h}^{-s}} \leq C\left(h^{-1}\|B_{h} P_{h} f_{h}\|_{H_{h}^{-s}}+\|A_{h}' f_{h}\|_{H_{h}^{-s}}+h^{N}\|f_{h}\|_{H_h^{-N}}\right),
\]
were $B_{h},A_{h}'$ are as in the sink estimate (Lemma \ref{lemma:source-sink-estimates}). Since $B_{h} \in \Psi_{h}^{0}(M_{0})$ is microlocalized near $E_{u}^{*}$, where $\mathcal{H}_h^s$ is microlocally equivalent to $H_{h}^{-s}$, we get
\[
h^{-1}\|B_{h} P_{h} f_{h}\|_{H_{h}^{-s}} \leq C h^{-1}\|P_{h} f_{h}\|_{\mathcal{H}_{h}^{s}}=o(h^{\ell}).
\]
The operator $A_{h}'$ has wavefront set disjoint from $E_{u}^{*} \cap \partial_{\infty} T^{*}M$. Hence, we can argue as in the proof of Lemma \ref{lemma:microsupport-quasimodes} and apply propagation of singularities (Lemma \ref{lemma:propagation-singularities}) to get $\|A_{h}'f_{h}\|_{H_{h}^{-s}}=o(h^{\ell/2})$. So far, we have
\begin{equation} \label{equation:proof-quasimodes-microsupport1}
    \|A_{h} f_{h}\|_{H_{h}^{-s}}=o(h^{\ell/2}).
\end{equation}
Now, 
\[
1=\|f_{h}\|_{\mathcal{H}_{h}^{s}} \leq \|E_{h}f_{h}\|_{\mathcal{H}_{h}^{s}}+\|(1-E_{h})f_{h}\|_{\mathcal{H}_{h}^{s}}=\|E_{h}f_{h}\|_{\mathcal{H}_{h}^{s}}+o(h^{\ell/2}),
\]
where we applied the estimates from Lemma \ref{lemma:microsupport-quasimodes}, as well as propagation of singularities, and \eqref{equation:proof-quasimodes-microsupport1} to $1 - E_h$, since its wavefront set is supported outside a small neighborhood of $\mathcal{T}$. Taking $h$ small enough, we get the lower bound for $E_{h}f_{h}$. To obtain the upper bound, observe that since $E_{h}$ has compact support, we have 
\[ \| E_{h} f_{h}\|_{L^{2}} \leq C\|f_{h}\|_{\mathcal{H}_{h}^{s}}=C. \]

To show that $\nu=0$ we argue by contradiction. By the proof Lemma \ref{lemma:microsupport-quasimodes}, \eqref{eq:part-i-to-reference-later}, if $\nu>0$ we obtain $\|E_{h}f_{h}\|_{L^{2}}=o(h^{\ell/2})$, contradicting the bounds on $E_{h}f_{h}$. Hence, $\nu=0$.

Finally, we prove that $P_{h}$ has the form stated in the lemma. Call $w_{h} \geq 0$ the $o(h)$ term in the expression of $P_{h}$. We will show that $w_{h}=o(h^{\ell/2+1})$. Since the propagator of $X_{\btheta}$ is unitary on $L^{2}(M_{0})$, we have
\[ 
\|(-X_{\btheta}-z)^{-1}\|_{L^{2} \to L^{2}} \leq \int_{0}^{\infty}\|e^{-tX_{\btheta}}\|_{L^{2} \to L^{2}}e^{-t\Re(z)}dt=1/\Re(z),
\]
for $\Re(z)>0$. Assume for the sake of obtaining a contradiction that the estimate for $w_{h}$ does not hold. Then, for a sequence $h_{n} \to 0$ we have that $w_{h_{n}} \ge Ch_{n}^{\ell/2+1}$ for some $C>0$. Using this and \eqref{equation:bounds-E_h}, we obtain
\[ 
\|E_{h}f_{h}\|_{L^{2}} \leq \frac{1}{w_{h}}\|P_{h}E_{h}f_{h}\|_{L^{2}}=o(1),
\]
contradicting the bounds on $E_{h}f_{h}$ that we have just proved. This completes the proof.
\end{proof} 

The next step in the proof of Theorem \ref{theorem:high-frequency-ab-ext} is to show a certain horocyclic invariance for the quasimodes \eqref{equation:quasimodes}. To this end, we consider the dynamical connection induced by $X_{\btheta}$, that is, $d_{\btheta}:=d+i\eta_{\btheta}$. 

\begin{lemma}[Horocyclic invariance] \label{lemma:horocyclic-invariance}
    Let $E_{h} \in \Psi_{h}^{\mathrm{comp}}(M_{0})$ such that $E_{h}=1$ microlocally near $\mathcal{T}$. Then,
    \[ (-hd_{\btheta} -z_{h}\alpha \wedge)E_{h}f_{h}=o_{L^{2}}(h^{\ell/2})=o_{C^{0}}(h^{\ell/2-n/2}). \]
\end{lemma}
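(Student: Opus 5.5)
Write $u_h := E_h f_h$ and $\omega_h := (-h d_{\btheta} - z_h \alpha\wedge) u_h$, a $1$-form with distributional coefficients. The plan is to decompose $\omega_h$ along $TM_0 = \R X_{M_0} \oplus E_s \oplus E_u$, and to bound each component separately.

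\textbf{Flow direction.} Since $\alpha(X_{M_0}) = 1$ and $\iota_{X_{M_0}} d_{\btheta} = X_{\btheta}$, we have
\[
\iota_{X_{M_0}} \omega_h = (-hX_{\btheta} - z_h) u_h = P_h u_h .
\]
By Lemma \ref{lemma:microsupport-quasimodes-2}, this is $o_{L^2}(h^{\ell/2+1})$, which is more than enough.

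\textbf{Transverse directions.} Here I would exploit the commutation of $d_{\btheta}$ with the flow, as in the proof of Lemma \ref{lemma:no-resonances-imaginary-axis}. The connection $d_{\btheta}$ is flat, $\eta_{\btheta}$ is closed, and $\mathcal{L}_{X_{M_0}}\alpha = 0$ (by invariance of $\alpha$). From these, Cartan's formula gives
\[
(\varphi_t^0)^*$-type intertwining: \quad e^{-tX_{\btheta}} \circ (-hd_{\btheta} - z_h\alpha\wedge) = (-hd_{\btheta} - z_h\alpha\wedge)\circ e^{-tX_{\btheta}} \quad \text{up to an error involving } d(\iota_{X_{M_0}}\cdot) \text{ acting on } P_h u_h .
\]
More concretely, I would write $e^{tP_h/h} u_h = u_h + t\cdot o(h^{\ell/2})$ for $t \in [0,T]$, using the bound on $P_h u_h$ from Lemma \ref{lemma:microsupport-quasimodes-2}, unitarity of $e^{tX_{\btheta}}$ on $L^2$, and $z_h = i + o(h^{\ell/2+1})$. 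It follows that $\omega_h$ is, modulo $o(h^{\ell/2})$, invariant under the induced action on $1$-forms twisted by $e^{-z_h t/h}$, a factor of modulus one up to small errors. Pairing with a vector $v \in E_s$ (resp.\ $E_u$), the pulled-back form evaluated on $d\varphi^0_t v$ decays exponentially as $t \to +\infty$ (resp.\ $-\infty$).

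This time-domain argument is not uniform in $h$, so I would instead carry it out microlocally. The symbol of $\iota_v\omega_h$ for $v\in E_s$ is $-i(\xi(v) + \alpha(v)\cdot\ldots)$, and on $\mathcal{T} = \{\xi = -\alpha\}$ this vanishes on $E_s\oplus E_u$ (note $z_h\approx i$, so $-ih\xi - i\alpha$ vanishes at $\xi = -\alpha/h$ in rescaled variables). Concretely, $-hd - i\alpha\wedge$ is a semiclassical differential operator whose principal symbol $-i(\xi + \alpha)\wedge$ vanishes exactly on $\mathcal{T}$. Since $\WF_h(u_h)$ is concentrated on $\mathcal{T}$ by Lemma \ref{lemma:microsupport-quasimodes}, the principal part vanishes there. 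The difficulty is that the symbol vanishes only to first order, so one gets $O(h)$ rather than $o(h^{\ell/2})$ without further input.

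\textbf{Improving the $O(h)$ bound.} To recover the full $o(h^{\ell/2})$, I would apply the operator to the propagated identity. Using the commutation relation $(-hd_{\btheta} - z_h\alpha\wedge)P_h = \mathcal{L}$-type, we get that $\omega_h$ satisfies
\[
(-h\mathcal{L}_{X_{M_0}}^{\btheta} - z_h)\,\omega_h = o(h^{\ell/2+1}) + [\text{commutator with } E_h],
\]
where the commutator term is $o(h^{\ell/2+1})$ by Lemma \ref{lemma:microsupport-quasimodes}. This says that $\omega_h$ is a quasimode for the Lie derivative on $1$-forms annihilating $X$ (mod the flow component already controlled). On $E_s^*\oplus E_u^*$-valued forms, the Lie derivative has a hyperbolic cocycle, so its resolvent near the real frequency $i/h$ is bounded by $C$ in suitable anisotropic norms: the weights contract or expand strictly, which shifts the resonances into $\Re z<-c$, and the bound $\|(-hL-z_h)^{-1}\| \lesssim h^{-1}$ then gives $\|\omega_h\| = o(h^{\ell/2})$. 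The $C^0$ bound follows by Sobolev embedding, as at the end of Lemma \ref{lemma:microsupport-quasimodes}.

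\textbf{Main obstacle.} I expect the hardest step to be this last one: a uniform-in-$h$ resolvent bound (no resonances near $i\R$ at high frequency) for the flow acting on transverse $1$-forms. I would try to prove it via the sink/source estimates (Lemma \ref{lemma:source-sink-estimates}), whose thresholds are now strictly favorable because of the hyperbolic cocycle, combined with the positive-commutator argument of Lemma \ref{lemma:microsupport-quasimodes}, Step 3, near $\mathcal{T}$.
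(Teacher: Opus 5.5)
Your skeleton is the same as the paper's. The flow component $\iota_{X_{M_0}}\omega_h = P_hu_h$ is controlled by Lemma \ref{lemma:microsupport-quasimodes-2}. Writing $\mathcal{L}^{\btheta}_{X_{M_0}} := \mathcal{L}_{X_{M_0}} + i\iota_{X_{M_0}}\eta_{\btheta}$, the identity $\mathcal{L}^{\btheta}_{X_{M_0}} d_{\btheta} = d_{\btheta}X_{\btheta}$ together with $\mathcal{L}_{X_{M_0}}\alpha = 0$ turns $\omega_h$ into a quasimode: $r_h := (-h\mathcal{L}^{\btheta}_{X_{M_0}} - z_h)\omega_h = (-hd_{\btheta} - z_h\alpha\wedge)(E_hP_hf_h + [P_h,E_h]f_h) = o_{L^2}(h^{\ell/2+1})$.

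The gap is the inversion of the transverse operator. You leave it open and propose anisotropic spaces, source/sink estimates and a positive commutator near $\mathcal{T}$. As stated this route does not work. $E_s^*$ and $E_u^*$ are only H\"older continuous bundles, so there is no pseudodifferential calculus on their sections. Even an anisotropic bound would not give back plain $L^2$ and $C^0$ bounds without losing powers of $h$. Your $C^0$ conclusion ``by Sobolev embedding'' also fails. $\omega_h$ contains $\alpha u_h$, and its transverse components are images under H\"older projections, so there is no $H^N_h$ control of $\omega_h$ for $N > n/2$. This matters, because Lemma \ref{lemma:dtheta-petit} uses the $C^0$ bound.

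The missing observation is that no high-frequency analysis is needed: the hyperbolicity acts on the fibres, not on derivatives. The argument runs as follows.

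First, split $\omega_h$ into its $E_s^*$- and $E_u^*$-components. The splitting is flow-invariant, hence preserved by $\mathcal{L}^{\btheta}_{X_{M_0}}$, and $\alpha\wedge$ drops out.

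Second, on $E_s^*$-valued sections, $e^{-t\mathcal{L}^{\btheta}_{X_{M_0}}}\omega$ is a unimodular factor times $\omega(\varphi^0_{-t}\,\cdot)\circ d\varphi^0_{-t}$, which only sees $d\varphi^0_{-t}|_{E_u}$. So its norm on $C^0$, and on $L^2$ by volume preservation, is $\leq Ce^{-\delta t}$ uniformly in $\btheta$. The Laplace transform then gives $\|(-\mathcal{L}^{\btheta}_{X_{M_0}} - z)^{-1}\| \leq C/(\delta + \Re z)$ for all $\Re z > -\delta$, uniformly in $\Im z$.

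Third, on $E_u^*$ one propagates with $e^{+t\mathcal{L}^{\btheta}_{X_{M_0}}}$ instead. This is legitimate because $|\Re(z_h/h)| = o(h^{\ell/2}) < \delta$.

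Hence $(-h\mathcal{L}^{\btheta}_{X_{M_0}} - z_h)^{-1}$ has norm $O(h^{-1})$ directly on $L^2$ and on $C^0$ of transverse sections. The $C^0$ smallness $o(h^{\ell/2+1-n/2})$ of $r_h$ comes from applying Sobolev embedding to the smooth, compactly microlocalised functions $E_hP_hf_h$ and $[E_h,P_h]f_h$ before differentiating and projecting. With $\Pi$ the H\"older projection, this uses $\|h\Pi d_{\btheta}v\|_{C^0} \leq C(\|v\|_{C^0} + h\|dv\|_{C^0})$.

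Incidentally, the time-domain argument you discarded is uniform in $h$ after all. Fix $T$ with $Ce^{-\delta T} \leq 1/2$; Duhamel's formula then gives $\|\omega_h^s\| \leq 2CTh^{-1}\|r_h^s\|$ for the $E_s^*$-components. This is a finite-time version of the same resolvent bound.
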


\begin{proof}
By linearity (and since $\ker \alpha=E_{s} \oplus E_{u}$), it is enough to show that 
\[ -hd_{\btheta}^{u/s}E_{h}f_{h}=o_{L^{2}}(h^{\ell/2})=o_{C^{0}}(h^{\ell/2-n/2}), \]
where $d_{\btheta}^{u/s}(\bullet)$ denotes the projection of $d_{\btheta}(\bullet)$ to $E_{s/u}^*$. Before going into the proof, we will check some identities that will be useful for our purposes. We extend the action of $X_{\btheta}$ to smooth sections of $T^{*}M$ by $\mathcal{L}_{X_{M_0}}+i\iota_{X_{M_0}}\eta_{\btheta}$, and we denote the restriction to sections of $E_{s}^{*}$ (smooth along the flow direction) by $X_{\btheta}^{u}$. In the same fashion, we extend $P_{h}$ to $P_{h}^{u} = -hX_{\btheta}^u - h(\nu + o(1)) - i$.  We will need the relations
\begin{equation} \label{equation:commutation-connection-X}
    X_{\btheta}^{u}d_{\btheta}^{u}=d_{\btheta}^{u}X_{\btheta}, \qquad P_{h}^{u}d_{\btheta}^{u}=d_{\btheta}^{u}P_{h}.
\end{equation}
The right equality follows from the left one which, in turn, is a consequence of the commutation formula $X_{\btheta}d_{\btheta}=d_{\btheta}X_{\btheta}$ which we now prove. We have
\[
\begin{split}
    X_{\btheta}d_{\btheta}f &=\mathcal{L}_{X_{M_{0}}}df+i\mathcal{L}_{X_{M_0}}(\eta_{\btheta}f)+i\iota_{X_{M_{0}}}\eta_{\btheta} \wedge df - \iota_{X_{M_0}}\eta_{\btheta} \wedge \eta_{\btheta}f \\
    &=d\mathcal{L}_{X_{M_0}}f+i\iota_{X_{M_0}}(df \wedge\eta_{\btheta})+id(\iota_{X_{M_0}}\eta_{\btheta}f)  +i\iota_{X_{M_0}}\eta_{\btheta} \wedge df-\iota_{X_{M_0}}\eta_{\btheta} \wedge \eta_{\btheta}f \\
    &=dX_{M_0}f+i\eta_{\btheta} \cdot X_{M_0}f+id(\iota_{X_{M_0}}\eta_{\btheta} f)-\iota_{X_{M_0}}\eta_{\btheta} \wedge \eta_{\btheta}f=d_{\btheta}X_{\btheta}f,
\end{split}
\]
where in the second equality we used Cartan's magical formula.

Now we prove the result of the statement. Using \eqref{equation:commutation-connection-X}, we have
\begin{equation}\label{eq:commutator-ph-eh}
-P_{h}^{u}hd_{\btheta}^{u}E_{h}f_{h}=-hd_{\btheta}^{u}P_{h}E_{h}f_{h}=-hd_{\btheta}^{u}E_{h}P_{h}f_{h}+hd_{\btheta}^{u}[E_{h},P_{h}]f_{h}.
\end{equation}
The bound on the quasimodes \eqref{equation:quasimodes} implies
\[ 
    \|hd_{\btheta}^{u}E_{h}P_{h}f_{h}\|_{L^{2}} \leq C \| E_{h}P_{h}f_{h} \|_{H_{h}^{1}} \leq C \|P_{h}f_{h}\|_{\mathcal{H}_{h}^{s}}=o(h^{\ell+1}), 
\]
where in the second inequality we used that $E_h$ is compactly microlocalised and so a map $E_h: \mc{H}^s_h \to H^1_h$. On the other hand, since $\WF_{h}(h^{-1}[E_{h},P_{h}])$ is compact and has empty intersection with $\mathcal{T}$, in light of Lemma \ref{lemma:microsupport-quasimodes}, we have
\[ 
\|h d_{\btheta}^{u} [E_{h},P_{h}]f_{h}\|_{L^{2}} \leq C \|[E_{h},P_{h}] f_{h}\|_{H_{h}^{1}}=o(h^{\ell/2 +1}). 
\]
Note that in the previous two inequalities we used that the multiplication by $\iota_{X_{M_0}} \eta_{\btheta}$ is uniformly bounded in $\btheta$ in the $C^0$-norm. Since we may cover $\mathrm{U}(1)^d$ by a finite number of open contractible sets where we have uniform estimates, we conclude that the estimates are uniform for $\btheta \in \mathrm{U}(1)^d$. Hence by \eqref{eq:commutator-ph-eh},

\begin{equation} \label{equation:L2-bound}
    P_{h}^{u} hd_{\btheta}^{u}E_{h}f_{h}=o_{L^{2}}(h^{\ell/2 +1}). 
\end{equation}
We now obtain a similar bound but in the $C^{0}$ norm. We will denote by $C_{h}^{1}$ the space $C^{1}$ with the semiclassical norm $\|f\|_{C_{h}^{1}}:=\|f\|_{C^{0}}+h\|df\|_{C^{0}}$. Then, for $N>n/2+1$
\[
\|hd_{\btheta}^{u}[E_{h},P_{h}]f_{h}\|_{C^{0}} \leq C\|[E_{h},P_{h}]f_{h}\|_{C_{h}^{1}} \leq Ch^{-n/2} \|[E_{h},P_{h}]f_{h}\|_{H_{h}^{N}}=o(h^{\ell/2+1-n/2}),
\]
where the second inequality follows from the Sobolev embedding, and the last one from Lemma \ref{lemma:microsupport-quasimodes} (since $\WF_{h}(h^{-1}[E_{h},P_{h}]) \cap \mathcal{T}=\emptyset$). Using \eqref{equation:quasimodes}, we also have
\[ 
\|hd_{\btheta}^{u}E_{h}P_{h}f_{h}\|_{C^{0}} \leq C \|E_{h}P_{h}f_{h}\|_{C_{h}^{1}} \leq C\|P_{h}f_{h}\|_{\mathcal{H}_{h}^{s}}=o(h^{\ell+1}). 
\]
Therefore, 
\begin{equation} \label{equation:C0-bound}
    P_{h}^{u}hd_{\btheta}^{u}E_{h}f_{h}=o_{C^{0}}(h^{\ell/2+1-n/2}).
\end{equation}
Note that the only difference between the bounds from the statement and the ones that we have so far is the operator $P_{h}^{u}$. Hence, we study the resolvent:
\[
(-X_{\btheta}^{u}-z)^{-1}=-\int_{0}^{\infty} e^{-tX_{\btheta}^{u}}e^{-tz} \dd t.
\]
We claim that it converges in both the $C^{0}$- and the $L^{2}$-topology on $\{ \Re(z)>-\delta\}$, where $\delta$ is the rate of contraction of $E_{s}^{*}$. Indeed, for $u \in L^2(M,E_{s}^{*})$, 
\[
\begin{split}
    \|(-X_{\btheta}^{u}-z)^{-1}u\|_{L^{2}} &\leq \int_{0}^{\infty}\|e^{-tX_{\btheta}^{u}}\|_{L^{2} \to L^{2}} e^{-t\Re(z)}\dd t \|u\|_{L^{2}} \\
    & \leq \int_{0}^{\infty} \|e^{-t\mathcal{L}_{X_{M_{0}}}|_{E_{s}^{*}}}\|_{L^{2} \to L^{2}} e^{-t\Re(z)}\dd t \|u\|_{L^{2}} \\
    & \leq \int_{0}^{\infty} e^{-\delta t}e^{-t\Re(z)} \dd t \|u\|_{L^{2}} \\
    & \leq (\delta +\Re(z))^{-1} \|u\|_{L^{2}},
\end{split}
\]
and the same bounds work in the $C^{0}$ case. In particular, the resolvent is defined and uniformly bounded for $\{z \in \C : \Re(z) \geq 0\}$ on both $C^{0}$ and $L^{2}$. Hence, $X_{\btheta}^{u}$ is invertible on both $C^{0}$ and $L^{2}$ with norm bounded by $\leq \delta^{-1}$. Thus, the proof is completed after an application of $(P_{h}^{u})^{-1}$, which has norm $\leq Ch^{-1}$, to \eqref{equation:C0-bound} and \eqref{equation:L2-bound}.
\end{proof}

For $E_{h} \in \Psi_{h}^{\mathrm{comp}}(M_{0})$ such that $E_{h}\equiv 1$ microlocally near $\mathcal{T}$, we introduce $v_{h}=E_{h}f_{h}$. To obtain a contradiction, we study some further properties of $v_{h}$.

\begin{lemma}
\label{lemma:dtheta-petit}
    We have that
    \[ d |v_{h}|^{2}=o_{C^{0}}(h^{\ell/2-n-1}). \]
\end{lemma}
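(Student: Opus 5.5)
The idea is to compute $d|v_h|^2$ directly from Lemma \ref{lemma:horocyclic-invariance}. The connection $d_{\btheta} = d + i\eta_{\btheta}$ is unitary, because $\eta_{\btheta}$ is real. Hence for any function $v$ we have the Leibniz-type identity
\[
d|v|^2 = d(v\overline{v}) = (d_{\btheta} v)\,\overline{v} + v\,\overline{(d_{\btheta} v)},
\]
since the terms $i\eta_{\btheta}|v|^2$ and $-i\eta_{\btheta}|v|^2$ cancel.

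\textbf{Step 1 (rewrite $d_{\btheta}v_h$).} I will apply the identity with $v = v_h = E_h f_h$ and use Lemma \ref{lemma:horocyclic-invariance} in the form
\[
d_{\btheta} v_h = -h^{-1} z_h\, \alpha\, v_h + r_h, \qquad r_h := h^{-1}\bigl(-hd_{\btheta} - z_h\alpha\wedge\bigr) v_h = o_{C^0}(h^{\ell/2 - n/2 - 1}).
\]
Substituting gives
\[
d|v_h|^2 = -h^{-1}(z_h + \overline{z_h})\,\alpha\,|v_h|^2 + r_h\overline{v_h} + v_h\overline{r_h}.
\]

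\textbf{Step 2 (the $\alpha$ term).} By Lemma \ref{lemma:microsupport-quasimodes-2} we have $z_h = i + o(h^{\ell/2+1})$, so $z_h + \overline{z_h} = 2\Re z_h = o(h^{\ell/2+1})$. The Anosov form $\alpha$ is continuous and hence bounded in $C^0$. The first term is therefore $o(h^{\ell/2})\,\|v_h\|_{C^0}^2$.

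\textbf{Step 3 (a $C^0$ bound on $v_h$).} This is the main obstacle: I need a crude sup bound on $v_h$, since only $L^2$ bounds are given. Because $E_h$ is compactly microlocalised, $E_h$ maps $\mathcal{H}^s_h$ to $H^N_h$ boundedly, uniformly in $h$. Using the semiclassical Sobolev embedding $\|u\|_{C^0} \le C h^{-n/2}\|u\|_{H^N_h}$ for $N > n/2$, as in the proof of Lemma \ref{lemma:microsupport-quasimodes}, I get
\[
\|v_h\|_{C^0} \le C h^{-n/2}\|f_h\|_{\mathcal{H}^s_h} = C h^{-n/2}.
\]

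\textbf{Step 4 (combine).} The cross terms are bounded by $2\|r_h\|_{C^0}\|v_h\|_{C^0} = o(h^{\ell/2 - n/2 - 1})\cdot O(h^{-n/2}) = o(h^{\ell/2 - n - 1})$. The $\alpha$ term is $o(h^{\ell/2})\cdot O(h^{-n}) = o(h^{\ell/2 - n})$, which is smaller. Adding these gives $d|v_h|^2 = o_{C^0}(h^{\ell/2-n-1})$.

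A technical point remains. The product of the merely continuous form $\alpha$ with the smooth function $|v_h|^2$ is only continuous, so $d|v_h|^2$ is a continuous $1$-form and the identity above holds pointwise as stated. Finally, all the constants are uniform in $\btheta$, by the uniformity already noted in the proof of Lemma \ref{lemma:horocyclic-invariance}.
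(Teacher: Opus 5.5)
Your proof is correct and follows the paper's argument: expand $d|v_h|^2$ (the paper uses $Y|v_h|^2$ for $\|Y\|_{C^0}\le 1$) using unitarity of $d_{\btheta}$, insert Lemma \ref{lemma:horocyclic-invariance}, use $\Re(z_h/h)=o(h^{\ell/2})$ from Lemma \ref{lemma:microsupport-quasimodes-2}, and bound $v_h=\mathcal{O}_{C^0}(h^{-n/2})$ by Sobolev embedding; you also correctly keep the factor $|v_h|^2$ in the $\alpha$-term, which the paper's displayed formula drops. The only slip is a harmless sign: with $r_h$ as you define it, $d_{\btheta}v_h=-h^{-1}z_h\alpha v_h-r_h$, which does not affect the estimate.
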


\begin{proof}
The estimate is equivalent to showing that $Y|v_{h}|^2=o_{C^{0}}(h^{\ell/2-n-1})$ for all smooth vector fields $Y \in C^{\infty}(M, T M)$ such that $\|Y\|_{C^0} \leq 1$. To show this, note that we pointwise have
\[
\begin{split}
    Y|v_{h}|^{2} &=\langle d_{\btheta}(Y)v_{h},v_{h} \rangle + \langle v_{h},d_{\btheta}(Y)v_{h} \rangle \\
    &= \left\langle \left(d_{\btheta}(Y)+\frac{z_{h}}{h}\alpha(Y)\right)v_{h},v_{h} \right\rangle + \left\langle v_{h},\left(d_{\btheta}(Y)+\frac{z_{h}}{h}\alpha(Y)\right)v_{h} \right\rangle -2\Re \left( \frac{z_{h}}{h} \right) \alpha(Y).
\end{split}
\]
By Lemma \ref{lemma:microsupport-quasimodes-2}, $\Re(z_{h}/h)=o(h^{\ell/2})$, while Lemma \ref{lemma:horocyclic-invariance} gives 
\[
    (d_{\btheta}(Y)+z_{h}\alpha(Y)/h)v_{h}=o_{C^{0}}(h^{\ell/2-n/2-1}),
\]
and by condition $\|f_h\|_{\mc{H}^s_h} = 1$, the Sobolev embedding, and the fact that $E_h$ is compactly microsupported, we get $v_{h} = \mc{O}_{C^{0}}(h^{-n/2})$. The conclusion now follows directly using these bounds and the previous computation.
\end{proof}

We now derive a uniform pointwise lower bound for $|v_h|$.

\begin{lemma}
\label{lemma:vh-bound-below}
    There exists a constant $C>0$ such that for all $x \in M_{0}$, and $h>0$, $|v_{h}(x)|>C$.
\end{lemma}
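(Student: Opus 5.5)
The plan is to combine Lemma \ref{lemma:dtheta-petit}, which says $|v_h|^2$ is almost constant, with the bounds \eqref{equation:bounds-E_h} on $\|v_h\|_{L^2}$ from Lemma \ref{lemma:microsupport-quasimodes-2}.

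\emph{Step 1: near-constancy.} Since $M_0$ is closed and connected, any two points $x,y \in M_0$ can be joined by a smooth path of $g_0$-length at most $\mathrm{diam}(M_0)$. Integrating $d|v_h|^2$ along this path and using Lemma \ref{lemma:dtheta-petit} gives
\[
\sup_{x,y \in M_0} \bigl| |v_h(x)|^2 - |v_h(y)|^2 \bigr| \leq \mathrm{diam}(M_0)\, \|d|v_h|^2\|_{C^0} = o(h^{\ell/2-n-1}).
\]
Because $\ell > \dim M_0 + 10 = n+10$, the exponent $\ell/2 - n - 1$ is positive. This is the bookkeeping point to check carefully: one may need to enlarge the constraint on $\ell$ (e.g. $\ell > 2n+2$) so that this exponent is positive. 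The choice of $\ell$ is free in \eqref{equation:toprove}, so I would simply enlarge it if needed. Granting this, the oscillation of $|v_h|^2$ over $M_0$ is $o(1)$ as $h \to 0$.

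\emph{Step 2: pinning the constant.} Set $m_h := \min_{M_0} |v_h|^2$, so that $|v_h|^2 = m_h + o_{C^0}(1)$ uniformly. Integrating against $\vol_{M_0}$, which has total mass one, gives
\[
\|v_h\|_{L^2}^2 = m_h + o(1).
\]
By \eqref{equation:bounds-E_h} we have $1/c \leq \|v_h\|_{L^2} \leq c$, hence $m_h \geq c^{-2} - o(1)$. Therefore $|v_h(x)|^2 \geq c^{-2}/2$ for every $x \in M_0$ once $h$ is small enough, and any $C < c^{-1}/\sqrt{2}$ gives the claimed pointwise lower bound. The statement is along the sequence $h = h_n \to 0$ from the contradiction argument, so restricting to small $h$, i.e. discarding finitely many terms, is harmless.

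\emph{Expected obstacle.} The only delicate point is the exponent arithmetic in Step 1, namely making sure $\ell/2 - n - 1 > 0$ so that the $C^0$ error in $d|v_h|^2$ actually tends to zero. The remaining ingredients are already provided by Lemma \ref{lemma:microsupport-quasimodes-2} and Lemma \ref{lemma:dtheta-petit}.
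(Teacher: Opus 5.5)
Your proposal is correct and is essentially the paper's argument. The paper also combines the near-constancy of $|v_h|^2$ from Lemma \ref{lemma:dtheta-petit} with the lower bound $\|v_h\|_{L^2}\geq 1/c$ from Lemma \ref{lemma:microsupport-quasimodes-2}, but phrases it by contradiction (a sequence $x_h$ with $|v_h(x_h)|\to 0$ would force $\|v_h\|_{C^0}=o(1)$) rather than directly via the minimum $m_h$. Your bookkeeping remark is also right: the exponent $\ell/2-n-1$ being non-negative does not follow from $\ell>n+10$ in high dimension, but since $\ell$ is free this is harmless, and, like the paper's proof, your argument gives the bound only for small $h$, which is all that is used afterwards.
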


\begin{proof}
By compactness, using Lemma \ref{lemma:dtheta-petit}, we have, uniformly in $x,y \in M_{0}$ that
\[ |v_{h}(x)|^{2}-|v_{h}(y)|^{2}=o(h^{\ell/2-n-1}). \]
Assume for the sake of a contradiction that the statement of the lemma does not hold. Then, there exists a sequence $x_{h} \in M_{0}$ such that $|v_{h}(x_{h})|^{2} \to 0$ as $h \to 0$. Plugging this into the previous relation, gives $\|v_{h}\|_{C^{0}}^{2}=o(1)$. However, this contradicts the lower bound $\|v_h\|_{L^2}=\|E_{h}f_{h}\|_{L^{2}} \geq 1/c$ obtained in Lemma \ref{lemma:microsupport-quasimodes-2}.
\end{proof}

In our case, we are interested in the (dynamical) connection that is used to obtain the horocyclic invariance, that is, $\nabla:=\nabla^{\mathrm{dyn}}+i\alpha=d_{\btheta}+i\alpha=d+i\eta_{\btheta}+i\alpha$. See \S\ref{sssection:dynamical-connection-linear} for further discussion. In contrast with \cite[Chapter 4]{Cekic-Lefeuvre-24}, here the dynamical connection is smooth. Therefore, the following lemma is standard. 

\begin{lemma}
\label{lemma:curvature-classic}
    Let $\nabla$ be a smooth connection on a complex line bundle $L \to M_{0}$. Then $F_{\nabla} \equiv 0$ if and only if for all homotopically trivial closed loops $\gamma \subset M_{0}$, $\mathrm{Hol}_{\nabla}(\gamma)=1$.
\end{lemma}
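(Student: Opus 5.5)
We prove both implications by working locally with a unit-norm trivialising section, which turns the connection into a $1$-form. Over any contractible open set $U \subset M_0$, pick a section $s$ of $L$ with $|s|=1$ (for a Hermitian line bundle with unitary connection; in general, pick any nonvanishing $s$). Write $\nabla s = \omega \otimes s$ with $\omega$ a smooth complex $1$-form on $U$. Then $F_\nabla|_U = d\omega$, since $d\omega$ is independent of the choice of $s$: a change $s' = gs$ replaces $\omega$ by $\omega + g^{-1}dg$, and $d(g^{-1}dg) = 0$. Parallel transport along a path $c$ in $U$ is multiplication by $\exp(-\int_c \omega)$ in this frame.

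\emph{($\Rightarrow$)} Suppose $F_\nabla \equiv 0$. First we show that holonomy is invariant under homotopies with fixed endpoints. Let $H\colon[0,1]^2 \to M_0$ be a homotopy of loops based at a fixed point. Subdivide the square into small rectangles $R_{ij}$ whose images lie in contractible trivialising open sets. On each such set, $\omega$ is closed, hence exact by the Poincaré lemma: $\omega = d\phi$. So the transport around the boundary $\partial R_{ij}$ is $\exp(-\oint_{H(\partial R_{ij})} d\phi) = 1$. The parallel transport along the boundary of the whole square is the ordered composition of the transports around the little boundaries, each conjugated back to the basepoint along a path; the interior edges cancel in pairs. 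This is the standard lasso argument, and it works because the structure group $\mathrm{GL}_1(\C)$ is abelian, so the ordering does not even matter. The sides $H(\cdot,0)$ and $H(\cdot,1)$ are constant at the basepoint, so $\mathrm{Hol}(\gamma_0) = \mathrm{Hol}(\gamma_1)$. Taking $\gamma_1$ constant gives $\mathrm{Hol}(\gamma)=1$ for every null-homotopic $\gamma$.

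\emph{($\Leftarrow$)} Suppose $\mathrm{Hol}(\gamma)=1$ for all null-homotopic loops. Fix $x\in M_0$ and a coordinate ball $B$ around $x$, together with a trivialisation over $B$. Every loop in $B$ is null-homotopic in $M_0$, so $\exp(-\oint_\gamma \omega) = 1$ for every closed loop $\gamma$ in $B$. Take a small parallelogram loop $\gamma_\epsilon$ spanned by $\epsilon v$ and $\epsilon w$ at $x$. By Stokes,
\[
\oint_{\gamma_\epsilon}\omega = \epsilon^2\, d\omega_x(v,w) + O(\epsilon^3).
\]
Hence $\exp(-\epsilon^2 d\omega_x(v,w) + O(\epsilon^3)) = 1$ for all small $\epsilon$. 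By continuity in $\epsilon$, the exponent, which lies in $2\pi i\Z$ and tends to $0$, must vanish identically for small $\epsilon$. Dividing by $\epsilon^2$ and letting $\epsilon\to0$ gives $d\omega_x(v,w)=0$. Therefore $F_\nabla \equiv 0$.

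The only delicate point is this last one. Holonomy only determines $\oint\omega$ modulo $2\pi i\Z$, so one must use continuity of the loop family to lift to an actual zero before differentiating. The lasso bookkeeping in the first direction is routine precisely because the group is abelian.
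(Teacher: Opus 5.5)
Your argument is correct and is the standard proof. The paper gives no proof of this lemma; it only remarks that the statement is standard because the connection is smooth. Your route (local connection form, Poincar\'e lemma on small cells with abelian lasso bookkeeping for $(\Rightarrow)$, shrinking parallelograms and Stokes for $(\Leftarrow)$) is the textbook argument being alluded to.

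There are two small remarks. First, in $(\Rightarrow)$ you should take the null-homotopy $H$ smooth, which Whitney approximation allows. Otherwise the cell boundaries $H(\partial R_{ij})$ need not be curves along which parallel transport and $\oint d\phi$ are defined. Second, your Taylor step in $(\Leftarrow)$ genuinely uses $\omega\in C^1$. That is fine for the lemma as stated, but the connection $d+i\eta_{\btheta}+i\alpha$ to which the paper applies it is only H\"older, because of the Anosov form $\alpha$. In that setting one argues instead that $\oint\omega=0$ on all small loops gives local $C^1$ primitives $\omega=d\phi$, hence $d\omega=0$ in $\mathcal{D}'$.
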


Since $\eta_{\btheta}$ is closed, the curvature of $\nabla$ is $F_{\nabla} = d(i\eta_{\btheta}+i\alpha)=id\alpha$. The final step in the proof of Theorem \ref{theorem:high-frequency-ab-ext} is to show that the curvature of $\nabla$ vanishes. This, of course, contradicts the hypothesis that $d\alpha \neq 0$, finishing the proof of the high frequency estimates:

\begin{proof}[Proof of Theorem \ref{theorem:high-frequency-ab-ext}]
    We claim that the combination of Lemmas \ref{lemma:horocyclic-invariance}, \ref{lemma:dtheta-petit}, \ref{lemma:vh-bound-below} and \ref{lemma:curvature-classic} implies that $d\alpha=0$; this is a contradiction and therefore proves \eqref{equation:toprove}, hence Theorem \ref{theorem:high-frequency-ab-ext}. The proof is the same as \cite[Lemma 4.5.7]{Cekic-Lefeuvre-24} and not reproduced here; it shows that the holonomy of $\nabla$ along contractible loops is trivial. Note that in the notation of \cite[Lemma 4.5.7]{Cekic-Lefeuvre-24}, we have $\eta = 1$. Also, in the course of this argument, one needs $\ell>n+10 = \dim M_0+10$.
\end{proof}

As a corollary of Theorem \ref{theorem:high-frequency-ab-ext}, we obtain:

\begin{corollary} \label{corollary:smoothness-resolvent}
Let $\vartheta$ be as in Theorem \ref{theorem:high-frequency-ab-ext}, and let $p\geq 0$. Then:
    \begin{enumerate}[label=(\roman*), itemsep=5pt]
        \item The resolvent 
        \[ 
        \R \ni \lambda \mapsto (\mp X_{\btheta}-i\lambda)^{-1} \in \mathcal{L}(\mathcal{H}_{\pm}^{s}(M_{0}),\mathcal{H}_{\pm}^{s}(M_{0})),
        \]
        is well-defined on the imaginary axis and $C^{p}$-regular with respect to $\lambda$, with the exception that for $\btheta= \mathbf{0}$, at $\lambda=0$, one has to replace $\mathcal{H}_{\pm}^{s}(M_{0})$ by its intersection with the space of distributions with zero average against the measure $\dd\vol_{M_{0}}$.
        \item For any $r > 0$, there exists $C = C(r) > 0$ such that for all $\btheta \in \mathrm{U}(1)^d$, and $\lambda \in \R$ with $|\lambda| > r$:
        \[ 
        \| \partial_{\lambda}^{p}(\mp X_{\btheta}-i\lambda)^{-1} \|_{\mathcal{H}_{\pm}^{s}(M_{0}) \to \mathcal{H}_{\pm}^{s}(M_{0})} \leq C\langle \lambda \rangle ^{(p + 1) \vartheta}.
        \]
    \end{enumerate}
\end{corollary}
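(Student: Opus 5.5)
We need to prove Corollary \ref{corollary:smoothness-resolvent}: regularity of the resolvent on the imaginary axis and polynomial bounds on its $\lambda$-derivatives.

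\textbf{Step 1: the resolvent on the imaginary axis.} By Lemma \ref{theorem:meromorphic-extension}, $z\mapsto(\mp X_{\btheta}-z)^{-1}$ is meromorphic on $\{\Re z>-cs\}$ as an operator on $\mathcal{H}^s_\pm(M_0)$. By Lemma \ref{lemma:no-resonances-imaginary-axis}, it has no poles on $i\R$, except the simple pole at $0$ when $\btheta=\mathbf{0}$. Hence the resolvent is holomorphic in a neighbourhood of every point $i\lambda$, and in particular $C^\infty$ in $\lambda$.

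\textbf{Step 2: the exceptional point.} For $\btheta=\mathbf{0}$ and $\lambda=0$, we use the Laurent expansion \eqref{eq:resolvents-at-zero}. The polar part is $-\Pi_{\mathbf{0}}^\pm/z$, where $\Pi^\pm_{\mathbf{0}}=\langle\bullet,\mathbf{1}\rangle\mathbf{1}$ annihilates distributions with zero average. On that closed invariant subspace the resolvent coincides with $R^{\pm,\mathrm{hol}}_{\mathbf{0}}(z)$, which is holomorphic near $0$. This proves (i).

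\textbf{Step 3: derivatives as powers of the resolvent.} For (ii), we use the identity
\[
\partial_\lambda^p(\mp X_{\btheta}-i\lambda)^{-1}=p!\,i^p(\mp X_{\btheta}-i\lambda)^{-(p+1)}.
\]
This holds by holomorphy, since $\partial_\lambda = i\partial_z$ along the axis and $\partial_z^p R(z)=p!\,R(z)^{p+1}$ for $R(z) = (A-z)^{-1}$.

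\textbf{Step 4: the bound.} The norm of $\partial_\lambda^p(\mp X_{\btheta}-i\lambda)^{-1}$ is therefore bounded by $p!\,\|(\mp X_{\btheta}-i\lambda)^{-1}\|^{p+1}$, so it suffices to show
\[
\|(\mp X_{\btheta}-i\lambda)^{-1}\|_{\mathcal{H}^s_\pm\to\mathcal{H}^s_\pm}\le C\langle\lambda\rangle^{\vartheta}
\]
uniformly in $\btheta\in\mathrm{U}(1)^d$ and $|\lambda|>r$. We split into two regimes.
\begin{itemize}
\item For $|\lambda|>1$, the point $z = i\lambda$ lies in $\mathbb{B}$ with $|z|>1$, so this is exactly Theorem \ref{theorem:high-frequency-ab-ext}.
\item For $r<|\lambda|\le 1$, we argue by compactness of $K:=\mathrm{U}(1)^d\times\{r\le|\lambda|\le1\}$. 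The map $(\btheta,\lambda)\mapsto(\mp X_{\btheta}-i\lambda)^{-1}$ is continuous in operator norm on $K$. This follows from analytic Fredholm theory, since the Fredholm family depends smoothly on $\btheta$ and has no poles on $K$ by Lemma \ref{lemma:no-resonances-imaginary-axis}, including at $\btheta = \mathbf{0}$ since $|\lambda| \geq r > 0$. Hence the norm is bounded on $K$.
\end{itemize}

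\textbf{Main obstacle.} The only delicate point is uniformity in $\btheta$ in the compact regime, i.e.\ continuity of the meromorphically continued resolvent in $\btheta$. The cleanest route is local: cover $\mathrm{U}(1)^d$ by good open sets on which $\eta_{\btheta}$ depends smoothly on $\btheta$ (Proposition \ref{prop:contractible}). On each such set, $X_{\btheta}$ is a smooth zeroth-order perturbation of $X_{M_0}$, so the Fredholm parametrix construction of \cite{Faure-Sjostrand-11} on the fixed space $\mathcal{H}^s_\pm$ is uniform in $\btheta$, giving continuity of the inverse away from poles. Changing the representative $\eta_{\btheta}$ conjugates $X_{\btheta}$ by a multiplication by a unit-modulus smooth function, by \eqref{eq:conjugated}. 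This multiplication is bounded on $\mathcal{H}^s_\pm$, with uniform bounds over the finitely many sets of the cover, so the local constants patch to a global one.
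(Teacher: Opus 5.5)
Your proposal is correct and follows essentially the paper's argument: you express $\lambda$-derivatives as $\partial_\lambda^p(\mp X_{\btheta}-i\lambda)^{-1}=i^p p!\,(\mp X_{\btheta}-i\lambda)^{-(p+1)}$, iterate the bound of Theorem \ref{theorem:high-frequency-ab-ext}, and handle the exceptional point $\btheta=\mathbf{0}$, $\lambda=0$ by restricting to zero-average distributions, where the resolvent equals its holomorphic part. Your explicit compactness and continuity argument for $r<|\lambda|\le 1$ fills in a range the paper treats only implicitly, since the theorem is stated for $|z|>1$; it uses the same continuity of the resolvent in $(\btheta,z)$ that the paper invokes elsewhere.
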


In $(ii)$, as in $(i)$ we would get a uniform bound over all $\lambda \in \mathbb{R}$ by replacing the space $\mc{H}^s_\pm(M_0)$ by its intersection with distributions with zero average with respect to the measure $\vol_{M_{0}}$. We emphasise that since the operators $X_{\btheta}$ are only well-defined as a smooth family of operators over an arbitrary good open subset $U$ of $\mathrm{U}(1)^d$, see around Proposition \ref{prop:contractible} above for a discussion, the statements should strictly speaking also be understood only for $\btheta \in U$. 

\begin{proof}
In what follows we consider the case $\btheta \neq \mathbf{0}$. The case $\btheta = \mathbf{0}$ is treated in the same way by noting that $(\mp X_{\btheta} - i\lambda)^{-1}$ agrees with its holomorphic part when acting on distributions with zero average; more precisely, we use that the family $(R^{\pm, \mathrm{hol}}_{\btheta}(z))_{\btheta, z}$ is continuous with values in $\mc{L}(\mc{H}^s_\pm(M_0), \mc{H}^s_\pm(M_0))$ for $|\lambda|$ small and $\btheta$ close to $\mathbf{0}$ and so uniformly bounded on compact sets in $(\lambda, \btheta)$.

By Theorem \ref{theorem:high-frequency-ab-ext}:
\begin{equation} \label{equation:high-freq-1}
    \|(\mp X_{\btheta}-i \lambda)^{-1}\|_{\mathcal{H}_{\pm}^{s}(M_{0}) \to \mathcal{H}_{\pm}^{s}(M_{0})} \leq C\langle\lambda\rangle^{\vartheta}.
\end{equation}
To show smoothness with respect to $\lambda$, is enough to use the resolvent formula
\[  
(\mp X_{\btheta}-i \lambda)^{-1}-(\mp X_{\btheta}-i \lambda')^{-1}=i(\lambda-\lambda')(\mp X_{\btheta}-i \lambda)^{-1}(\mp X_{\btheta}-i \lambda')^{-1},
\]
which shows that
\[
\lambda \mapsto ( \mp X_{\btheta}-i \lambda )^{-1} \in C^{1}(\R, \mathcal{L}(\mathcal{H}_{\pm}^{s}(M_{0}), \mathcal{H}_{\pm}^{s}(M_{0}))),
\]
with 
\[
\partial_{\lambda} (\mp X_{\btheta}-i \lambda )^{-1}=i (\mp X_{\btheta}-i \lambda)^{-2}.
\]
A similar argument for higher order derivatives gives that
\[
\lambda \mapsto (\mp X_{\btheta}-i \lambda)^{-1} \in C^{p}\left(\R, \mathcal{L}(\mathcal{H}_{\pm}^{s}, \mathcal{H}_{\pm}^{s})\right),
\]
with derivatives
\begin{equation} \label{equation:high-freq-2}
    \partial_{\lambda}^{p}(\mp X_{\btheta}-i \lambda )^{-1}=i^p p!(\mp X_{\btheta}-i \lambda)^{-(p+1)}.
\end{equation}
Item (ii) follows from iteration of \eqref{equation:high-freq-1} and \eqref{equation:high-freq-2}.    
\end{proof}

\subsection{Proof of Theorem \ref{theorem:main1}} Recall that $\langle{t}\rangle = \sqrt{1 + t^2}$ is the notation for the Japanese bracket. We begin by proving the following asymptotic for the correlation function of $X_{\btheta}$:

\begin{theorem} \label{theorem:mixing_X_theta}
Assume that $d\alpha \neq 0$. Let $U$ be a neighborhood of $\mathbf{0} \in \mathrm{U}(1)^d$ such that the resonance parametrization $(\lambda_{\btheta})_{\btheta \in U}$ near $z = 0$ exists and depends smoothly on $\btheta$, see \S \ref{ssec:leading-resonance} above. Then, there exists $\vartheta > 0$, such that for all integers $N \geq 0$, there exists $C_N > 0$ such that for all $f,g \in C^{\infty}(M_{0})$ and for all $t \geq 0$ we have:
\begin{enumerate}[label=(\roman*), itemsep=5pt]
    \item If $\btheta \in U$, we have
\[
\begin{split}
\left|\int_{M_{0}} e^{-tX_{\btheta}}f \cdot \overline{g}~ \dd \vol_{M_0} - e^{\lambda_{\btheta}t}\langle \Pi_{\btheta}^{+}f, g \rangle_{L^{2}(M_{0})}\right|\leq C_N \langle{t\rangle}^{-N} \|f\|_{H^{\vartheta(N+2)+4}({M_{0}})}\|g\|_{H^{\vartheta(N+2)+4}({M_{0}})}.
\end{split}
\]
\item If $\btheta \in \mathrm{U}(1)^d \setminus U$, we have
\[
\begin{split}
\left|\int_{M_{0}} e^{-tX_{\btheta}}f \cdot \overline{g}~ \dd \vol_{M_{0}}\right|\leq C_N \langle{t\rangle}^{-N} \|f\|_{H^{\vartheta(N+2)+4}({M_{0}})}\|g\|_{H^{\vartheta(N+2)+4}({M_{0}})}.
\end{split}
\]
\end{enumerate}
\end{theorem}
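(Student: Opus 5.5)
We argue by Fourier–Laplace inversion on the imaginary axis, using the high-frequency estimates of Theorem \ref{theorem:high-frequency-ab-ext} and Corollary \ref{corollary:smoothness-resolvent}, followed by repeated integration by parts in the spectral variable $\lambda$. Fix $f,g\in C^\infty(M_0)$ and set $c(t):=\langle e^{-tX_{\btheta}}f,g\rangle$ for $t\ge 0$. For $\Re z>0$ the Laplace transform is
\[
\int_0^\infty e^{-tz}c(t)\,\dd t=-\langle R^+_{\btheta}(z)f,g\rangle .
\]
Since $e^{-tX_{\btheta}}$ is unitary, the inversion formula gives, for every $\eps>0$,
\[
c(t)=-\frac{1}{2\pi}\int_{\R}e^{(\eps+i\lambda)t}\langle R^+_{\btheta}(\eps+i\lambda)f,g\rangle\,\dd\lambda .
\]
This integral is only conditionally convergent. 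To make it absolutely convergent we first trade regularity for decay. Write $f=(-X_{\btheta}-1)^{k}(-X_{\btheta}-1)^{-k}f$, or more simply use the identity
\[
R(z)=\frac{R(z)(-X_{\btheta}-1)^{2}}{(z+1)^{2}}-\frac{\dots}{(z+1)}
\]
expressed through the resolvent identity. This shows that $\langle R(z)f,g\rangle$ equals $(1+z)^{-2}\langle R(z)f_2,g\rangle$ plus explicit terms whose inverse transforms are $0$ for $t>0$. Here $f_2=(-X_{\btheta}-1)^2f$ costs two derivatives, which accounts for the ``$+4$'' in the statement once the loss is split between $f$ and $g$. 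The resulting integrand is $O(\langle\lambda\rangle^{\vartheta-2})$, and it becomes integrable once we pay enough further powers.

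Next we let $\eps\to 0$. By Corollary \ref{corollary:smoothness-resolvent}(i) the resolvent extends continuously to $i\R$ as an operator $\mathcal{H}^s_+\to\mathcal{H}^s_+$, and we pair with $g\in\mathcal{H}^{s}_-$, which is the dual space. For $\btheta\notin U$ there are no resonances on $i\R$ (Lemma \ref{lemma:no-resonances-imaginary-axis}). For $\btheta\in U$ we first remove the leading resonance:
\[
R^+_{\btheta}(z)=-\frac{\Pi^+_{\btheta}}{z-\lambda_{\btheta}}+\widetilde R_{\btheta}(z),
\]
where $\widetilde R_{\btheta}(z)$ is holomorphic near $0$. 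Since $\Re\lambda_{\btheta}\le 0$, the inverse transform of the polar term contributes exactly $e^{\lambda_{\btheta}t}\langle\Pi^+_{\btheta}f,g\rangle$. The case $\Re\lambda_{\btheta}=0$ occurs only at $\btheta=\mathbf 0$ and is handled by a direct computation, or by restricting to zero-average data as in Corollary \ref{corollary:smoothness-resolvent}. This leaves
\[
c(t)-(\text{main term})=-\frac1{2\pi}\int_\R e^{i\lambda t}F(\lambda)\,\dd\lambda,
\]
with $F$ of class $C^p$. Integrating by parts $N$ times in $\lambda$ gives $t^{-N}\int_\R|\partial^N_\lambda F|\,\dd\lambda$. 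For large $|\lambda|$ we have $\|\partial^N_\lambda R\|\lesssim\langle\lambda\rangle^{(N+1)\vartheta}$ by Corollary \ref{corollary:smoothness-resolvent}(ii). We compensate this growth by performing the above regularization trick with $(1+z)^{-m}$ and $m\approx\vartheta(N+2)$, at the cost of $m$ derivatives on $f$ (or $g$). Finally we bound $\|f\|_{\mathcal{H}^s_+}\lesssim\|f\|_{H^s}$ and $\|g\|_{\mathcal{H}^s_-}\lesssim\|g\|_{H^s}$, which produces the Sobolev norms $H^{\vartheta(N+2)+4}$.

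The main obstacle is uniformity in $\btheta$. The constants in Corollary \ref{corollary:smoothness-resolvent} and in the bounds for $\widetilde R_{\btheta}$ near $\lambda=0$ must hold uniformly as $\btheta$ ranges over $U$ and over $\mathrm{U}(1)^d\setminus U$. Away from $\mathbf 0$, a compactness argument on $(\btheta,\lambda)\in(\mathrm{U}(1)^d\setminus U)\times[-r,r]$ using continuity of the meromorphically continued resolvent (no poles on $i\R$) gives uniform bounds. Near $\mathbf 0$ we use the contour representation \eqref{eq:projector-near-the-origin}: the function $\widetilde R_{\btheta}(z)$ is smooth in $(\btheta,z)$ for $|z|$ small, so its $\lambda$-derivatives are uniformly bounded. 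One subtlety remains, namely that $\lambda_{\btheta}$ may approach $i\R$ as $\btheta\to\mathbf 0$. Since $\widetilde R_{\btheta}$ is regular across the pole by construction, I expect this to be harmless.
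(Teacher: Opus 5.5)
Your argument is correct, but it takes a different route from the paper's.

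\textbf{Your route versus the paper's.}
\begin{itemize}
\item \emph{Starting point.} You invert the one-sided Laplace transform $-\langle R^+_{\btheta}(z)f,g\rangle$ along $\Re z=\eps$ and then let $\eps\to 0$. This is legitimate once you regularize, because Theorem~\ref{theorem:high-frequency-ab-ext} controls the resolvent on all of $\mathbb{B}\cap\{|z|>1\}$, which gives dominated convergence. The paper instead applies Stone's formula to the self-adjoint operator $iX_{\btheta}$, so it starts directly on $i\R$ with the spectral density $R^+_{\btheta}(z)+R^-_{\btheta}(-z)$.
\item \emph{Regularization.} On that sum $X_{\btheta}$ acts exactly as multiplication by $-z$, because the identity terms cancel. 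So the paper trades $\langle X_{\btheta}\rangle^{N_2}f$ for the factor $(1-z^2)^{-N_2/2}$ with no remainders. Your resolvent identity instead produces explicit terms that you have to dispose of separately.
\item \emph{Leading resonance.} You subtract the polar part $-\Pi^+_{\btheta}/(z-\lambda_{\btheta})$ and keep the contour on $i\R$. This needs the remainder to be holomorphic near $0$ with $z$-derivatives bounded uniformly in $\btheta\in U$. You justify this correctly via Cauchy estimates on a fixed circle, so the worry about $\lambda_{\btheta}$ approaching $i\R$ is indeed harmless. The paper instead deforms $i\R$ near $0$ to a fixed curve passing to the left of every $\lambda_{\btheta}$, picks up the residue, and uses $e^{t\Re z}\le 1$ on the deformed piece.
\item \emph{Trade-off.} In your version $|e^{zt}|=1$ on the whole contour and the integrations by parts are in a real variable. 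The paper's version avoids the Bromwich limit and all remainder bookkeeping.
\end{itemize}

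\textbf{A sign slip to fix.}
\begin{itemize}
\item With $f_2=(-X_{\btheta}-1)^2f$, the correct identity is $R^+_{\btheta}(z)(-X_{\btheta}-1)=\id+(z-1)R^+_{\btheta}(z)$. So the prefactor is $(z-1)^{-2}$, not $(1+z)^{-2}$.
\item The pole at $z=1$ lies to the right of the line $\Re z=\eps<1$. That is exactly why the inverse transforms of the explicit terms vanish for $t>0$, as you claimed.
\item If you want $(1+z)^{-m}$ instead, you must use $(1-X_{\btheta})^m f$. Then the explicit terms invert to $t^{k-1}e^{-t}/(k-1)!$. These are not zero but are exponentially small, so the argument survives either way.
\item After regularizing, the residue at $\lambda_{\btheta}$ still returns exactly $e^{\lambda_{\btheta}t}\langle\Pi^+_{\btheta}f,g\rangle$. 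This is because $\Pi^+_{\btheta}(-X_{\btheta}-1)^m=(\lambda_{\btheta}-1)^m\Pi^+_{\btheta}$, which cancels the factor $(\lambda_{\btheta}-1)^{-m}$.
\end{itemize}
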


Note that to get an asymptotic for all negative times, we could use that $\langle{e^{-t X_{\btheta}} f, g}\rangle_{L^2} = \langle{f, e^{t X_{\btheta}}g}\rangle_{L^2}$ and apply Item (i) to get that the principal term in the expansion is equal to $e^{-t \overline{\lambda_{\btheta}}} \langle{\Pi^-_{\btheta}f, g}\rangle_{L^2}$. Observe that by Lemma \ref{lemma:leading-resonance-symmetry} we have $\overline{\lambda_{\btheta}} = \lambda_{-\btheta}$.

\begin{proof}
We first prove $(i)$ and argue for non-zero $\btheta \in U$. The claim for $\btheta = \mathbf{0}$ then follows by continuity. On $\mathcal{D}_{L^2}:=\{f \in L^2(M_{0}) : X_{\btheta} f \in L^2(M_{0})\}$ the operator $iX_{\btheta}$ is self-adjoint. It thus admits a spectral measure $\dd P_{\btheta}(\lambda)$ such that $\id_{L^2(M_{0})}= \int_{\R} \dd P_{\btheta}(\lambda)$. It follows from Stone's formula and Lemma \ref{lemma:no-resonances-imaginary-axis} that $\dd P_{\btheta}(\lambda)$ is analytic, without atoms, and 
\[
\dd P_{\btheta}(\lambda)=-\frac{1}{2 \pi}(R^{+}_{\btheta}(i \lambda)+R^{-}_{\btheta}(-i \lambda)) \dd \lambda,
\]
where $R_{\btheta}^{\pm}(z)$ are is in \eqref{eq:resolvent_def}, see \cite[Proposition 9.2.2]{Lefeuvre-book}. Note that $\dd P$ is smooth by Corollary \ref{corollary:smoothness-resolvent}.

Now, for $f, g \in C^{\infty}(M_{0})$ we have:
\[ 
\begin{split}
\int_{M_{0}} e^{-tX_{\btheta}} f \overline{g}~ \dd \vol_{M_{0}} & = \langle e^{-t X_{\btheta}} f, g \rangle_{L^2(M_{0})} \\
& = \int_{\R} e^{i t \lambda}\langle \dd P_{\btheta}(\lambda) f, g\rangle_{L^2(M_{0})} \\
& = -\frac{1}{2\pi i} \int_{i\R}e^{zt} \langle (R_{\btheta}^{+}(z)+R_{\btheta}^{-}(-z) )f,g\rangle \dd z.    
\end{split}
\]
To compute the last integral, we will use the contour deformation method. Let $r>0$. Let $\gamma$ be a curve that differs from $i\R$ only for $|z|<r$, where it is equal to a smooth graph over the imaginary axis, so that $\lambda_{\btheta}$ is contained in the region of $\mathbb{C}$ to the right of $\gamma$ for any $\btheta \in U$. Call this portion of the curve $\gamma_{r}$. Note that since $\lambda_{\btheta} \in \mathbb{R}_{\leq 0}$, we can take $r > 0$ arbitrarily small. In this way, the only resonance of $-X_{\btheta}$ that $\gamma \cup i\R$ encloses in the interior or its boundary is $\lambda_{\btheta}$, for every $\btheta \in U$. Since we are considering $\btheta$ close to $\mathbf{0}$, we can take $\gamma$ not depending on $\btheta$. Recall that around $\lambda_{\btheta}$ we have the expansion
\[ 
R_{\pm}^{\btheta}(z)=R_{\pm}^{\btheta,\mathrm{hol}}(z)-\frac{\Pi_{\btheta}^{\pm}}{z-\lambda_{\btheta}}.
\]
Then, by the residue theorem, 
\[
\begin{split}
    \int_{\R} e^{i t \lambda}\langle \dd P_{\btheta}(\lambda) f, g\rangle_{L^2(M_{0})}  = & -\frac{1}{2\pi i}\int_{\gamma} e^{zt}\langle  (R_{\btheta}^{+}(z) + R_{\btheta}^-(-z)) f, g\rangle \dd z \\
    & -\operatorname*{Res}_{z=\lambda_{\btheta}} ( e^{zt}\langle (R_{\btheta}^{+}(z)+R_{\btheta}^{-}(-z))f,g \rangle ).
\end{split}
\]
Note that since $R_{\btheta}^{-}(-z)$ is holomorphic over $\gamma$ as $-\gamma$ is a subset of the right half-plane, we have 
\[
\operatorname*{Res}_{z=\lambda_{\btheta}} ( e^{zt}\langle (R_{\btheta}^{+}(z)+R_{\btheta}^{-}(-z))f,g \rangle )= - e^{\lambda_{\btheta}t}\langle \Pi_{\btheta}^{+}f,g \rangle_{L^{2}(M_{0})}.
\]
Now we show that the integral over $\gamma$ is well defined and is $\mathcal{O}(t^{-\infty})$. We record the following identity 
\[ X_{\btheta}(R_{\btheta}^{+}(z)+R_{\btheta}^{-}(-z)) = (R_{\btheta}^{+}(z)+R_{\btheta}^{-}(-z))X_{\btheta} =-z(R_{\btheta}^{+}(z)+R_{\btheta}^{-}(-z)). \]
We also write $\langle \partial_z \rangle^{2}=1+\partial_{z}^2$ and $\langle X_{\btheta}\rangle^2=1-X_{\btheta}^2$. We now treat the integral over $\gamma$ in two steps: large $|z|$ and bounded $|z|$.

Take $N_1 \geq 0$ arbitrary and even, and $N_2$ to be the smallest even integer such that $N_2 >\vartheta\left(N_1+1\right)+1$. Note that over the support of $\chi$, we have $\langle{z}\rangle^2 = 1 - z^2$. We get:
\begin{align*}
    &\int_{\gamma} e^{zt}\langle (R_{\btheta}^{+}(z)+R_{\btheta}^{-}(-z)) f, g\rangle_{L^2(M_{0})}\dd z\\ 
    &=\langle t\rangle^{-N_1} \int_{\gamma} \langle \partial_z \rangle^{N_1} (e^{zt})(1 - z^2)^{-N_2/2}\langle (R_{\btheta}^{+}(z)+R_{\btheta}^{-}(-z)) \langle X_{\btheta}\rangle^{N_2} f, g\rangle \dd z \\
    &=\langle t\rangle^{-N_1} \int_{\gamma} e^{zt}\langle \partial_{z}\rangle^{N_1}\left((1 - z^2)^{-N_2/2} (R_{\btheta}^{+}(z)+R_{\btheta}^{-}(-z))\langle X_{\btheta}\rangle^{N_2} f, g\rangle\right)\dd z,
\end{align*}
where the last equality follows by integration by parts along the curve $\gamma$. Note that in the construction we may assume that the point $z = -1$ is to the left of $\gamma$ so $(1 - z^2)^{-1}$ is well-defined. Indeed, the absolute convergence of the last term can be seen as follows:
\begin{align*}
    &\langle \partial_{z}\rangle^{N_1}\left((1 - z^2)^{-N_2/2}\langle (R_{\btheta}^{+}(z)+R_{\btheta}^{-}(-z))\langle X_{\btheta}\rangle^{N_2} f, g\rangle \right)\\ 
    &= (-1)^{N_1 / 2} (1 - z^2)^{-N_2/2}\langle\partial_{z}^{N_1} (R_{\btheta}^{+}(z)+R_{\btheta}^{-}(-z))\langle X_{\btheta}\rangle^{N_2} f, g \rangle + \text {l.o.t }
\end{align*}
where l.o.t are lower-order terms in the $z$ variable, i.e. they have the same or better decay as $|z| \to \infty$ and can be treated similarly. Observe that similarly to \eqref{equation:high-freq-2}, we have
\begin{align*}
\partial_{z}^{N_1} (R_{\btheta}^{+}(z)+R_{\btheta}^{-}(-z)) &= N_1! ((-X_{\btheta}-z)^{-(N_{1}+1)} + (-1)^{N_{1}}(X_{\btheta}+z)^{-(N_{1}+1)})\\ 
&= N_1! ((R_{\btheta}^{+})^{(N_1 + 1)}(z) + (R_{\btheta}^{-})^{(N_1 + 1)}(-z)),
\end{align*}
where in the second line we used that $N_1$ is even and where ${}^{(N_1 + 1)}$ denotes $(N_1 + 1)$-fold composition. By assumption we may write $-N_2+\vartheta (N_1+1 )=-(1+\eps)$ for some $\eps \in(0,2]$. For $z \in \gamma \setminus \gamma_{r}$ we can write $z=i\lambda$ with $\lambda \in \mathbb{R}$. Note that we have $1 - z^2 = \langle{z}\rangle^2$, and so we get that (recall $s > 0$ is the exponent of the anisotropic space)
\[
\begin{split}
& |\langle z\rangle^{-N_2}\langle\partial_z^{N_1} (R_{\btheta}^{+}(z)+R_{\btheta}^{-}(-z))\langle X_{\btheta}\rangle^{N_2} f, g \rangle_{L^{2}(M_{0})}| \\
& \leq C\langle\lambda\rangle^{-(1+\eps)}\left(\|\langle X_{\btheta}\rangle^{N_2} f\|_{\mathcal{H}_{+}^{s}}\|g\|_{\mathcal{H}_{-}^{s}}+\|\langle X_{\btheta}\rangle^{N_2} f\|_{\mathcal{H}_{-}^{s}}\|g\|_{\mathcal{H}_{+}^{s}}\right) \\
& \leq C\langle\lambda\rangle^{-(1+\eps)}\|\langle X_{\btheta}\rangle^{N_2} f\|_{H^{s}}\|g\|_{H^{s}} \\
& \leq C\langle\lambda\rangle^{-(1+\eps)}\|f\|_{H^{s+N_{2}}}\|g\|_{H^{s}} \\
& \leq C\langle\lambda\rangle^{-(1+\eps)}\|f\|_{H^{s+N_{2}}}\|g\|_{H^{s+N_{2}}}.
\end{split}
\]
Here in first inequality we used Corollary \ref{corollary:smoothness-resolvent}, Item (ii) (which also gives that the estimate is uniform in $\btheta \in U$) as well as that the $L^2$ product is bounded on $\mc{H}^s_+(M_0) \times \mc{H}^s_-(M_0)$, while in the second one we used that $H^{s} \subset \mathcal{H}_{\pm}^{s}$.

We now treat the integral over $\gamma_r$. For $z \in \gamma_{r}$ we similarly get

\[
\begin{split}
& |(1 - z^2)^{-N_2/2}\langle\partial_z^{N_1} (R_{\btheta}^{+}(z)+R_{\btheta}^{-}(-z))\langle X_{\btheta}\rangle^{N_2} f, g \rangle_{L^{2}(M_{0})}| \\
& \leq C\left(\|\langle X_{\btheta}\rangle^{N_2} f\|_{\mathcal{H}_{+}^{s}}\|g\|_{\mathcal{H}_{-}^{s}}+\|\langle X_{\btheta}\rangle^{N_2} f\|_{\mathcal{H}_{-}^{s}}\|g\|_{\mathcal{H}_{+}^{s}}\right) \\
& \leq C\|\langle X_{\btheta}\rangle^{N_2} f\|_{H^{s}}\|g\|_{H^{s}} \\
& \leq C\|f\|_{H^{s+N_{2}}}\|g\|_{H^{s}} \\
& \leq C\|f\|_{H^{s+N_{2}}}\|g\|_{H^{s+N_{2}}},
\end{split}
\]
were the first inequality follows from Lemma \ref{theorem:meromorphic-extension} and the fact that $|(1 -z^2)^{-1}|$ is bounded for $z \in \gamma_r$. The estimates are uniform over $\btheta$ by continuity of $(R^\pm_{\btheta}(z))_{\btheta, z \in \gamma_r}$ in $(\btheta, z)$ and the fact that we estimate over a compact set in $(\btheta, z)$. This shows that the integral over $\gamma$ and also over $\R$ are well defined and that
\begin{align*}
    &\left|\int_\gamma e^{zt} \langle{(R^+_{\btheta}(z) + R^-_{\btheta}(-z))f, g}\rangle \dd z\right|\\ 
    &\leq C\langle t\rangle^{-N_{1}}\|f\|_{H^{s+N_{2}}}\|g\|_{H^{s+N_{2}}}\left| \int_{\gamma_r} e^{t \Re z} \dd z+ \int_{\R \setminus [-r,r]} \langle\lambda\rangle^{-(1+\eps)} \dd \lambda \right|\\
    &\leq C\langle t\rangle^{-N_{1}}\|f\|_{H^{s+N_{2}}}\|g\|_{H^{s+N_{2}}},
\end{align*}
where the first integral is bounded since it is over a compact set and $e^{t \Re (z)} \leq 1$ (recall that we assumed $t \geq 0$ and we have $\gamma_r \subset \{\Re(z) \leq 0\}$), while in the second one we use that the exponent of the integrand is less than $-1$. In particular, for $s\leq 1$ we find that 
\[
    |\langle e^{tX_{\btheta}}f,g \rangle_{L^{2}(M_{0})}| \le C \langle t \rangle^{-N_{1}}\|f\|_{H^{\vartheta(N_{1}+1)+4}} \|g\|_{H^{\vartheta(N_{1}+1)+4}}, 
\]
and the desired bound follows.

For $(ii)$, a similar argument applies. Indeed, in this case we may take $\gamma$ to be defined similarly, but such that now there are no resonance of $X_{\btheta}$ for $\btheta \in \mathrm{U}(1)^d \setminus U$ on $\gamma$ or to the right of $\gamma$. Indeed this can be achieved by Lemma \ref{lemma:no-resonances-imaginary-axis} and compactness, by potentially taking smaller $U$. Then the same argument as above shows the required bound.
\end{proof}

Before proving the main theorem, we establish a bound on the spectral projector to the resonant state $\lambda_{\btheta}$.

\begin{lemma} \label{lemma:regularity_Pi_theta}
Let $U \subset \mathrm{U}(1)^d$ be a good open neighbourhood of the origin $\mathbf{0} \in \mathrm{U}(1)^d$ such that $(\lambda_{\btheta})_{\btheta \in U}$ and $\Pi_{\btheta}^+$ are well-defined, see \S \ref{ssec:leading-resonance}. Let $k \geq 0$ an integer, and $s > 0$ a positive real number such that the construction in \S \ref{ssec:leading-resonance} works. Then, for all multi-indices $\alpha$ such that $|\alpha| \leq k$, there exists $C=C(k,U) > 0$ such that for all $f,g \in B^{s,k}(M)$
\[ \sup_{\btheta \in U}|\partial_{\btheta}^{\alpha} \langle \Pi_{\btheta}^{+}f_{\btheta},g_{\btheta}\rangle_{L^{2}(M_{0})} | \leq C\|f\|_{B^{s, k}(M)}\|g\|_{B^{s, k}(M)}. \]
\end{lemma}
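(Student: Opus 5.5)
The plan is to expand the derivative with the Leibniz rule and bound each factor separately: the $\btheta$-derivatives of $f_{\btheta}$ and $g_{\btheta}$ by Lemma \ref{lemma:spaces_theta_sobolev}, and the $\btheta$-derivatives of $\Pi_{\btheta}^+$ by the smooth dependence of the spectral projector on $\btheta$. Since $\btheta \mapsto \Pi^+_{\btheta}$ is only smooth as a map into operators between anisotropic spaces, we will use the factorisation $\Pi^+_{\btheta} = \langle \bullet, v_{\btheta}\rangle u_{\btheta}$ from \S\ref{ssec:leading-resonance}. This gives
\[
\langle \Pi_{\btheta}^{+}f_{\btheta},g_{\btheta}\rangle = \langle f_{\btheta}, v_{\btheta}\rangle \, \langle u_{\btheta}, g_{\btheta}\rangle,
\]
where $u_{\btheta} \in \mc{H}^s_+(M_0)$, $v_{\btheta} \in \mc{H}^s_-(M_0)$, and both depend smoothly on $\btheta \in U$ by \cite[Lemma 3.5]{Cekic-Lefeuvre-24-stability}.

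Applying the Leibniz rule to $\partial^\alpha_{\btheta}$ of this product, every term has the form
\[
\langle \partial^{\beta_1}_{\btheta} f_{\btheta}, \partial^{\beta_2}_{\btheta} v_{\btheta}\rangle \, \langle \partial^{\beta_3}_{\btheta} u_{\btheta}, \partial^{\beta_4}_{\btheta} g_{\btheta}\rangle, \qquad \beta_1+\beta_2+\beta_3+\beta_4 = \alpha.
\]
For the pairings we use that the $L^2$ pairing extends continuously to $\mc{H}^s_+ \times \mc{H}^s_-$, and that $H^s(M_0) \hookrightarrow \mc{H}^s_\pm(M_0)$ continuously. These give, for instance,
\[
|\langle \partial^{\beta_1} f_{\btheta}, \partial^{\beta_2} v_{\btheta}\rangle| \leq C\|\partial^{\beta_1} f_{\btheta}\|_{\mc{H}^s_+}\|\partial^{\beta_2} v_{\btheta}\|_{\mc{H}^s_-} \leq C'\|\partial^{\beta_1}f_{\btheta}\|_{H^s(M_0)}.
\]
The analogous bound holds for the second factor. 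Here $\sup_{\btheta \in U}\|\partial^{\beta}v_{\btheta}\|_{\mc{H}^s_-}$ and $\sup_{\btheta \in U}\|\partial^{\beta}u_{\btheta}\|_{\mc{H}^s_+}$ are finite for $|\beta| \leq k$, after possibly shrinking $U$ to a relatively compact good subset of the domain where the smooth parametrisation exists. Lemma \ref{lemma:spaces_theta_sobolev} then gives $\sup_{\btheta\in U}\|\partial^{\beta_1}_{\btheta} f_{\btheta}\|_{H^s(M_0)} \leq C\|f\|_{B^{s,k}(M)}$, and similarly for $g$. Summing the finitely many terms yields the claim, first for $f,g \in C^\infty_{\comp}(M)$ and then for all of $B^{s,k}(M)$ by density.

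The main obstacle is justifying that $\btheta \mapsto u_{\btheta}, v_{\btheta}$ are $C^k$ with uniformly bounded derivatives in the fixed anisotropic spaces $\mc{H}^s_\pm$ for the given $s$. If the factorisation route is delicate, I would instead differentiate the contour formula \eqref{eq:projector-near-the-origin} under the integral. Derivatives of $R^+_{\btheta}(z)$ in $\btheta$ are sums of products of resolvents and the multiplication operators $i\iota_{X_{M_0}}\partial^\beta\eta_{\btheta}$, which are smooth and hence bounded on $\mc{H}^s_+$. Since $\gamma_0$ stays away from resonances uniformly for $\btheta$ in a compact neighbourhood, these derivatives are uniformly bounded on $\mc{H}^s_+$ for $z\in\gamma_0$, and $\partial^\beta\Pi^+_{\btheta}$ is bounded $\mc{H}^s_+\to\mc{H}^s_+$ uniformly. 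We can then pair directly using $\mc{H}^s_+ \times \mc{H}^s_- \to \C$ together with $H^s \subset \mc{H}^s_\pm$.
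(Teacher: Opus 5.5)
Your proposal is correct and is essentially the paper's argument. The paper takes exactly your fallback route: it differentiates the contour formula \eqref{eq:projector-near-the-origin} to get $\|\partial^{\alpha}_{\btheta}\Pi^+_{\btheta}\|_{\mc{H}^s_+\to\mc{H}^s_+}\le C$ uniformly on $U$, then combines the Leibniz rule, continuity of the pairing on $\mc{H}^s_+\times\mc{H}^s_-$, the inclusion $H^s\subset\mc{H}^s_\pm$, and Lemma \ref{lemma:spaces_theta_sobolev}. Your primary variant through the rank-one factorisation $\Pi^+_{\btheta}=\langle\bullet,v_{\btheta}\rangle u_{\btheta}$ is equally valid, given the smoothness of $\btheta\mapsto u_{\btheta}\in\mc{H}^s_+$ and $\btheta\mapsto v_{\btheta}\in\mc{H}^s_-$ quoted in \S\ref{ssec:leading-resonance}, and differs from the paper's proof only cosmetically.
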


\begin{proof}
In the first place, thanks to \eqref{eq:projector-near-the-origin} observe that for any $j = 1, \dotsc, d$, we have
\[ 
    \partial_{\theta_j}\Pi_{\btheta}^{+}=-\frac{1}{2\pi i}\oint_\gamma R_{\btheta}^{+}(z)(i \iota_{X_{M_0}} \partial_{\theta_j}\eta_{\btheta})R_{\btheta}^{+}(z) \dd z, 
\]
and inductively we see that derivatives of the spectral projector involve derivatives of $\iota_{X_{M_0}}\eta_{\btheta}$ and powers of the resolvent $R^+_{\btheta}(z)$ for $z \in \gamma$, which are uniformly bounded between anisotropic Sobolev spaces for $\btheta \in U$ since there are no resonances crossing $\gamma$ by assumption. Thus, $\|\partial^\alpha_{\btheta} \Pi^+_{\btheta}\|_{\mc{H}^s_+ \to \mc{H}^s_+} \leq C$ for $|\alpha| \leq k$ and $\btheta \in U$. Then, 
\[
\begin{split}
    |\partial_{\btheta}^{\alpha} \langle \Pi_{\btheta}^{+}f_{\btheta},g_{\btheta}\rangle_{L^{2}(M_{0})} | &\leq \sum_{\alpha_{1}+\alpha_{2}+\alpha_{3}=\alpha}C_{\alpha_{1},\alpha_{2},\alpha_{3}} \langle \partial_{\btheta}^{\alpha_{1}}\Pi_{\btheta}^{+} \partial_{\btheta}^{\alpha_{2}}f_{\btheta},\partial_{\btheta}^{\alpha_{3}}g_{\btheta}\rangle_{L^{2}(M_{0})}  \\
    & \leq C\sum_{\alpha_{1}+\alpha_{2}+\alpha_{3}=\alpha}C_{\alpha_{1},\alpha_{2},\alpha_{3}} \| \partial_{\btheta}^{\alpha_{2}}f_{\btheta} \|_{\mathcal{H}_{+}^{s}(M_{0})} \|\partial_{\btheta}^{\alpha_{3}}g_{\btheta}\|_{\mathcal{H}_{-}^{s}(M_{0})} \\
    & \leq C\sum_{\alpha_{1}+\alpha_{2}+\alpha_{3}=\alpha}C_{\alpha_{1},\alpha_{2},\alpha_{3}} \| \partial_{\btheta}^{\alpha_{2}}f_{\btheta} \|_{H^{s}(M_{0})} \|\partial_{\btheta}^{\alpha_{3}}g_{\btheta}\|_{H^{s}(M_{0})} \\
    & \leq C\|f_{\btheta}\|_{C^{k}(U,H^{s}(M_{0}))} \|g_{\btheta}\|_{C^{k}(U,H^{s}(M_{0}))},
\end{split}
\]
where $C_{\alpha_1, \alpha_2, \alpha_3}$ denotes a multinomial coefficient, in the second line we used the boundedness of the $L^2$ pairing on $\mc{H}^s_+ \times \mc{H}^s_-$, and where in the third inequality we used that $H^{s} \subset \mathcal{H}_{\pm}^{s}$. The result now follows from Lemma \ref{lemma:spaces_theta_sobolev}.    
\end{proof}

\begin{proof}[Proof of Theorem \ref{theorem:main1}]
Let $U_0$ be a good neighbourhood of $\mathbf{0} \in \mathrm{U}(1)^d$ such that the conclusion of Theorem \ref{theorem:mixing_X_theta}, Item (i) holds. Consider a finite covering of $\mathrm{U}(1)^d$ by good geodesically convex balls $(U_i)_{i = 0}^k$ and a partition of unity subordinate to this cover $(\chi_i)_{i = 0}^k$ with $\chi_0 = 1$ near the origin; note that $k$ can be taken to be a constant depending only on $d$. Over each $U_i$ we may trivialise the line bundle $(L_{\btheta})_{\btheta \in U_i}$ smoothly by Proposition \ref{prop:contractible}, i.e. there is a smooth family of sections of $L_{\btheta}$ of pointwise unit norm $(s_{\btheta, i})_{\btheta \in U_i}$. We may use the isomorphism \eqref{equation:iso} for $\btheta \in U_i$ identifying sections of $L_{\btheta}$ with functions on $M_0$; the identification goes as $F_{\btheta} = s_{\btheta, i} \pi^*f_{\btheta, i}$ for a section $F_{\btheta}$ of $L_{\btheta}$. With this identification, $(e^{tX_M} f)_{\btheta, i} = e^{tX_{\btheta}} f_{\btheta, i}$ for each $i$, thanks to \eqref{eq:conjugacy-propagators} and Proposition \ref{eq:fourier-theory}. We therefore conclude from Parseval's decomposition \eqref{equation:parseval} that
\begin{align}\label{eq:parseval-in-action}
\begin{split}
    &\langle e^{-tX_M}f,g \rangle_{L^{2}(M)} = (2\pi)^{-d} \int_{\mathrm{U}(1)^d} \langle{e^{-tX_{\btheta}}F_{\btheta}, G_{\btheta}}\rangle_{L^2(M_0, L_{\btheta})} \dd \btheta\\
    =& \underbrace{\sum_{i = 1}^k \int_{U_i} \chi_i(\btheta) \langle e^{-tX_{\btheta}}f_{\btheta, i}, g_{\btheta, i} \rangle_{L^{2}} \dd \btheta + \int_{U_0}\chi_0(\btheta) (\langle e^{-tX_{\btheta}}f_{\btheta, 0}, g_{\btheta, 0} \rangle_{L^{2}} - e^{t \lambda_{\btheta}} \langle{\Pi_{\btheta}^+f_{\btheta, 0}, g_{\btheta, 0}}\rangle_{L^2}) \dd \btheta}_{S(t):=}\\ 
    &+  \underbrace{\int_{U_0} \chi_0(\btheta) e^{t \lambda_{\btheta}} \langle{\Pi_{\btheta}^+f_{\btheta, 0}, g_{\btheta, 0}}\rangle_{L^2} \dd \btheta.}_{Q(t):=}
\end{split}
\end{align}

We now estimate $S(t)$. By Theorem \ref{theorem:mixing_X_theta}, Items (i) and (ii), for all $N > 0$ there is $C = C(N) > 0$ such that (note that technically we take the maximum of the constants that appear for each $U_i$, where $i = 0, \dotsc, k$)
\begin{align*}
    |S(t)| &\leq C \langle{t}\rangle^{-N} \sum_{i = 0}^k \int_{\mathrm{U}(1)^d} \chi_i(\btheta) \|f_{\btheta, i}\|_{H^{\vartheta(N + 2) + 4}} \|g_{\btheta, i}\|_{H^{\vartheta(N + 2) + 4}} \dd \btheta\\ 
    &\leq C \langle{t}\rangle^{-N} \|f\|_{B^{\vartheta(N + 2) + 4, 0}} \|g\|_{B^{\vartheta(N + 2) + 4, 0}},
\end{align*}
where in the last estimate we used Lemma \ref{lemma:spaces_theta_sobolev}.

We now derive an asymptotics for the term $Q(t)$ defined in \eqref{eq:parseval-in-action}; for simplicity we drop the sub-index $0$ in what follows. Since $\lambda_{\btheta}$ has a non-degenerate critical point at $\btheta = \mathbf{0}$ according to Lemma \ref{lemma:resonance-non-degenerate}, and since $\Re(\lambda_{\btheta}) \leq 0$, we may apply the stationary phase lemma (see \cite[Lemma 7.7.5]{Hoermander-I-03}), and so we have, for each $N>0$:
\begin{equation}
\label{equation:tard}
\begin{split}
    \left( \frac{t}{2\pi} \right)^{\frac{d}{2}} Q(t) &= \frac{1}{\sqrt{\det (-\lambda''_{\mathbf{0}})}} \sum_{j=0}^{N-1} t^{-j} L_j (\langle \Pi_{\btheta}^{+}f_{\btheta},g_{\btheta} \rangle_{L^{2}(M_{0})})|_{\btheta = \mathbf{0}} + R(t),
\end{split}
\end{equation}
where $L_j$ is an elliptic differential operator of order $2j$ that can be made explicit, see \cite[Lemma 7.7.5]{Hoermander-I-03}; we have $L_0 = \id$. As $L_{j}$ is a differential operator of order $2j$, we have
\begin{equation}
\label{equation:expression-cj}
    C_j(f, g) := \frac{(2\pi)^{\frac{d}{2}}}{\sqrt{\det \Var_{\vol_{M_{0}}} (\iota_{X_{M_0}} D \eta_{\mathbf{0}})}} L_j \langle{\Pi^+_{\btheta} f_{\btheta}, g_{\btheta}}\rangle|_{\btheta = \mathbf{0}} = \mc{O}(\|f\|_{B^{s, 2j}} \|g\|_{B^{s, 2j}}),
\end{equation}
where in the estimate we used Lemma \ref{lemma:regularity_Pi_theta}. The remainder term is estimated as
\[ 
    |R(t)| \leq C t^{-N} \sum_{|\alpha| \leq 2N+d+1}  \sup_{\btheta \in U_0} |\partial_{\btheta}^{\alpha} \langle \Pi_{\btheta}^{+}f_{\btheta},g_{\btheta}\rangle_{L^{2}(M_{0})} | \leq Ct^{-N}\|f\|_{B^{s,2N+d+1}(M)}\|g\|_{B^{s,2N+d+1}(M)},
\]
where we again used Lemma \ref{lemma:regularity_Pi_theta}. This completes the proof.
\end{proof}

\subsection{Computation of $C_1$.} Continuing the discussion from the Proof of Theorem \ref{theorem:main1}, and using its notation, according to \cite[Lemma 7.7.5]{Hoermander-I-03} for any $j$ we have 
\begin{equation}\label{eq:stat-phase-lemma}
    L_ju = \sum_{\nu - \mu = j} \sum_{2\nu \geq 3\mu \geq 0} i^{-j} 2^{-\nu} (\mu! \nu!)^{-1} \langle{(-i\lambda''_{\mathbf{0}})^{-1}D, D}\rangle^{\nu} (g^\mu u)(\mathbf{0}),\quad u \in C^\infty(U_0),
\end{equation}
is a differential operator of order $2j$ acting on $u$ at $\btheta = \mathbf{0}$, where $g$ vanishes to third order at $\mathbf{0}$ and is defined by 
\[
    ig(\btheta) = \lambda_{\btheta} - \frac{1}{2} \langle{\lambda_{\mathbf{0}}'' \btheta, \btheta}\rangle.
\]
Also, here $D = -i \partial_{\btheta}$ denotes the gradient operator times $-i$. We have $L_0 = \id$ is the identity operator, and from \eqref{eq:stat-phase-lemma} we read that
\begin{align}\label{eq:L1}
\begin{split}
    i L_1 u =& 2^{-1} \langle{(-i\lambda_{\mathbf{0}}'')^{-1}D, D}\rangle u (\mathbf{0}) + 2^{-2} \frac{1}{2!} \langle{(-i\lambda_{\mathbf{0}}'')^{-1}D, D}\rangle^2 (g u) (\mathbf{0})\\ 
    &+  2^{-3} \frac{1}{2! 3!}\langle{(-i\lambda_{\mathbf{0}}'')^{-1}D, D}\rangle^3 (g^2) u(\mathbf{0}),\quad u \in C^\infty_{\comp}(U_0),
\end{split}
\end{align}

Let $u(\btheta) := \langle{\Pi^+_{\btheta} f_{\btheta}, g_{\btheta}}\rangle_{L^2}$; in what follows we compute the first two derivatives of $u$. We first note that
\begin{align*}
    \partial_{\btheta_i} \Pi^+_{\btheta} &= - \frac{1}{2\pi i} \oint_{\gamma_0} R^+_{\btheta}(z) i \iota_X \partial_{\btheta_i} \eta_{\btheta} R_{\btheta}^+(z) \dd z\\
    &= -\frac{1}{2\pi i}\oint_{\gamma_0} \left(R^+_{\hol, \btheta}(z) - \frac{\Pi^+_{\btheta}}{z - \lambda_{\btheta}}\right) i \iota_X \partial_{\btheta_i} \eta_{\btheta} \left(R^+_{\hol, \btheta}(z) - \frac{\Pi^+_{\btheta}}{z - \lambda_{\btheta}}\right) \dd z\\
    &= \Pi^+_{\btheta} i \iota_X \partial_{\btheta_i} \eta_{\btheta} R^+_{\hol, \btheta}(\lambda_{\btheta}) + R^+_{\hol, \btheta}(\lambda_{\btheta}) i \iota_X \partial_{\btheta_i} \eta_{\btheta} \Pi^+_{\btheta},
\end{align*}
where $R^+_{\hol, \btheta}(z)$ denotes the holomorphic part of $R^+_{\btheta}(z)$ near $z = \lambda_{\btheta}$. We therefore have for an index $j \in \{1, \dotsc, d\}$ that
\begin{align*}
    \partial_{\btheta_j} u &= \langle{(\Pi^+_{\btheta} i \iota_X \partial_{\btheta_j} \eta_{\btheta} R^+_{\hol, \btheta}(\lambda_{\btheta}) + R^+_{\hol, \btheta}(\lambda_{\btheta}) i \iota_X \partial_{\btheta_j} \eta_{\btheta} \Pi^+_{\btheta}) f_{\btheta}, g_{\btheta}}\rangle\\ 
    &\hspace{6cm}+ \langle{\Pi^+_{\btheta} \partial_{\btheta_j}f_{\btheta}, g_{\btheta}}\rangle + \langle{\Pi^+_{\btheta} f_{\btheta}, \partial_{\btheta_j} g_{\btheta}}\rangle\\ 
    &= \langle{\Pi^+_{\btheta}(\underbrace{\partial_{\btheta_j} + i \iota_X \partial_{\btheta_j} \eta_{\btheta} R^+_{\hol, \btheta}(\lambda_{\btheta})}_{P_{j}^+:=}) f_{\btheta}, g_{\btheta}}\rangle + \langle{\Pi^+_{\btheta} f_{\btheta}, (\underbrace{\partial_{\btheta_j} + i \iota_{-X} \partial_{\btheta_j} \eta_{\btheta} R^-_{\hol, \btheta}(\overline{\lambda_{\btheta}})}_{P_{j}^-:=})g_{\btheta} }\rangle,
\end{align*}
keeping in mind that $P_i^\pm$ depend on $\btheta$. Iterating this, we compute for $k \in \{1, \dotsc, d\}$ that 
\begin{align*}
    \partial_{\btheta_k} \partial_{\btheta_j}u = \langle{\Pi^+_{\btheta}P^+_{k} P^+_{j} f_{\btheta}, g_{\btheta}}\rangle + \langle{\Pi^+_{\btheta} P^+_{j} f_{\btheta}, P^-_{k} g_{\btheta}}\rangle + \langle{\Pi^+_{\btheta} P_{k}^+ f_{\btheta}, P_{j}^- g_{\btheta} }\rangle + \langle{\Pi^+_{\btheta} f_{\btheta}, P_{k}^- P_{j}^- g_{\btheta} }\rangle.
\end{align*}
For simplicity we now assume that 
\[
    \int_{M} f ~\dd \vol_M = \int_{M_0} f_0 ~\dd \vol_{M_0} = \int_{M} g ~\dd \vol_{M} = \int_{M_0} g_0 ~\dd \vol_{M_0} = 0.
\]
In particular $u(\mathbf{0}) = \mathbf{0}$, and from the expression for $\partial_{\btheta_j} u$ we have $\partial_{\btheta_j} u(\mathbf{0}) = 0$. Using that $g$ vanishes to third order at $\mathbf{0}$, from \eqref{eq:L1} we therefore see that the last two terms vanish, and so 
\begin{align*}
    2L_1 u &= \sum_{j, k} (-\lambda_{\mathbf{0}}'')^{-1}_{jk} \partial_{\btheta_k} \partial_{\btheta_j} u (\mathbf{0})\\
    &= \sum_{j, k} (-\lambda_{\mathbf{0}}'')^{-1}_{jk} \left(\langle{\Pi^+_{\btheta} P^+_{j} f_{\btheta}, P^-_{k} g_{\btheta}}\rangle|_{\btheta = \mathbf{0}} + \langle{\Pi^+_{\btheta} P_{k}^+ f_{\btheta}, P_{j}^- g_{\btheta} }\rangle|_{\btheta = \mathbf{0}}\right)\\
    &= 2\sum_{j, k} (-\lambda_{\mathbf{0}}'')^{-1}_{jk} \Pi^+_{\mathbf{0}} P^+_{j} f_{\btheta}|_{\btheta = \mathbf{0}} \Pi^-_{\mathbf{0}} \overline{P^-_{k}} \overline{g_{\btheta}}|_{\btheta = \mathbf{0}},
\end{align*}
where in the third equality we used that $\lambda''_{\mathbf{0}}$ is symmetric, and that $\vol_{M_0}$ is assumed to be a probability. Note also that $\overline{R^-_{\hol}(0) v} = R^-_{\hol}(0) \overline{v}$ for any $v \in C^\infty(M_0)$. We are left to compute $\Pi^+_{\mathbf{0}} P_j^+ f_{\btheta}|_{\btheta = \mathbf{0}}$ and the symmetric term acting on $g_{\btheta}$.

We compute 
\begin{align*}
    \Pi^+_{\mathbf{0}} P_j^+ f_{\btheta}|_{\btheta = \mathbf{0}} &= \int_{M_0}\left(\partial_{\btheta_j} f_{\btheta}|_{\btheta = \mathbf{0}} + i \iota_{X_{M_0}} \partial_{\btheta_j} \eta_{\btheta} R^+_{\hol}(0) f_{\mathbf{0}}\right) \dd \vol_{M_0}\\
    &= -i \int_{M_0} (fH_j)_{\mathbf{0}} ~\dd \vol_{M_0} + i\int_{M_0} R^-_{\hol}(0) (\iota_{X_{M_0}} \partial_{\btheta_j} \eta_{\btheta}) f_{\mathbf{0}} ~\dd \vol_{M_0}\\
    &= i\int_M f\left(-H_j + \pi^*\iota_{X_{M_0}} R^{1, -}_{\hol}(0) \partial_{\btheta_j} \eta_{\btheta}\right) ~\dd \vol_{M},
\end{align*}
where in the second line we used Proposition \ref{prop:differentiation-btheta}, that $R^+_{\hol}(0)^* = R^-_{\hol}(0)$ as well as that $\eta_{\btheta}$ is real-valued and that $R^\pm_{\hol}(0)$ are real operators, as well as $\pi^* f_{\mathbf{0}}(x) = \sum_{\mathbf{n}} f(\tau_{\mathbf{n}}\widehat{x})$ for an arbitrary $\widehat{x} \in \pi^{-1}(x)$. Finally, in the last line we wrote $R^{1, -}_{\hol}(0)$ to denote the holomorphic part of the resolvent at zero acting on $1$-forms, and we used that $\iota_{X_{M_0}} R^{1, -}_{\hol}(0) = R^{-}_{\hol}(0) \iota_{X_{M_0}}$.

By definition of $H_j$, see \eqref{eq:auxiliary-Hj}, we have that $dH_j = \pi^* \partial_{\btheta_j} \eta_{\btheta}$. Therefore, we have
\begin{align*}
     - dH_j + d\pi^* \iota_{X_{M_0}} R^{1, -}_{\hol}(0) \partial_{\btheta_j} \eta_{\btheta} &= -\pi^* \partial_{\btheta_j} \eta_{\btheta} + \pi^* \mc{L}_{X_{M_0}} R^{1, -}_{\hol}(0) \partial_{\btheta_j} \eta_{\btheta}\\ 
    &= -\pi^* \partial_{\btheta_j} \eta_{\btheta} + \pi^* (\id - \Pi^{1, -}_{\mathbf{0}}) \partial_{\btheta_j} \eta_{\btheta} = -\pi^*\Pi^{1, -}_{\mathbf{0}}  \partial_{\btheta_j} \eta_{\btheta},
\end{align*}
    where in the first equality we used the commutation relation $dR^{-}_{\hol}(0) = R^{1, -}_{\hol}(0) d$, as well as that $\partial_{\btheta_j} \eta_{\btheta}$ is closed, and Cartan's magic formula. In the second equality we used that $\mc{L}_{X_{M_0}} R^-_{\hol}(0) = \id - \Pi^{1, -}_{\mathbf{0}}$ similarly to the relation \eqref{eq:holomorphic-part-at-zero-identity}, where $\Pi^{1, -}_{\mathbf{0}}$ denotes the spectral projector onto coresonant $1$-forms. (Note also that $\Pi^{1, -}_{\mathbf{0}}  \partial_{\btheta_j} \eta_{\btheta} - \partial_{\btheta_j} \eta_{\btheta}$ is exact and that $\Pi^{1, -}_{\mathbf{0}}  \partial_{\btheta_j} \eta_{\btheta} \in \ker \iota_{X_{M_0}}$.) We may thus write (where $d^{-1}$ denotes a primitive of an exact form)
\[
    \Pi_{\mathbf{0}}^+ P_j^+ f_{\btheta}|_{\btheta = \mathbf{0}} = -i \int_M f d^{-1} (\pi^* \Pi_{\mathbf{0}}^{1, -} \partial_{\btheta_j} \eta_{\btheta})~ \dd \vol_M,
\]
which is well-defined since we assume $\int_M f~ \dd \vol_M = 0$ and the action of $d^{-1}$ is well-defined up to a constant. The term corresponding to $\overline{P_k^- g_{\btheta}|_{\btheta = \mathbf{0}}}$ is computed by symmetry (replacing $X_{M_0}$ by $-X_{M_0}$).

We now prove directly that this term is non-zero; we argue by contradiction. This is equivalent to having $-H_j + \pi^*\iota_{X_{M_0}} R^{1, -}_{\hol}(0) \partial_{\btheta_j} \eta_{\btheta}$ is constant, since we had already assumed that $f$ has zero average. In particular, it follows that $R^-_{\hol}(0) \iota_{X_{M_0}}\partial_{\btheta_j}\eta_{\btheta} =: v \in C^\infty(M_0)$. Applying $X_{M_0}$ it follows that
\[
    \iota_{X_{M_0}} \partial_{\btheta_j} \eta_{\btheta} = X_{M_0}v \iff \partial_{\btheta_j} \eta_{\btheta} - dv \in \ker d \cap \ker \iota_{X_{M_0}} \cap C^\infty.
\]
Arguing as in the proof of Lemma \ref{lemma:resonance-non-degenerate} we get that $dv = \partial_{\btheta_j} \eta_{\btheta}$, which contradicts the definition of $\eta_{\btheta}$ by using the surjectivity of the representation $\rho$.

Thus in general, we have
\begin{equation}\label{equation:c1}
\begin{split}
    C_1(f, g) = \frac{(2\pi)^{\frac{d}{2}}}{\sqrt{\det \Var_{\vol}(\iota_{X_{M_0}} D\eta_{\mathbf{0}})}}\left(\sum_{j, k}(-\lambda''_{\mathbf{0}})^{-1}_{jk} \right. &\int_M f d^{-1} (\pi^* \Pi^{1, -}_{\mathbf{0}} \partial_{\btheta_j} \eta_{\btheta}) \dd \vol_M\\ 
    & \left. \cdot\int_M \overline{g} d^{-1} (\pi^* \Pi^{1, +}_{\mathbf{0}} \partial_{\btheta_k} \eta_{\btheta}) \dd \vol_M \right),
\end{split}
\end{equation}
when $\int_M f \dd \vol_M = \int_M g \dd \vol_M = 0$. Note that this pairing is non-trivial, because $\{\mathbf{1}_M, d^{-1} \pi^* \Pi_{\mathbf{0}}^{1, +} \partial_{\btheta_1} \eta_{\btheta}, \dotsc, d^{-1} \pi^*\Pi_{\mathbf{0}}^{1, +} \partial_{\btheta_d} \eta_{\btheta}\}$ are linearly independent by the argument given in the preceding paragraph.

\begin{remark}\rm
    It is possible to write down the full term for $C_1$ (and not only its restriction to zero average functions) by studying further the expressions in \eqref{eq:L1}. To do so, one would compute the terms $P_k^+ P_j^+ f_{\btheta}|_{\btheta = \mathbf{0}}$. We refrain from doing so as the terms would get more complicated quickly; in particular, this would involve the $\btheta$-derivatives of $R_{\hol, \btheta}^+(\lambda_{\btheta})$, which in turn would involve higher order coefficients in the Taylor expansion of $R^+_{\btheta}(\lambda_{\btheta})$ and also derivatives of $\btheta$. A helpful assumption is to assume that $(\varphi_t)_{t \in \mathbb{R}}$ comes from an Anosov geodesic flow because then all odd derivatives of $\lambda_{\btheta}$ at $\btheta = \mathbf{0}$ vanish identically. The pairing $C_2$ can be computed similarly with the assumption that $\Pi^+_{\mathbf{0}} P_j^+ f_{\btheta}|_{\btheta = \mathbf{0}}$ for all indices $1 \leq j \leq d$ and that $f$ has zero average. The computation of $C_\ell$ for $\ell \geq 3$ is less clear but a natural inductive assumption would be to assume that $P^+_\alpha f_{\btheta}|_{\btheta = \mathbf{0}} = 0$ for all $|\alpha| \leq \ell - 1$.
\end{remark}

\subsection{Non-vanishing of the bilinear forms}

Finally, we prove that the bilinear forms $C_j$ defined in \eqref{equation:expression-cj} do not vanish:

\begin{lemma}
    \label{lemma:cj-non-zero}
    For all $j \geq 1$, the bilinear form $C_j \colon C^\infty_{\comp}(M) \times  C^\infty_{\comp}(M) \to \C$ is non zero.
\end{lemma}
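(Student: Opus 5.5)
We show that $C_j$ is non-zero by testing it on well-chosen pairs. By \eqref{equation:expression-cj}, up to the nonzero constant $(2\pi)^{d/2}/\sqrt{\det(-\lambda''_{\mathbf{0}})}$, we have $C_j(f,g) = L_j u_{f,g}(\mathbf{0})$, where $u_{f,g}(\btheta) := \langle \Pi^+_{\btheta} f_{\btheta}, g_{\btheta}\rangle$ on $U_0$. The operator $L_j$ from \eqref{eq:stat-phase-lemma} is a differential operator of order $2j$ at $\btheta=\mathbf{0}$. Its top-order part is $i^{-j}2^{-j}(j!)^{-1}\langle (-i\lambda''_{\mathbf{0}})^{-1}D,D\rangle^{j}$ (the $\mu=0$, $\nu=j$ term). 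The terms with $\mu\geq 1$ have order $2\nu = 2(j+\mu)$ in total, but they act on $g^\mu u$, and $g$ vanishes to third order. So it suffices to produce $f,g$ such that $u_{f,g}$ vanishes to order $2j-1$ at $\mathbf{0}$ and has prescribed, nondegenerate $2j$-jet. For such $u$, every term with $\mu\geq1$ vanishes: $g^\mu u$ vanishes to order $3\mu + 2j > 2\nu = 2j+2\mu$. The lower-order terms of the $\mu=0$ part also vanish, since there is only one such term. Hence $C_j(f,g)$ reduces to a nonzero multiple of $\langle(-\lambda''_{\mathbf{0}})^{-1}\partial,\partial\rangle^j u(\mathbf{0})$. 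Taking $u = \langle Q\btheta,\btheta\rangle^j + \mc{O}(|\btheta|^{2j+1})$, with $Q$ positive definite, this quantity is positive.

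To realize such a $u$, we use that $f\mapsto (f_{\btheta})_{\btheta}$ intertwines multiplication by $H_k$ with $i\partial_{\btheta_k}$ (Proposition \ref{prop:differentiation-btheta}). The Floquet transform is surjective onto smooth families. More concretely, we pick $f$ whose Fourier modes near $\mathbf{0}$ are $f_{\btheta} = \phi(\btheta)\, u_{\btheta}^{\sharp}$. Here $\phi$ is a smooth function on $U_0$, and $u^\sharp_{\btheta}$ is a smooth family in $C^\infty(M_0)$ with $\langle \Pi^+_{\btheta} u^\sharp_{\btheta}, \mathbf{1}\rangle \neq 0$. Such a family exists by continuity, since $\Pi^+_{\mathbf{0}}\mathbf{1}=\mathbf{1}$; one can simply take $u^\sharp_{\btheta}=\mathbf{1}_{M_0}$. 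We construct $f$ by the inverse formula \eqref{equation:floquet-decomposition}, $f = (2\pi)^{-d}\int \chi(\btheta)\phi(\btheta) s_{\btheta}\pi^*\mathbf{1}\,\dd\btheta$. This $f$ is not compactly supported, but it decays rapidly in $\n$ (its $\n$-coefficients are Fourier coefficients of a smooth function), and this is enough by density together with the continuity of $C_j$ on $B^{s,2j}$. We similarly pick $g$ with modes $\psi(\btheta)\mathbf{1}$. Then $u_{f,g}(\btheta) = \phi(\btheta)\overline{\psi(\btheta)}\,\langle \Pi^+_{\btheta}\mathbf{1},\mathbf{1}\rangle$, where the last factor equals $1+\mc{O}(|\btheta|)$. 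We choose $\phi(\btheta) = |\btheta|^{2j}\cdot(\text{cutoff})$, using the Euclidean chart near $\mathbf{0}$, and $\psi \equiv 1$ near $\mathbf{0}$. Then $u$ vanishes to order $2j$ with leading jet $|\btheta|^{2j}$. Applying $\langle(-\lambda''_{\mathbf{0}})^{-1}\partial,\partial\rangle^j$ to $|\btheta|^{2j}$ gives a strictly positive constant, because $(-\lambda''_{\mathbf{0}})^{-1}$ is positive definite by Lemma \ref{lemma:resonance-non-degenerate}. This follows since $\Delta_A^j |\btheta|^{2j}$ with $A>0$ is a positive combination of traces of products of $A$.

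If one insists on $f,g\in C^\infty_{\comp}(M)$, we proceed as follows. The map $f\mapsto C_j(f,g)$ is continuous in $B^{s,2j}$. Our $f$ (with coefficients $\widehat{\chi\phi}(\n)$ rapidly decaying) lies in every $B^{s,r}$. Truncating in $\n$ approximates it there, so $C_j$ is nonzero on some compactly supported pair.

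The main obstacle is bookkeeping in the stationary phase formula: we must check that, for $u$ vanishing to order $2j$, only the single top term $\mu=0,\nu=j$ survives, and that the order-$2j$ Taylor term of $u$ equals the one computed. This requires care because of the $\btheta$-dependence of $\Pi^+_{\btheta}$ and of the trivialization $s_{\btheta}$, which only perturb $u$ at order $2j+1$.
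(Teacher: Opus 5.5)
Your argument is correct, but it isolates the principal part of $L_j$ differently from the paper. The paper's test pair is built by translation:
\begin{itemize}
\item it takes compactly supported $f,g$ with $\int_{M_0} f_{\mathbf{0}} = \int_{M_0} g_{\mathbf{0}} = 1$ and replaces $f$ by $\tau_{\bk}^*f$ with $\bk=(k,0,\dotsc,0)$;
\item it uses $(\tau_{\bk}^*f)_{\btheta} = e^{i\btheta\cdot\bk}f_{\btheta}$ and reads off $L_j(e^{i\btheta\cdot\bk}c(\btheta))|_{\btheta=\mathbf{0}} = k^{2j}\sigma_{L_j}(\mathbf{0},\mathbf{e}_1^*)+\mathcal{O}(k^{2j-1})$ as $k\to\infty$.
\end{itemize}
This needs only ellipticity of $L_j$ at a single covector, and no synthesis of $f$ or density argument.

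You instead build an amplitude $u$ that is flat to order $2j$ with prescribed $2j$-jet. This turns the claim into an exact, non-asymptotic identity, at the cost of the positivity computation for $\langle(-\lambda''_{\mathbf{0}})^{-1}\partial,\partial\rangle^j|\btheta|^{2j}$. Your bookkeeping is right: each $\mu\geq1$ term has total order $2j+2\mu$, while $g^\mu u$ vanishes to order $2j+3\mu$. The $\mu=0$ term is homogeneous of order exactly $2j$, so it only sees the $2j$-jet.

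Your route also gives more information. Since each $L_{j'}$ acts on $u$ as an operator of order $2j'$, the same pair gives $C_0=\dotsb=C_{j-1}=0$ and $C_j\neq 0$. So the correlation for that pair genuinely starts at $t^{-d/2-j}$, whereas the paper's test pairs always have a nonzero leading term.

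Finally, you can drop the non-compactly-supported $f$ and the density step. Take $h,g\in C^\infty_{\comp}(M)$ with $\int_M h\neq0\neq\int_M g$, and set $f:=\bigl(\sum_{l=1}^d(2-\tau_{\mathbf{e}_l}^*-\tau_{-\mathbf{e}_l}^*)\bigr)^j h$, where $\mathbf{e}_l\in\Z^d$ are the standard basis vectors. This is a finite combination of translates, hence compactly supported. Its modes are $f_{\btheta}=\bigl(\sum_l 2(1-\cos\theta_l)\bigr)^j h_{\btheta}$, with multiplier $|\btheta|^{2j}+\mathcal{O}(|\btheta|^{2j+2})$. This is exactly your construction, realized with the same translation operators the paper uses.
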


\begin{proof}
    The operator $L_j$ in the expression \eqref{equation:expression-cj} of $C_j(f,g)$ is an elliptic differential operator of order $2j$ applied to $\langle\Pi_{\btheta}^+f_{\btheta},g_{\btheta}\rangle$ (and evaluated at $\btheta=\mathbf{0}$). Recall that near $\btheta = \boldsymbol{0}$, $\Pi_{\btheta}^+ = \langle \bullet,v_{\btheta}\rangle u_{\btheta}$, where $u_{\btheta}$ (resp. $v_{\btheta}$) is the corresponding resonant state (resp. coresonant state), see \eqref{eq:projector-near-the-origin} and the discussion below. Therefore, near $\btheta=\mathbf{0}$, for $f,g \in C^\infty_{\comp}(M)$, $\langle\Pi_{\btheta}^+f_{\btheta},g_{\btheta}\rangle = \langle f_{\btheta},v_{\btheta}\rangle \langle u_{\btheta},g_{\btheta}\rangle$. 

    We consider $f,g \in C^\infty_{\comp}(M)$ with $\int_{M_0} f_{\boldsymbol{0}} \dd\vol_{M_0} =1 = \int_{M_0} g_{\boldsymbol{0}} \dd\vol_{M_0}$ and let $c(\btheta) := \langle f_{\btheta},v_{\btheta}\rangle \langle u_{\btheta},g_{\btheta}\rangle$. This is a smooth function of $\btheta$ near $\btheta=\boldsymbol{0}$. As $u_{\mathbf{0}} = v_{\mathbf{0}} = \mathbf{1}_{M_0}$, we find that $c(\boldsymbol{0})=1$. Also note that for $\bk \in \Z^d$, $\tau_{\bk}^*f \in C^\infty_{\comp}(M)$ and $(\tau_{\bk}^*f)_{\btheta} = e^{i\btheta\cdot\bk}f_{\btheta}$. Therefore $\langle\Pi_{\btheta}^+(\tau_{\bk}^*f)_{\btheta},g_{\btheta}\rangle = e^{i\btheta\cdot\bk} c(\btheta)$ near $\btheta = \mathbf{0}$.
    
    We then compute asymptotically $L_j \langle\Pi_{\btheta}^+(\tau_{\bk}^*f)_{\btheta},g_{\btheta}\rangle|_{\btheta=\boldsymbol{0}}$ in the specific case where $\bk=(k,0, \dotsc,0) \in \Z^d$ and $k \to +\infty$. As $L_j$ is a differential operator of order $2j$, we find:
    \[
    \begin{split}
    L_j \langle\Pi_{\btheta}^+(\tau_{\bk}^*f)_{\btheta},g_{\btheta}\rangle|_{\btheta=0}&  = L_j(e^{i\btheta\cdot\bk}c(\btheta))|_{\btheta=\boldsymbol{0}}  =k^{2j} \sigma_{L_j}(\boldsymbol{0}, \mathbf{e}_1^*) c(\boldsymbol{0}) + \mc{O}(k^{2j-1}) \\
    &= k^{2j} \sigma_{L_j}(\boldsymbol{0}, \mathbf{e}_1^*) + \mc{O}(k^{2j-1}),
    \end{split}
    \]
    where $\sigma_{L_j} \in C^\infty(T^*\mathrm{U}(1)^d)$ denotes the (homogeneous) principal symbol of $L_j$, evaluated at the point $\btheta=\boldsymbol{0} \in \mathrm{U}(1)^d$ and covector $\mathbf{e}_1^* \in T^*_{\boldsymbol{0}}\mathrm{U}(1)^d$ (such that $\mathbf{e}_1^*(\partial_{\theta_1})=1$ and $\mathbf{e}_1^*(\partial_{\theta_j})=0$ for $j \geq 2$). As $L_j$ is elliptic, $\sigma_{L_j}(\boldsymbol{0}, \mathbf{e}_1^*) \neq 0$ so the above expression is non zero for $k \gg 1$ large enough. This proves the claim.
\end{proof}

\begin{remark}\rm
    The principal symbol of $L_{j}$ is read-off from its expression in the stationary phase lemma, see \eqref{eq:stat-phase-lemma}. Indeed, only the term for $\mu = 0$ and $\nu = j$ appears in the principal symbol as the others are of lower order thanks to the fact that $g$ vanishes to third order at $\btheta = \mathbf{0}$. Therefore
    \begin{equation}
        \label{equation:expression-symbol-cj}
        \sigma_{L_j}(\mathbf{0}, \xi) = i^{-j - 1} 2^{-j} (j!)^{-1} \langle{(-\lambda''_{\mathbf{0}})^{-1}\xi, \xi}\rangle^j,\quad \xi \in T^*_{\mathbf{0}} \mathrm{U(1)}^d \simeq \mathbb{R}^d.
    \end{equation}
\end{remark}

\subsection{Linear drift}

\label{ssection:linear-drift}

Let $f,g \in C^\infty_{\comp}(M)$. The asymptotic expansion of the correlation function \eqref{equation:decay-correlation-0} applied with $\tau_{\bk}^*f$ and $g$ yields:
\[
\begin{split}
t^{d/2}\langle \tau_{\bk}^*f \circ \varphi_{-t}, g \rangle_{L^2(M)} & = \kappa  \int_M f ~\dd\vol_{M} \int_M g ~\dd\vol_{M} + \sum_{j=1}^{N-1} t^{-j} C_j(\tau_{\bk}^*f,g)  \\
& + \mc{O}\left(\langle t \rangle^{-N}\left(\|\tau_{\bk}^*f\|_{B^{\vartheta N,0}}\|g\|_{B^{\vartheta N,0}} + \|\tau_{\bk}^*f\|_{B^{s,2N+d+1}}\|g\|_{B^{s,2N+d+1}}\right)\right).
\end{split}
\]
Observe that $\|\tau_{\bk}^*f\|_{B^{\vartheta N,0}} = \|f\|_{B^{\vartheta N,0}}$ and $\|\tau_{\bk}^*f\|_{B^{s,2N+d+1}} \leq C \langle \bk \rangle^{2N+d+1}\|f\|_{B^{s,2N+d+1}}$. In addition, using \eqref{equation:expression-cj}, we have
\[
\begin{split}
    \tfrac{\sqrt{\det \Var_{\vol} (\iota_{X_{M_0}} D \eta_{\mathbf{0}})}}{(2\pi)^{\frac{d}{2}}}C_j(\tau_{\bk}^*f, g) & =  L_j \langle{\Pi^+_{\btheta} (\tau_{\bk}^*f)_{\btheta}, g_{\btheta}}\rangle|_{\btheta = \mathbf{0}} \\
    & =  L_j(e^{i\bk\cdot\btheta}\langle{\Pi^+_{\btheta} f_{\btheta}, g_{\btheta}}\rangle)|_{\btheta = \mathbf{0}} \\
    & =  |\bk|^{2j} \left(\sigma_{L_j}(\mathbf{0},\bk/|\bk|) \int_M f ~\dd\vol_{M} \int_M ~g \dd\vol_{M} + \mc{O}(|\bk|^{-1})\right),
    \end{split}
    \]
    where we used in the last line that $L_{2j}$ is a differential operator of order $2j$ and that $\langle{\Pi^+_{\btheta} f_{\btheta}, g_{\btheta}}\rangle|_{\btheta=\boldsymbol{0}}= \int_M f ~\dd\vol_{M} \int_M g ~\dd\vol_{M}$. We thus find:
    \[
    \begin{split}
    t^{d/2}\langle \tau_{\bk}^*f \circ \varphi_{-t}, g \rangle_{L^2(M)} & = A \cdot \sum_{j=0}^{N-1} t^{-j} |\bk|^{2j} \sigma_{L_j}(\mathbf{0},\bk/|\bk|) + \mc{O}(|\bk|^{2j-1})  + \mc{O}(\langle t \rangle^{-N}\langle \bk \rangle^{2N+d+1}),
    \end{split}
    \]
    where
    \[
    A := \frac{(2\pi)^{\frac{d}{2}}\int_M f ~\dd\vol_{M} \int_M g ~\dd\vol_{M}}{\sqrt{\det \Var_{\vol} (\iota_{X_{M_0}} D \eta_{\mathbf{0}})}}.
    \]
We now consider the regime $|\bk| = t^{1/2-\eps}$ with $0 < \eps \leq 1/2$. Then:
\[
\begin{split}
t^{d/2}\langle \tau_{\bk}^*f \circ \varphi_{-t}, g \rangle_{L^2(M)}& = A \cdot \sum_{j=0}^{N-1} t^{-2\eps j}  \sigma_{L_j}(\mathbf{0},\bk/|\bk|) + \mc{O}(t^{-2\eps j -1/2+\eps})  \\
& \qquad  + \mc{O}(\langle t \rangle^{-N + (2N+d+1)(1/2-\eps)}).
\end{split}
\]
In the sum, the remainders $\mc{O}(t^{-2\eps j -1/2+\eps})$ are negligible compared to the main terms in the expansion provided $-2\eps -1/2+\eps = -\eps - 1/2 < -2\eps(N-1)$, that is $\eps < 1/(2(2N-3))$. In addition, the remainder $\mc{O}(\langle t \rangle^{-N + (2N+d+1)(1/2-\eps)})$ is negligible compared to the main terms in the expansion provided $-N + (2N+d+1)(1/2-\eps) < -2(N-1)\eps$ that is $\eps > (d+1)/2(d+3)$. The condition $1/(2(2N-3)) > (d+1)/2(d+3)$ is equivalent to $\tfrac{d+1}{d+3} < \tfrac{1}{2N-3}$. It is always satisfied for $N = 2$, regardless of the value of $d \geq 1$, but it is never satisfied for $N \geq 3$ even for $d=1$. Letting $N=2$, and using the expression \eqref{equation:expression-symbol-cj} for the principal symbol of $L_1$, we then find:
\[
\begin{split}
t^{d/2}\langle \tau_{\bk}^*f \circ \varphi_{-t}, g \rangle_{L^2(M)}  = & \frac{(2\pi)^{\frac{d}{2}}\int_M f ~\dd\vol_M \int_M g ~\dd\vol_M}{\sqrt{\det \Var_{\vol} (\iota_{X_{M_0}} D \eta_{\mathbf{0}})}}\left(1- t^{-2\eps} 2^{-1} \left\langle(-\lambda_{\boldsymbol{0}}'')^{-1}\tfrac{\bk}{|\bk|},\tfrac{\bk}{|\bk|}\right\rangle \right) \\
&  + o(t^{-2\eps}).
\end{split}
\]
This provides a fairly explicit expression for the second term in the expansion of the correlation function.

\section{Abelian covers of isometric extensions}

\label{section:decay-isometric}

Throughout this section, $p \colon P_0 \to M_0$ is a principal $G$-bundle and $\pi \colon M \to M_0$ denotes the $\Z^d$-extension and $P =\pi^*P_0 \to M$ is the pullback $G$-bundle. When the context is clear, we will use the convention that the projection $G$-bundle map $P \to P_0$ is also denoted by $\pi$, and that the projection $P \to M$ is denoted by $p$; similarly, we will also write $\pi$ for the induced map between bundles associated to subgroups of $G$ and $P \to P_0$. Recall from Lemma \ref{lemma:g-z-bundles-relation} that $P\to M_0$ is also a $(G \times \Z^d)$-bundle.

\subsection{Borel-Weil-Floquet theory} \label{ssection:borel-weil-floquet} In this paragraph, we generalize the Floquet theory explained in \S\ref{ssection:floquet-theory} in order to include a $G$-bundle part. Let $f \in C^\infty_{\comp}(P)$. Applying Proposition \ref{eq:fourier-theory}, we may write
\[
f = \dfrac{1}{(2\pi)^d} \int_{\mathrm{U}(1)^d} F_{\btheta} ~\dd\btheta,
\]
where $F_{\btheta} \in C^\infty(P)$ is a $\Z^d$-equivariant function satisfying $\tau_{\mathbf{n}}^*F_{\btheta} = e^{i\n\cdot\btheta}F_{\btheta}$ for all $\n \in \Z^d$; the space of such functions is denoted by $C^\infty_{\btheta}(P)$. After fixing an open contractible set $U \subset \mathrm{U}(1)^d$, for $\btheta \in U$, this function can be written as $F_{\btheta} = (\pi^* f_{\btheta}) s_{\btheta}$, where $f_{\btheta} \in C^\infty(P_0)$ and $s_{\btheta} \in C^\infty_{\btheta}(P)$ has pointwise unit norm. Note that $s_{\btheta}$ can simply be taken to be the pullback by $p$ of a corresponding element of $C_{\btheta}^\infty(M)$; by slightly abusing the notation, we will denote this element by $s_{\btheta}$ as well. In turn, $f_{\btheta}$ can be decomposed using the fiberwise Fourier transform \eqref{equation:fourier-group} on the group $G$, that is $\mc{F}(f_{\btheta}) = (f_{\btheta,\bk, i})_{\bk \in \widehat{G}, i=1,\dotsc, d_{\bk}}$, where $f_{\btheta,\bk,i} \in C^\infty_{\hol}(F_0,\Lk)$ and $F_0 := P_0/T$ is the flag bundle over $M_0$. We note that technically, in the Fourier transform of the group $G$ there is an additional index $i = 1,\dotsc, d_{\bk}$, where we recall $d_{\bk}$ denotes the dimension of the representation $\bk$ and $i$ indexes the isomorphic copies of the same representation. However, since the arguments below are the same for every $i$ we often ignore the index $(\bk, i)$ in the Fourier components and only write $\bk$.

We may also define $F_{\btheta,\bk} := (\pi^* f_{\btheta,\bk}) s_{\btheta} \in C^\infty_{\hol,\btheta}(F, \pi^*\Lk)$, where $F := P/T = \pi^*F_0$ is the flag bundle over $M$, and we see $s_{\btheta}$ as a function on $F$, and, similarly to \eqref{equation:equivariant-space}, the subscript $\btheta$ indicates the equivariance of this section with respect to the natural induced action of $\Z^d$ on $F$ which lifts to a $\Z^d$-action on $C^\infty(F, \pi^*\Lk)$. In what follows to simplify the notation we will write $\Lk \to F$ instead of $\pi^*\Lk \to F$. Then we may write $\mc{F}(f) = (f_{\kk, i})_{\kk \in \widehat{G}, i = 1, \dotsc, d_{\bk}}$, where $f_{\kk, i} \in C^\infty_{\hol}(F, \Lk)$ and
\[
    f_{\kk, i} = \frac{1}{(2\pi)^d} \int_{\mathrm{U}(1)^d} F_{\btheta, \kk, i} ~\dd\btheta.
\]
Note that here $F_{\btheta, \kk, i}$ do not depend on $U$ and the choices made when selecting $(s_{\btheta})_{\btheta \in U}$. In summary, we may either take a partial Fourier transform in $G$, and then in $\Z^d$, or vice versa, and the two procedures commute.

Alternatively, one may choose not to trivialize the line bundle $L_{\btheta} \to M_0$ (using the section $s_{\btheta}$); in this case, $f_{\btheta,\bk}$ should be seen as a fiberwise holomorphic section of $p^* L_{\btheta} \otimes \Lk \to F_0$, where $p : F_0 \to M_0$ is the footpoint projection and $p^*L_{\btheta} \to F_0$ is the pullback bundle. Since the realizations described here are all unitarily equivalent, we will not further distinguish between the corresponding objects.

The following result generalizes Proposition \ref{eq:fourier-theory}; its proof is a straightforward consequence of the previous paragraph, and the fact that the Fourier transform is an $L^2$-isometry, see e.g. \cite[Lemma 2.2.4]{Cekic-Lefeuvre-24}:

\begin{proposition} \label{proposition:borel-floquet-weil}
Let $f \in C^\infty_{\comp}(P)$. Then
\[
f = \dfrac{1}{(2\pi)^d} \int_{\mathrm{U}(1)^d} \mc{F}^{-1}\left(\bigoplus_{\bk \in \widehat{G}, i = 1, \dotsc, d_{\bk}} F_{\btheta,\bk, i}\right) \dd\btheta
\]
In addition, we have the following Parseval identity: for all $f,g \in C^\infty(P)$,
\[
\langle f,g \rangle_{L^2(P)} = \dfrac{1}{(2\pi)^d} \sum_{\bk \in \widehat{G}} d_{\bk}\sum_{i = 1}^{d_{\bk}}\int_{\mathrm{U}(1)^d} \langle F_{\btheta,\bk, i}, G_{\btheta,\bk, i}\rangle_{L^2(F_0, p^*L_{\btheta} \otimes \Lk)} \dd \btheta.
\]
\end{proposition}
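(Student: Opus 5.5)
The plan is to compose the two decompositions already available: Floquet theory in the $\Z^d$-direction (Proposition \ref{eq:fourier-theory}) and the Peter--Weyl/Borel--Weil Fourier transform in the $G$-fibers \eqref{equation:fourier-group}, which is an $L^2$-isometry. I would do the $\Z^d$ part first, since it uses only the proper free $\Z^d$-action on $P$ and so transfers verbatim from $M$ to $P$.

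Given $f\in C^\infty_{\comp}(P)$, define $F_{\btheta}(w):=\sum_{\n\in\Z^d} f(\tau_{\n}w)e^{-i\n\cdot\btheta}$. This sum is finite locally because the $\Z^d$-action on $P$ is proper (Lemma \ref{lemma:g-z-bundles-relation}), and $F_{\btheta}\in C^\infty_{\btheta}(P)$. Orthogonality of characters on $\mathrm{U}(1)^d$ gives $f=(2\pi)^{-d}\int F_{\btheta}\,\dd\btheta$. The same fiber computation as in the proof of Proposition \ref{eq:fourier-theory}, done over a point $z\in P_0$ instead of $x\in M_0$, gives
\[
\langle f,g\rangle_{L^2(P)}=(2\pi)^{-d}\int_{\mathrm{U}(1)^d}\langle F_{\btheta},G_{\btheta}\rangle_{L^2(P_0,p^*L_{\btheta})}\,\dd\btheta .
\]
Here $F_{\btheta}$ is viewed as a section of the pullback of $L_{\btheta}$ to $P_0$, and $\nu$ is the lift of $\nu_0$.

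Next, since the right $G$-action on $P$ commutes with $\tau_{\n}$, the map $f\mapsto F_{\btheta}$ is $G$-equivariant. I would therefore apply the fiberwise Fourier transform $\mc F$ to $F_{\btheta}$, regarded as a section of $p^*L_{\btheta}\to P_0$. Pulling back the flat line bundle $L_{\btheta}$ from $M_0$ changes nothing in the fiberwise $G$-analysis; equivalently, locally in $\btheta$ one can trivialize with $s_{\btheta}$ as in the paragraph preceding the proposition. Its $(\bk,i)$ components are exactly $F_{\btheta,\bk,i}$. Since $\mc F$ is linear and commutes with the $\btheta$-integral, the two partial transforms commute, and the inversion formula follows by applying $\mc F^{-1}$ inside the integral.

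The Parseval identity then follows from the $L^2$ isometry of $\mc F$ for each fixed $\btheta$, citing \cite[Lemma 2.2.4]{Cekic-Lefeuvre-24} with its normalization weights $d_{\bk}$. The only real point of care is checking that this isometry holds for sections twisted by the flat Hermitian line bundle $p^*L_{\btheta}$. I expect this to be immediate: pointwise norms are unaffected, and locally one can trivialize by the unit section $s_{\btheta}$. The sum over $\bk$ and the $\btheta$-integral can be interchanged by Tonelli, since all terms are nonnegative when $f=g$, and polarization gives the general case.
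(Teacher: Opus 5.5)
Your proposal is correct and follows essentially the same route as the paper. The paper also obtains the statement by composing the $\Z^d$-Floquet decomposition of Proposition \ref{eq:fourier-theory}, transferred to $P$, with the fiberwise $G$-Fourier transform. It likewise observes that the two partial transforms commute and that $\mc{F}$ is an $L^2$-isometry \cite[Lemma 2.2.4]{Cekic-Lefeuvre-24}. Your remarks on properness of the $\Z^d$-action, the twist by $p^*L_{\btheta}$, and the Tonelli/polarization step spell out details the paper leaves implicit.
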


Let $\mathbf{X} := X_P$ be the flow generator on $P$ and denote the flow generator on $P_0$ by $X_{P_0}$. It then induces a family of operators
\[
X_{\btheta,\bk} \colon C^\infty_{\hol}(F_0, p^*L_{\btheta} \otimes \Lk) \to C^\infty_{\hol}(F_0, p^*L_{\btheta} \otimes \Lk)
\]
in the Borel-Weil calculus. Alternatively, working over the open contractible subset $U \subset \mathrm{U}(1)^d$, the operator $X_{\btheta}: C^\infty(P_0) \to C^\infty(P_0)$ takes the form 
\[
    X_{\btheta} = X_{P_0} + i p^*(\iota_{X_{M_0}}\eta_{\btheta}),
\]
where $(\eta_{\btheta})_{\btheta \in U}$ is the family of $1$-forms on $M_0$ constructed in Proposition \ref{prop:eta-theta}; see \S \ref{sssection:conjugacy} where this is discussed in the case of the extension to $M \to M_0$ (and where $P_0 = M_0$). It is easily seen that the associated operator $X_{\btheta, \bk}: C^\infty_{\hol}(F_0, \Lk) \to C^\infty_{\hol}(F_0, \Lk)$ takes the form
\begin{equation}\label{eq:X-btheta-kk}
    X_{\btheta, \bk} = (X_{P_0})_{\bk} + i p^*(\iota_{X_{M_0}} \eta_{\btheta}).
\end{equation}

Observe that for $\bk=\mathbf{0}$, one recovers the family of operators $X_{\btheta,\bk=\mathbf{0}} = X_{\btheta}$ acting on $C^\infty(M_0,L_{\btheta})$, or equivalently on $C^\infty(M_0)$ when trivializing $L_{\btheta}$ for $\btheta \in U$ as above. This was studied in \S\ref{section:abelian-extension-anosov}.

\subsection{Resolvents and anisotropic Sobolev spaces}

We recall that $\HH \to F_0$ denotes the horizontal bundle induced by the dynamical connection, see \S \ref{ssection:dynamical-connection}; its dual $\HH^*$ is defined as the annihilator of $\V$, the tangent space to the fibres of $F_0 \to M_0$, see \S \ref{ssection:bw-calculus}. Let
\[
p^*(e^{sG_m}) \in C^\infty(\HH^*)
\]
be the pullback of the escape function introduced in \S\ref{sssection:anisotropic-spaces} to the bundle $\HH^*$ over $F_0$, defined using the isomorphism $dp^\top \colon T^*M_0 \to \HH^*$, see \eqref{equation:isomorphism-h} (we drop the subscript $F_0$ in the footpoint projection to simplify notation). That is, for $(w,\xi) \in \HH^*$:
\[
p^*(e^{sG_m})(w,\xi) := e^{sG_m(x, dp^{-\top}\xi)}, \qquad x := p(w), p \colon F_0 \to M_0.
\]
The (smooth) function $p^*e^{sG_m}$ is a symbol in the anisotropic class $S^{sp^*m(x,\xi)}(\HH^*)$. Following \cite[Lemma 4.3.1]{Cekic-Lefeuvre-24}, this allows to define 
\[
\mathbf{A}(s) \in \Psi^{sm(x,\xi)}_{h,\mathrm{BW}}(P_0, p^* L_{\btheta}), \qquad \mathbf{A}(s) := \Op_{h}^{\mathrm{BW}}(p^* e^{sG_m} \otimes \mathbf{1}_{L_{\btheta}}),
\]
which is a family of pseudodifferential operators in the Borel-Weil calculus, parametrized by $s \geq 0$, and acting for all $h > 0, \bk \in \widehat{G}$ such that $h|\bk| \leq 1$ as
\[
A_{h,\btheta,\bk}(s) \colon C^\infty_{\hol}(F_0, p^*L_{\btheta} \otimes \Lk) \to C^\infty_{\hol}(F_0, p^*L_{\btheta} \otimes \Lk).
\]
Up to lower-order modification of the symbol, the above map can be made an isomorphism.

As in \S\ref{sssection:anisotropic-spaces}, we define the anisotropic Sobolev spaces by setting for $f_{\btheta,\bk} \in C^\infty_{\hol}(F_0, p^*L_{\btheta}\otimes \Lk)$:
\[
\|f_{\btheta,\bk}\|_{\mc{H}^{s}_{h,\pm}(F_0, p^*L_{\btheta}\otimes \Lk)} := \|A_{h,\btheta,\bk}(s)^{\mp 1}f_{\btheta,\bk}\|_{L^2(F_0, p^*L_{\btheta}\otimes \Lk)}.
\]
The space $\mc{H}^{s}_{h,\pm}(F_0, p^*L_{\btheta}\otimes \Lk)$ is then defined as the completion of $C^\infty_{\hol}(F_0, p^*L_{\btheta} \otimes \Lk)$ with respect to the above norm.

When $h = 1/\langle \bk \rangle$, we shall simply write $\mc{H}^{s}_{\pm}(F_0,p^*L_{\btheta}\otimes \Lk)$. Similarly to Lemma \ref{theorem:meromorphic-extension}, the resolvents
\[
z \mapsto (\mp X_{\btheta,\bk}-z)^{-1} \in \mc{L}(\mc{H}^{s}_{h,\pm}(F_0,p^*L_{\btheta}\otimes \Lk))
\]
admit a meromorphic extension from $\{\Re(z) \gg 0\}$ to $\{\Re(z) > -cs\}$, see \cite[Theorem 4.3.1]{Cekic-Lefeuvre-24}. The poles are contained in $\{\Re(z) \leq 0\}$ and independent of choices made in the construction.

Alternatively, as in \S \ref{ssection:borel-weil-floquet} above, taking an open contractible set $U \subset \mathrm{U}(1)^d$, we may trivialise $L_{\btheta}$ for $\btheta \in U$. In this setting, we obtain the usual spaces $\mc{H}^s_{\pm}(F_0, \Lk)$ and norms on them  which are independent of $\btheta$; on these spaces the resolvents $(\mp X_{\btheta, \bk} - z)^{-1}$ act (c.f. Section \ref{section:abelian-extension-anosov} for the case when $P_0 = M_0$).

\subsection{Resonances on the imaginary axis} Recall that $F_1, \dotsc, F_a \in \mc{D}'(M_0, \Lambda^2 T^*M_0)$ are the curvature $2$-forms of the dynamical connection corresponding to the central part. We let $(\psi_t^{F_0})_{t \in \R}$ be the flow on $F_0$ induced by $(\psi_t)_{t \in \R}$, defined on $P_0$.

\begin{lemma}
\label{lemma:no-res-k}
Assume that $(\psi_t^{F_0})_{t \in \R}$ is ergodic on $F_0$ and that $(d\alpha, F_1, \dotsc, F_a)$ are linearly independent over $\R$.
    Let $\bk \in \widehat{G}$, $\bk \neq \mathbf{0}$ and $\btheta \in \mathrm{U}(1)^d$. Then $X_{\btheta,\bk}$ has no resonances on $i\R$.
\end{lemma}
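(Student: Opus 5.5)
Suppose, to get a contradiction, that $u \in \mc{H}^s_{+}(F_0, \Lk)$ is non-zero and satisfies $(-X_{\btheta,\bk} - i\lambda)u = 0$ for some $\lambda \in \R$, with $u$ fiberwise holomorphic; the coresonance case is symmetric. We work over a contractible $U \ni \btheta$ to trivialise $L_{\btheta}$, so that by \eqref{eq:X-btheta-kk} we have $X_{\btheta,\bk} = (X_{P_0})_{\bk} + i p^*(\iota_{X_{M_0}}\eta_{\btheta})$.

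\textbf{Step 1: smoothness and invariance.} Since $X_{\btheta,\bk}$ is skew-adjoint on $L^2$ and the resonance lies on $i\R$, the argument of \cite[Proposition 9.2.3]{Lefeuvre-book}, transported to the Borel-Weil calculus (as in \cite[Section 4]{Cekic-Lefeuvre-24}), shows that $u$ is smooth. Two flow-invariance facts follow. First, $|u|^2$ is a flow-invariant function on $F_0$, because the $\eta_{\btheta}$ term and $i\lambda$ are purely imaginary. Ergodicity of $(\psi^{F_0}_t)$ then forces $|u| \equiv 1$ after normalisation. Second, $u$ is invariant along the stable and unstable horocycles for the combined connection
\[
\nabla := \nabla^{\mathrm{dyn}}_{\bk} + i\eta_{\btheta} + i\lambda\alpha
\]
on $\Lk \to F_0$, taken along horizontal directions. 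This is proved exactly as in Lemma \ref{lemma:no-resonances-imaginary-axis}: apply the propagator to vectors in $E_s$ (resp. $E_u$) and let $t \to +\infty$ (resp. $-\infty$), using that the dynamical connection is the one whose parallel transport intertwines the stable and unstable holonomies.

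\textbf{Step 2: the section is parallel.} From Step 1, $u$ is a unit-norm section of $\Lk$ that is parallel along $\HH$ for $\nabla$. It is also fiberwise holomorphic. The aim is to show it is parallel in the vertical directions as well. Once that holds, the curvature of $\nabla$ vanishes as a distribution on $F_0$, by the line-bundle analogue of Lemma \ref{lemma:curvature-classic} in the distributional setting of \cite[Lemma 4.5.7]{Cekic-Lefeuvre-24}. This curvature is
\[
F_{\nabla} = F^{\bk}_{\nabla^{\mathrm{dyn}}} + i\lambda\, p^*d\alpha,
\]
since $\eta_{\btheta}$ is closed.

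\textbf{Step 3: contradiction.} Split into two cases according to $\bk$.
\begin{enumerate}[label=(\alph*)]
\item Suppose some semisimple component $k_j$, $j > a$, is non-zero. Then $\Lk$ restricted to a fiber $G/T$ is topologically non-trivial \cite[Proposition 2.1.8]{Cekic-Lefeuvre-24}. It therefore admits no nowhere-vanishing section, so $|u| \equiv 1$ is impossible.
\item Otherwise $\bk$ has only central components, and $\Lk$ is pulled back from $M_0$ with connection induced by the central part. Its curvature is then $i\sum_{j \leq a} k_j F_j$, up to the normalisation of the generators $\lambda_j$. Flatness gives $\sum_j k_j F_j + \lambda\, d\alpha = 0$ with $\bk \neq \mathbf{0}$. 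This contradicts the assumed linear independence of $(d\alpha, F_1, \dotsc, F_a)$.
\end{enumerate}

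\textbf{Main obstacle.} The delicate point is Steps 1--2 in the low-regularity, Borel-Weil setting. The dynamical connection is only Hölder continuous, so $F_j$ are merely distributions. One also has to pass from horizontal invariance together with fiberwise holomorphy of $u$ to genuine flatness, or alternatively to the topological obstruction in case (a). I would follow \cite[Section 4.5]{Cekic-Lefeuvre-24} essentially verbatim, observing that the extra flat term $i\eta_{\btheta}$ neither changes the curvature nor affects the holonomy argument along contractible loops.
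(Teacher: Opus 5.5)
Your proposal is correct and follows essentially the paper's argument: smoothness of the resonant state, $|u|$ constant by ergodicity of $(\psi^{F_0}_t)_{t\in\R}$, the topological obstruction of \cite[Proposition 2.1.8]{Cekic-Lefeuvre-24} when a semisimple weight is present, and then flatness of $\nabla^{\mathrm{dyn}}_{\bk}+i\eta_{\btheta}+i\lambda\alpha$, which gives $\sum_j k_jF_j+\lambda\, d\alpha=0$ and contradicts linear independence. The \emph{main obstacle} you flag in Step~2 disappears if you order things as the paper does. First dispose of case (a), which needs only $|u|\equiv 1$. Then, in case (b), a fiberwise holomorphic section of a bundle pulled back from $M_0$ is constant on the compact connected fibres $G/T$, so $u=p^*\tilde u$, and the horocyclic argument of Lemma \ref{lemma:no-resonances-imaginary-axis} can be run directly on $M_0$. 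Flatness then follows at once from the existence of a smooth unit-norm parallel section, with no need for the quasimode or holonomy machinery of \cite[Section 4.5]{Cekic-Lefeuvre-24}.
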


\begin{proof}
Assume for a contradiction that there exists a non-trivial solution
\[
u_{\btheta,\bk} \in \mc{H}^{s}_{\pm}(F_0,p^*L_{\btheta}\otimes \Lk), \qquad (-X_{\btheta,\bk}-i\lambda) u_{\btheta,\bk} = 0,
\]
for some $\lambda \in \R$. By \cite[Proposition 9.2.3, item (ii)]{Lefeuvre-book}, $u_{\btheta,\bk} \in C^\infty_{\hol}(F_0,p^*L_{\btheta}\otimes \Lk)$. Since $X_{F_0}|u_{\btheta,\bk}|^2 = 2\Re(\langle X_{\btheta,\bk} u_{\btheta,\bk}), u_{\btheta,\bk} \rangle) = 0$, we deduce that $|u_{\btheta,\bk}| = c > 0$ is constant by ergodicity of $(\psi_t^{F_0})_{t \in \R}$ on $F_0$. Restricting to a fiber $F_0(x_0) \simeq G/T$ for $x_0 \in M$, we find that $u_{\btheta,\bk}$ is a nowhere vanishing section of $p^*L_{\btheta} \otimes \Lk \to F_0(x_0)$. However, if there exists $k_i \neq 0$ for $i \in \{a+1, \dotsc , d\}$, then $p^*L_{\btheta} \otimes \Lk \to F_0(x_0)$ is a non-trivial line bundle (see \cite[Proposition 2.1.8]{Cekic-Lefeuvre-24}) and this is contradiction. Therefore, $k_{a+1}= \dotsb =k_d=0$.

As $p^*L_{\btheta} \otimes \Lk \to F_0$ is trivial over every fiber of $F_0 \to M_0$, it is the pullback of a line bundle $L_{\btheta} \otimes \Lk_{M_0} \to M_0$ over $M_0$. In addition, fiberwise holomorphic sections in $C^\infty_{\hol}(F_0,p^*L_{\btheta} \otimes \Lk)$ are constant along each fiber and obtained as pullback of sections of $L_{\btheta} \otimes \Lk_{M_0} \to M_0$. That is $u_{\btheta,\bk} = p^* \tilde{u}_{\btheta,\bk}$ for some $\tilde{u}_{\btheta,\bk} \in C^\infty(M_0,L_{\btheta} \otimes \Lk_{M_0})$ (see also the discussion on \cite[Case 2 of Proof of (4.6.1)]{Cekic-Lefeuvre-24} where a similar argument is developed). The flow invariance property $X_{\btheta,\bk} u_{\btheta,\bk} = 0$ translates on the base into $X_{\btheta,\bk,M_0} \tilde{u}_{\btheta,\bk}=0$. After trivializing $L_{\btheta}$ as explained in \S \ref{ssection:borel-weil-floquet} above, thanks to \eqref{eq:X-btheta-kk} we may write $X_{\btheta,\bk,M_0} = (\nabla^{\mathrm{dyn}}_{\kk, M_0})_{X_{M_0}} + i \iota_{X_{M_0}} \eta_{\btheta}$, where $\nabla^{\mathrm{dyn}}_{\kk, M_0}$ denotes the connection on $\Lk_{M_0}$ associated to the dynamical connection. Arguing as in the proof of Lemma \ref{lemma:no-resonances-imaginary-axis}, we find, similarly to \eqref{equation:no-res}, that (see also \cite[Derivation of (4.6.9)]{Cekic-Lefeuvre-24})
\[
(-\nabla^{\mathrm{dyn}}_{\bk,M_0}-i\eta_{\btheta} \wedge -i\lambda\alpha \wedge ) \tilde{u}_{\btheta,\bk}=0.
\]
As $|\tilde{u}_{\btheta,\bk}|=c > 0$, we obtain that the connection $\nabla^{\mathrm{dyn}}_{\bk,M_0} + i \eta_{\btheta} + i\lambda\alpha$ admits a parallel section so its curvature vanishes, that is, using that $\eta_{\btheta}$ is closed and that the curvatures of the line bundles in $\Lk_{M_0}$ can be taken to agree with $i(F_j)_{j = 1}^{a}$,
\[
    k_1 F_1 + \dotsb + k_a F_a + \lambda d\alpha = 0.
\]
By assumption, this is only possible if $k_1 = \dotsb = k_a = 0$ and $\lambda = 0$. That is $\bk = \mathbf{0}$, and we are thus back to the setting of Lemma \ref{lemma:no-resonances-imaginary-axis} which establishes the absence of resonances on the imaginary axis when $\bk = \mathbf{0}$ and $\btheta \neq \mathbf{0}$. This concludes the proof.
\end{proof}

\subsection{High-frequency estimates and decay of correlation}

The key result is the following high-frequency estimate which is analogous to Theorem \ref{theorem:high-frequency-ab-ext} (compare with \cite[Theorem 4.4.2]{Cekic-Lefeuvre-24}).

\begin{theorem}[High frequency estimates, $\bk \neq \mathbf{0}$]
\label{theorem:high-frequency}
Assume that $(\psi_t^{F_0})_{t \in \R}$ is ergodic on $F_0$ and $(d\alpha, F_1, \dotsc, F_a)$ are linearly independent over $\R$. Let $s > 0$. There exist $C,\vartheta > 0$ such that for all $z \in \mathbb{B}, \mathbf{k} \in \widehat{G} \setminus \{0\}, \btheta \in \mathrm{U}(1)^d$:
\begin{equation}
    \label{equation:twisted-hf}
\|(\mp X_{\boldsymbol{\theta},\bk}-z)^{-1}\|_{\mc{H}^s_{1/\langle\mathbf{k}\rangle,\pm}(F_0,L_{\btheta}\otimes\mathbf{L}^{\otimes \mathbf{k}})} \leq C \langle \Im(z)\rangle^{\vartheta} \langle\mathbf{k}\rangle^{\vartheta}.
\end{equation}
\end{theorem}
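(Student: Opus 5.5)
We argue by contradiction, following the scheme of Theorem \ref{theorem:high-frequency-ab-ext} transplanted into the Borel-Weil calculus, as in \cite[Theorem 4.4.2]{Cekic-Lefeuvre-24}. We treat only the `$+$' case. It suffices to prove a semiclassical version: with $h := \langle (\Im z, \bk)\rangle^{-1}$, we aim for a bound $C h^{-\ell}$ on $\mc{H}^s_{h,+}(F_0, p^*L_{\btheta}\otimes\Lk)$ for some $\ell > \dim F_0 + 10$. The stated estimate \eqref{equation:twisted-hf} then follows from the analogue of \eqref{equation:equivalence-norms} comparing the $h$-spaces with the $1/\langle\bk\rangle$-spaces.

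\textbf{Reduction to quasimodes.} If the bound fails, we obtain sequences $\bk_n \neq \mathbf{0}$, $\btheta_n$, $z_n \in \mathbb{B}$ and normalized $f_n$ with $\|(-X_{\btheta_n,\bk_n}-z_n)f_n\| = o(h_n^{\ell+1})$. For bounded $(\Im z_n, \bk_n)$ this is excluded by Lemma \ref{lemma:no-res-k}, together with compactness in $\btheta$ and continuity of the resolvent. Hence $h_n \to 0$. After extraction we may assume $h_n\Im z_n \to \lambda_\infty$, $h_n \bk_n \to \mathbf{k}_\infty$ with $|(\lambda_\infty,\mathbf{k}_\infty)|=1$, and $\Re z_n \to \nu$.

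The operator $P_h := -hX_{\btheta,\bk} - hz$ is then a Borel-Weil operator whose principal symbol on $\HH^*$ is $-i(\langle\xi,X\rangle + \lambda_\infty + \langle \mathbf{k}_\infty, \text{moment map}\rangle)$. The term $i\iota_X\eta_{\btheta}$ is of order $h$, uniformly in $\btheta$, so $\btheta$ plays no role at leading order. Here we cover $\mathrm{U}(1)^d$ by finitely many good sets, exactly as in the proof of Lemma \ref{lemma:horocyclic-invariance}.

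\textbf{Microlocalization and horocyclic invariance.} Repeating Lemmas \ref{lemma:microsupport-quasimodes} and \ref{lemma:microsupport-quasimodes-2} in the Borel-Weil calculus uses only elliptic regularity, propagation, the source/sink estimates, Egorov and the positive-commutator factorization \eqref{equation:sum-of-squares}, all of which hold there. This localizes $f_h$ near the trapped set $\{\xi = -(\lambda_\infty+\cdots)\alpha\}$ in $\HH^*$, forces $\nu=0$, and gives $\Re z_h = o(h^{\ell/2+1})$.

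Next we prove horocyclic invariance as in Lemma \ref{lemma:horocyclic-invariance}, using the commutation of $X_{\btheta,\bk}$ with the dynamical connection $\nabla^{\mathrm{dyn}}_{\bk} + i\eta_{\btheta}$ on $p^*L_{\btheta}\otimes\Lk$ along $E_s\oplus E_u$. We then take $v_h = E_hf_h$. As in Lemmas \ref{lemma:dtheta-petit} and \ref{lemma:vh-bound-below}, $|v_h|$ is asymptotically constant, uniformly bounded below, and approximately parallel for the rescaled connection $h(\nabla^{\mathrm{dyn}}_{\bk}+i\eta_{\btheta}) + i h\Im z\,\alpha$. At this stage one also needs approximate fiberwise holomorphy and the transitivity group $H=G$, exactly as in \cite[\S 4.5]{Cekic-Lefeuvre-24}.

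\textbf{Main obstacle: extracting a contradiction from the limiting connection.} Following \cite[Lemma 4.5.7]{Cekic-Lefeuvre-24}, approximate parallel sections with $|v_h|$ bounded below force the limiting holonomy of contractible loops to be trivial. Consider the normalized curvature $h(\sum k_j F_j + \lambda d\alpha)$ and pass to the limit. When some semisimple component of $\mathbf{k}_\infty$ is nonzero, the nontriviality of $\Lk$ on fibres $G/T$ gives the contradiction: a non-vanishing approximately holomorphic section is impossible, as in the proof of Lemma \ref{lemma:no-res-k}. Otherwise $\mathbf{k}_\infty$ is central, and we obtain $\sum_{j\le a} k_{\infty,j}F_j + \lambda_\infty d\alpha = 0$ with $(\lambda_\infty, \mathbf{k}_\infty)\neq 0$. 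This contradicts the linear independence of $(d\alpha,F_1,\dots,F_a)$.

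The delicate points are the following. First, $F_j$ are only distributions, since the dynamical connection is merely Hölder; this requires the regularization and holonomy argument of \cite{Cekic-Lefeuvre-24} rather than Lemma \ref{lemma:curvature-classic}. Second, the flat twist $\eta_{\btheta}$ must not affect the conclusion; this holds since it is closed and $O(h)$. Third, all constants must be uniform in $\btheta$.
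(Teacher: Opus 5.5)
Your route is the paper's. The paper also reduces \eqref{equation:twisted-hf} to the quasimode/contradiction argument of \cite[Section 4.6]{Cekic-Lefeuvre-24}, made uniform in $\btheta$ as in Theorem \ref{theorem:high-frequency-ab-ext}, with Lemma \ref{lemma:no-res-k} excluding resonances on the imaginary axis. It treats the regimes $|\bk|\geq|\Im z|$ (with $h=1/\langle\bk\rangle$) and $|\Im z|\geq|\bk|$ (with $h=1/\langle\Im z\rangle$) separately, with loss $h^{-\ell}$ where $\ell=\max(3\dim F_0+5,\dim F_0+11)$. Your joint parameter $h=\langle(\Im z,\bk)\rangle^{-1}$ merges the two regimes harmlessly. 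You should, however, fix $\ell$ at least that large rather than merely $\ell>\dim F_0+10$.

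The step that does not work as written is your final dichotomy. You split according to whether $\bk_\infty=\lim h_n\bk_n$ has a nonzero semisimple component, but both mechanisms you invoke depend on $\bk_n$ itself, not on its rescaled limit.

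\begin{itemize}
\item The topological obstruction (non-triviality of $\Lk$ on the fibres $G/T$) is present as soon as $k_{n,i}\neq 0$ for some $i>a$, whatever $\bk_\infty$ is.
\item Conversely, fibrewise holomorphic sections descend to sections of $L_{\btheta}\otimes\Lk_{M_0}\to M_0$ only when $k_{n,a+1}=\dots=k_{n,d}=0$. That descent is what allows the holonomy argument on the base and yields $\sum_{j\le a}k_{\infty,j}F_j+\lambda_\infty d\alpha=0$.
\end{itemize}

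Consider the mixed case: $\bk_n$ has a nonzero semisimple part but $h_n\bk_n^{\mathrm{ss}}\to 0$, e.g.\ the semisimple part stays bounded while the central part or $\Im z_n$ tends to infinity. Your scheme sends such a sequence to the curvature branch. There $v_h$ is not pulled back from $M_0$, so the claimed curvature identity is not justified by the argument you describe.

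The repair is to split on $\bk_n$ along a subsequence, as in the proof of Lemma \ref{lemma:no-res-k}:
\begin{itemize}
\item If $\bk_n^{\mathrm{ss}}\neq 0$, use the non-vanishing/topological argument. The obstruction itself does not care about the size of $h_n\bk_n^{\mathrm{ss}}$.
\item If $\bk_n$ is central, reduce to $M_0$ and use linear independence of $(d\alpha,F_1,\dotsc,F_a)$ together with $(\lambda_\infty,\bk_\infty)\neq 0$.
\end{itemize}
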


The proof of \eqref{equation:twisted-hf} relies on the following two estimates which hold for all $z \in \mathbb{B}, \mathbf{k} \in \widehat{G} \setminus \{0\}, \boldsymbol{\theta} \in \mathrm{U}(1)^d$ (below, $\lambda = \Im(z)$):
\begin{equation}
    \label{equation:hf-k}
\begin{split}
\begin{aligned}
& \|(\mp X_{\boldsymbol{\theta},\bk}-z)^{-1}\|_{\mc{H}^s_{1/\langle\mathbf{k}\rangle,\pm}(F_0, p^*L_{\btheta}\otimes \mathbf{L}^{\otimes \mathbf{k}})} \leq C \langle\mathbf{k}\rangle^\ell, \quad &&|\bk| \geq |\lambda|, \\
& \|(\mp X_{\boldsymbol{\theta},\bk}-z)^{-1}\|_{\mc{H}^s_{1/\langle\lambda\rangle,\pm}(F_0, p^*L_{\btheta}\otimes\mathbf{L}^{\otimes \mathbf{k}})} \leq C \langle \Im(z)\rangle^{\ell}, \quad &&|\lambda| \geq |\bk|,
\end{aligned}
\end{split}
\end{equation}
where $\ell = \max(3\dim F_0 + 5, \dim F_0 + 11)$. The case $\btheta = \mathbf{0}$ was established in \cite[Section 4.6]{Cekic-Lefeuvre-24}. The fact that the two estimates above hold uniformly with respect to $\btheta$ follows from a straightforward adaptation of the proof of \cite[Section 4.6]{Cekic-Lefeuvre-24}, using the same strategy as in the proof of Theorem \ref{theorem:high-frequency-ab-ext}. Note that by Lemma \ref{lemma:no-res-k}, the operators $(\mp X_{\btheta,\bk}-z)^{-1}$ have no resonances on the imaginary axis.

This immediately implies the following result:

\begin{proposition}[Expansion of the correlation function for $\bk \neq \mathbf{0}$]
\label{proposition:expansion}
Assume that $(\psi_t^{F_0})_{t \in \R}$ is ergodic on $F_0$ and $(d\alpha, F_1, \dotsc, F_a)$ are linearly independent over $\R$. There exists $\vartheta > 0$ such that for all $N \geq 1$, there exists a constant $C_N > 0$ such that the following holds. Let $f,g \in C^\infty_{\comp}(P)$ such that
\[
F_{\btheta,\bk=\mathbf{0}}=  G_{\btheta, \bk=\mathbf{0}} = 0, \qquad \forall \btheta \in \mathrm{U}(1)^d.
\]
Then:
\[
\left|\langle f \circ \psi_{-t}, g \rangle_{L^2(P)}\right| \leq C_N \langle t \rangle^{-N} \|f\|_{B^{\vartheta N, 0}(P)} \|g\|_{B^{\vartheta N,0}(P)}, \qquad t \geq 0.
\]
\end{proposition}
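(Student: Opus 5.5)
We decompose the correlation via the Borel--Weil--Floquet Parseval identity of Proposition \ref{proposition:borel-floquet-weil}. Since the lifted flow commutes with both the $G$-action and the $\Z^d$-action, the propagator $e^{-t\mathbf{X}}$ acts diagonally: $(f\circ\psi_{-t})_{\btheta,\bk,i} = e^{-tX_{\btheta,\bk}} F_{\btheta,\bk,i}$. We cover $\mathrm{U}(1)^d$ by finitely many good open sets with a subordinate partition of unity, as in the proof of Theorem \ref{theorem:main1}, and trivialise $L_{\btheta}$ on each. The hypothesis $F_{\btheta,\mathbf{0}}=G_{\btheta,\mathbf{0}}=0$ removes the $\bk=\mathbf{0}$ mode. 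We are left with
\[
\langle f\circ\psi_{-t},g\rangle_{L^2(P)} = (2\pi)^{-d}\sum_{\bk\neq\mathbf{0}} d_{\bk}\sum_{i=1}^{d_{\bk}}\int_{\mathrm{U}(1)^d}\langle e^{-tX_{\btheta,\bk}}f_{\btheta,\bk,i},g_{\btheta,\bk,i}\rangle\,\dd\btheta .
\]
It therefore suffices to bound each summand by $C_N\langle t\rangle^{-N}\langle\bk\rangle^{\vartheta' N}$ times suitable Sobolev norms of $f_{\btheta,\bk,i}$ and $g_{\btheta,\bk,i}$, uniformly in $\btheta$ and $\bk$.

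For fixed $(\btheta,\bk)$ with $\bk\neq\mathbf{0}$, we repeat the argument of Theorem \ref{theorem:mixing_X_theta}(ii). We use Stone's formula for the skew-adjoint operator $X_{\btheta,\bk}$ to write the correlation as an integral of $e^{zt}\langle (R^+_{\btheta,\bk}(z)+R^-_{\btheta,\bk}(-z))f,g\rangle$ over $i\R$. By Lemma \ref{lemma:no-res-k} there are no resonances on $i\R$, so no contour deformation or residue appears. We then integrate by parts $N$ times in $z$ and insert $\langle X_{\btheta,\bk}\rangle^{N_2}$ to gain decay in $\Im z$. Derivatives $\partial_z^{N}$ of the resolvent are powers $R^{N+1}$. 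Theorem \ref{theorem:high-frequency} bounds each power by $C^{N+1}\langle \Im z\rangle^{(N+1)\vartheta}\langle\bk\rangle^{(N+1)\vartheta}$ on the spaces $\mc{H}^s_{1/\langle\bk\rangle,\pm}$, uniformly in $\btheta$. Choosing $N_2>\vartheta(N+1)+1$ makes the $\lambda$-integral converge and yields
\[
|\langle e^{-tX_{\btheta,\bk}}f_{\btheta,\bk},g_{\btheta,\bk}\rangle| \le C_N\langle t\rangle^{-N}\langle\bk\rangle^{(N+1)\vartheta}\,\|\langle X_{\btheta,\bk}\rangle^{N_2}f_{\btheta,\bk}\|_{\mc{H}^s_{+}}\|g_{\btheta,\bk}\|_{\mc{H}^s_{-}} ,
\]
together with the symmetric term.

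Next we convert these norms into ordinary Sobolev norms on $F_0$. The semiclassical norms with $h=1/\langle\bk\rangle$ compare to the $h=1$ norms at the cost of a factor $\langle\bk\rangle^{s}$, by the analogue of \eqref{equation:equivalence-norms}. We also use $H^s\subset\mc{H}^s_\pm$ and the fact that $\langle X_{\btheta,\bk}\rangle^{N_2}$ loses $N_2$ derivatives uniformly in $\btheta$ and $\bk$. This gives a bound by $\langle\bk\rangle^{c N}\|f_{\btheta,\bk}\|_{H^{s+N_2}}\|g_{\btheta,\bk}\|_{H^{s+N_2}}$. Powers of $\langle\bk\rangle$ correspond to vertical derivatives, since the vertical Casimir acts by a scalar comparable to $|\bk|^2$. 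So we absorb these losses into Sobolev regularity on $P_0$, and pay a further $\langle\bk\rangle^{-\dim G}$-type factor to sum over $\bk$ with multiplicities $d_{\bk}^2$.

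Finally, Cauchy--Schwarz in $(\bk,i)$, Parseval on $G$, and Lemma \ref{lemma:spaces_theta_sobolev} (with $k=0$, applied on $P$) for the $\btheta$-integral bound the total by $C_N\langle t\rangle^{-N}\|f\|_{B^{\vartheta N,0}(P)}\|g\|_{B^{\vartheta N,0}(P)}$ after enlarging $\vartheta$.

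The main obstacle is the uniform bookkeeping of the $\langle\bk\rangle$ losses. The resolvent bound on the $\bk$-dependent semiclassical anisotropic space must be compared with a fixed, $\bk$-independent Sobolev norm on $P_0$ with only polynomial loss in $\bk$, uniformly in $\btheta$. I would first check this comparison via the Borel--Weil quantisation of the escape function, following \cite[Section 4]{Cekic-Lefeuvre-24}.
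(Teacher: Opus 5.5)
Your proposal is correct and follows essentially the same route as the paper: the Borel--Weil--Floquet Parseval decomposition with the $\bk=\mathbf{0}$ mode removed, a per-$(\btheta,\bk)$ polynomial decay bound obtained from Theorem~\ref{theorem:high-frequency} by the integration-by-parts argument of Theorem~\ref{theorem:mixing_X_theta} (without contour deformation), then Cauchy--Schwarz in $(\bk,i)$ and Lemma~\ref{lemma:spaces_theta_sobolev} in $\btheta$. The comparison step you flag as the main obstacle is exactly what the paper imports from \cite[Lemma 4.4.1]{Cekic-Lefeuvre-24}, the embedding of anisotropic horizontal--vertical spaces into Sobolev spaces on $P_0$; also, once you use Cauchy--Schwarz, you do not need an extra $\langle\bk\rangle^{-\dim G}$ summability factor.
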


The constant $\vartheta > 0$ can be made explicit and only depends on the dimension of $M_0$ and $G$. See \cite[Theorem 4.1.1]{Cekic-Lefeuvre-24} for instance. Note that the condition that $F_{\btheta, \bk = \mathbf{0}} = 0$ is zero for all $\btheta$ is the same as $f_{\bk = \mathbf{0}} = 0$, which in turn means that the integrals of $f$ over the $G$-fibres in $P$ are all zero.

\begin{proof}
Using Proposition \ref{proposition:borel-floquet-weil}, we begin by expanding
\[
\langle f \circ \psi_{-t}, g \rangle_{L^2(P)} = \dfrac{1}{(2\pi)^d} \sum_{\bk \in \widehat{G} \setminus \{0\}} d_{\bk} \sum_{i = 1}^{d_{\bk}} \int_{\btheta \in \mathrm{U}(1)^d}  \langle e^{-t X_{\btheta,\bk}}f_{\btheta,\bk, i}, g_{\btheta,\bk, i}\rangle_{L^2(F_0, p^* L_{\btheta}\otimes \Lk)} \dd \btheta.
\]
Then, using the high-frequency estimate \eqref{equation:twisted-hf} and an integration by parts argument as in the proof of Theorem \ref{theorem:mixing_X_theta} (there is no need to perform a contour deformation in this case as there are no resonances on the imaginary axis), one obtains the uniform polynomial decay for each $\btheta$ for the propagator of $X_{\btheta, \bk}$. That is, there exist $C, \vartheta_0 > 0$, such that for every $N \geq 1$ and for all $\bk \neq \mathbf{0}$
\[
    \langle e^{-tX_{\btheta, \bk}} f_{\btheta, \bk}, g_{\btheta, \bk} \rangle_{L^2} \leq  C \langle t \rangle^{-N} \langle \bk \rangle^{\vartheta_0 N} \|f_{\btheta, \bk}\|_{H^{\vartheta_0 N}(F_0, p^*L_{\btheta} \otimes \Lk)} \|g_{\btheta, \bk}\|_{H^{\vartheta_0 N}(F_0, p^*L_{\btheta} \otimes \Lk)}.
\]
Thus we may estimate, for every $N \geq 1$
\begin{align*}
    &|\langle f \circ \psi_{-t}, g \rangle_{L^2(P)}| \leq C\langle t \rangle^{-N} \int_{\btheta \in \mathrm{U}(1)^d} \dd\btheta \sum_{\bk \in \widehat{G}\setminus\{\mathbf{0}\}} d_{\bk} \langle \bk \rangle^{\vartheta_0 N} \sum_{i = 1}^{d_{\bk}} \|f_{\btheta, \bk, i}\|_{H^{\vartheta_0 N}} \|g_{\btheta, \bk, i}\|_{H^{\vartheta_0 N}}\\
    &\leq C\langle t \rangle^{-N} \int_{\btheta \in \mathrm{U}(1)^d} \dd\btheta \left(\sum_{\bk \in \widehat{G}} d_{\bk} \langle \bk \rangle^{\vartheta_0 N} \sum_{i = 1}^{d_{\bk}} \|f_{\btheta, \bk, i}\|_{H^{\vartheta_0 N}}^2\right)^{\frac{1}{2}}\\
    &\hspace{7cm}\cdot\left(\sum_{\bk \in \widehat{G}} d_{\bk} \langle \bk \rangle^{\vartheta_0 N} \sum_{i = 1}^{d_{\bk}} \|g_{\btheta, \bk, i}\|_{H^{\vartheta_0 N}}^2\right)^{\frac{1}{2}}\\
    &\leq C\langle t \rangle^{-N} \int_{\btheta \in \mathrm{U}(1)^d} \dd\btheta \|f_{\btheta}\|_{H^{3\vartheta_0N/2}(P_0, p^*L_{\btheta})} \|g_{\btheta}\|_{H^{3\vartheta_0N/2}(P_0, p^*L_{\btheta})}\\
    &\leq C \langle t \rangle^{-N} \|f\|_{B^{\vartheta N, 0}(P)} \|g\|_{B^{\vartheta N, 0}(P)},
\end{align*}
where in the second line we used the Cauchy-Schwartz inequality, as well as that $f_{\mathbf{k} = \mathbf{0}} = g_{\bk = \mathbf{0}} = 0$ by assumption, in the third line we used the embedding of anisotropic horizontal-vertical spaces into Sobolev spaces on $P_0$ proved in \cite[Lemma 4.4.1]{Cekic-Lefeuvre-24}, and in the last line we used Lemma \ref{lemma:spaces_theta_sobolev} and we introduced $\vartheta := 3\vartheta_0/2$. This completes the proof. (See also \cite[Sections 4.4.1 and 4.4.2]{Cekic-Lefeuvre-24} for similar arguments.)
\end{proof}

Finally, we can complete the proof of Theorem \ref{theorem:main2}:

\begin{proof}[Proof of Theorem \ref{theorem:main2}]
We expand
\[
\begin{split}
\langle f \circ \psi_{-t}, g \rangle_{L^2(P)} & = \underbrace{\dfrac{1}{(2\pi)^d}\int_{\btheta \in \mathrm{U}(1)^d} \langle e^{-tX_{\btheta, \bk = \mathbf{0}}}f_{\btheta, \bk=\mathbf{0}}, g_{\btheta, \bk=\mathbf{0}}\rangle_{L^2(M_0, L_{\btheta})} \dd\btheta}_{=A} \\
& + \underbrace{\dfrac{1}{(2\pi)^d} \sum_{\bk \in \widehat{G} \setminus \{\mathbf{0}\}} d_{\bk} \sum_{i = 1}^{d_{\bk}} \int_{\btheta \in \mathrm{U}(1)^d} \langle e^{-tX_{\btheta,\bk}}f_{\btheta,\bk, i}, g_{\btheta,\bk, i}\rangle_{L^2(F_0, p^*L_{\btheta}\otimes \Lk)} \dd\btheta}_{=B}.
\end{split}
\]
Note that in the term $A$, $f_{\btheta, \bk=\mathbf{0}} \in C^\infty_{\hol}(F_0, p^*L_{\btheta})$ is identified with an element of $C^\infty(M_0,L_{\btheta})$. By Theorem \ref{theorem:main1}, the term $A$ admits an asymptotic expansion in powers of $t$ as claimed. If the transitivity group $H$ is equal to $G$, then $(\psi_t^{F_0})_{t \in \R}$ is ergodic on $F_0$. By Proposition \ref{proposition:expansion}, the term $B$ is bounded by $B \leq C_N \langle t \rangle^{-N} \|f\|_{B^{\vartheta N,0}} \|g\|_{B^{\vartheta N,0}}$ for $t \geq 0$, which concludes the proof.
\end{proof}

\bibliographystyle{alpha}
\bibliography{biblio}

\end{document}